\documentclass[10pt]{amsart}
\usepackage{mathrsfs}
\usepackage{amssymb}
\usepackage{graphicx}
\usepackage{mathrsfs}
\usepackage[all]{xy}
\usepackage[colorlinks=true]{hyperref}
\makeindex
\newcommand{\Pf}{\mathrm{Pf}}

\newcommand{\Aut}{\mathrm{Aut}}

\newcommand{\Ad}{\mathrm{Ad}}

\newcommand{\Yok}{Y_{0}^{\mathfrak k}}
\newcommand{\ad}{\mathrm{ad}}
\newcommand{\pa}{\partial}

\newcommand{\ac}{^{\cdot}}

\newcommand{\n}{\nabla}
\newcommand{\N}{\mathbf{N}}
\newcommand{\Z}{\mathbf{Z}}
\newcommand{\R}{\mathbf{R}}
\newcommand{\C}{\mathbf{C}}
\def \Vol{\mathrm {Vol}}

\def \Trs{\mathrm {Tr_{s}}}
\def \Tr{\mathrm{Tr}}

\def \ch{\mathrm {ch}}
\def \End{\mathrm {End}}

\def\even{\mathrm{even}}

\def \reg{\mathrm{reg}}

\theoremstyle{plain}
\newtheorem{lem}{Lemma}[section]
\newtheorem{theorem}[lem]{Theorem}
\newtheorem{proposition}[lem]{Proposition}

\theoremstyle{definition}
\newtheorem{definition}[lem]{Definition}

\theoremstyle{remark}
\newtheorem{remark}[lem]{Remark}

\numberwithin{equation}{section}
\begin{document} 
 
 \bibliographystyle{alpha}
 
\title[Orbital integrals and center of enveloping algebra]{Geometric orbital integrals and the center of the enveloping 
algebra}


\author{Jean-Michel \textsc{Bismut}}
\address{Institut de Mathématique d'Orsay \\ Université Paris-Saclay
\\ Bâtiment 307 \\ 91405 Orsay \\ France}
\curraddr{}
\email{jean-michel.bismut@universite-paris-saclay.fr}
\author{Shu \textsc{Shen}}
\address{Institut de Mathématiques de Jussieu-Paris Rive Gauche\\
Sorbonne Université\\ Case Courrier 247\\4 place Jussieu\\75252 Paris 
Cedex 05\\ France}
\email{shu.shen@imj-prg.fr}
\thanks{The authors are  much indebted to Laurent Clozel for his 
stimulating remarks during the preparation of the paper,  and for 
reading  the preliminary version of this paper very carefully.}
\subjclass[2020]{11F72, 22E30,}
\keywords{Selberg trace 
formula, Analysis on real and complex Lie groups}
\date{\today}


\dedicatory{}

\begin{abstract}
	The purpose of this paper is to extend the explicit geometric 
evaluation of semisimple orbital integrals for smooth kernels for the 
Casimir operator obtained by the first author to the case of kernels for arbitrary elements in the 
center of the enveloping algebra. 
 \end{abstract}
\maketitle
\tableofcontents
\section{Introduction}%
\label{sec:intro}
In \cite[Chapter 6]{Bismut08b}, one of us established a geometric 
formula for the semisimple orbital integrals  of smooth kernels associated with the Casimir.  The purpose of 
this paper is to extend this formula to the smooth kernels where more 
general elements of 
the center of the enveloping algebra also appear. 

Let us briefly describe our main 
result in more detail. 
Let $G$ be a connected real reductive group, and let $\mathfrak g$ be 
its Lie algebra. Let $\theta\in  \mathrm{Aut}\left(G\right)$ be a 
Cartan involution, and let $K \subset G$ be the corresponding maximal 
compact subgroup with Lie algebra $\mathfrak k$. Let $ \mathfrak g= 
\mathfrak p \oplus \mathfrak k$ be the associated Cartan splitting.  
Let $B$ be a symmetric nondegenerate bilinear form  on $\mathfrak g$ 
which is  $G$ and $\theta$ invariant, positive on 
$\mathfrak p$ and negative on $\mathfrak k$.  Let $X=G/K$ be the associated symmetric space, a 
Riemannian manifold with parallel nonpositive curvature. 

Let $\rho^{E}:K\to U\left(E\right)$ be a finite-dimensional unitary 
representation of $K$, and let $F=G\times_{K}E $ be the corresponding 
vector bundle on $X$. Then $G$ acts on the left on $C^{\infty 
}\left(X,F\right)$. Let $U\left(\mathfrak g\right)$ be the enveloping 
algebra of $\mathfrak g$, and let $Z\left(\mathfrak g\right)$ be the 
center of $U\left(\mathfrak g\right)$. Then $Z\left(\mathfrak 
g\right)$ acts on $C^{\infty }\left(X,F\right)$ and its action 
commutes with the left action of $G$. Among the elements of 
$Z\left(\mathfrak g\right)$, there is the Casimir $C^{\mathfrak g}$, 
whose action on $C^{\infty }\left(X,F\right)$ is denoted 
$C^{\mathfrak g,X}$.  

Let 
\index{SR@$\mathcal{S}^{\even}\left(\R\right)$}%
$\mathcal{S}^{\even}\left(\R\right)$ denote the even real 
functions on $\R$ that lie in the Schwartz space 
\index{SR@$\mathcal{S}\left(\R\right)$}%
$\mathcal{S}\left(\R\right)$. Let $\mu\in \mathcal{S}^{\mathrm{even}}\left(\R\right)$ be such that if 
$\widehat{\mu}\in \mathcal{S}^{\mathrm{even}}\left(\R\right)$ is its Fourier transform, 
there is $C>0$, and for any $k\in \N$, there is $c_{k}>0$ such that
\begin{equation}\label{eq:arc1}
\left\vert  \widehat{\mu}^{(k)}\left(y\right)\right\vert\le 
c_{k}\exp\left(-Cy^{2}\right).
\end{equation}
If $A\in \R$, $\mu\left(\sqrt{C^{\mathfrak g,X}+A}\right)$ is a 
well-defined operator with a smooth kernel.

If $\gamma\in G$ is semisimple, as explained in \cite[Section 6.2]{Bismut08b}, 
the orbital integral 
$\Tr^{\left[\gamma\right]}\left[\mu\left(\sqrt{C^{\mathfrak 
g,X}+A}\right)\right]$ is well-defined, and it only depends on the 
conjugacy class of $\gamma$ in $G$. After conjugation, we can write 
$\gamma$ in the form $\gamma=e^{a}k^{-1},a\in \mathfrak p, k\in 
K,\Ad\left(k^{-1}\right)a=a$. If $Z\left(\gamma\right)\subset G$ is the 
centralizer of $\gamma$ with Lie algebra $\mathfrak 
z\left(\gamma\right)$, then $\theta$ acts on $Z\left(\gamma\right)$, 
and $Z\left(\gamma\right)$ is a   possibly nonconnected reductive 
group. Let $\mathfrak z\left(\gamma\right)= \mathfrak 
p\left(\gamma\right) \oplus \mathfrak k\left(\gamma\right)$ be the 
associated Cartan splitting. 

Let 
\index{Ig@$I\ac\left(\mathfrak g\right)$}%
$I\ac\left(\mathfrak g\right)$ be the algebra of invariant 
polynomials on $\mathfrak g^{*}$, and let
\index{tD@$\tau_{\mathrm{D}}$}%
$\tau_{\mathrm{D}}:  
I\ac\left(\mathfrak g\right) \simeq Z\left(\mathfrak g\right)$ denote the Duflo 
isomorphism \cite[Théorème V.2]{Duflo70}. If $\mathfrak h \subset \mathfrak g$ is 
 a Cartan subalgebra, let $I\ac\left(\mathfrak h,\mathfrak 
g\right)$ denote the algebra of polynomials on $\mathfrak h^{*}$ that 
are invariant under the corresponding algebraic Weyl 
group, so that 
we have the canonical identification $I\ac\left(\mathfrak g\right) 
\simeq I\ac\left(\mathfrak h, \mathfrak g\right)$\footnote{This 
isomorphism is usually written in its complex version 
$I\ac\left(\mathfrak g_{\C}\right) \simeq I\ac\left(\mathfrak h_{\C}, 
\mathfrak g_{\C}\right)$. 
In Subsections \ref{subsec:reglie} and \ref{subsec:refo},  the corresponding real version is 
derived. Such considerations  will also apply to other complex 
isomorphisms.}.
There is a Harish-Chandra isomorphism 
\index{fHC@$\phi_{\mathrm{HC }}$}%
$\phi_{\mathrm{HC}}: Z\left(\mathfrak g\right) \simeq I\ac\left(\mathfrak h, \mathfrak g\right)$. By 
\cite[Lemme V.1]{Duflo70}, the Duflo and 
Harish-Chandra isomorphisms are known to be compatible.

There is a canonical projection $\mathfrak g\to \mathfrak 
z\left(\gamma\right)$\footnote{This projection is defined in 
Subsection \ref{subsec:moha}.}, that induces a corresponding projection
  $I\ac\left(\mathfrak g\right)\to 
I\ac\left(\mathfrak z\left(\gamma\right)\right)$. If $L\in Z\left(\mathfrak g\right)$, 
let 
\index{Lzg@$L^{\mathfrak z\left(\gamma\right)}$}%
$L^{\mathfrak z\left(\gamma\right)}$ denote the differential 
operator on $\mathfrak z\left(\gamma\right)$ canonically associated 
with the projection of $\tau_{\mathrm{D}}^{-1}L$ on $I\ac\left(\mathfrak 
z\left(\gamma\right)\right)$. In particular, up to a constant,  $-\left(C^{\mathfrak 
g} \right) ^{\mathfrak z\left(\gamma\right)}$ extends to the   standard 
Laplacian on the Euclidean vector space 
\index{zig@$\mathfrak z_{i}\left(\gamma\right)$}%
$\mathfrak z_{i}\left(\gamma\right)= \mathfrak p\left(\gamma\right) \oplus i 
\mathfrak k\left(\gamma\right)$.

Following \cite[Chapter 5]{Bismut08b}, in Definition \ref{def:Jg}, we define a smooth function 
$\mathcal{J}_{\gamma}: i \mathfrak k\left(\gamma\right)\to \C$. Let 
$\delta_{a}$ be the Dirac mass at $a\in \mathfrak p\left(\gamma\right)$. Then 
$$\mathcal{J}_{\gamma}\left(\Yok\right)\Tr^{E}\left[\rho^{E}\left(k^{-1}e^{-\Yok}\right)\right] \otimes \delta_{a}$$
is a distribution on $\mathfrak z_{i}\left(\gamma\right)$. \footnote{In 
the sequel, $\otimes$ will be omitted.}

Our main result, which is repeated in the text as Theorem 
\ref{thm:Tgenorbis} is as follows.
\begin{theorem}\label{thm:Tgenorbiso}
The following identity holds:
	\begin{multline}\label{eq:diff2biso}
\Tr^{\left[\gamma\right]}\left[L\mu\left(\sqrt{C^{\mathfrak g,X}+A}\right)\right]\\
=L^{\mathfrak z\left(\gamma\right)}
\mu\left(\sqrt{\left( C^{\mathfrak g}\right)^{\mathfrak 
z\left(\gamma\right)}+A}\right)  \left[
\mathcal{J}_{\gamma}\left(\Yok\right)\Tr^{E}\left[\rho^{E}\left(k^{-1}e^{-\Yok}\right)\right]\delta_{a}\right]\left(0\right).
\end{multline}
\end{theorem}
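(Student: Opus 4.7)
The case $L=1$ is precisely the theorem of \cite[Chapter~6]{Bismut08b}, where $L^{\mathfrak z\left(\gamma\right)}=1$ and \eqref{eq:diff2biso} reduces to the known geometric formula for $\Tr^{\left[\gamma\right]}\left[\mu\left(\sqrt{C^{\mathfrak g,X}+A}\right)\right]$. My plan is to bootstrap from this case by tracking how left-multiplication by a general $L\in Z\left(\mathfrak g\right)$ transforms both sides of \eqref{eq:diff2biso}.

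First, writing $L=\tau_{\mathrm{D}}\left(P\right)$ with $P\in I\ac\left(\mathfrak g\right)$ and using linearity in $L$, one can reduce to a convenient spanning family of $P$'s, for instance those coming via $\phi_{\mathrm{HC}}$ from Chevalley generators on a Cartan subalgebra. The operator $L$ acts on $C^{\infty}\left(X,F\right)$ as a $G$-invariant differential operator, and I would analyze its effect on the smooth kernel of $\mu\left(\sqrt{C^{\mathfrak g,X}+A}\right)$ near the fixed-point submanifold $X^{\gamma}\subset X$. Using an exponential chart adapted to the orthogonal splitting $\mathfrak g=\mathfrak z\left(\gamma\right)\oplus\mathfrak z\left(\gamma\right)^{\perp}$, I would decompose the symbol of $L$ into a tangential and a transverse part. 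The orbital integral is an integral over the transverse directions (essentially $Z\left(\gamma\right)\backslash G$), controlled by the Gaussian decay \eqref{eq:arc1}, and should average out the transverse derivatives into polynomial factors absorbed by the function $\mathcal{J}_{\gamma}$.

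The technical heart is to identify the effective tangential operator emerging from this process with $L^{\mathfrak z\left(\gamma\right)}$. Equivalently, one has to establish a functoriality statement: taking orbital integrals intertwines the action of $L$ on $X$ with the action on $\mathfrak z_{i}\left(\gamma\right)$ of the image of $L$ under the composition $Z\left(\mathfrak g\right)\stackrel{\tau_{\mathrm{D}}^{-1}}{\longrightarrow}I\ac\left(\mathfrak g\right)\longrightarrow I\ac\left(\mathfrak z\left(\gamma\right)\right)\stackrel{\tau_{\mathrm{D}}}{\longrightarrow}Z\left(\mathfrak z\left(\gamma\right)\right)$. On the Euclidean model $\mathfrak z_{i}\left(\gamma\right)$, the resulting $L^{\mathfrak z\left(\gamma\right)}$ has constant coefficients and hence commutes with $\mu\left(\sqrt{\left(C^{\mathfrak g}\right)^{\mathfrak z\left(\gamma\right)}+A}\right)$, so it can be pulled outside the spectral calculus and applied directly to the distribution $\mathcal{J}_{\gamma}\left(\Yok\right)\Tr^{E}\left[\rho^{E}\left(k^{-1}e^{-\Yok}\right)\right]\delta_{a}$, yielding the right-hand side of \eqref{eq:diff2biso}.

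\textbf{Main obstacle.} The delicate point is the compatibility of the Duflo isomorphism with the orthogonal projection $\mathfrak g\to\mathfrak z\left(\gamma\right)$. Because $\tau_{\mathrm{D}}$ is symmetrization twisted by the Kirillov factor $\det^{1/2}\left(\sinh(\ad/2)/(\ad/2)\right)$, showing that the transverse data from $\mathfrak z\left(\gamma\right)^{\perp}$ conspire exactly with the function $\mathcal{J}_{\gamma}$ already present on the right-hand side is the crux of the argument. I expect this to rely on \cite[Lemme V.1]{Duflo70}, which identifies $\tau_{\mathrm{D}}$ with $\phi_{\mathrm{HC}}$ up to the $\widehat{A}$-like correction, together with the definition of $\mathcal{J}_{\gamma}$ in Definition~\ref{def:Jg}, which is precisely built from transverse Kirillov-type data along $X^{\gamma}$.
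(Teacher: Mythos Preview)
Your proposal takes a genuinely different route from the paper, and the central step you describe has a real gap.

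\textbf{What the paper does.} The paper never decomposes $L$ into tangential and transverse parts along $X\left(\gamma\right)$. Instead it proceeds in two stages. First, for $\gamma\in H^{\mathrm{reg}}$ regular, it uses Harish-Chandra's identity $\Tr^{\left[\gamma\right]}\left[LQ\right]=\left(\sigma L\right)_{\gamma}\Tr^{\left[\gamma\right]}\left[Q\right]$ (Proposition~\ref{prop:pdifre}), which expresses the effect of $L$ on the orbital integral as a differential operator in the \emph{parameter} $\gamma$, together with the formula $rLf=D_{H}^{-1}\left(\phi_{\mathrm{HC}}L\right)D_{H}rf$ (equation~\eqref{eq:inva17}). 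Combined with the $L=1$ case from \cite{Bismut08b} and the smoothness of $\left(\gamma,h_{\mathfrak k}\right)\mapsto\mathcal{J}_{\gamma}\left(h_{\mathfrak k}\right)$ on $H^{\mathrm{reg}}\times i\mathfrak h_{\mathfrak k}$ (Theorem~\ref{thm:caregbi}), this yields the regular case (Theorem~\ref{thm:Tgenorb}). Second, for general $\gamma$, it applies Harish-Chandra's limit theorem \cite[Lemma~28]{Harish66}, which says that $\overline{\pi}^{\mathfrak h,\mathfrak z\left(\gamma\right)}\left[D_{H}\left(\gamma'\right)\Tr^{\left[\gamma'\right]}\left[\cdot\right]\right]\to c_{\gamma}\Tr^{\left[\gamma\right]}\left[\cdot\right]$ as $\gamma'\to\gamma$ through regular elements. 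The bulk of the technical work (Section~\ref{sec:rojg}) is a delicate root-system analysis of how $\pi^{\mathfrak h,\mathfrak z\left(\gamma\right)}D_{H}\left(\gamma'\right)\mathcal{J}_{\gamma'}$ behaves in this limit, combined with Rossmann's formula to pass from $\mathfrak h$ to $\mathfrak z\left(\gamma\right)$. The constant $c_{\gamma}$ is then pinned down by comparison with the $L=1$ case.

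\textbf{The gap in your approach.} Your plan hinges on the assertion that ``the transverse derivatives should average out into polynomial factors absorbed by $\mathcal{J}_{\gamma}$'', but you give no mechanism for this. The orbital integral \eqref{eq:fina6a1} is an integral over $\mathfrak p^{\perp}\left(\gamma\right)$ of a kernel on the curved space $X$, with the nontrivial Jacobian $r\left(f\right)$; the operator $L$ acts on this kernel as a differential operator on $X$, not as a constant-coefficient operator on the flat model. There is no evident symbol decomposition of $L$ adapted to the splitting $\mathfrak g=\mathfrak z\left(\gamma\right)\oplus\mathfrak z^{\perp}\left(\gamma\right)$ that would make the transverse integration produce exactly $L^{\mathfrak z\left(\gamma\right)}$ times the same $\mathcal{J}_{\gamma}$-weighted integrand. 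The Duflo compatibility you cite (\cite[Lemme~V.1]{Duflo70}) relates $\tau_{\mathrm{D}}$ to $\phi_{\mathrm{HC}}$ on a Cartan subalgebra, not to the projection onto a general centralizer $\mathfrak z\left(\gamma\right)$; the paper uses it only to define $L^{\mathfrak z\left(\gamma\right)}$ (Definition~\ref{def:duzg}), not to transfer $L$ across the orbital integral. What actually does that transfer is Harish-Chandra's relation \eqref{eq:inva17}, which is why the paper must first restrict to regular $\gamma$ and then take a limit.
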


When $L=1$, our theorem was already established in 
\cite[Theorem 6.2.2]{Bismut08b}.

The proofs in \cite{Bismut08b} used a construction of a new object, 
the hypoelliptic Laplacian. Here, we will only need the results of 
\cite{Bismut08b}. 

Our proof is done in two steps. In a first step, using the 
results of \cite{Bismut08b}, we  prove Theorem 
\ref{thm:Tgenorbiso} when $\gamma\in G$ is regular. In this case, 
using the properties of the Harish-Chandra isomorphism 
\cite{Harish66}, the proof is 
relatively easy. 

When $\gamma$ is nonregular, we combine our result for $\gamma$ 
regular with limit arguments due to Harish-Chandra on the behavior of 
orbital integrals when $\gamma'$ regular converges to $\gamma$. 
In both steps, remarkable and nontrivial properties of the function 
$\mathcal{J}_{\gamma}$ are used. One of these  properties is that 
one essential piece of $\mathcal{J}_{\gamma}$ can be calculated
only in terms of  imaginary roots. 

This paper is organized as follows. In Section \ref{sec:geofo}, we 
describe the geometric setting, and we explain the formula for the 
semisimple orbital integrals that was obtained in \cite{Bismut08b}. 

In Section \ref{sec:geofori}, we recall some of the properties of Cartan 
subalgebras, Cartan subgroups, and of the corresponding root systems.

In Section \ref{sec:rofu}, we express the restriction of the function 
$\mathcal{J}_{\gamma}$ to  Cartan subalgebras in terms of a positive root 
system. 

In Section \ref{sec:reg}, we specialize the results of the previous 
section to the case where $\gamma$ is regular. We prove a crucial and unexpected 
smooth dependence of $\mathcal{J}_{\gamma}$ on $\gamma$.

In Section \ref{sec:haricha}, we explain in some detail the 
Harish-Chandra isomorphism.

In Section \ref{sec:cenreg}, we establish Theorem \ref{thm:Tgenorbiso} 
when $\gamma$ is regular. 

In Section \ref{sec:rojg},  when $\gamma$ is non necessarily 
regular, we study the limit of $\mathcal{J}_{\gamma'}$,  and  the limit of 
our formula for regular orbital integrals  as $\gamma'$ 
regular converges to $\gamma$ in a suitable sense. 

In Section \ref{sec:senUg}, using the results of the 
previous section, we establish Theorem \ref{thm:Tgenorbiso} in full 
generality.

Finally, in Section \ref{sec:index}, we prove that our formula is 
compatible to the index theory for Dirac operators, and also with 
known results on Dirac cohomology \cite{HuangPan02}.
 
\section{Geometric formulas for orbital integrals and the Casimir}%
\label{sec:geofo}
In this Section, we explain the geometric formula 
given in \cite[Chapter 6]{Bismut08b} for the 
semisimple orbital integrals associated with the proper smooth 
kernels for the Casimir.

This section is organized as follows. In Subsection 
\ref{subsec:regr}, we introduce the real reductive group $G$, its 
maximal compact subgroup $K$,  the Lie 
algebras $\mathfrak g, \mathfrak k$,   and the 
symmetric space $X=G/K$. 

In Subsection \ref{subsec:sesidis}, we recall the definition of 
semisimple elements in $G$, and  of the corresponding displacement 
function.

In Subsection \ref{subsec:cas}, we introduce the enveloping algebra 
$U\left(\mathfrak g\right)$, and the Casimir element $C^{\mathfrak g}\in 
U\left(\mathfrak g\right)$.

In Subsection \ref{subsec:symsp}, given a unitary representation of 
$K$, we construct the corresponding vector bundle $F$ on $X$, and the elliptic operator 
$C^{\mathfrak g,X}$   which is 
just the action of $C^{\mathfrak g}$ on $C^{\infty }\left(X,F\right)$. 

In Subsection \ref{subsec:wake}, given  $\mu\in 
\mathcal{S}^{\mathrm{even}}\left(\R\right)$ such that its Fourier transform has the 
proper Gaussian decay,  if $A\in \R$, we recall the definition of the  semisimple orbital 
integrals associated with the smooth kernel for 
$\mu\left(\sqrt{C^{\mathfrak g,X}+A}\right)$. Among these kernels, 
there is the heat kernel for $C^{\mathfrak g,X}$. 

In Subsection \ref{subsec:fuJg}, if $\gamma\in G$ is semisimple, if 
$Z\left(\gamma\right)\subset G$ is its centralizer with Lie algebra 
$\mathfrak z\left(\gamma\right)$,  if $\mathfrak k\left(\gamma\right)$ 
is the compact part of $\mathfrak z\left(\gamma\right)$,  we recall the definition of the 
function $\mathcal{J}_{\gamma}$ on $ i \mathfrak 
k\left(\gamma\right)$ given  in \cite[Theorem 5.5.1]{Bismut08b}. 

In Subsection \ref{subsec:propjg}, we study the behavior of 
$\mathcal{J}_{\gamma}$
 when replacing by $\gamma$ by $\gamma^{-1}$, 
and also by complex conjugation. 

Finally, in  Subsection \ref{subsec:geoorb}, we state the  geometric 
formula obtained in \cite{Bismut08b} for the above
orbital integrals, in which the function $\mathcal{J}_{\gamma}$ plays 
a key role.
\subsection{Reductive groups and symmetric spaces}%
\label{subsec:regr}
Let $G$ be a connected reductive real Lie group, and let 
\index{g@$\mathfrak g$}%
$\mathfrak 
g$ be its Lie algebra.  Let 
\index{t@$\theta$}%
$\theta\in \Aut\left(G\right)$ be a 
Cartan involution. Then  $\theta$ acts as an automorphism  of 
$\mathfrak g$.  Let
\index{K@$K$}%
$K \subset G$ be the fixed point set of $\theta$. Then $K$ is a 
compact connected subgroup of $G$, which is a maximal compact 
subgroup.  
If 
\index{k@$\mathfrak k$}%
$\mathfrak k \subset \mathfrak g$ is the Lie algebra of 
$K$, then
$\mathfrak k$ is the  fixed point set of $\theta$ in 
$\mathfrak g$. Let 
\index{p@$\mathfrak p$}%
$\mathfrak p 
\subset \mathfrak g$ be the eigenspace of $\theta$ corresponding to 
the eigenvalue $-1$, so that we have the Cartan decomposition
\begin{equation}\label{eq:cent1}
\mathfrak g= \mathfrak p \oplus \mathfrak k.
\end{equation}
Put
\begin{align}\label{eq:inva1}
&m=\dim \mathfrak p, &n=\dim \mathfrak k,
\end{align}
so that
\begin{equation}\label{eq:inva2}
\dim \mathfrak g=m+n.
\end{equation}

Let 
\index{B@$B$}%
$B$ be a $G$ and $\theta$ invariant 
bilinear symmetric nondegenerate  form on $\mathfrak g$. Then 
(\ref{eq:cent1}) is a $B$-orthogonal splitting. We assume that $B$ is positive on 
$\mathfrak p$ and negative on $\mathfrak k$. Let $\left\langle  \,\right\rangle=-B\left(\cdot,\theta\cdot\right)$ be the corresponding 
scalar product on $\mathfrak g$. Let $B^{*}$ be the  bilinear  
symmetric  form on $\mathfrak g^{*}= \mathfrak p^{*} \oplus \mathfrak 
k^{*}$ which is dual to $B$.

Let $\omega^{\mathfrak g}$ be the canonical left-invariant $1$-form on 
$G$ with values in $\mathfrak g$. By (\ref{eq:cent1}), 
$\omega^{\mathfrak g}$ splits as 
\begin{equation}\label{eq:inva2a1}
\omega^{\mathfrak g}=\omega^{\mathfrak p} + \omega^{\mathfrak k}. 
\end{equation}

Let 
\index{X@$X$}%
$X=G/K$ be the corresponding symmetric space. Then $p:G\to X=G/K$ 
is a $K$-principal bundle, and $\omega^{\mathfrak k}$ is a 
connection form. Also the tangent bundle 
$TX$ is given by 
\begin{equation}\label{eq:inva2a2}
TX =G\times_{K}\mathfrak p.
\end{equation}
Then $TX$ is equipped with the scalar product $\left\langle  
\,\right\rangle$ induced by $B$, so that $X$ is a Riemannian 
manifold. The connection $\n^{TX}$ on $TX$ which is induced by 
$\omega^{\mathfrak k}$ is the Levi-Civita connection of $TX$, and its 
curvature is parallel and nonpositive.  Also $G$  acts isometrically  on the 
left on $X$, and $\theta$ acts as an isometry of 
$X$.

By \cite[Proposition 1.2]{Knapp86}, any element $\gamma\in G$ factorizes uniquely in the form 
\begin{align}\label{eq:fac1}
&\gamma=e^{a}k^{-1}, &a\in  \mathfrak p, \qquad k\in K.
\end{align}

If $\gamma,g\in G$, set
\begin{equation}\label{eq:inva1a1}
C\left(g\right)\gamma=g\gamma g^{-1}.
\end{equation}
Then $C\left(g\right)$ is an automorphism of $G$. Its derivative at 
the identity is 
 the adjoint representation $g\in G\to \Ad\left(g\right)\in 
\Aut\left(\mathfrak g\right)$. The derivative of this last map is 
given by $a\in \mathfrak g\to \mathrm{ad}\left(a\right)\in 
\End\left(\mathfrak g\right)$, with 
$\mathrm{ad}\left(a\right)b=\left[a,b\right]$. If $\gamma\in G$, the fixed point set 
of $C\left(\gamma\right)$ is the centralizer 
\index{Zg@$Z\left(\gamma\right)$}%
$Z\left(\gamma\right) 
\subset G$, whose Lie algebra $\mathfrak z\left(\gamma\right)$ is 
given by
\begin{equation}\label{eq:inva1a2}
\mathfrak z\left(\gamma\right)=\ker 
\left(1-\Ad\left(\gamma\right)\right).
\end{equation}

If $f\in \mathfrak g$, let $Z\left(f\right) \subset G$ be the 
stabilizer of $f$. Its Lie algebra $\mathfrak z\left(f\right) \subset 
\mathfrak g$ is given by
\begin{equation}\label{eq:inva1a3}
\mathfrak z\left(f\right)=\ker \mathrm{ad}\left(f\right).
\end{equation}

In the sequel, if $M$ is a Lie group, we denote by 
\index{M@$M^{0}$}%
$M^{0}$ the 
connected component of the identity.
\subsection{Semisimple elements and their displacement function}%
\label{subsec:sesidis}
Let $d$ be the Riemannian distance on $X$. By \cite[§ 6.1]{BalGroSchro}, $d$ 
is a convex function on $X\times X$. If $\gamma\in G$, let 
$d_{\gamma}$ be the corresponding displacement function on $X$, i.e., 
\begin{equation}\label{eq:inva2a4}
d_{\gamma}\left(x\right)=d\left(x,\gamma x\right). 
\end{equation}
If $g\in G$, then
\begin{equation}\label{eq:inva2a5}
d_{C\left(g\right)\gamma}\left(gx\right)=d_{\gamma}\left(x\right).
\end{equation}
Moreover,
\begin{equation}\label{eq:inva2a6}
d_{\theta\left(\gamma\right)}\left(\theta 
x\right)=d_{\gamma}\left(x\right).
\end{equation}

Set
\begin{equation}\label{eq:inva2a7}
m_{\gamma}=\inf d_{\gamma}.
\end{equation}
Let $X\left(\gamma\right) \subset X$ be the closed subset where 
$d_{\gamma}$ reaches its minimum. By \cite[p. 78 and  §1.2]{BalGroSchro},  $X\left(\gamma\right)$ is a 
closed convex subset,  $d_{\gamma}$ is smooth on $X\setminus X\left(\gamma\right)$ and has no critical 
points on $X\setminus X\left(\gamma\right)$. 
Also by (\ref{eq:inva2a5}), (\ref{eq:inva2a6}),
\begin{align}\label{eq:inva2a8}
&X\left(C\left(g\right)\gamma\right)=gX\left(\gamma\right),
&X\left(\theta\gamma\right)=\theta X\left(\gamma\right),\\
&m_{C\left(g\right)\gamma}=m_{\gamma}, 
&m_{\theta\left(\gamma\right)}=m_{\gamma}.\nonumber 
\end{align}

By  \cite[Definition 2.19.21]{Eberlein96},  $\gamma$ is said to be semisimple if $X\left(\gamma\right)$ is 
nonempty. If $\gamma$ is semisimple, then $C\left(g\right)\gamma$ and 
$\theta\left(\gamma\right)$ are semisimple. Also $\gamma$ is said to 
be elliptic if it is semisimple and $m_{\gamma}=0$. Elliptic elements 
are exactly the group elements that are conjugate to elements of $K$. 
Finally, $\gamma$ is said to be hyperbolic if it is conjugate to 
$e^{a}, a\in \mathfrak p$. 

By \cite[Proposition 2.1]{Kostant73}, \cite[Theorems 2.19.23 and 2.19.24]{BalGroSchro},  $\gamma\in G$ is semisimple if and only if it factorizes as 
$\gamma=he=eh$, with  commuting  hyperbolic $h$ and  elliptic $e$. 
Also  
$e,h$ are uniquely determined by $\gamma$, and  
\begin{equation}\label{eq:inva2a7x1}
Z\left(\gamma\right)=Z\left(h\right)\cap Z\left(e\right).
\end{equation}

Set
\index{x0@$x_{0}$}%
\begin{equation}\label{eq:gon1}
x_{0}=p1.
\end{equation}
\begin{theorem}\label{TGeod1}
Let $\gamma\in G$ be semisimple. If $g\in G,x=p g\in X$, then 
$x\in X\left(\gamma\right)$ if and only if there exist $a\in \mathfrak p,k\in 
K$ such that $\Ad\left(k\right)a=a$, and 
\begin{equation}
    \gamma=C\left(g\right)\left(e^{a}k^{-1}\right).
    \label{eq:dist3}
\end{equation}
Also $C\left(g\right)e^{a}\in G,C\left(g\right)k\in G$ are uniquely 
determined by $\gamma$. If $g_{t}=ge^{ta}$, then $t\in [0,1]\to 
y_{t}=pg_{t}$
is the unique geodesic connecting $x$ and $\gamma x$. Moreover
\begin{equation}
    m_{\gamma}=\left\vert  a\right\vert.
    \label{eq:dis4}
\end{equation}

If $\gamma\in G$ is semisimple, then $x_{0}\in 
	X\left(\gamma\right)$ if and only if there exist $a\in \mathfrak 
	p, k\in K$ such that
	\begin{align}\label{eq:dis4a}
&\gamma=e^{a}k^{-1}, &a\in \mathfrak p, \qquad \Ad\left(k\right)a=a.
\end{align}
Also $a,k$ are uniquely determined by (\ref{eq:dis4a}). 
\end{theorem}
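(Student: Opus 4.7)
The strategy is to reduce everything to the base point $x_0$ via equivariance, then exploit the Cartan decomposition together with the convexity of the displacement function on the nonpositively curved symmetric space $X$.

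\textbf{Step 1: Equivariance reduction.} By (\ref{eq:inva2a5}) and (\ref{eq:inva2a8}), one has $x = pg \in X(\gamma)$ if and only if $x_0 \in X(C(g^{-1})\gamma)$, and $m_{C(g^{-1})\gamma} = m_\gamma$. Writing $\gamma' = C(g^{-1})\gamma$ and $\gamma' = e^{a}k^{-1}$ in the desired form will be equivalent to $\gamma = C(g)(e^a k^{-1})$. Moreover, the geodesic $t \mapsto p e^{ta}$ joining $x_0$ and $\gamma' x_0$ is translated to $t \mapsto p g e^{ta}$ by left multiplication by $g$, which is an isometry sending $x_0$ to $x$ and $\gamma' x_0$ to $\gamma x$. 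Thus it is enough to prove the entire theorem with $x = x_0$.

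\textbf{Step 2: The "if" direction.} Suppose $\gamma = e^a k^{-1}$ with $\Ad(k)a = a$. Then $e^a$ and $k^{-1}$ commute, so this is the commuting product of a hyperbolic and an elliptic element; by the Kostant decomposition, this is the canonical semisimple splitting. Since $k^{-1}$ fixes $x_0$, we have $\gamma x_0 = p e^{a}$, and the standard fact that $\exp_{x_0} : \mathfrak{p} \to X$ parametrizes geodesics through $x_0$ shows $t \mapsto p e^{ta}$ is the unique geodesic from $x_0$ to $\gamma x_0$, of length $|a|$, so $d_\gamma(x_0) = |a|$. The relation $\Ad(k)a = a$ forces $\gamma (p e^{ta}) = p e^{(t+1)a}$ for all $t \in \R$, so $\gamma$ translates this geodesic by $|a|$, and by convexity of $d_\gamma$ along geodesics this axis consists of minima; hence $m_\gamma = |a|$ and $x_0 \in X(\gamma)$.

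\textbf{Step 3: The "only if" direction.} Conversely, assume $x_0 \in X(\gamma)$. Write $\gamma = e^{a}k^{-1}$ by the Cartan decomposition (\ref{eq:fac1}), with $a \in \mathfrak{p}$, $k \in K$. The geodesic from $x_0$ to $\gamma x_0 = p e^a$ is again $t \mapsto p e^{ta}$, of length $|a|$, so $m_\gamma = |a|$. The key point is to derive $\Ad(k) a = a$ from the minimization. One argument: since $\gamma$ is an isometry, $\gamma x_0 \in X(\gamma)$ as well, and a standard convexity argument on the Hadamard manifold $X$ (if $d_\gamma$ is minimized at both $x$ and $\gamma x$, the concatenated broken geodesic $x_0 \to \gamma x_0 \to \gamma^2 x_0$ must be a genuine geodesic, so $\gamma$ translates the initial geodesic) yields $\gamma (p e^{ta}) = p e^{(t+1)a}$ for all $t$. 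Computing the left side via $k^{-1} e^{ta} = e^{t \Ad(k^{-1})a} k^{-1}$ gives $p(e^a e^{t\Ad(k^{-1})a}) = p e^{(t+1)a}$, and differentiating at $t = 0$ together with the injectivity of $\exp : \mathfrak{p} \to X$ yields $\Ad(k^{-1})a = a$. This is the step I expect to be the main subtlety: showing that at a minimum of the convex displacement function, the isometry must translate the geodesic from $x_0$ to $\gamma x_0$ along an axis.

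\textbf{Step 4: Uniqueness.} The commuting pair $(e^a, k^{-1})$ constructed above is then the canonical Kostant hyperbolic-elliptic decomposition of $\gamma$, which by (\ref{eq:inva2a7x1}) and the cited uniqueness statement is determined by $\gamma$; this determines $e^a$ and $k^{-1}$, hence $a$ (since $\exp: \mathfrak{p} \to \exp \mathfrak{p}$ is a diffeomorphism) and $k$. For the general $x = pg$, the corresponding elements $C(g)e^a$ and $C(g)k$ are then intrinsic to $\gamma$. Finally, uniqueness of the geodesic $t \mapsto p g e^{ta}$ follows from the fact that $X$ is a simply connected manifold of nonpositive curvature, on which any two points are joined by a unique geodesic.
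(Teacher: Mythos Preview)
Your proof is correct and self-contained, whereas the paper does not give an argument at all: it simply cites \cite[Theorem 3.1.2]{Bismut08b} for the first part and observes that the second part is the specialization $g=1$. So there is nothing to compare at the level of ideas---you have supplied a genuine proof where the paper defers to an external reference.

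A couple of minor comments on your write-up. In Step~2, the assertion that an axis lies in the min-set is standard CAT(0) geometry but deserves a one-line justification: the nearest-point projection $\pi$ onto the geodesic line is $1$-Lipschitz and $\gamma$-equivariant, so $d_\gamma(z)\ge d_\gamma(\pi z)=|a|$ for every $z$. In Step~3, your ``broken geodesic must be straight'' claim is the key point; the cleanest way to see it is that the midpoint $m$ of $[x_0,\gamma x_0]$ also lies in $X(\gamma)$ by convexity, and then $d(m,\gamma m)=|a|=d(m,\gamma x_0)+d(\gamma x_0,\gamma m)$ forces $\gamma x_0$ to sit on the segment $[m,\gamma m]$, hence the angle at $\gamma x_0$ is $\pi$. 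Your differentiation at $t=0$ then correctly yields $\Ad(k^{-1})a-a\in\mathfrak p\cap\mathfrak k=0$. Finally, in Step~4 the uniqueness of $a$ uses that $\exp:\mathfrak p\to G$ is injective (indeed a diffeomorphism onto its image), which follows from the global Cartan decomposition; you invoke this implicitly and it is fine.
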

\begin{proof}
	The first part of our theorem was established in \cite[Theorem 
3.1.2]{Bismut08b}.
By taking $g=1$ in the first part, we obtain the second part.
\end{proof}

Let $\gamma\in G$ be a semisimple element written as in (\ref{eq:dis4a}).  
By \cite[Proposition 3.2.8 and eqs. (3.3.4), (3.3.6)]{Bismut08b}, 
\begin{align}\label{eq:inva18a}
	&Z\left(e^{a}\right)=Z\left(a\right),
&Z\left(\gamma\right)=Z\left(a\right)\cap Z\left(k\right),\qquad
\mathfrak z\left(\gamma\right)=\mathfrak z\left(a\right)\cap 
\mathfrak z\left(k\right).
\end{align}

By (\ref{eq:dis4a}), $a\in \mathfrak z\left(\gamma\right)$, and by 
(\ref{eq:inva18a}), $\mathfrak z\left(\gamma\right) \subset \mathfrak 
z\left(a\right)$, so that $a$ is an element of the center of 
$\mathfrak z\left(\gamma\right)$.

Clearly, 
\begin{equation}\label{eq:inva18a2}
\theta\left(\gamma\right)=e^{-a}k^{-1}.
\end{equation}
Therefore $\theta\left(\gamma\right)\in Z\left(\gamma\right)$, so 
that the above centralizers and Lie algebras are preserved by 
$\theta$. Set
\index{Kg@$K\left(\gamma\right)$}%
\begin{equation}\label{eq:inva18a2x1}
K\left(\gamma\right)=Z\left(\gamma\right)\cap K.
\end{equation}
By \cite[Theorem 3.3.1]{Bismut08b}, we have the identity 
\begin{equation}\label{eq:bob-1}
K^{0}\left(\gamma\right)=Z^{0}\left(\gamma\right)\cap K,
\end{equation}
and $K^{0}\left(\gamma\right)$ is a maximal compact subgroup of 
$Z^{0}\left(\gamma\right)$.

Put
\index{pg@$\mathfrak p\left(\gamma\right)$}%
\index{kg@$\mathfrak k\left(\gamma\right)$}%
\begin{align}\label{eq:fina1}
&\mathfrak p\left(\gamma\right)= \mathfrak p\cap \mathfrak 
z\left(\gamma\right), 
&\mathfrak k\left(\gamma\right)=\mathfrak k\cap \mathfrak 
z\left(\gamma\right).
\end{align}
Then 
$\mathfrak k\left(\gamma\right)$ is the Lie algebra of 
$K\left(\gamma\right)$. We use similar notation for the Lie algebras 
$\mathfrak z\left(k\right), \mathfrak z\left(a\right)$.
We have the Cartan decompositions  of Lie algebras, 
\begin{align}\label{eq:inva19}
&\mathfrak z\left(\gamma\right)= \mathfrak p\left(\gamma\right) \oplus 
\mathfrak k\left(\gamma\right),
&\mathfrak z\left(k\right)= \mathfrak p\left(k\right) \oplus 
\mathfrak k\left(k\right),\qquad 
&\mathfrak z\left(a\right)= \mathfrak p\left(a\right) \oplus 
\mathfrak k\left(a\right). 
\end{align}
Then  $B$ restricts to a    nondegenerate form on 
$\mathfrak z\left(\gamma\right), \mathfrak z\left(k\right),  \mathfrak z\left(a\right)$, so that 
$Z\left(\gamma\right), Z\left(k\right), Z\left(a\right)$ are 
possibly nonconnected reductive subgroups of $G$. By \cite[Theorem 
3.3.1]{Bismut08b}, we have the identification of finite groups,
\begin{equation}\label{eq:bonbo1}
Z^{0}\left(\gamma\right)\setminus 
Z\left(\gamma\right)=K^{0}\left(\gamma\right)\setminus 
K\left(\gamma\right).
\end{equation}

Let 
\index{zg@$ \mathfrak 
z^{\perp}\left(\gamma\right)$}%
\index{za@$\mathfrak z^{\perp}\left(a\right)$}%
$ \mathfrak 
z^{\perp}\left(\gamma\right),\mathfrak z^{\perp}\left(a\right)$ be the orthogonal spaces to 
$\mathfrak z\left(\gamma\right),\mathfrak z\left(a\right)$ in 
$\mathfrak g$ with 
respect to $B$. We  have  splittings
\begin{align}\label{eq:inva19z1}
	&\mathfrak z^{\perp}\left(\gamma\right)= \mathfrak p^{\perp}\left(\gamma\right)
\oplus \mathfrak k^{\perp}\left(\gamma\right),
&\mathfrak z^{\perp}\left(a\right)= \mathfrak p^{\perp}\left(a\right) 
\oplus \mathfrak k^{\perp}\left(a\right).
\end{align}
Let 
\index{za@$\mathfrak z^{\perp}_{a}\left(\gamma\right)$}%
$\mathfrak z^{\perp}_{a}\left(\gamma\right)$ denote the 
orthogonal to $\mathfrak z\left(\gamma\right)$ in $\mathfrak 
z\left(a\right)$. We still have a splitting
\begin{equation}\label{eq:inva19z2}
\mathfrak z^{\perp}_{a}\left(\gamma\right)= \mathfrak 
p^{\perp}_{a}\left(\gamma\right) \oplus \mathfrak 
k^{\perp}_{a}\left(\gamma\right).
\end{equation}

Now we recall a result established in \cite[Theorem 3.3.1]{Bismut08b}.
\begin{theorem}\label{Ttotal}
    The set $X\left(\gamma\right)$ is preserved by $\theta$. Moreover, 
\begin{equation}
    X\left(\gamma\right)=X\left(e^{a}\right)\cap X\left(k\right).
    \label{eq:dist14}
\end{equation}
Also $X\left(\gamma\right)$ is a totally geodesic submanifold of $X$. In the 
geodesic coordinate system centered at $x_{0}=p1$, then
\begin{equation}\label{eq:last8x1}
X\left(\gamma\right)= \mathfrak p\left(\gamma\right).
\end{equation}

The actions of $Z^{0}
\left(\gamma\right), Z\left(\gamma\right)$ on $X\left(\gamma\right)$ 
are transitive. More precisely the maps $g\in Z^{0}\left(\gamma\right)
\to p g\in X ,g\in Z\left(\gamma\right)\to pg\in X$ induce the identification of 
$Z^{0}\left(\gamma\right)$-manifolds,
\begin{equation}
   X\left(\gamma\right) = Z^{0}\left(\gamma\right)/K^{0}\left(\gamma\right)=Z\left(\gamma\right)/K\left(\gamma\right).
    \label{eq:dist13}
\end{equation}
\end{theorem}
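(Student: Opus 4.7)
The plan is to fix the base point $x_{0}=p1$, which lies in $X(\gamma)$ by the second part of Theorem \ref{TGeod1}, and to identify $X(\gamma)$ with the $Z(\gamma)$-orbit through $x_{0}$. Once this identification is in place, all the assertions of the theorem become easy consequences of the structure of the reductive subgroup $Z(\gamma)$.

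First I would set up the orbit picture. For any $z\in Z(\gamma)$ one has $C(z)\gamma=\gamma$, so (\ref{eq:inva2a8}) gives $zX(\gamma)=X(\gamma)$; thus $Z(\gamma)$ preserves $X(\gamma)$. Combined with $x_{0}\in X(\gamma)$ and the identity (\ref{eq:bob-1}) for the stabilizer of $x_{0}$ in $Z^{0}(\gamma)$, this produces injections
\begin{equation*}
Z^{0}(\gamma)/K^{0}(\gamma)\hookrightarrow X(\gamma),\qquad Z(\gamma)/K(\gamma)\hookrightarrow X(\gamma).
\end{equation*}
The heart of the argument is the reverse inclusion, i.e., transitivity of the $Z^{0}(\gamma)$-action on $X(\gamma)$. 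Given $x=pg\in X(\gamma)$, the first part of Theorem \ref{TGeod1} produces $a'\in\mathfrak p$ and $k'\in K$ with $\Ad(k')a'=a'$ and $\gamma=C(g)(e^{a'}k'^{-1})$; the uniqueness clause of that theorem, compared with the factorization $\gamma=e^{a}k^{-1}$ obtained by taking the trivial $g=1$, forces $C(g)e^{a'}=e^{a}$ and $C(g)k'=k$. Writing $g=e^{v}h$ with $v\in\mathfrak p$ and $h\in K$ by the polar decomposition, I would unfold these two identities using the centrality of $a$ in $\mathfrak z(\gamma)$ and the uniqueness of the polar decomposition to deduce that $v\in\mathfrak p(\gamma)$ and $h$ normalizes the picture modulo $K(\gamma)$, so that $g\in Z(\gamma)K$ and hence $x\in Z(\gamma)x_{0}$.

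Granted the identifications $X(\gamma)=Z(\gamma)x_{0}=Z(\gamma)/K(\gamma)$, the remaining claims follow. The $\theta$-invariance is immediate from $\theta(Z(\gamma))=Z(\gamma)$, already noted around (\ref{eq:inva18a2}), together with $\theta x_{0}=x_{0}$. The decomposition $X(\gamma)=X(e^{a})\cap X(k)$ follows by applying the same orbit description to $e^{a}$, $k$ and $\gamma$ and invoking the identity $Z(\gamma)=Z(e^{a})\cap Z(k)$ of (\ref{eq:inva18a}); the inclusion $X(\gamma)\subseteq X(e^{a})\cap X(k)$ is the easy direction, and the reverse inclusion uses that on $X(k)$ the element $k$ has $0$ displacement so that $\gamma x=e^{a}x$ and $d_{\gamma}=d_{e^{a}}$ attains its minimum $|a|=m_{\gamma}$. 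The identification $X(\gamma)=\mathfrak p(\gamma)$ in geodesic coordinates at $x_{0}$ results from the Cartan decomposition $Z^{0}(\gamma)=\exp(\mathfrak p(\gamma))K^{0}(\gamma)$ composed with the orbit map $g\mapsto pg$. Finally the totally geodesic property follows from the orbit description together with the standard fact that a $\theta$-stable reductive subgroup $Z^{0}(\gamma)\subseteq G$ yields, via $Z^{0}(\gamma)/K^{0}(\gamma)\hookrightarrow G/K$, a totally geodesic submanifold of $X$.

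The main obstacle is the transitivity step: turning the uniqueness assertion of Theorem \ref{TGeod1} into the explicit statement that for every $x=pg\in X(\gamma)$ the polar factor $g=e^{v}h$ satisfies $v\in\mathfrak p(\gamma)$ and $h\in K(\gamma)\cdot K$. This requires careful bookkeeping with the polar decomposition of $G$, with the fact that $a$ lies in the center of $\mathfrak z(\gamma)$ (so that $e^{v}$ commutes with $e^{a}$ whenever $v\in\mathfrak p(\gamma)$), and with the $\theta$-invariance of $Z(\gamma)$. Everything else in the theorem reduces quickly to this single geometric transitivity fact.
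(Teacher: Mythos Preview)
The paper does not prove this theorem at all: it is introduced with the sentence ``Now we recall a result established in \cite[Theorem 3.3.1]{Bismut08b}'' and no argument is given. So there is no in-paper proof to compare your proposal against.

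That said, your strategy is the standard one and is essentially how the result is obtained in the cited reference. The step you flag as the main obstacle---transitivity of $Z^{0}(\gamma)$ on $X(\gamma)$---can be completed cleanly as follows. With $x=pg$ and $g=e^{v}h$, $v\in\mathfrak p$, $h\in K$, one has $x=pe^{v}$. From $C(g)k'=k$ one gets $e^{-v}ke^{v}\in K$; applying $\theta$ and using $\theta e^{\pm v}=e^{\mp v}$ gives $e^{2v}k=ke^{2v}$, hence $e^{2\Ad(k)v}=e^{2v}$, and injectivity of $\exp$ on $\mathfrak p$ yields $v\in\mathfrak p(k)$. From $C(g)e^{a'}=e^{a}$ one gets $e^{-v}e^{a}e^{v}=e^{a''}$ with $a''=\Ad(h)a'\in\mathfrak p$; applying $\theta$ and inverting gives $e^{v}e^{a}e^{-v}=e^{a''}$ as well, so $e^{2v}$ commutes with $e^{a}$, i.e.\ $e^{2v}\in Z(e^{a})=Z(a)$ by (\ref{eq:inva18a}). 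Since $\ad(v)$ is symmetric for $\langle\,\cdot\,,\cdot\,\rangle$ when $v\in\mathfrak p$, the operator $e^{2\ad(v)}$ is positive definite and fixes only $\ker\ad(v)$; thus $a\in\ker\ad(v)$ and $v\in\mathfrak p(a)$. Combining, $v\in\mathfrak p(a)\cap\mathfrak p(k)=\mathfrak p(\gamma)$, so $e^{v}\in Z^{0}(\gamma)$ and $x\in Z^{0}(\gamma)x_{0}$. With this in hand, all the remaining assertions follow exactly as you outline.
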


Now we will establish a simple important consequence of Theorem 
\ref{Ttotal}. 
\begin{theorem}\label{thm:Tincl}
	Let $\gamma$ be a semisimple element of $G$ as in 
	(\ref{eq:dis4a}). Let $\gamma'$ be another semisimple element of $G$ such that
\begin{align}\label{eq:dis4aa}
&\gamma'=e^{a'}k^{\prime -1}, &a'\in \mathfrak p, \qquad 
\Ad\left(k'\right)a'=a'.
\end{align}
Then there exists $g\in G$ such that $\gamma'=C\left(g\right)\gamma$ 
if and only if there exists $k''\in K$ such that 
$C\left(k''\right)\gamma=\gamma'$, in which case
\begin{align}\label{eq:dis4b}
&a'=\Ad\left(k^{\prime \prime }\right)a,&k'=C\left(k^{\prime \prime 
}\right)k.
\end{align}
\end{theorem}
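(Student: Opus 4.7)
The proof plan is to exploit the fact that both $\gamma$ and $\gamma'$, written in the normal form (\ref{eq:dis4a}), (\ref{eq:dis4aa}), have $x_0 = p1$ in their respective minimizing sets by the second part of Theorem \ref{TGeod1}. The reverse implication (conjugation by $k''\in K$ gives conjugation in $G$) is trivial, so the content is the forward direction.

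Assume $\gamma' = C(g)\gamma$ for some $g\in G$. By (\ref{eq:inva2a8}),
\begin{equation*}
X(\gamma') = g\,X(\gamma),
\end{equation*}
and since $x_0\in X(\gamma')$, we get $g^{-1}x_0 \in X(\gamma)$. By Theorem \ref{Ttotal}, $Z(\gamma)$ acts transitively on $X(\gamma)$, so there exists $h\in Z(\gamma)$ with $h\,x_0 = g^{-1}x_0$. Then $gh$ stabilizes $x_0$, hence $k'' := gh \in K$. Consequently $g = k'' h^{-1}$ with $h^{-1}\in Z(\gamma)$, and therefore
\begin{equation*}
\gamma' = C(g)\gamma = C(k'')\,C(h^{-1})\gamma = C(k'')\gamma,
\end{equation*}
which proves $\gamma'$ is $K$-conjugate to $\gamma$.

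To extract (\ref{eq:dis4b}), I compute
\begin{equation*}
C(k'')\gamma = k''\,e^{a}k^{-1}\,k''^{-1} = e^{\mathrm{Ad}(k'')a}\bigl(C(k'')k\bigr)^{-1}.
\end{equation*}
Set $a''=\Ad(k'')a\in\mathfrak p$ and $k'''=C(k'')k\in K$. A short computation using $\Ad(k)a=a$ gives $\Ad(k''')a''=a''$, so $C(k'')\gamma$ is presented in the normal form of Theorem \ref{TGeod1}. By the uniqueness clause in the second part of that theorem applied to $\gamma'$, we conclude $a' = a''=\Ad(k'')a$ and $k' = k''' = C(k'')k$.

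The only subtle point is step three, the use of transitivity of $Z(\gamma)$ on $X(\gamma)$; everything else is bookkeeping of the Cartan-type decompositions. No essentially new ingredient beyond Theorems \ref{TGeod1} and \ref{Ttotal} is required.
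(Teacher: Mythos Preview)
Your proof is correct and follows essentially the same approach as the paper: both use that $x_0$ lies in $X(\gamma)$ and $X(\gamma')$, then invoke the transitivity of the centralizer action from Theorem \ref{Ttotal} to produce an element of $K$. The only cosmetic difference is that the paper applies transitivity of $Z(\gamma')$ on $X(\gamma')$ (moving $gx_0$ to $x_0$), whereas you apply transitivity of $Z(\gamma)$ on $X(\gamma)$ (moving $x_0$ to $g^{-1}x_0$); these are symmetric versions of the same step.
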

\begin{proof}
Assume that $\gamma'=C\left(g\right)\gamma$. By (\ref{eq:inva2a8}), we 
	get
	\begin{equation}\label{eq:dis4c}
X\left(\gamma'\right)=gX\left(\gamma\right).
\end{equation}
By Theorem \ref{TGeod1},  $x_{0}\in 
X\left(\gamma\right)\cap X\left(\gamma'\right)$. By (\ref{eq:dis4c}), 
$gx_{0}\in X\left(\gamma'\right)$. By Theorem \ref{Ttotal}, there 
exists $h\in Z\left(\gamma'\right)$ such that
\begin{equation}\label{eq:dis4d}
hgx_{0}=x_{0}, 
\end{equation}
which is equivalent to 
\begin{equation}\label{eq:dis4da1}
k''=hg\in K.
\end{equation}
Since $h\in Z\left(\gamma'\right)$, we conclude that 
$C\left(k''\right)\gamma=\gamma'$. Using the uniqueness of  
decomposition in (\ref{eq:dis4aa}), equation (\ref{eq:dis4b}) follows. The proof 
of our theorem is completed. 
\end{proof}
\subsection{Enveloping algebra and the Casimir}%
\label{subsec:cas}
We identify $\mathfrak g$ with the Lie algebra of left-invariant 
vector fields on $G$. Let 
\index{Ug@$U\left(\mathfrak g\right)$}%
$U\left(\mathfrak g\right)$ be the enveloping algebra of 
$\mathfrak g$. Then $U\left(\mathfrak g\right)$ 
can be identified with the algebra of left-invariant differential 
operators on $G$. Let $
\index{Zg@$Z\left(\mathfrak g\right)$}%
Z\left(\mathfrak g\right) \subset 
U\left(\mathfrak g\right)$ denote the center of $U\left(\mathfrak 
g\right)$.

If $E$ is a finite-dimensional real or complex 
vector space, and if $\rho^{E}: \mathfrak g\to \End\left(E\right)$ is 
a morphism of Lie algebras, the map $\rho^{E}$ extends to a morphism 
$U\left(\mathfrak g\right)\to \End\left(E\right)$.

Among the elements of $
Z \left(\mathfrak g\right)$, there is the Casimir element 
$C^{\mathfrak g}$. If $e_{1},\ldots,e_{m+n}$ is a basis of $\mathfrak 
g$, and if 
$e^{*}_{1},\ldots,e_{m+n}^{*}$ is the dual basis of $\mathfrak g$ 
with respect to $B$, then \footnote{In \cite[Section 8.3]{Knapp86}, the 
Casimir is defined with the opposite sign. We have adopted the sign 
conventions of \cite{Bismut08b}, which are closer to analysis.}
\begin{equation}\label{eq:inva8a1}
C^{\mathfrak g}=-\sum_{i=1}^{m+n}e_{i}^{*}e_{i}.
\end{equation}
If we consider instead the Lie algebra $\left(\mathfrak k, 
B\vert_{\mathfrak k}\right)$, 
$C^{\mathfrak k}\in Z\left(\mathfrak k\right)$ denotes the associated 
Casimir element.

If $e_{1},\ldots, e_{m}$ is a basis of $\mathfrak p$, and if 
	$e^{*}_{1},\ldots,e^{*}_{m}$ is the dual basis of $\mathfrak p$ 
	with respect to $B\vert_{\mathfrak p}$, set
	\begin{equation}\label{eq:inva14z1}
C^{\mathfrak p}=-\sum_{i=1}^{m}e^{*}_{i}e_{i}.
\end{equation}
Then $C^{\mathfrak p}\in U\left(\mathfrak g\right)$. 
Using (\ref{eq:inva8a1}), (\ref{eq:inva14z1}), we get
\begin{equation}\label{eq:inva14z2}
C^{\mathfrak g}=C^{\mathfrak p}+C^{\mathfrak k}.
\end{equation}
Also $C^{\mathfrak p}$ and $C^{\mathfrak k}$ commute.

If $\rho^{E}: \mathfrak g\to \End\left(E\right)$ is taken as above, 
put
\index{CgE@$C^{\mathfrak g,E}$}%
\begin{equation}\label{eq:inva8a1x1}
C^{\mathfrak g,E}=\rho^{E}\left(C^{\mathfrak g}\right). 
\end{equation}
Under the above conditions, we can define $C^{\mathfrak p, E}, 
C^{\mathfrak k,E}$.

Since $\mathfrak g$ is itself a representation of $\mathfrak g$, 
$C^{\mathfrak g, \mathfrak g}$ is  the action of $C^{\mathfrak 
g}$ on $\mathfrak g$. 
Since $\mathfrak k$ 
acts  on $\mathfrak p, \mathfrak k$, $C^{\mathfrak k, 
\mathfrak p}, C^{\mathfrak k, \mathfrak k}$ are also well-defined. 
\begin{proposition}\label{prop:Pidetr}
	The following identity holds:
	\begin{equation}\label{eq:inva14}
\Tr\left[C^{\mathfrak g, \mathfrak g}\right]=3\Tr\left[C^{\mathfrak 
k, \mathfrak p}\right]+\Tr\left[C^{\mathfrak k, \mathfrak k}\right].
\end{equation}
\end{proposition}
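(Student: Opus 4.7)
The plan is to decompose both sides along the Cartan splitting $\mathfrak g = \mathfrak p \oplus \mathfrak k$ and reduce the claim to a combinatorial cancellation among the structure constants of the Lie bracket.

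By (\ref{eq:inva14z2}), $C^{\mathfrak g} = C^{\mathfrak p} + C^{\mathfrak k}$, so on the adjoint representation one has $C^{\mathfrak g, \mathfrak g} = C^{\mathfrak p, \mathfrak g} + C^{\mathfrak k, \mathfrak g}$. Because $\ad(\mathfrak k)$ preserves both $\mathfrak p$ and $\mathfrak k$, the trace of $C^{\mathfrak k, \mathfrak g}$ on $\mathfrak g$ splits cleanly as $\Tr[C^{\mathfrak k, \mathfrak p}] + \Tr[C^{\mathfrak k, \mathfrak k}]$. It therefore suffices to show
$$\Tr[C^{\mathfrak p, \mathfrak g}] = 2\,\Tr[C^{\mathfrak k, \mathfrak p}].$$

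To handle $\Tr[C^{\mathfrak p,\mathfrak g}]$, I would pick $\{p_i\}$ and $\{k_j\}$ orthonormal for $\langle\,,\,\rangle$ in $\mathfrak p$ and $\mathfrak k$ respectively, so that $p_i^{*} = p_i$ and $k_j^{*} = -k_j$ with respect to $B$. For $p \in \mathfrak p$, $\ad(p)$ exchanges $\mathfrak p$ and $\mathfrak k$; writing $A = \ad(p)|_{\mathfrak p}: \mathfrak p \to \mathfrak k$ and $A' = \ad(p)|_{\mathfrak k}: \mathfrak k \to \mathfrak p$, cyclicity of trace gives $\Tr_{\mathfrak p}[A'A] = \Tr_{\mathfrak k}[AA']$, and hence $\Tr_{\mathfrak g}[\ad(p)^2] = 2\,\Tr_{\mathfrak p}[\ad(p)^2|_{\mathfrak p}]$. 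Combined with the choice of bases, this yields
$$\Tr[C^{\mathfrak p, \mathfrak g}] = -2\sum_i \Tr_{\mathfrak p}[\ad(p_i)^2|_{\mathfrak p}], \qquad \Tr[C^{\mathfrak k, \mathfrak p}] = \sum_j \Tr_{\mathfrak p}[\ad(k_j)^2|_{\mathfrak p}].$$

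The remaining, and essentially the only nontrivial, step is the cancellation
$$\sum_i \Tr_{\mathfrak p}[\ad(p_i)^2|_{\mathfrak p}] + \sum_j \Tr_{\mathfrak p}[\ad(k_j)^2|_{\mathfrak p}] = 0.$$
Setting $\alpha_{ilj} = \langle [p_i, p_l], k_j\rangle$, a direct expansion using $\ad$-invariance of $B$ in the form $B([p_i,[p_i,p_l]],p_l)=-B([p_i,p_l],[p_i,p_l])$ gives $\Tr_{\mathfrak p}[\ad(p_i)^2|_{\mathfrak p}] = \sum_{l,j}\alpha_{ilj}^{2}$. The same invariance in its symmetric form $B([k_j,p_l],p_m)=B(k_j,[p_l,p_m])$ identifies the matrix entries of $\ad(k_j)|_{\mathfrak p}$ with $-\alpha_{lmj}$ and yields $\Tr_{\mathfrak p}[\ad(k_j)^2|_{\mathfrak p}] = -\sum_{l,m}\alpha_{lmj}^{2}$. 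After relabeling the indices the two triple sums match and cancel, proving the reduced identity. The only real subtlety — but also the point where the entire argument rests — is this bookkeeping with $\ad$-invariance to match the two families of structure constants; everything else is cyclicity of trace.
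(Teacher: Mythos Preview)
Your proof is correct and follows essentially the same path as the paper: both reduce to $\Tr[C^{\mathfrak p,\mathfrak g}]=2\,\Tr[C^{\mathfrak k,\mathfrak p}]$ and establish this via the structure-constant identity $\sum_{i,j\in\mathfrak p}\lvert[e_i,e_j]\rvert^{2}=\sum_{i\in\mathfrak p,\,j\in\mathfrak k}\lvert[e_i,e_j]\rvert^{2}$ coming from $\ad$-invariance of $B$. One small remark on economy: your cyclicity step $\Tr_{\mathfrak p}[A'A]=\Tr_{\mathfrak k}[AA']$, unpacked, already says $\sum_{l}\lvert[p_i,p_l]\rvert^{2}=\sum_{j}\lvert[p_i,k_j]\rvert^{2}$, which after summing over $i$ \emph{is} the identity you then re-prove with the $\alpha_{ilj}$; the paper gets there in one pass by writing $\Tr[C^{\mathfrak p,\mathfrak g}]=-\sum_{i\in\mathfrak p,\,j\in\mathfrak g}\lvert[e_i,e_j]\rvert^{2}$ and recognizing each half of the split $j\in\mathfrak p$, $j\in\mathfrak k$ as $\Tr[C^{\mathfrak k,\mathfrak p}]$.
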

\begin{proof}
	By (\ref{eq:inva14z2}), we get
\begin{equation}\label{eq:inva11}
\Tr\left[C^{\mathfrak g,\mathfrak g}\right]=\Tr\left[C^{\mathfrak 
p,\mathfrak g}\right]+\Tr\left[C^{\mathfrak k,\mathfrak g}\right].
\end{equation}

Let $e_{1},\ldots,e_{m}$ be an orthonormal basis of $\mathfrak p$, 
and let $e_{m+1},\ldots,e_{n}$ be an orthonormal basis of $\mathfrak 
k$. Then
\begin{align}\label{eq:inva12ax1}
&\Tr\left[C^{\mathfrak p, \mathfrak g}\right]=-\sum_{\substack{1\le i\le m\\ 1\le 
j\le m+n}}^{}\left\vert  \left[e_{i},e_{j}\right]\right\vert^{2},\\
&\Tr\left[C^{\mathfrak k, \mathfrak p}\right]=
-\sum_{\substack{1\le 
i\le m\\m+1\le j\le m+n}}^{}\left\vert  
\left[e_{i},e_{j}\right]\right\vert^{2} =
-\sum_{1\le 
i,j\le m}^{}\left\vert  
\left[e_{i},e_{j}\right]\right\vert^{2}.
\nonumber 
\end{align}
By (\ref{eq:inva12ax1}), we deduce that
\begin{equation}\label{eq:inva13}
\Tr\left[C^{\mathfrak p, \mathfrak g}\right]=2\Tr\left[C^{\mathfrak 
k, \mathfrak p}\right].
\end{equation}

 Also 
\begin{equation}\label{eq:inva12}
\Tr\left[C^{\mathfrak k,\mathfrak g}\right]=\Tr\left[C^{\mathfrak k, 
\mathfrak p}\right]+\Tr\left[C^{\mathfrak k, \mathfrak k}\right].
\end{equation}
By (\ref{eq:inva11}), (\ref{eq:inva13}), and (\ref{eq:inva12}), we get (\ref{eq:inva14}). The proof of our proposition is completed. 
\end{proof}

Let $\mathfrak h \subset \mathfrak g$ be a $\theta$-stable Cartan subalgebra. 
Then we have the splitting $\mathfrak h= \mathfrak h_{\mathfrak p} 
\oplus \mathfrak h_{\mathfrak k}$. 
\footnote{More details will be given in Section \ref{sec:geofori} on Cartan subalgebras 
and root systems.} Let $R 
\subset \mathfrak h_{\C}^{*}$ be the corresponding root system. 
Let $R_{+}$ be a positive 
root system. If  $R_{-}=-R_{+}$,  $R$ is the disjoint union of 
$R_{+}$ and $R_{-}$. 
 Let 
 \index{rg@$\rho^{\mathfrak g}$}%
 $\rho^{\mathfrak g}\in \mathfrak h^{*}_{\C}$ be  the half sum of the positive roots. 
Here  $\rho^{\mathfrak g}\in \mathfrak h_{\mathfrak p}^{*} \oplus i 
\mathfrak h_{\mathfrak k}^{*}$. By Kostant's strange formula \cite{Kostant76}, we have the identity 
\begin{equation}\label{eq:inva10}
B^{*}\left(\rho^{\mathfrak g},\rho^{\mathfrak 
g}\right)=-\frac{1}{24}\Tr\left[C^{\mathfrak g, \mathfrak g}\right].
\end{equation}
By (\ref{eq:inva14}), (\ref{eq:inva10}), we get
\begin{equation}\label{eq:inva10a}
B^{*}\left(\rho^{\mathfrak g}, \rho^{\mathfrak 
g}\right)=-\frac{1}{8}\Tr\left[C^{\mathfrak k, \mathfrak 
p}\right]-\frac{1}{24}\Tr\left[C^{\mathfrak k, \mathfrak k}\right].
\end{equation}
Using the notation in \cite[eq. (2.6.11)]{Bismut08b} \footnote{The
definition of $\kappa^{\mathfrak g}$ is not needed. The formula is 
given  for later reference.}, by
(\ref{eq:inva10a}), we obtain
\begin{equation}\label{eq:fina-1}
B^{*}\left(\rho^{\mathfrak g},\rho^{\mathfrak 
g}\right)=-\frac{1}{4}B^{*}\left(\kappa^{\mathfrak g}, 
\kappa^{\mathfrak g}\right).
\end{equation}
\subsection{The elliptic operator $C^{\mathfrak g,X}$}%
\label{subsec:symsp}
Let $E$ be a finite-dimensional Hermitian vector space, and let 
$\rho^{E}:K\to U\left(E\right)$ denote a unitary representation of $K$. 
The Casimir $C^{\mathfrak k,E}$ is a self-adjoint nonpositive 
endomorphism of $E$. If $\rho^{E}$ is irreducible, then $C^{\mathfrak k,E}$ 
is a scalar.

Let $F$ be the vector bundle on $X$, 
\begin{equation}\label{eq:inva18}
F=G\times_{K}E.
\end{equation}
Then $F$ is a Hermitian vector bundle on $X$, which is equipped 
with a canonical connection $\n^{F}$. Also $C^{\mathfrak k,E}$ 
descend to a parallel section $C^{\mathfrak k,F}$ of 
$\End\left(F\right)$.  Moreover,  $G$ acts on $C^{ \infty 
}\left(X,F\right)$, so that if $g\in G, s\in C^{\infty 
}\left(X,F\right)$, if $g_{*}$ denotes the lift of the action of $g$ 
to $F$, 
\begin{equation}\label{eq:final2}
gs\left(x\right)=g_{*}s\left(g^{-1}x\right).
\end{equation}

The Casimir operator $C^{\mathfrak g}$  descends 
to a second order elliptic operator
\index{CgX@$C^{\mathfrak g,X}$}%
$C^{\mathfrak g,X}$ acting on $C^{ \infty 
}\left(X,F\right)$, which commutes with $G$. Let 
\index{DX@$\Delta^{X}$}%
$\Delta^{X}$ denote the classical Bochner Laplacian acting on 
$C^{\infty }\left(X,F\right)$. By \cite[eq. (2.13.2)]{Bismut08b}, 
the splitting (\ref{eq:inva14z2}) of $C^{\mathfrak g}$ descends to the 
splitting of $C^{\mathfrak g,X}$, 
\begin{equation}\label{eq:inva10b}
C^{\mathfrak g,X}=-\Delta^{X}+C^{\mathfrak k,F}.
\end{equation}
\subsection{Orbital integrals and the 
Casimir}%
\label{subsec:wake}
Let $\mu\in\mathcal{S}^{\mathrm{even}}\left(\R\right)$, and let $\widehat{\mu}
\in \mathcal{S}^{\mathrm{even}}\left(\R\right)$ be 
its Fourier transform, i.e., 
\begin{equation}
    \widehat{\mu}\left(y\right)=\int_{\R}^{}e^{-2i\pi 
    yx}\mu\left(x\right)dx.
    \label{eq:fourier1}
\end{equation}
We  assume that there exists $C>0$ such that for any $k\in \N$, 
there is $c_{k}>0$ such that
\begin{equation}
    \left\vert  \widehat{\mu}^{(k)}\left(y\right)\right\vert\le 
    c_{k}\exp\left(-Cy^{2}\right).
    \label{eq:fourier2}
\end{equation}
The above condition is verified if $\widehat{\mu}$ has compact 
support. For $t>0$, it is also verified by the Gaussian function 
$e^{-tx^{2}}$.

If $A\in \R$, the operator $\mu\left(\sqrt{C^{\mathfrak g,X}+A}\right)$ is  self-adjoint 
with a smooth kernel
$\mu\left(\sqrt{C^{\mathfrak 
g,X}+A}\right)\left(x,x'\right)$ with respect to the Riemannian 
volume $dx'$ on $X$. As explained in 
\cite[Section 6.2]{Bismut08b}, using finite propagation speed for the 
wave equation, condition (\ref{eq:fourier2}) implies 
that there exist $C>0,c>0$ such that if $x,x'\in X$, then
\begin{equation}\label{eq:ober1}
\left\vert  \mu\left(\sqrt{C^{\mathfrak g,X}+A}\right)\left(x,x'\right)\right\vert
\le Ce^{-cd^{2}\left(x,x'\right)}.
\end{equation}
If $\widehat{\mu}$ has 
compact support, then $\mu\left(\sqrt{C^{\mathfrak 
g,X}+A}\right)\left(x,x'\right)$ vanishes when $d\left(x,x'\right)$ 
is large enough.

As explained in \cite[Section 6.2]{Bismut08b}, the above condition 
guarantees that if $\gamma\in G$ is semisimple, the orbital integral 
$\Tr^{\left[\gamma\right]}\left[\mu\left(\sqrt{C^{\mathfrak 
g,X}+A}\right)\right]$ is well-defined. 
Let us give more details  
on our conventions. 

Let $\gamma\in G$ be taken as in (\ref{eq:dis4a}). Let 
$N_{X\left(\gamma\right)/X}$ be the orthogonal bundle to 
$TX\left(\gamma\right)$ in $TX$. By \cite[eq. (3.4.1)]{Bismut08b}, we 
have the identity
\begin{equation}\label{eq:fina3}
N_{X\left(\gamma\right)/X}=Z^{0}\left(\gamma\right)\times_{K^{0}\left(\gamma\right)} \mathfrak p^{\perp}\left(\gamma\right).
\end{equation}
Let $\mathcal{N}_{X\left(\gamma\right)/X}$ be the total space of 
$N_{X\left(\gamma\right)/X}$. By \cite[Theorem 3.4.1]{Bismut08b}, the 
normal geodesic coordinate system based at $X\left(\gamma\right)$ 
gives a smooth identification of 
$\mathcal{N}_{X\left(\gamma\right)/X}$ with $X$.  Let $dx,dy, df$ be the 
Riemannian volume forms on $X,X\left(\gamma\right), 
N_{X\left(\gamma\right)/X}$. Then $dydf$ is a volume form on 
$\mathcal{N}_{X\left(\gamma\right)/X}$. Let $r\left(f\right)$ denote 
the corresponding Jacobian, so that we have the identity of volume 
forms on $X$, 
\begin{equation}\label{eq:fina4}
dx=r\left(f\right)dydf.
\end{equation}
By \cite[eq. (3.4.36)]{Bismut08b}, there are constants $C>0,C'>0$ such that
\begin{equation}\label{eq:fina5}
r\left(f\right)\le Ce^{C'\left\vert  f\right\vert}.
\end{equation}

By \cite[Theorem 3.4.1]{Bismut08b}, there exists $C_{\gamma}>0$ such 
that for $f\in \mathfrak p^{\perp}\left(\gamma\right), \left\vert  
f\right\vert\ge 1$, 
\begin{equation}\label{eq:fina6}
d_{\gamma}\left(e^{f}x_{0}\right)\ge \left\vert  
a\right\vert+C_{\gamma}\left\vert  f\right\vert.
\end{equation}

As explained in \cite[eq. (4.2.6)]{Bismut08b}, by 
(\ref{eq:ober1}), (\ref{eq:fina6}),  there exist $C_{\gamma}>0,c_{\gamma}>0$ such that if $f\in \mathfrak 
p^{\perp}\left(\gamma\right)$, then
\begin{equation}\label{eq:fina7}
\left\vert  \mu\left(\sqrt{C^{\mathfrak 
g,X}+A}\right)\left(\gamma^{-1}e^{f}x_{0},e^{f}x_{0} 
\right)\right\vert\le C_{\gamma}\exp\left(-c_{\gamma}\left\vert  
f\right\vert^{2}\right).
\end{equation}

We denote by 
\index{gs@$\gamma_{*}$}%
$\gamma_{*}$ the action of $\gamma$ on $F$. More 
precisely, if $x\in X$, $\gamma_{*}$ maps $F_{x}$ into $F_{\gamma x}$. 

In \cite[Definition 4.2.2]{Bismut08b}, the orbital integral 
$$\Tr^{\left[\gamma\right]}
\left[\mu\left(\sqrt{C^{\mathfrak g,X}+A}\right)\right]$$ 
is defined 
by the formula
\begin{multline}\label{eq:fina6a1}
\Tr^{\left[\gamma\right]}
\left[\mu\left(\sqrt{C^{\mathfrak g,X}+A}\right)\right]\\
=
\int_{\mathfrak p^{\perp}\left(\gamma\right)}^{}\Tr\left[\gamma_{*}
\mu\left(\sqrt{C^{\mathfrak 
g,X}+A}\right)\left(\gamma^{-1}e^{f}x_{0}, 
e^{f}x_{0}\right) \right]r\left(f\right)df.
\end{multline}
Equations (\ref{eq:fina5}), (\ref{eq:fina7}) guarantee that the 
integral in (\ref{eq:fina6a1}) converges.

Let $dk$ be the Haar measure on $K$ such that $\Vol\left(K\right)=1$. 
Then $dg=dxdk$ is a Haar measure on $G$. Let $dy$ be the Riemannian 
volume form on $X\left(\gamma\right)$. Let $dk^{\prime 0}$ be the 
Haar measure on $K^{0}\left(\gamma\right)$ such that 
$\Vol\left(K^{0}\left(\gamma\right)\right)=1$. Then 
$dz^{0}=dydk^{\prime 0}$ is a Haar measure on 
$Z^{0}\left(\gamma\right)$. Let $dv^{0}$ be the volume on 
$Z^{0}\left(\gamma\right)\setminus G$ such that $dg=dz^{0}dv^{0}$. 

As explained in \cite[Section 4.2]{Bismut08b}, the smooth kernel 
$$\mu\left(\sqrt{C^{\mathfrak g,X}+A}\right)\left(x,x'\right)$$
lifts to a smooth 
function on $G$ with values in $\End\left(E\right)$, denoted 
$$\mu^{E}\left(\sqrt{C^{\mathfrak 
g,X}+A}\right)\left(g\right),$$ 
and by \cite[eq. (4.2.11)]{Bismut08b}, we have the identity 
\begin{multline}\label{eq:fina8}
\Tr^{\left[\gamma\right]}
\left[\mu\left(\sqrt{C^{\mathfrak g,X}+A}\right)\right]\\
=\int_{Z^{0}\left(\gamma\right)\setminus G}^{}\Tr^{E}
\left[\mu\left(\sqrt{C^{\mathfrak 
g,X}+A}\right)\left(\left(v^{0}\right)^{-1}\gamma 
v^{0}\right)\right]dv^{0}.
\end{multline}
This definition of orbital integrals coincides with the definition 
given by Selberg \cite[p. 66]{Selberg56}.
\subsection{The function $\mathcal{J}_{\gamma}$}%
\label{subsec:fuJg}
We use the assumptions in Subsection \ref{subsec:sesidis} and the 
corresponding notation. In particular $\gamma\in G$ is a semisimple 
element as in (\ref{eq:dis4a}). 

Then $\Ad\left(\gamma\right)$ preserves $\mathfrak z\left(a\right), 
\mathfrak z^{\perp}\left(a\right)$. Also $\Ad\left(k^{-1}\right)$ preserves 
$\mathfrak z_{a}^{\perp}\left(\gamma\right)$. If $\Yok\in 
\mathfrak k\left(\gamma\right)$,  $\mathrm{ad}\left(\Yok\right)$ 
preserves $ \mathfrak 
z_{a}^{\perp}\left(\gamma\right)$. The splitting 
(\ref{eq:inva19z2}) is   preserved by $\Ad\left(k^{-1}\right)$ and 
$\mathrm{ad}\left(\Yok\right)$.

If $x\in \R$, put
\begin{equation}\label{eq:aro1}
\widehat{A}\left(x\right)=\frac{x/2}{\sinh\left(x/2\right)}.
\end{equation}
If $\Yok\in \mathfrak k\left(\gamma\right)$, $\mathrm{ad}\left(\Yok\right)$ 
acts as an antisymmetric endomorphism of $\mathfrak 
p\left(\gamma\right), \mathfrak k\left(\gamma\right)$, so that its 
eigenvalues are either $0$, or they come by nonzero conjugate 
imaginary pairs. 
If $\Yok\in i \mathfrak k\left(\gamma\right)$, put\footnote{This fits 
with the classical notation in the theory of characteristic classes.}
\begin{align}\label{eq:aro2}
&\widehat{A}\left(\mathrm{ad}\left(\Yok\right)\vert
_{ \mathfrak 
p\left(\gamma\right)}\right)=\left[\det\left(\widehat{A}\left(\mathrm{ad}\left(\Yok\right)\vert_{\mathfrak p\left(\gamma\right)}\right) \right) \right]^{1/2},\\
&\widehat{A}\left(\mathrm{ad}\left(\Yok\right)\vert
_{ \mathfrak k\left(\gamma\right)}\right)=\left[\det\left(\widehat{A}\left(\mathrm{ad}\left(\Yok\right)\vert
_{ \mathfrak k\left(\gamma\right)}\right)\right)\right]^{1/2}. \nonumber 
\end{align}
The square root in (\ref{eq:aro2}) is  the positive square 
root of a positive real number.

We follow \cite[Theorem 5.5.1]{Bismut08b}, while slightly changing 
the notation.
\begin{definition}\label{DLg}
	If $\Yok\in i \mathfrak  k\left(\gamma\right)$, put
	\index{LgY@$\mathcal{L}_{\gamma}\left(\Yok\right)$}%
	\begin{equation}\label{eq:congo0}
\mathcal{L}_{\gamma}\left(\Yok\right)=\frac{\det\left(1-    
\Ad\left(k^{-1}e^{-\Yok}\right)\right)\vert_{\mathfrak 
	k^{\perp}_{a}\left(\gamma\right)}}{\det\left(1-
    \Ad\left(k^{-1}e^{-\Yok}\right)\right)\vert_{\mathfrak 
    p_{a}^{\perp}\left(\gamma\right)}}.
\end{equation}
Set
\index{MgY@$\mathcal{M}_{\gamma}\left(\Yok\right)$}%
\begin{equation}\label{eq:congo-1}
\mathcal{M}_{\gamma}\left(\Yok\right)
=\left[\frac{1}{\det\left(1-\Ad\left(k^{-1}\right)\right)\vert_{ 
\mathfrak z^{\perp}_{a}\left(\gamma\right)}
    }\mathcal{L}_{\gamma}\left(\Yok\right)\right]^{1/2}.
\end{equation}
	\end{definition}
	The fact that the square root in (\ref{eq:congo-1}) is unambiguously 
defined is established in \cite[Section 5.5]{Bismut08b}.  Let us 
explain the details. First we make $\Yok=0$. Then
\begin{multline}\label{eq:ret33s-1}
\frac{1}{\det\left(1-\Ad\left(k^{-1}\right)\right)\vert_{ 
\mathfrak z^{\perp}_{a}\left(\gamma\right)}
    }\frac{\det\left(1-\Ad\left(k^{-1}\right)\right)\vert_{\mathfrak 
	k^{\perp}_{a}\left(\gamma\right)}}{\det\left(1-    \Ad\left(k^{-1}\right)\right)\vert_{\mathfrak 
    p_{a}^{\perp}\left(\gamma\right)}}\\
	=\left[\frac{1}{\det\left(1-\Ad\left(k^{-1}\right)\right)\vert_{\mathfrak p_{a}^{\perp}\left(\gamma\right)}}
	\right]^{2}.
\end{multline}
The conventions in \cite{Bismut08b}  say that the square root of 
(\ref{eq:ret33s-1}) is  the obvious positive square root,  i.e., 
\begin{equation}\label{eq:ret33s-2}
\mathcal{M}_{\gamma}\left(0\right)
	=\frac{1}{\det\left(1-\Ad\left(k^{-1}\right)\right)\vert_{\mathfrak p_{a}^{\perp}\left(\gamma\right)}}.
\end{equation}
Using analyticity in the variable $\Yok\in i \mathfrak 
k\left(\gamma\right)$, the choice of the square root in 
(\ref{eq:congo-1}) determines a choice of the square root in 
(\ref{eq:ret33}). This point 
will be discussed at length in Section \ref{sec:rofu}. No  choice of 
a Cartan subalgebra or of a positive root system is 
needed at this stage.

\begin{definition}\label{def:Jg}
	\index{Jg@$\mathcal{J}_{\gamma}\left(\Yok\right)$}%
Let $\mathcal{J}_{\gamma}\left(\Yok\right)$ be the  smooth function of 
$\Yok\in i\mathfrak 
k\left(\gamma\right)$,
\begin{equation}\label{eq:ret33}
\mathcal{J}_{\gamma}\left(Y_{0}^{ \mathfrak 
k}\right)=\frac{1}{\left\vert  \det\left(1-\Ad\left(\gamma\right)\right)\vert_{ 
\mathfrak z^{\perp}\left(a\right)}\right\vert^{1/2}}
\frac{\widehat{A}\left(\mathrm{ad}\left(\Yok\right)\vert
_{ \mathfrak p\left(\gamma\right)}\right)}{\widehat{A}\left(\mathrm{ad}\left(\Yok\right)
\vert_{\mathfrak k\left(\gamma\right)}\right)}\mathcal{M}_{\gamma}\left(\Yok\right).
\end{equation}
\end{definition}

With the conventions in \cite[Chapter 5]{Bismut08b}, where instead a 
function $J_{\gamma}\left(\Yok\right)$ is defined on $\mathfrak 
k\left(\gamma\right)$, we have
\begin{equation}\label{eq:inva20a1}
J_{\gamma}\left(\Yok\right)=\mathcal{J}_{\gamma}\left(i\Yok\right).
\end{equation}

By \cite[eq. (5.5.11)]{Bismut08b} or by (\ref{eq:ret33}), if $\Yok\in 
i\mathfrak k$, then
\begin{equation}\label{eq:norm0}
\mathcal{J}_{1}\left(\Yok\right)=\frac{\widehat{A}\left(\mathrm{ad}\left(\Yok\right)\vert
_{ \mathfrak p}\right)}{\widehat{A}\left(\mathrm{ad}\left(\Yok\right)
\vert_{\mathfrak k}\right)}.
\end{equation}
\subsection{Some properties of the function $\mathcal{J}_{\gamma}$}%
\label{subsec:propjg}
Let $\rho^{E^{*}}: K\to U\left(E^{*}\right)$ denote the representation of $K$ which is dual to 
the representation $\rho^{E}$.

\begin{proposition}\label{prop:pcobis}
	If $\Yok\in i \mathfrak k\left(\gamma\right)$, then
	\begin{align}\label{eq:ida1}
		&\mathcal{J}_{\gamma^{-1}}\left(\Yok\right)=\mathcal{J}_{\gamma}\left(-\Yok\right),
		&\Tr^{E^{*}}\left[\rho^{E^{*}}\left(ke^{-\Yok}\right) \right]=
		\Tr^{E}\left[\rho^{E}\left(k^{-1}e^{\Yok}\right)\right],\\
&\overline{\mathcal{J}_{\gamma}\left(\Yok\right)}=\mathcal{J}_{\gamma}\left(-\Yok\right),
&\overline{\Tr^{E}\left[\rho^{E} \left( k^{-1}e^{-\Yok} \right) 
\right]}=\Tr^{E^{*}}\left[\rho^{E^{*}}\left(k^{-1}e^{\Yok}\right)\right]. \nonumber 
\end{align}
\end{proposition}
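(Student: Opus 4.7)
The plan is to handle the four identities separately, with the two trace identities being essentially formal consequences of duality and unitarity, and the two $\mathcal{J}_{\gamma}$ identities requiring a factor-by-factor analysis of Definition \ref{def:Jg}.

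For the trace identities, I would use that the dual representation satisfies $\rho^{E^{*}}(g)=(\rho^{E}(g^{-1}))^{T}$ on $K$ and its complexification, so that $\Tr^{E^{*}}[\rho^{E^{*}}(g)]=\Tr^{E}[\rho^{E}(g^{-1})]$; applying this with $g=ke^{-\Yok}$ and using the cyclicity of the trace gives the second identity. For the fourth identity, I would combine unitarity of $\rho^{E}$ on $K$ with the observation that $\rho^{E}(\Yok)$ is Hermitian for $\Yok\in i\mathfrak{k}(\gamma)$, so that $\rho^{E}(e^{-\Yok})$ is self-adjoint; taking the adjoint of $\rho^{E}(k^{-1}e^{-\Yok})$ then produces $\rho^{E}(e^{-\Yok}k)$, and a second application of the duality identity converts this into $\Tr^{E^{*}}[\rho^{E^{*}}(k^{-1}e^{\Yok})]$.

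For the identity $\mathcal{J}_{\gamma^{-1}}(\Yok)=\mathcal{J}_{\gamma}(-\Yok)$, I would first note that if $\gamma=e^{a}k^{-1}$ with $\Ad(k)a=a$, then $\gamma^{-1}$ admits the analogous decomposition with $(a',k')=(-a,k^{-1})$, so all of the subspaces $\mathfrak{z}^{\perp}(a)$, $\mathfrak{p}^{\perp}_{a}(\gamma)$, $\mathfrak{k}^{\perp}_{a}(\gamma)$, $\mathfrak{p}(\gamma)$, $\mathfrak{k}(\gamma)$ are unchanged. I would then compare the three factors in (\ref{eq:ret33}): (a) the factor $|\det(1-\Ad(\gamma))|^{1/2}$ is unchanged under $\gamma\to\gamma^{-1}$, because $\Ad(\gamma)$ preserves the nondegenerate form $B$ on $\mathfrak{z}^{\perp}(a)$, so $|\det\Ad(\gamma)|=1$ and the standard identity $\det(1-A^{-1})=(-1)^{\dim}(\det A)^{-1}\det(1-A)$ gives equality in absolute value; (b) the $\widehat{A}$ quotient is unchanged because $\widehat{A}$ is even; (c) for $\mathcal{M}_{\gamma^{-1}}(\Yok)$ versus $\mathcal{M}_{\gamma}(-\Yok)$, one observes that $\Ad(ke^{-\Yok})$ is the inverse of $\Ad(e^{\Yok}k^{-1})$, which has the same determinant as $\Ad(k^{-1}e^{\Yok})$, and the inverse–determinant identity combined with $|\det\Ad(k)|=|\det\Ad(e^{-\Yok})|=1$ gives equality up to sign. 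The sign is then pinned down at $\Yok=0$: the orthogonality of $\Ad(k)$ on $\mathfrak{p}^{\perp}_{a}(\gamma)\subset\mathfrak{p}$ yields $\det(1-\Ad(k))=\det(1-\Ad(k^{-1}))$, and one extends to all $\Yok$ by analyticity using the convention recalled after Definition \ref{DLg}.

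For $\overline{\mathcal{J}_{\gamma}(\Yok)}=\mathcal{J}_{\gamma}(-\Yok)$, I would write $\Yok=iY$ with $Y\in\mathfrak{k}(\gamma)$, so that $\overline{\Yok}=-\Yok$. The first factor in (\ref{eq:ret33}) is real-positive by construction, and the $\widehat{A}$ factors take positive real values because on the real subspaces $\mathfrak{p}(\gamma)$, $\mathfrak{k}(\gamma)$ the operator $\ad(\Yok)=i\ad(Y)$ has real spectrum. For the $\mathcal{M}_{\gamma}$ factor, the key observation is
\begin{equation*}
\overline{\Ad(k^{-1}e^{-\Yok})}=\Ad(k^{-1})\,e^{-\ad(\overline{\Yok})}=\Ad(k^{-1}e^{\Yok}),
\end{equation*}
which gives $\overline{\mathcal{L}_{\gamma}(\Yok)}=\mathcal{L}_{\gamma}(-\Yok)$, and the square-root sign is again transported from $\Yok=0$, where $\mathcal{M}_{\gamma}(0)$ is explicitly the positive real number in (\ref{eq:ret33s-2}). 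The main obstacle throughout, as in the foundational discussion in \cite[Chapter 5]{Bismut08b}, is tracking the branch of the square root defining $\mathcal{M}_{\gamma}$: each determinantal manipulation introduces potential signs, and one must check at every step that the analytic continuation from $\Yok=0$ is preserved. Once this bookkeeping is carried out, both $\mathcal{J}_{\gamma}$ identities follow.
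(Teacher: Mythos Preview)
Your approach is correct and, for the two trace identities and for the conjugation identity $\overline{\mathcal{J}_{\gamma}(\Yok)}=\mathcal{J}_{\gamma}(-\Yok)$, it matches the paper's proof essentially verbatim (the paper calls the trace identities ``trivial'' and for the conjugation identity uses exactly your observation $\overline{\Ad(k^{-1}e^{-\Yok})}=\Ad(k^{-1}e^{\Yok})$).

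For the first identity $\mathcal{J}_{\gamma^{-1}}(\Yok)=\mathcal{J}_{\gamma}(-\Yok)$, however, the paper takes a shorter route than yours. Rather than invoking $\det(1-A^{-1})=(-1)^{\dim}(\det A)^{-1}\det(1-A)$ and then tracking the residual signs back to $\Yok=0$, the paper observes that $\Ad(\gamma^{-1})$ is the $B$-adjoint $\widetilde{\Ad(\gamma)}$ of $\Ad(\gamma)$ (since $\Ad(\gamma)$ preserves $B$), and likewise $\Ad(ke^{-\Yok})$ is the $B$-adjoint of $\Ad(k^{-1}e^{\Yok})$ on each of the relevant invariant subspaces. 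Since $\det(1-\widetilde{A})=\det(1-A)$ automatically, one obtains \emph{exact} equality of each determinant appearing in $\mathcal{L}_{\gamma}$ and in the normalizing factor, with no sign ambiguity at all; the square-root branches then agree by the same analyticity-from-$\Yok=0$ argument you describe. Your inverse-based argument is valid but imports extra signs $(-1)^{\dim}$ and $\det\Ad(k^{\pm 1})$ that must be cancelled by hand, whereas the adjoint trick bypasses them entirely. The payoff of the paper's version is brevity; the payoff of yours is that it stays closer to elementary determinant identities and makes the dependence on the normalization (\ref{eq:ret33s-2}) more explicit.
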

\begin{proof}
	If $f\in \End\left(\mathfrak g\right)$, let $\widetilde f\in 
	\End\left(\mathfrak g\right)$ denote 
	the adjoint of $f$ with respect to $B$. We have the identity
	\begin{equation}\label{eq:ida0}
\Ad\left(\gamma^{-1}\right)=\widetilde 
{\Ad\left(\gamma\right)}.
\end{equation}
By (\ref{eq:ida0}), we deduce that
\begin{equation}\label{eq:ida0a}
\det\left(1-\Ad\left(\gamma^{-1}\right)\right) \vert_{\mathfrak 
z^{\perp}\left(a\right)}=\det\left(1-\Ad\left(\gamma\right)\right)\vert_{\mathfrak z^{\perp}\left(a\right)}.
\end{equation}
A similar argument shows that
\begin{align}\label{eq:ida0b}
&\det\left(1-\Ad\left(ke^{-\Yok}\right)\right)\vert_{\mathfrak 
p_{a}^{\perp}\left(\gamma\right)}=\det\left(1-\Ad\left(k^{-1}e^{\Yok}\right)\right)\vert_{\mathfrak 
p_{a}^{\perp}\left(\gamma\right)},\\
&\det\left(1-\Ad\left(ke^{-\Yok}\right)\right)\vert_{\mathfrak 
k_{a}^{\perp}\left(\gamma\right)}=\det\left(1-\Ad\left(k^{-1}e^{\Yok}\right)\right)\vert_{\mathfrak 
k_{a}^{\perp}\left(\gamma\right)}. \nonumber 
\end{align}

By (\ref{eq:congo0}), (\ref{eq:congo-1}), (\ref{eq:ret33}), 
(\ref{eq:ida0a}), and 
(\ref{eq:ida0b}), we get the first identity in (\ref{eq:ida1}). The 
second identity in (\ref{eq:ida1}) is trivial.

	If $\Yok\in i\mathfrak 
	k\left(\gamma\right)$, 
	\begin{align}\label{eq:ida2}
&\overline{\det\left(1-\Ad\left(k^{-1}e^{-\Yok}\right)\right)\vert_{\mathfrak 
p_{a}^{\perp}\left(\gamma\right)}}=\det\left(1-\Ad\left(k^{-1}e^{\Yok}\right)\right)\vert_{\mathfrak 
p_{a}^{\perp}\left(\gamma\right)},\\
&\overline{\det\left(1-\Ad\left(k^{-1}e^{-\Yok}\right)\right)\vert_{\mathfrak 
k_{a}^{\perp}\left(\gamma\right)}}=\det\left(1-\Ad\left(k^{-1}e^{\Yok}\right)\right)\vert_{\mathfrak 
k_{a}^{\perp}\left(\gamma\right)}. \nonumber 
\end{align}
By (\ref{eq:ida2}), we get the third identity in (\ref{eq:ida1}).  
Since $\Yok\in i\mathfrak k\left(\gamma\right)$, the fourth identity is 
trivial.
The proof of our proposition is completed. 
\end{proof}

\subsection{A geometric formula for the orbital integrals associated 
with the Casimir}%
\label{subsec:geoorb}
Note that $i \mathfrak  
k\left(\gamma\right)$ is naturally an Euclidean 
vector space.  If $\Yok\in i\mathfrak k\left(\gamma\right)$, we denote by 
$\left\vert  \Yok\right\vert$ its Euclidean norm. More precisely, if 
$\Yok\in i\mathfrak k\left(\gamma\right)$, then
\begin{equation}\label{eq:norm1}
\left\vert  \Yok\right\vert^{2}=B\left(\Yok,\Yok\right).
\end{equation}

By
\cite[eq. (6.1.1)]{Bismut08b}, there exist 
$c>0,C>0$ such that if $\Yok \in i \mathfrak 
k\left(\gamma\right)$, 
\begin{equation}\label{eq:comm-1}
\left\vert  \mathcal{J}_{\gamma}\left(\Yok\right)\right\vert\le c
\exp\left(C\left\vert  \Yok\right\vert\right).
\end{equation}
 In the sequel, $\int_{i \mathfrak 
k\left(\gamma\right)}^{}$ denotes integration on the real vector space 
$i \mathfrak k\left(\gamma\right)$.

Let $d\Yok$ be the Euclidean volume on $i \mathfrak 
k\left(\gamma\right)$. Set $p=\dim \mathfrak p\left(\gamma\right), q=\dim 
\mathfrak k\left(\gamma\right)$. 
 Now we  state the result obtained in \cite[Theorem 
6.1.1]{Bismut08b}. Our reformulation takes  equation 
(\ref{eq:fina-1}) into account.
\begin{theorem}\label{thm:Tformlim2}
 For  $t>0$, the following identity holds:
\begin{multline}\label{eq:inva21a}
\Tr^{\left[\gamma\right]}\left[\exp\left(-tC^{\mathfrak 
g,X}/2\right) \right]=\exp\left(-tB^{*}\left(\rho^{\mathfrak 
g},\rho^{\mathfrak g}\right)/2\right)
\frac{\exp\left(-\left\vert  
a\right\vert^{2}/2t\right)}{\left(2\pi t\right)^{p/2}}\\
\int_{i\mathfrak k\left(\gamma\right)}^{}
\mathcal{J}_{\gamma}\left(Y_{0}^{ \mathfrak 
k}\right)\Tr^{E}\left[\rho^{E}\left(k^{-1}e^{-\Yok}\right)\right]
\exp\left(-\left\vert  Y_{0}^{ \mathfrak 
k}\right\vert^{2}/2t\right)\frac{dY_{0}^{ \mathfrak k}}{
\left(2\pi t\right)^{q/2}}.
\end{multline}
\end{theorem}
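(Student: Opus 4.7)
The identity to be proved is a direct transcription of \cite[Theorem 6.1.1]{Bismut08b}, modulo the algebraic rewriting provided by (\ref{eq:fina-1}). My plan is therefore to split the argument into two clean tasks. First, invoke the original formula from Bismut's book with its prefactor, which there appears in the form $\exp\left(tB^{*}\left(\kappa^{\mathfrak g},\kappa^{\mathfrak g}\right)/8\right)$. Second, apply Kostant's strange formula in the form (\ref{eq:fina-1}) to replace this prefactor by $\exp\left(-tB^{*}\left(\rho^{\mathfrak g},\rho^{\mathfrak g}\right)/2\right)$. No new geometric input is needed for this rewriting; it is purely algebraic and was already assembled through Proposition \ref{prop:Pidetr} and Kostant's formula (\ref{eq:inva10}).

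For the substantive content, that is the proof of \cite[Theorem 6.1.1]{Bismut08b} itself, I would follow Bismut's hypoelliptic-Laplacian strategy. The starting point is the integral representation (\ref{eq:fina6a1}) of $\Tr^{\left[\gamma\right]}\left[\exp\left(-tC^{\mathfrak g,X}/2\right)\right]$, obtained by restricting the heat kernel to the $\gamma$-twisted diagonal and integrating in the normal geodesic chart based at $X\left(\gamma\right)$; absolute convergence is ensured by (\ref{eq:fina5}) and (\ref{eq:fina7}). The core idea is to deform the elliptic operator $C^{\mathfrak g,X}/2$ through a one-parameter family $\mathcal{L}_{b}$, $b>0$, of hypoelliptic operators on a total space over $X$ whose bicharacteristic flow approximates the geodesic flow as $b\to\infty$. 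The orbital integrals are constant along $b\in\left(0,\infty\right)$, so the computation can be moved to any convenient value of $b$.

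The main technical step is the evaluation of the $b\to\infty$ limit. In this regime the hypoelliptic dynamics concentrate the twisted kernel on the geodesic in $X\left(\gamma\right)$ realizing the displacement $a$, and the orbital integral splits into three transverse contributions: the Gaussian $\left(2\pi t\right)^{-p/2}\exp\left(-\left\vert a\right\vert^{2}/2t\right)$ from the hyperbolic directions in $\mathfrak p\left(\gamma\right)$; the Euclidean Gaussian $\left(2\pi t\right)^{-q/2}\exp\left(-\left\vert \Yok\right\vert^{2}/2t\right)d\Yok$ from the compact directions parametrized by $i\mathfrak k\left(\gamma\right)$; and a fixed-point integrand on the normal bundle to $X\left(\gamma\right)$. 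The bundle-geometric pieces of that integrand, namely the ratio of $\widehat{A}$-genus factors on $\mathfrak p\left(\gamma\right)$ and $\mathfrak k\left(\gamma\right)$, the prefactor $\left\vert\det\left(1-\Ad\left(\gamma\right)\right)\right\vert^{-1/2}_{\mathfrak z^{\perp}\left(a\right)}$, and the analytic factor $\mathcal{M}_{\gamma}\left(\Yok\right)$, arise from an equivariant-index-theoretic computation on the normal bundle to $X\left(\gamma\right)$, with $\Ad\left(k^{-1}\right)e^{-\mathrm{ad}\left(\Yok\right)}$ playing the role of the twisting automorphism. The trace $\Tr^{E}\left[\rho^{E}\left(k^{-1}e^{-\Yok}\right)\right]$ is the local Lefschetz contribution of $\gamma$ on the coefficient bundle $F$.

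The hard part of this programme is the $b\to\infty$ limit itself: one must control the spectrum of a family of non-self-adjoint hypoelliptic operators, show that contributions off $X\left(\gamma\right)$ vanish, and extract the precise geometric integrand, while carefully tracking the choice of square root singled out in Definition \ref{DLg} and in (\ref{eq:ret33s-2}) so that $\mathcal{J}_{\gamma}$ defined by (\ref{eq:ret33}) emerges with the correct sign. Since the present paper uses Theorem \ref{thm:Tformlim2} only as a black-box input toward the extension to arbitrary $L\in Z\left(\mathfrak g\right)$, this analytic work need not be redone and can be cited in its entirety from \cite[Chapter 6]{Bismut08b}.
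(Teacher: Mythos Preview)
Your proposal is correct and matches the paper exactly: the paper does not give a proof of Theorem \ref{thm:Tformlim2} but simply states it as \cite[Theorem 6.1.1]{Bismut08b} reformulated via (\ref{eq:fina-1}), which is precisely what you identify in your first paragraph. Your subsequent outline of the hypoelliptic-Laplacian argument behind \cite[Theorem 6.1.1]{Bismut08b} is additional context that the paper itself omits, since it treats the result as a black-box input.
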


Let $B\vert_{\mathfrak z\left(\gamma\right)}$ be the restriction of 
$B$ to $\mathfrak z\left(\gamma\right)$, and let 
$B^{*}\vert_{\mathfrak z\left(\gamma\right)} $ be the corresponding 
quadratic form on $\mathfrak z^{*}\left(\gamma\right)$.  Let $ 
\Delta^{\mathfrak z\left(\gamma\right)}$ denote the associated 
generalized Laplacian on $\mathfrak z\left(\gamma\right)$. We can extend $\Delta^{\mathfrak z\left(\gamma\right)}$ to an 
operator acting via constant holomorphic vector fields on $\mathfrak 
z\left(\gamma\right)_{\C}$. 

Put
\index{zig@$\mathfrak z_{i}\left(\gamma\right)$}%
\begin{equation}\label{eq:fina-2}
\mathfrak z_{i}\left(\gamma\right)=\mathfrak p\left(\gamma\right) 
\oplus i \mathfrak k\left(\gamma\right).
\end{equation}
Then $B\vert_{\mathfrak z_{i}\left(\gamma\right)}$ is a scalar 
product on  $\mathfrak z_{i}\left(\gamma\right)$. 
The generalized Laplacian 
$\Delta^{\mathfrak z\left(\gamma\right)}$ restricts on $\mathfrak z_{i}\left(\gamma\right)$ to the 
standard Euclidean Laplacian of $\mathfrak z_{i}\left(\gamma\right)$. 

 We take $\mu\in \mathcal{S}^{\mathrm{even}}\left(\R\right)$ as in 
Subsection \ref{subsec:wake}. 
If $f\in \mathfrak z_{i}\left(\gamma\right)$,  let
$$\mu \left( \sqrt{-\Delta^{ \mathfrak 
z\left(\gamma\right)}+B^{*}\left(\rho^{\mathfrak g},\rho^{\mathfrak 
g}\right)+A}\right)\left(f\right)$$
be the smooth convolution kernel  for
$\mu \left( \sqrt{-\Delta^{ \mathfrak 
z\left(\gamma\right)}+B^{*}\left(\rho^{\mathfrak g},\rho^{\mathfrak 
g}\right)+A} \right) $ on $\mathfrak z_{i}\left(\gamma\right)$ with 
respect to the volume form associated with the  scalar product of $\mathfrak 
z_{i}\left(\gamma\right)$.  Using  (\ref{eq:fourier2}) and finite 
propagation speed for the wave equation, there exist $C>0,c>0$ such 
that if $f\in 
\mathfrak z_{i}\left(\gamma\right)$, then
\begin{equation}\label{eq:ober2}
\left\vert  \mu \left( \sqrt{-\Delta^{ \mathfrak 
z\left(\gamma\right)}+B^{*}\left(\rho^{\mathfrak g},\rho^{\mathfrak 
g}\right)+A} \right) \left(f\right)\right\vert\le Ce^{-c\left\vert  
f\right\vert^{2}}.
\end{equation}

Let $\delta_{a}$ be the Dirac mass at $a\in \mathfrak 
p\left(\gamma\right)$. Then
$$\mathcal{J}_{\gamma}\left(\Yok\right)\Tr^{E}\left[\rho^{E}\left(k^{-1}e^{-\Yok}
\right)\right]\delta_{a}$$
is a distribution on  $\mathfrak z_{i}\left(\gamma\right)$, to which 
the smooth convolution kernel 
$$\mu \left( \sqrt{-\Delta^{ \mathfrak 
z\left(\gamma\right)}+B^{*}\left(\rho^{\mathfrak g},\rho^{\mathfrak 
g}\right)+A} \right) $$
can be applied. By definition,
\begin{multline}\label{eq:fina-2a}
\mu\left(\sqrt{-\Delta^{ \mathfrak z\left(\gamma\right)}
    +B^{*}\left(\rho^{\mathfrak g},\rho^{\mathfrak g}\right)+A}
    \right)\\
	\left[\mathcal{J}_{\gamma}\left(\Yok\right)\Tr^{E}\left[\rho^{E}\left(k^{-1}e^{-\Yok}
\right)\right]\delta_{a}\right]\left(0\right)\\
=\int_{i \mathfrak k\left(\gamma\right)}^{}\mu\left(\sqrt{-\Delta^{ \mathfrak z\left(\gamma\right)}
   +B^{*}\left(\rho^{\mathfrak g},\rho^{\mathfrak g}\right)+A}
    \right)
\left(-\Yok,-a\right)\\
\mathcal{J}_{\gamma}\left(\Yok\right)\Tr^{E}\left[\rho^{E}\left(k^{-1}e^{-\Yok}
\right)\right]d\Yok.
\end{multline}
In the right hand-side of (\ref{eq:fina-2a}), $\left(-\Yok,-a\right)$ 
can as well be replaced by $\left(\Yok,a\right)$.

In \cite[Theorem 6.2.2]{Bismut08b}, the following extension  of 
Theorem \ref{thm:Tformlim2} was established.
\begin{theorem}\label{thm:Ttrfin}
The following identity holds:
\begin{multline}
    \Tr^{\left[\gamma\right]}\left[\mu\left(\sqrt{C^{\mathfrak g,X}+A}\right)\right] 
    =
  \mu\left(\sqrt{-\Delta^{ \mathfrak z\left(\gamma\right)}
   +B^{*}\left(\rho^{\mathfrak g},\rho^{\mathfrak g}\right)+A}
    \right)\\
	\left[\mathcal{J}_{\gamma}\left(\Yok\right)\Tr^{E}\left[\rho^{E}\left(k^{-1}e^{-\Yok}
\right)\right]\delta_{a}\right]\left(0\right).
    \label{eq:trib2}
\end{multline}
\end{theorem}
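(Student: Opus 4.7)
The plan is to deduce \eqref{eq:trib2} from the heat-kernel case (Theorem \ref{thm:Tformlim2}) by first verifying it on the family of Gaussians $\mu_{t}(x)=e^{-tx^{2}/2}$, $t>0$, and then appealing to the density of this family among admissible $\mu$.

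For $\mu=\mu_{t}$, one has $\mu_{t}(\sqrt{H+A})=e^{-tA/2}\exp(-tH/2)$ for any non-negative self-adjoint $H$, so the left-hand side of \eqref{eq:trib2} equals $e^{-tA/2}\Tr^{[\gamma]}[\exp(-tC^{\mathfrak g,X}/2)]$, which by Theorem \ref{thm:Tformlim2} is the explicit Gaussian integral \eqref{eq:inva21a}. On the right, the convolution kernel of $\mu_{t}(\sqrt{-\Delta^{\mathfrak z(\gamma)}+B^{*}(\rho^{\mathfrak g},\rho^{\mathfrak g})+A})$ on the Euclidean space $\mathfrak z_{i}(\gamma)$ equals $e^{-t(B^{*}(\rho^{\mathfrak g},\rho^{\mathfrak g})+A)/2}$ times the standard Euclidean heat kernel, which factorizes as a product of Gaussians on $\mathfrak p(\gamma)$ and $i\mathfrak k(\gamma)$. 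Pairing with $\mathcal{J}_{\gamma}(\Yok)\Tr^{E}[\rho^{E}(k^{-1}e^{-\Yok})]\delta_{a}$ and evaluating at $0$ via \eqref{eq:fina-2a}: the $\delta_{a}$ contributes $(2\pi t)^{-p/2}e^{-|a|^{2}/(2t)}$, and the residual Gaussian integral in $\Yok$ reproduces $e^{-tA/2}$ times the right-hand side of \eqref{eq:inva21a}. This verifies \eqref{eq:trib2} for every $\mu_{t}$.

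To pass from the Gaussian family to general admissible $\mu$, I would argue that both sides of \eqref{eq:trib2} are continuous linear functionals of $\mu$ in a Fréchet topology on even Schwartz functions whose Fourier transforms satisfy \eqref{eq:fourier2}. Continuity of the left-hand side follows from the pointwise estimate \eqref{eq:fina7} combined with the definition \eqref{eq:fina6a1}, while continuity of the right-hand side follows from \eqref{eq:ober2} together with the exponential bound \eqref{eq:comm-1} on $\mathcal{J}_{\gamma}$. Representing each side as the pairing of $\mu$ against a distribution $\sigma_{\gamma}^{A}$, respectively $\tau_{\gamma}^{A}$, in the spectral variable $\lambda$, the Gaussian identities of the previous step give $\int_{0}^{\infty} e^{-t\lambda^{2}/2}(d\sigma_{\gamma}^{A}-d\tau_{\gamma}^{A})(\lambda)=0$ for all $t>0$; injectivity of the Gauss transform on such tempered objects forces $\sigma_{\gamma}^{A}=\tau_{\gamma}^{A}$. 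The main obstacle is the functional-analytic bookkeeping of this second step: one must identify the correct topology on admissible $\mu$ (controlled by the constants $C$ and $c_{k}$ of \eqref{eq:fourier2}), establish uniform continuity of both sides, and invoke the appropriate density/injectivity statement---the difficulty being that the hypothesis \eqref{eq:fourier2} is weaker than compact Fourier support, so the density of Gaussians cannot be obtained by a naive Paley--Wiener argument.
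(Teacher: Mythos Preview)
The paper does not give its own proof of Theorem~\ref{thm:Ttrfin}: the statement is quoted verbatim from \cite[Theorem 6.2.2]{Bismut08b} and used as an input throughout. So there is no in-paper argument to compare against; your proposal is an attempt to reconstruct the passage from the heat-kernel identity (Theorem~\ref{thm:Tformlim2}, also quoted from \cite{Bismut08b}) to the general-$\mu$ identity.

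Your first step is fine and is exactly the computation one expects: for $\mu_{t}(x)=e^{-tx^{2}/2}$ both sides reduce to \eqref{eq:inva21a} up to the common factor $e^{-tA/2}$.

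Your second step is the right idea but the execution you sketch is shakier than it needs to be. You do not need a Fr\'echet topology on admissible $\mu$ or a density statement for Gaussians in that topology. What you need is the observation, made explicit in Subsection~\ref{subsec:micro} of the present paper, that each side of \eqref{eq:trib2} is the pairing of $\widehat{\mu}$ against an even tempered distribution in $s\in\R$ (the wave-kernel distributions $\Tr^{[\gamma]}[\cos(2\pi s\sqrt{C^{\mathfrak g,X}+A})]$ and its flat counterpart on $\mathfrak z_{i}(\gamma)$; see \eqref{eq:trib4}--\eqref{eq:brahms1}). Agreement of the two sides for all $\mu_{t}$ says precisely that these two distributions have the same Gauss--Weierstrass transform for every $t>0$. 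Since convolution with the heat kernel is injective on tempered distributions (equivalently, the Fourier transform of the difference is annihilated by $e^{-t\xi^{2}/2}$ for all $t>0$, hence vanishes), the distributions coincide, and \eqref{eq:trib2} follows for every admissible $\mu$. This bypasses the ``density of Gaussians'' issue you flag: you are not approximating $\mu$ by Gaussians, you are using injectivity of the heat semigroup on the distributional side. That is almost certainly the argument in \cite[\S 6.2]{Bismut08b}, and it is cleaner than the route you outline.
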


\section{Cartan subalgebras, Cartan subgroups,  and root systems}%
\label{sec:geofori}
 The purpose of this Section is to recall basic facts on   Cartan 
 subalgebras,  on Cartan subgroups, and on root systems.
 
This section is organized as follows. In Subsection 
\ref{subsec:lial}, we state some elementary facts of linear algebra.

In Subsection \ref{subsec:caal}, we recall the definition of Cartan 
subalgebras. 

In Subsection \ref{subsec:reglie}, we introduce the corresponding 
root system, and the associated algebraic Weyl group.

In Subsection \ref{subsec:reim}, we define the real and the imaginary 
roots.

In Subsection \ref{subsec:posro}, we construct  a positive root 
system.

In Subsection \ref{subsec:rofu}, when the Cartan subalgebra is 
fundamental, we compare the root system of $\mathfrak k$ with the 
root system of $\mathfrak g$. 

In Subsection \ref{subsec:casg}, we introduce the  Cartan 
subgroups, and of the corresponding regular elements.

In Subsection \ref{subsec:semisi}, we relate
semisimple elements in $G$ to Cartan subgroups.

In Subsection \ref{subsec:rosisi}, we describe the characters of 
Cartan subgroups associated with a root system.

In Subsection \ref{subsec:reimsesi}, we give some properties 
of the real and imaginary roots with respect to semisimple elements 
in $G$.

Finally, in Subsection \ref{subsec:care}, we give a well-known formula that 
relates the action of invariant differential operators on the Lie 
algebra $\mathfrak g$ and on a Cartan subalgebra $\mathfrak h$.

We make the same assumptions as in Section \ref{sec:geofo}, and we 
use the corresponding notation.
\subsection{Linear algebra}%
\label{subsec:lial}
Let $V$ be a finite-dimensional real vector space. The symmetric 
algebras 
\index{SV@$S\ac \left(V\right)$}%
$S\ac \left(V\right), S\ac \left(V^{*}\right)$ are the 
algebras of polynomials on $V^{*},V$.  If $v\in V$, $v$ acts as a 
derivation of $S\ac\left(V^{*}\right)$. More generally, 
$S\ac\left(V\right)$ acts on $S\ac\left(V^{*}\right)$, and this 
action identifies $S\ac\left(V\right)$ with the algebra
\index{DV@$D\ac\left(V\right)$}%
$D\ac\left(V\right)$ of  real partial differential operators on $V$ 
with constant coefficients.   In particular, if $B^{*}\in 
S^{2}\left(V\right)$ is a bilinear symmetric  form on $V^{*}$, the associated 
element in $D\ac\left(V\right)$ is the corresponding  
Laplacian $\Delta^{V}$. If $B^{*}$ is positive,  $\Delta^{V}$ is just a classical Laplacian. 
If  $B^{*}$ is  negative, then $\Delta^{V}$ is the 
negative of  a 
classical Laplacian on $V$. 

Let  $V_{\C}=V \otimes _{\R}\C$ be the 
complexification of $V$, a complex vector 
space. Its complex dual is given by $V^{*}_{\C}=V^{*} \otimes _{\R}\C$. 
The algebras $S\ac\left(V_{\C}\right),S\ac\left(V^{*}_{\C}\right)$ are the 
algebras of complex polynomials on $V^{*},V$. Note that
\begin{align}\label{eq:coc1}
&S\ac\left(V_{\C}\right)=S\ac\left(V\right) \otimes _{\R}\C,
&S\ac\left(V^{*}_{\C}\right)=S\ac\left(V^{*}\right) \otimes _{\R}\C.
\end{align}

Put
\begin{align}\label{eq:coc2}
&D\ac\left(V_{\C}\right)=D\left(V\right) \otimes _{\R}\C,
&D\ac\left(V^{*}_{\C}\right)=D\ac\left(V^{*}\right) \otimes _{\R}\C.
\end{align}
Then $D\ac\left(V_{\C}\right),D\ac\left(V^{*}_{\C}\right)$ are the 
complexifications of 
$D\ac\left(V\right),D\ac\left(V^{*}\right)$, and also the spaces 
of complex holomorphic differential operators with constant 
coefficients on $V_{\C},V^{*}_{\C}$. 

In particular, if $B^{*}\in S^{2}\left(V\right)$, $\Delta^{V}$ is now 
viewed as a holomorphic operator on $V_{\C}$, that coincides with the 
corresponding Laplacian $\Delta^{V}$ on $V$, and with $-\Delta^{V}$ 
on $iV \simeq V$. \footnote{On $\C \simeq \R^{2}$, 
when acting on holomorphic functions, the differential operators 
$\frac{\pa}{\pa z}, \frac{\pa}{\pa x}, -i\frac{\pa}{\pa y}$ coincide. 
In this sense, the differential operator $\frac{\pa}{\pa x}$ on $\R$ 
extends to the differential operator $\frac{\pa}{\pa z}$ on $\C$, and 
restricts to the operator  $-i\frac{\pa}{\pa  y}$ on the 
imaginary line $i\R$. The operator $\frac{\pa^{2}}{\pa x^{2}}$ on 
$\R$ restricts to the operator $-\frac{\pa^{2}}{\pa y^{2}}$ on $i\R$.}

Also  $S\ac\left(V^{*}\right) 
\subset C^{ \infty }\left(V,\R\right)$, and the action of  
$D\ac\left(V\right)$ extends to $C^{ \infty }\left(V,\R\right)$. 

Let 
\index{SV@$S\left[\left[V^{*}\right]\right]$}%
$S\left[\left[V^{*}\right]\right]$ be the algebra of 
formal power series $s=\sum_{i=0}^{+ \infty }s^{i}, s^{i}\in 
S^{i}\left(V^{*}\right)$.  Then $S\left[\left[V^{*}\right]\right]$ 
can be identified with the algebra $D\left[\left[V^{*}\right]\right]$ 
of differential operators of infinite order with constant 
coefficients on $V^{*}$. In particular, 
$S\left[\left[V^{*}\right]\right]$ acts on $S\ac\left(V\right)$.
\subsection{The Cartan subalgebras of $\mathfrak g$}%
\label{subsec:caal}
By  \cite[Section 0.2]{Wallach88}, a Lie subalgebra
\index{h@$\mathfrak h$}%
$\mathfrak h \subset \mathfrak g$ is 
said to be a Cartan subalgebra if $\mathfrak h$ is maximal among the 
abelian subalgebras of $\mathfrak g$ whose elements act as semisimple 
endomorphisms of $\mathfrak g$. Cartan subalgebras are known 
to exist and have the same dimension $r$, which is called the complex 
rank of $G$.  By \cite[Proposition 6.64]{Knapp02}, there is a finite family 
of nonconjugate  Cartan subalgebras in $\mathfrak g$. 
 By \cite[Lemma 2.3.3]{Wallach88}, in
every conjugacy class of Cartan subalgebras, there is a unique $\theta$-stable Cartan 
algebra, up to conjugation by $K$.  Therefore there is a finite family of nonconjugate 
$\theta$-stable Cartan subalgebras, up to conjugation by $K$.  By 
\cite[Theorem 2.15]{Knapp02},  the Cartan subalgebras of 
$\mathfrak g_{\C}$ are unique up to automorphisms induced by the 
adjoint group $\Ad\left(\mathfrak g_{\C}\right)$.

Let $\mathfrak h \subset \mathfrak g$ be a $\theta$-stable Cartan 
subalgebra. To the Cartan splitting of $\mathfrak g$ in 
(\ref{eq:cent1}) corresponds the splitting
\begin{equation}\label{eq:inva3}
\mathfrak h= \mathfrak h_{\mathfrak p} \oplus \mathfrak h_{\mathfrak 
k}.
\end{equation}
In particular the restriction $B\vert_{\mathfrak h}$ of $B$ to 
$\mathfrak h$ is nondegenerate. This is also the case if  $\mathfrak 
h$ is any Cartan 
subalgebra.

Up to conjugation by $K$, there is a unique $\theta$-stable Cartan 
subalgebra $\mathfrak h$, which is called fundamental, such that $\mathfrak h_{\mathfrak k}$ is a 
Cartan subalgebra of $\mathfrak k$.  Let us give more details on its 
construction \cite[pp. 129, 131]{Knapp86}.
Let 
\index{t@$\mathfrak t$}%
$\mathfrak t \subset \mathfrak k$ be a Cartan subalgebra of 
$\mathfrak k$. Let $\mathfrak z\left(\mathfrak t\right) \subset 
\mathfrak g$ be the centralizer of $\mathfrak t$, i.e., 
\begin{equation}\label{eq:gena1}
\mathfrak z\left(\mathfrak t\right)=\left\{f\in \mathfrak g, 
\left[\mathfrak t,f\right]=0\right\}.
\end{equation}
Then $\mathfrak h= \mathfrak z\left(\mathfrak t\right)$ is a 
$\theta$-stable
fundamental Cartan subalgebra of $\mathfrak g$, and $\mathfrak 
h_{\mathfrak k}= \mathfrak t$.

An 
element $f\in \mathfrak g$ is said to be semisimple if 
$\mathrm{ad}\left(f\right)\in \End\left(\mathfrak g\right)$ is semisimple. If 
$\mathfrak h$ is a Cartan subalgebra, elements of $\mathfrak h$ are 
semisimple. Any semisimple element of $\mathfrak g$ lies in a Cartan 
subalgebra. 

If $\mathfrak h \subset \mathfrak g$ is a Cartan subalgebra, let 
\index{h@$\mathfrak h^{\perp}$}%
$\mathfrak h^{\perp}$ be the orthogonal to $\mathfrak h$ in 
$\mathfrak g$. We have the $B$-orthogonal splitting, 
\begin{equation}\label{eq:cor0}
\mathfrak g= \mathfrak h \oplus \mathfrak h^{\perp},
\end{equation}
and $B$ is also nondegenerate on $\mathfrak h^{\perp}$. If $\mathfrak 
h$ is $\theta$-stable, then $\mathfrak h^{\perp}$ is also 
$\theta$-stable, and so it splits as
\index{hp@$\mathfrak h^{\perp}_{\mathfrak p}$}%
\index{hk@$\mathfrak h^{\perp}_{\mathfrak k}$}%
\begin{equation}\label{eq:spli1}
\mathfrak h^{\perp}=\mathfrak h^{\perp}_{\mathfrak p} \oplus 
\mathfrak h^{\perp}_{\mathfrak k}.
\end{equation}

Let
\index{u@$\mathfrak u$}%
$\mathfrak u= i \mathfrak p \oplus \mathfrak k$ be the 
compact form of  $\mathfrak g$. Then 
\index{hu@$\mathfrak h_{\mathfrak u}$}%
$\mathfrak h_{\mathfrak u}=i \mathfrak 
h_{\mathfrak p}\oplus \mathfrak h_{\mathfrak k}$ is a  Cartan 
subalgebra of $\mathfrak u$. If $\mathfrak h$ is $\theta$-stable, 
then $\mathfrak h_{\mathfrak u}$ is also $\theta$-stable.

An element $f\in \mathfrak g$ is said to be regular if $\mathfrak 
z\left(f\right)$ is a Cartan subalgebra. Regular elements in 
$\mathfrak g$ are semisimple. 

If $\mathfrak h$ is a Cartan subalgebra, if $f\in \mathfrak h$, 
$\mathrm{ad}\left(f\right)$ acts as an endomorphism of $\mathfrak g/\mathfrak 
h$. Then $f\in \mathfrak h$ is regular if and only if $\det 
\mathrm{ad}\left(f\right)\vert_{\mathfrak g/\mathfrak h}\neq 0$.

\subsection{A root system and the Weyl group}%
\label{subsec:reglie}
Let $\mathfrak h$ be a $\theta$-stable Cartan subalgebra.

Let 
\index{R@$R$}%
$R \subset  \mathfrak h^{*}_{\C}$ be the root system 
associated with $\mathfrak h, \mathfrak g$ \cite[Section II.4]{Knapp02}. If $\alpha\in R$, then $-\alpha\in R,\overline{\alpha}\in R$. If 
$\alpha\in R$, let 
\index{ga@$\mathfrak g_{\alpha}$}%
$\mathfrak g_{\alpha} \subset  \mathfrak g_{\C}$ 
be the weight space associated with $\alpha$, which is of dimension   
$1$.
Then we have the splitting
\begin{equation}\label{eq:coci1}
\mathfrak g_{\C}= \mathfrak h_{\C} \bigoplus \oplus_{\alpha\in 
R} \mathfrak g_{\alpha}.
\end{equation}
If $\alpha\in R$, then
\begin{equation}\label{eq:coci2}
\mathfrak g_{\overline{\alpha}}=\overline{\mathfrak g}_{\alpha}.
\end{equation}
If $f\in \mathfrak h$,  $\mathrm{ad}\left(f\right)\in \End\left(\mathfrak 
g\right)$ is antisymmetric with respect to  $B$, so that
the $\mathfrak g_{\alpha}\vert_{\alpha\in R}$ are $B$-orthogonal to 
$\mathfrak h_{\C }$. 
If $\alpha,\beta\in R$, then $\mathfrak g_{\alpha}, \mathfrak 
g_{\beta}$ are $B$-orthogonal except when $\beta=-\alpha$, and the 
pairing between $\mathfrak g_{\alpha},  \mathfrak g_{-\alpha}$ is 
nondegenerate, so that if $\alpha\in R$, the form $B$ induces the 
identification
\begin{equation}\label{eq:coc3}
\mathfrak g_{-\alpha} \simeq \mathfrak g_{\alpha}^{*}.
\end{equation}
Also
\begin{equation}\label{eq:comm-1a}
\mathfrak h^{\perp}_{\C}=\oplus_{\alpha\in R}\mathfrak g_{\alpha}.
\end{equation}

If
$\alpha\in R$,  $\alpha$ takes real values on $\mathfrak 
h_{\mathfrak p}$, and imaginary values on $\mathfrak h_{\mathfrak 
k}$, i.e., $\alpha\in \mathfrak h^{*}_{\mathfrak p} \oplus i 
\mathfrak h^{*}_{\mathfrak k}$. 
Also $\theta$ preserves the splitting (\ref{eq:cor0}) of $\mathfrak 
g$, and it maps $R$ into itself. More precisely, if $\alpha\in R$, 
\begin{align}\label{eq:cub1}
&\theta \alpha=-\overline{\alpha}, &\mathfrak g_{\theta\alpha}=\theta 
\mathfrak g_{\alpha}.
\end{align}

Let
 \index{Whg@$W\left(\mathfrak h_{\C}:\mathfrak g_{\C}\right)$}%
 $W\left(\mathfrak h_{\C}:\mathfrak g_{\C}\right) \subset  
 \Aut\left(\mathfrak h_{\C}\right)$ be the algebraic 
 Weyl group \cite[p. 131]{Knapp86}.  Then $R \subset i \mathfrak h^{*}_{\mathfrak u}$, and 
 $W\left(\mathfrak h_{\C}:\mathfrak g_{\C}\right) \subset 
 \Aut\left(\mathfrak h_{\mathfrak u}\right)$, i.e., $W\left(\mathfrak 
 h_{\C}:\mathfrak g_{\C}\right)$ preserves the real vector space 
 $\mathfrak h_{\mathfrak u}$.   Also $\theta$ acts as 
an automorphism of the Lie algebras $\mathfrak g,\mathfrak u$, and 
$W\left(\mathfrak h_{\C}: \mathfrak g_{\C}\right)$ is preserved by 
conjugation by $\theta$. 

In general, $W\left(\mathfrak h_{\C}: \mathfrak g_{\C}\right)$ does 
not preserve the real vector space $\mathfrak h$. Recall that if
$h\in \mathfrak h_{\C}$, we can define its complex conjugate 
$\overline{h}\in \mathfrak h_{\C}$. If $u\in 
\End\left(\mathfrak h_{\C}\right)$, its complex conjugate 
$\overline{u}\in \End\left(\mathfrak h_{\C}\right)$ is such that if $h\in \mathfrak 
h_{\C}$, then
\begin{equation}\label{eq:cla1}
\overline{u}\left(h\right)=\overline{u\left(\overline{h}\right)}.
\end{equation}

\begin{proposition}\label{prop:pstab}
	If $w\in W\left(\mathfrak h_{\C}: \mathfrak g_{\C}\right)$, then
	\begin{equation}\label{eq:sym0}
\overline{w}=\theta w\theta^{-1}.
\end{equation}
In particular, the  group $W\left(\mathfrak h_{\C}:\mathfrak g_{\C}\right)$ is 
	preserved by complex conjugation.
\end{proposition}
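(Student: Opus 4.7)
The plan is to verify the identity on the generators of the Weyl group and then extend it multiplicatively, deducing the stability statement as a direct consequence.

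First I would recall that $W(\mathfrak{h}_{\C}:\mathfrak{g}_{\C})$ is generated by the root reflections $s_{\alpha}$, $\alpha\in R$, and observe that both sides of (\ref{eq:sym0}) behave well under composition: if the identity $\overline{w}=\theta w\theta^{-1}$ holds for $w_{1}$ and $w_{2}$, then complex conjugation being an involutive antilinear automorphism of $\End(\mathfrak{h}_{\C})$ implies
\begin{equation*}
\overline{w_{1}w_{2}}=\overline{w_{1}}\,\overline{w_{2}}=\theta w_{1}\theta^{-1}\theta w_{2}\theta^{-1}=\theta w_{1}w_{2}\theta^{-1}.
\end{equation*}
Thus it is enough to establish (\ref{eq:sym0}) for $w=s_{\alpha}$.

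Next I would handle the generators directly. Since $\theta$ is an automorphism of $\mathfrak{g}$ which by (\ref{eq:cub1}) sends the root $\alpha$ to $-\overline{\alpha}$, the conjugation $\theta s_{\alpha}\theta^{-1}$ is the orthogonal reflection associated with $\theta\alpha=-\overline{\alpha}$, so
\begin{equation*}
\theta s_{\alpha}\theta^{-1}=s_{-\overline{\alpha}}=s_{\overline{\alpha}}.
\end{equation*}
On the other hand, writing $s_{\alpha}(h)=h-\alpha(h)H_{\alpha}$ for the coroot $H_{\alpha}\in\mathfrak{h}_{\C}$, I would use (\ref{eq:cla1}) together with the fact that $H_{\overline{\alpha}}=\overline{H_{\alpha}}$ (complex conjugation of roots corresponds to complex conjugation of coroots, as $B$ is real on $\mathfrak{h}$) to compute
\begin{equation*}
\overline{s_{\alpha}}(h)=\overline{s_{\alpha}(\overline{h})}=\overline{\overline{h}-\alpha(\overline{h})H_{\alpha}}=h-\overline{\alpha}(h)\,\overline{H_{\alpha}}=s_{\overline{\alpha}}(h).
\end{equation*}
Comparing the two expressions yields $\overline{s_{\alpha}}=\theta s_{\alpha}\theta^{-1}$, which proves (\ref{eq:sym0}) on generators.

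For the final statement, I would invoke the already-noted fact that conjugation by $\theta$ preserves $W(\mathfrak{h}_{\C}:\mathfrak{g}_{\C})$, since $\theta$ is an automorphism of $\mathfrak{g}$ preserving $\mathfrak{h}$. Combined with (\ref{eq:sym0}), this gives $\overline{w}=\theta w\theta^{-1}\in W(\mathfrak{h}_{\C}:\mathfrak{g}_{\C})$ for every $w$, so the Weyl group is stable under complex conjugation. The only subtle point in the whole argument is verifying $\overline{s_{\alpha}}=s_{\overline{\alpha}}$; this is bookkeeping with the antilinear conjugation, but once one notes that $B$ is real on $\mathfrak{h}$ so that coroots conjugate to coroots, it is immediate.
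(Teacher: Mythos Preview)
Your argument is correct and follows essentially the same route as the paper's first proof: reduce to generators $s_{\alpha}$, use $\theta\alpha=-\overline{\alpha}$ from (\ref{eq:cub1}) to obtain $\overline{s_{\alpha}}=\theta s_{\alpha}\theta^{-1}$, and then invoke the $\theta$-stability of the Weyl group. You spell out the coroot computation that the paper leaves implicit; the paper also offers a second, independent proof via the identification $\mathfrak{h}_{\C}\simeq\mathfrak{h}_{\mathfrak{u},\C}$ and the fact that $w$ is real on $\mathfrak{h}_{\mathfrak{u}}$, which you do not need.
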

\begin{proof}
	The group $W\left(\mathfrak h_{\C}:\mathfrak g_{\C}\right)$ is 
	generated by the symmetries $s_{\alpha}, \alpha\in R$ with respect to the vanishing locus 
	of the $\alpha\in R$. By (\ref{eq:cub1}), we deduce that if 
	$\alpha\in R$, 
	\begin{equation}\label{eq:sym1}
\overline{s_{\alpha}}= \theta s_{\alpha}\theta^{-1}, 
\end{equation}
from which get (\ref{eq:sym0}). Since $W\left(\mathfrak h_{\C}: 
\mathfrak g_{\C}\right)$ is stable by conjugation by $\theta$, the 
group $W\left(\mathfrak h_{\C}:\mathfrak g_{\C}\right)$ is preserved 
by complex conjugation. 

Another proof is as follows. Observe that there is a canonical 
identification of complex vector spaces  $\varphi: \mathfrak h_{\C} \simeq \mathfrak h_{\mathfrak 
u,\C}$, but the complex conjugations on $\mathfrak h_{\C}$ and on $\mathfrak 
h_{\mathfrak u,\C}$ are not the same. More precisely, if $h\in 
\mathfrak h_{\C}$, 
\begin{equation}\label{eq:chif2bis1}
\overline{\varphi h}=\varphi\theta\overline{h}=\theta\varphi\overline{h}.
\end{equation}

If $w\in W\left(\mathfrak h_{\C}:\mathfrak g_{\C}\right)$, then 
\begin{equation}\label{eq:chif2}
w\vert_{\mathfrak h_{\C}}=\varphi^{-1}w\vert_{\mathfrak h_{\mathfrak 
u,\C}}\varphi.
\end{equation}
Recall that $w$ is a real automorphism of the real vector space   
$\mathfrak h_{\mathfrak u}$. By (\ref{eq:chif2bis1}), (\ref{eq:chif2}), we get
\begin{equation}\label{eq:chif3}
\overline{w}\vert_{\mathfrak 
h_{\C}}=\overline{\varphi}^{-1}w\vert_{\mathfrak h_{\mathfrak u,\C}}\overline{\varphi}.
\end{equation}
By (\ref{eq:chif2bis1})--(\ref{eq:chif3}), we get (\ref{eq:sym0}). The proof of our proposition is completed. 
\end{proof}
\subsection{Real roots and imaginary roots}%
\label{subsec:reim}
 Let 
 \index{Rre@$R^{\mathrm{re}}$}%
 $R^{\mathrm{re}} \subset R$ be the  roots $\alpha\in R$ such 
that $\theta\alpha=-\alpha$, let 
\index{Rim@$R^{\mathrm{im}}$}%
$R^{\mathrm{im}}$ be the roots $\alpha\in R$  such that $\theta\alpha=\alpha$. These are 
respectively the real roots and the imaginary roots. Imaginary roots 
vanish on $\mathfrak h_{\mathfrak p}$, real roots vanish on 
$\mathfrak h_{\mathfrak k}$. By  \cite[p. 349]{Knapp86}, the set of complex roots
\index{Rc@$R^{\mathrm{c}}$}%
$R^{\mathrm{c}} \subset R$ is defined to be
\begin{equation}\label{eq:clo-2}
R^{\mathrm{c}}=R\setminus\left(R^{\mathrm{re}}\cup 
R^{\mathrm{im}}\right).
\end{equation}

\begin{proposition}\label{prop:pco}
	If $\alpha\in R$, the  map  $f\in \mathfrak g_{\C}\to 
	\overline{f}\in \mathfrak g_{\C}$ induces an 
	antilinear isomorphism from $\mathfrak g_{\alpha}$ into $\mathfrak 
	g_{-\theta\alpha}$, and the map $f\in \mathfrak g_{\C} \to 
	\theta\overline{f}\in \mathfrak g_{\C}$ induces an antilinear 
	isomorphism from $\mathfrak g_{\alpha}$ into $\mathfrak 
	g_{-\alpha}$. If $\alpha\in R^{\mathrm{re}}$, $\mathfrak 
	g_{\alpha}$ is  the complexification of a real vector subspace of $\mathfrak 
	h^{\perp}$.
\end{proposition}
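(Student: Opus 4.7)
The plan is to reduce everything to the defining property of weight spaces, namely that $f\in\mathfrak{g}_\alpha$ iff $[h,f]=\alpha(h)f$ for every $h\in\mathfrak{h}_{\mathbf{C}}$, and then apply complex conjugation to this relation.

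For the first statement, I would start by taking $f\in\mathfrak{g}_\alpha$ and $h\in\mathfrak{h}$ (real). Conjugating $[h,f]=\alpha(h)f$ yields $[h,\overline{f}]=\overline{\alpha(h)}\,\overline{f}$. The key arithmetic observation is that for any real $h\in\mathfrak{h}=\mathfrak{h}_{\mathfrak p}\oplus\mathfrak{h}_{\mathfrak k}$ one has $\overline{\alpha(h)}=(-\theta\alpha)(h)$: on $\mathfrak{h}_{\mathfrak p}$ we have $\alpha(h)\in\mathbf{R}$ and $\theta h=-h$ so both sides equal $\alpha(h)$; on $\mathfrak{h}_{\mathfrak k}$ we have $\alpha(h)\in i\mathbf{R}$ and $\theta h=h$ so both sides equal $-\alpha(h)$. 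Extending by complex linearity in $h$, this shows $\overline{f}\in\mathfrak{g}_{-\theta\alpha}$. The map is antilinear and injective with inverse given by conjugation in the reverse direction, hence an antilinear isomorphism.

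For the second statement, I would compose the antilinear isomorphism $f\mapsto\overline{f}:\mathfrak{g}_\alpha\to\mathfrak{g}_{-\theta\alpha}$ just constructed with the complex-linear isomorphism $\theta:\mathfrak{g}_{-\theta\alpha}\to\mathfrak{g}_{-\alpha}$ provided by the second identity in (\ref{eq:cub1}) (applied with $\alpha$ replaced by $-\theta\alpha$, noting $\theta(-\theta\alpha)=-\alpha$). The composition $f\mapsto\theta\overline{f}$ is then an antilinear isomorphism $\mathfrak{g}_\alpha\to\mathfrak{g}_{-\alpha}$.

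For the third statement, if $\alpha\in R^{\mathrm{re}}$ then by definition $\theta\alpha=-\alpha$, so $-\theta\alpha=\alpha$ and the antilinear isomorphism of the first part becomes an antilinear involution of the one-dimensional space $\mathfrak{g}_\alpha$. The fixed point set $V_\alpha=\{f\in\mathfrak{g}_\alpha:\overline{f}=f\}$ is then a real form of $\mathfrak{g}_\alpha$, contained in $\mathfrak{g}\cap\mathfrak{h}^{\perp}_{\mathbf{C}}=\mathfrak{h}^{\perp}$ by (\ref{eq:comm-1a}), and $\mathfrak{g}_\alpha=V_\alpha\otimes_{\mathbf{R}}\mathbf{C}$. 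No step presents a serious obstacle; the only point that requires care is the sign bookkeeping that identifies $\overline{\alpha(h)}$ with $(-\theta\alpha)(h)$ on real $h$, which is the underlying reason both $-\theta\alpha$ (not $-\alpha$) and then $-\alpha$ (after one more application of $\theta$) show up on the right-hand sides.
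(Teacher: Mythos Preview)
Your proof is correct and follows essentially the same approach as the paper: conjugate the weight-space relation $[b,f]=\langle\alpha,b\rangle f$ for real $b\in\mathfrak h$, identify $\overline{\langle\alpha,b\rangle}=\langle-\theta\alpha,b\rangle$, then compose with $\theta$ for the second claim and specialize to $-\theta\alpha=\alpha$ for the third. You spell out the sign check on $\mathfrak h_{\mathfrak p}$ and $\mathfrak h_{\mathfrak k}$ and the fixed-point construction of the real form more explicitly than the paper does, but the argument is the same.
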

\begin{proof}
	If $b\in \mathfrak h,f\in \mathfrak g_{\alpha}$, then
	\begin{equation}\label{eq:jou0}
\left[b,f\right]=\left\langle  \alpha,b\right\rangle f.
\end{equation}
By taking the conjugate of (\ref{eq:jou0}), we obtain
\begin{equation}\label{eq:jou-1}
\left[b,\overline{f}\right]=\left\langle  -\theta 
\alpha,b\right\rangle\overline{f}.
\end{equation}
By (\ref{eq:jou-1}), we get the first part of our proposition. By 
composing this isomorphism with $\theta$, we obtain the second part. 
If $\alpha\in R^{\mathrm{re}}$, then $-\theta\alpha=\alpha$, so that 
$\mathfrak g_{\mathfrak \alpha}$ is real.
The proof of our proposition is completed. 
\end{proof}

\begin{definition}\label{def:Dm}
	Put 
	\index{i@$\mathfrak i$}%
	\index{r@$\mathfrak r$}%
	\begin{align}\label{eq:cong1}
&\mathfrak i=\ker \mathrm{ad}\left(\mathfrak h_{\mathfrak p}\right)\cap 
\mathfrak h^{\perp},
&\mathfrak r=\ker \mathrm{ad}\left(\mathfrak h_{\mathfrak k}\right)\cap \mathfrak 
h^{\perp}.
\end{align}
\end{definition} 
 Then  $\theta$ acts on $\mathfrak i, \mathfrak r$, so that we have 
 the
 splittings,
 \begin{align}\label{eq:cong2}
&\mathfrak i= \mathfrak i_{\mathfrak p} \oplus \mathfrak i_{\mathfrak 
k},
&\mathfrak r= \mathfrak r_{\mathfrak p} \oplus \mathfrak r_{\mathfrak 
k}.
\end{align}
\begin{proposition}\label{prop:porth}
	The vector spaces $\mathfrak i, \mathfrak r$ are orthogonal in 
	$\mathfrak h^{\perp}$. Moreover, 
	\begin{align}\label{eq:jou1}
&\mathfrak i_{\C}=\oplus_{\alpha\in R^{\mathrm{im}}}\mathfrak 
g_{\alpha},
&\mathfrak r_{\C}= \oplus _{\alpha\in R^{\mathrm{re}}}\mathfrak 
g_{\alpha}.
\end{align}
If $\alpha\in R^{\mathrm{im}}$, then either $\mathfrak g_{\alpha} \subset 
\mathfrak p_{\C}$, or $\mathfrak g_{\alpha} \subset  \mathfrak k_{\C}$.
\end{proposition}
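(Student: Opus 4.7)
The plan is to deduce everything from the root-space decomposition $\mathfrak{h}^{\perp}_{\mathbf{C}} = \bigoplus_{\alpha \in R} \mathfrak{g}_{\alpha}$ of (\ref{eq:comm-1a}), combined with the fact that $R \subset \mathfrak{h}^{*}_{\mathfrak{p}} \oplus i \mathfrak{h}^{*}_{\mathfrak{k}}$ and that $\theta$ acts as $-1$ on $\mathfrak{h}^{*}_{\mathfrak{p}}$ and $+1$ on $\mathfrak{h}^{*}_{\mathfrak{k}}$. In particular $\alpha \in R^{\mathrm{im}}$ (i.e. $\theta\alpha = \alpha$) is equivalent to $\alpha|_{\mathfrak{h}_{\mathfrak{p}}} = 0$, and $\alpha \in R^{\mathrm{re}}$ is equivalent to $\alpha|_{\mathfrak{h}_{\mathfrak{k}}} = 0$.

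First I would prove (\ref{eq:jou1}). An element $f = \sum_{\alpha} f_{\alpha} \in \mathfrak{h}^{\perp}_{\mathbf{C}}$ satisfies $[b,f]=0$ for every $b \in \mathfrak{h}_{\mathfrak{p}}$ iff $\alpha(b) f_{\alpha} = 0$ for every such $b$ and every $\alpha$, i.e.\ iff $f_{\alpha} = 0$ whenever $\alpha|_{\mathfrak{h}_{\mathfrak{p}}} \neq 0$. By the equivalence above, this says $f_{\alpha} = 0$ unless $\alpha \in R^{\mathrm{im}}$, so $\mathfrak{i}_{\mathbf{C}} = \bigoplus_{\alpha \in R^{\mathrm{im}}} \mathfrak{g}_{\alpha}$. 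The identical argument with $\mathfrak{h}_{\mathfrak{k}}$ in place of $\mathfrak{h}_{\mathfrak{p}}$ gives $\mathfrak{r}_{\mathbf{C}} = \bigoplus_{\alpha \in R^{\mathrm{re}}} \mathfrak{g}_{\alpha}$. (Here one uses implicitly that $\ker\mathrm{ad}(\mathfrak{h}_{\mathfrak{p}}) \cap \mathfrak{h}^{\perp}$ complexifies to $\ker\mathrm{ad}(\mathfrak{h}_{\mathfrak{p}}) \cap \mathfrak{h}^{\perp}_{\mathbf{C}}$, which is immediate since $\mathfrak{h}_{\mathfrak{p}}$ is a real subspace.)

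For orthogonality, I would invoke the pairing properties recalled in Subsection \ref{subsec:reglie}: $\mathfrak{g}_{\alpha}$ and $\mathfrak{g}_{\beta}$ are $B$-orthogonal whenever $\beta \neq -\alpha$. If $\alpha \in R^{\mathrm{im}}$ and $\beta \in R^{\mathrm{re}}$ satisfied $\beta = -\alpha$, then $\alpha$ would be both real and imaginary, so $\alpha = \theta\alpha = -\alpha$ and $\alpha = 0$, which is impossible. Hence $\mathfrak{i}_{\mathbf{C}} \perp_{B} \mathfrak{r}_{\mathbf{C}}$, which passes to the real forms $\mathfrak{i}, \mathfrak{r} \subset \mathfrak{h}^{\perp}$.

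Finally, for the last statement, if $\alpha \in R^{\mathrm{im}}$, then $\theta\alpha = \alpha$ so $\theta \mathfrak{g}_{\alpha} = \mathfrak{g}_{\alpha}$ by (\ref{eq:cub1}); since $\dim_{\mathbf{C}}\mathfrak{g}_{\alpha} = 1$ and $\theta^{2} = 1$, $\theta$ acts on $\mathfrak{g}_{\alpha}$ as $\pm 1$, placing it in $\mathfrak{k}_{\mathbf{C}}$ or $\mathfrak{p}_{\mathbf{C}}$ respectively. No step presents a genuine obstacle; the only care needed is in bookkeeping the real-versus-complex structures, handled uniformly by working in $\mathfrak{h}^{\perp}_{\mathbf{C}}$ and then descending to the real form via $\theta$-stability.
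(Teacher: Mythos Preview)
Your proof is correct, but it takes a different route from the paper for two of the three claims.

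For the identities (\ref{eq:jou1}), the paper simply calls them ``elementary''; your explicit argument via the root-space decomposition is exactly what one would fill in there.

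For orthogonality, the paper argues entirely on the real side: it first shows $\mathfrak i\cap\mathfrak r=0$ (anything commuting with all of $\mathfrak h$ lies in $\mathfrak h$), then chooses $b_{\mathfrak p}\in\mathfrak h_{\mathfrak p}$ with $\langle\alpha,b_{\mathfrak p}\rangle\neq 0$ for every non-imaginary root, so that $\mathfrak i=\ker\mathrm{ad}(b_{\mathfrak p})\cap\mathfrak h^{\perp}$ and $\mathrm{ad}(b_{\mathfrak p})$ is invertible on $\mathfrak r$; since $\mathrm{ad}(b_{\mathfrak p})$ is symmetric for the Euclidean form $-B(\cdot,\theta\cdot)$, its kernel is orthogonal to its image, and $\theta$-stability of $\mathfrak r$ converts this into $B$-orthogonality. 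Your argument instead complexifies and invokes directly the $B$-orthogonality of $\mathfrak g_{\alpha}$ and $\mathfrak g_{\beta}$ for $\beta\neq-\alpha$, together with $R^{\mathrm{im}}\cap R^{\mathrm{re}}=\emptyset$. Your route is shorter and uses only facts already recorded in Subsection~\ref{subsec:reglie}; the paper's route has the mild advantage of producing the single element $b_{\mathfrak p}$, which is reused later (e.g.\ in Proposition~\ref{prop:peven} and Subsection~\ref{subsec:posro}).

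For the last claim, the paper observes that for $\alpha\in R^{\mathrm{im}}$ the action of $\mathfrak h$ on $\mathfrak g_{\alpha}$ factors through $\mathfrak h_{\mathfrak k}$, and $\mathrm{ad}(\mathfrak h_{\mathfrak k})$ preserves $\mathfrak p_{\C}\oplus\mathfrak k_{\C}$. You instead use (\ref{eq:cub1}) to get $\theta\mathfrak g_{\alpha}=\mathfrak g_{\alpha}$ and conclude from $\dim_{\C}\mathfrak g_{\alpha}=1$ and $\theta^{2}=1$ that $\theta$ acts as $\pm 1$. Both are one-line arguments; yours is perhaps the more transparent way to see why the dichotomy holds.
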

\begin{proof}
	If $f\in \mathfrak h^{\perp}, f\in \mathfrak i \cap \mathfrak 
	r$,  then $f$ commutes with 
	$\mathfrak h$. Since $\mathfrak h$ is a Cartan subalgebra,  
	$f=0$, so that $\mathfrak i\cap \mathfrak r=0$. If $\alpha\in 
	R\setminus R^{\mathrm{im}}$, $\alpha$ does not vanish identically on 
	$\mathfrak h_{\mathfrak p}$, and its vanishing locus in 
	$\mathfrak h_{\mathfrak p}$ is a 
	hyperplane. So  one can find  $b_{\mathfrak p}\in \mathfrak h_{\mathfrak 
	p}\setminus 0$  such that for any $\alpha\in R\setminus 
	R^{\mathrm{im}}$, 
	$\left\langle  \alpha,b_{\mathfrak p}\right\rangle\neq 0$.  Then
	\begin{equation}\label{eq:jou3}
\mathfrak i=\ker \mathrm{ad}\left(b_{ \mathfrak p}\right)\cap \mathfrak 
h^{\perp}.
\end{equation}
Since $\mathfrak i\cap \mathfrak r=0$, $\mathrm{ad}\left(b_{\mathfrak 
p}\right)$ acts as an invertible morphism of $\mathfrak r$. Therefore 
any element of $\mathfrak r$ lies in the image of 
$\mathrm{ad}\left(b_{\mathfrak p}\right)$. Since $\mathrm{ad}\left(b_{\mathfrak p}\right)$  is symmetric in the classical 
sense,  $\mathfrak i$ and $\mathfrak r$ are orthogonal.  Equation (\ref{eq:jou1}) is elementary.
If $\alpha\in R^{\mathrm{im}}$, the action of $\mathfrak h$ on 
$\mathfrak g_{\alpha}$ factors through   $\mathfrak 
h_{\mathfrak k}$.  Also $\mathrm{ad}\left(\mathfrak h_{\mathfrak k}\right)$ 
preserves the splitting $\mathfrak g = \mathfrak p \oplus \mathfrak 
k$. Therefore if $\alpha\in R^{\mathrm{im}}$, either $\mathfrak 
g_{\alpha} \subset  \mathfrak p_{\C}$, or $\mathfrak g_{\alpha}\subset 
\mathfrak k_{\C}$. The proof of our proposition is completed. 
\end{proof}

\begin{definition}\label{def:Dimco}
	Put
	\index{Rimp@$R^{\mathrm{im}}_{\mathfrak p}$}%
	\index{Rimk@$R^{\mathrm{im}}_{\mathfrak k}$}%
	\begin{align}\label{eq:bax1}
&R^{\mathrm{im}}_{\mathfrak p}=\left\{\alpha\in R^{\mathrm{im}}, 
\mathfrak g_{\alpha}\subset \mathfrak p_{\C}\right\},
&R^{\mathrm{im}}_{\mathfrak k}=\left\{\alpha\in R^{\mathrm{im}},\mathfrak 
g_{\alpha}\subset \mathfrak k_{\C}\right\}.
\end{align}
\end{definition}
Then
\begin{equation}\label{eq:bax1z1}
R^{\mathrm{im}}=R^{\mathrm{im}}_{\mathfrak p}\cup 
R^{\mathrm{im}}_{\mathfrak k}.
\end{equation}

Let 
\index{c@$\mathfrak c$}%
$\mathfrak c$ denote the orthogonal to $\mathfrak i \oplus 
\mathfrak r$ in $\mathfrak h^{\perp}$. Again $\mathfrak c$ splits as 
\begin{equation}\label{eq:jou6}
\mathfrak c=\mathfrak c_{\mathfrak p} \oplus \mathfrak c_{\mathfrak 
k}.
\end{equation}
Moreover, we have the orthogonal splitting
\begin{equation}\label{eq:jou7}
\mathfrak h^{\perp}=\mathfrak i \oplus \mathfrak r \oplus \mathfrak c.
\end{equation}

\begin{proposition}\label{prop:psor}
	The following identity holds:
	\begin{equation}\label{eq:jou8}
\mathfrak c_{\C}=\oplus _{\alpha\in R^{\mathrm{c}}}\mathfrak g_{\alpha}.
\end{equation}
\end{proposition}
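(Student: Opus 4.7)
The plan is to complexify the orthogonal splitting $\mathfrak h^{\perp}=\mathfrak i\oplus\mathfrak r\oplus\mathfrak c$, identify two of the three summands on the right with already known sums of root spaces via Proposition \ref{prop:porth}, and then pin down $\mathfrak c_{\C}$ by a combination of orthogonality and a dimension count.

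More precisely, first I would observe that since $\mathfrak c\subset\mathfrak h^{\perp}$ is defined as the real $B$-orthogonal of $\mathfrak i\oplus\mathfrak r$, the complex-bilinear extension of $B$ to $\mathfrak g_{\C}$ makes $\mathfrak c_{\C}$ the orthogonal of $\mathfrak i_{\C}\oplus\mathfrak r_{\C}$ inside $\mathfrak h^{\perp}_{\C}$. By Proposition \ref{prop:porth}, we have
\begin{equation*}
\mathfrak i_{\C}\oplus\mathfrak r_{\C}=\bigoplus_{\alpha\in R^{\mathrm{im}}\cup R^{\mathrm{re}}}\mathfrak g_{\alpha},
\end{equation*}
and by (\ref{eq:comm-1a}), $\mathfrak h^{\perp}_{\C}=\oplus_{\alpha\in R}\mathfrak g_{\alpha}$.

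Next I would use the basic orthogonality stated just after (\ref{eq:coci2}): for $\alpha,\beta\in R$, the spaces $\mathfrak g_{\alpha}$ and $\mathfrak g_{\beta}$ are $B$-orthogonal unless $\beta=-\alpha$. The key point is that the sets $R^{\mathrm{re}}$, $R^{\mathrm{im}}$, and therefore $R^{\mathrm{c}}$, are all stable under $\alpha\mapsto-\alpha$ (immediate from the conditions $\theta\alpha=\mp\alpha$). Hence for any $\alpha\in R^{\mathrm{c}}$ and any $\beta\in R^{\mathrm{re}}\cup R^{\mathrm{im}}$, one has $\beta\neq-\alpha$, so $\mathfrak g_{\alpha}$ is $B$-orthogonal to $\mathfrak g_{\beta}$. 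Summing over $\beta$ gives $\mathfrak g_{\alpha}\subset(\mathfrak i_{\C}\oplus\mathfrak r_{\C})^{\perp}\cap\mathfrak h^{\perp}_{\C}=\mathfrak c_{\C}$, and therefore
\begin{equation*}
\bigoplus_{\alpha\in R^{\mathrm{c}}}\mathfrak g_{\alpha}\subset\mathfrak c_{\C}.
\end{equation*}

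Finally, equality follows from dimensions. Since $R$ is the disjoint union of $R^{\mathrm{re}}$, $R^{\mathrm{im}}$, and $R^{\mathrm{c}}$ by definition (\ref{eq:clo-2}), and each $\mathfrak g_{\alpha}$ is one-dimensional, we get $\dim_{\C}\mathfrak h^{\perp}_{\C}=|R|=|R^{\mathrm{re}}|+|R^{\mathrm{im}}|+|R^{\mathrm{c}}|$, while $\dim_{\C}(\mathfrak i_{\C}\oplus\mathfrak r_{\C})=|R^{\mathrm{re}}|+|R^{\mathrm{im}}|$, so that $\dim_{\C}\mathfrak c_{\C}=|R^{\mathrm{c}}|$, which matches the dimension of the displayed direct sum. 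The inclusion is therefore an equality, giving (\ref{eq:jou8}). No part of this argument is genuinely hard; the only thing worth double-checking is the stability of $R^{\mathrm{re}}$ and $R^{\mathrm{im}}$ under negation, which is what makes the $B$-orthogonality between $\mathfrak g_{\alpha}$ for $\alpha\in R^{\mathrm{c}}$ and the root spaces inside $\mathfrak i_{\C}\oplus\mathfrak r_{\C}$ work out without any leftover pairing.
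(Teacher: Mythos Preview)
Your proof is correct and follows essentially the same approach as the paper. The paper's proof is a one-liner citing equations (\ref{eq:comm-1a}), (\ref{eq:jou1}), and (\ref{eq:jou7}); your orthogonality-plus-dimension argument is a natural and correct way to make that citation explicit.
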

\begin{proof}
	This follows from equations (\ref{eq:comm-1a}),
	(\ref{eq:jou1}),  and (\ref{eq:jou7}). 
\end{proof}

Now we give a result taken from \cite[Lemma 2.3.5]{Wallach88}.
\begin{proposition}\label{prop:Preal}
	A $\theta$-stable Cartan subalgebra $\mathfrak h$ is fundamental 
	if and only if there are no 
	real roots.
\end{proposition}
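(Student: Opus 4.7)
My plan is to prove the two directions separately, using Propositions \ref{prop:pco} and \ref{prop:porth} as the main inputs, together with the definition (\ref{eq:cong1}) of $\mathfrak r$.

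For the easy direction, assume $R^{\mathrm{re}}=\emptyset$. By Proposition \ref{prop:porth}, we have $\mathfrak r_{\C}=\oplus_{\alpha\in R^{\mathrm{re}}}\mathfrak g_{\alpha}=0$, hence $\mathfrak r=0$. By Definition \ref{def:Dm}, this means $\ker\mathrm{ad}(\mathfrak h_{\mathfrak k})\cap\mathfrak h^{\perp}=0$. Combined with the splitting (\ref{eq:cor0}), the centralizer of $\mathfrak h_{\mathfrak k}$ in $\mathfrak g$ is exactly $\mathfrak h$. Intersecting with $\mathfrak k$ gives $\mathfrak z_{\mathfrak k}(\mathfrak h_{\mathfrak k})=\mathfrak h_{\mathfrak k}$, so $\mathfrak h_{\mathfrak k}$ is a Cartan subalgebra of the compact Lie algebra $\mathfrak k$; thus $\mathfrak h$ is fundamental.

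For the harder direction, assume $\mathfrak h$ is fundamental, so $\mathfrak h_{\mathfrak k}$ is a Cartan subalgebra of $\mathfrak k$, and argue by contradiction: suppose $\alpha\in R^{\mathrm{re}}$. By Proposition \ref{prop:pco}, $\mathfrak g_{\alpha}$ is the complexification of a real line $V\subset\mathfrak h^{\perp}$; pick a nonzero $f\in V$ and split it as $f=f_{\mathfrak p}+f_{\mathfrak k}$ along the Cartan decomposition (\ref{eq:cent1}). Since $\alpha$ vanishes on $\mathfrak h_{\mathfrak k}$, for every $b\in\mathfrak h_{\mathfrak k}$ we have $[b,f]=0$; as $\mathrm{ad}(\mathfrak h_{\mathfrak k})$ preserves the Cartan splitting, this forces $[b,f_{\mathfrak p}]=0$ and $[b,f_{\mathfrak k}]=0$ separately. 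Hence $f_{\mathfrak k}\in\mathfrak z_{\mathfrak k}(\mathfrak h_{\mathfrak k})=\mathfrak h_{\mathfrak k}$, while $f\in\mathfrak h^{\perp}$ together with (\ref{eq:spli1}) forces $f_{\mathfrak k}\in\mathfrak h^{\perp}_{\mathfrak k}\cap\mathfrak h_{\mathfrak k}=0$. Thus $f=f_{\mathfrak p}\in\mathfrak p\cap\mathfrak h^{\perp}$.

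To close the argument, I use that a real root does not vanish on $\mathfrak h_{\mathfrak p}$: pick $b\in\mathfrak h_{\mathfrak p}$ with $\alpha(b)\neq 0$, noting that $\alpha(b)$ is real. Then
\begin{equation*}
[b,f]=\alpha(b)f\in\mathfrak p,
\end{equation*}
because $f\in\mathfrak p$ and $\alpha(b)\in\R$. On the other hand, $b\in\mathfrak p$ and $f\in\mathfrak p$ force $[b,f]\in[\mathfrak p,\mathfrak p]\subset\mathfrak k$. Therefore $[b,f]\in\mathfrak p\cap\mathfrak k=0$, so $f=0$, contradicting our choice. This proves $R^{\mathrm{re}}=\emptyset$. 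The main subtlety is the second direction, specifically using Proposition \ref{prop:pco} to produce a \emph{real} element $f\in\mathfrak g_{\alpha}$ so that the $\mathfrak p/\mathfrak k$ splitting is meaningful; the rest reduces to the Cartan relations.
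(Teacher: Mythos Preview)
Your proof is correct. The direction ``no real roots $\Rightarrow$ fundamental'' is essentially the same as the paper's (which phrases it contrapositively via $\mathfrak z(\mathfrak h_{\mathfrak k})\supsetneq\mathfrak h$), only you route it through Proposition~\ref{prop:porth} to identify $\mathfrak r$.

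In the other direction the two arguments diverge. The paper observes that when $\mathfrak h$ is fundamental one has $\mathfrak z(\mathfrak h_{\mathfrak k})=\mathfrak h$ (this is the content of the construction (\ref{eq:gena1})); hence any $f\in\mathfrak g_{\alpha}$ with $\alpha$ real, which automatically commutes with $\mathfrak h_{\mathfrak k}$, lies in $\mathfrak h_{\C}\cap\mathfrak h^{\perp}_{\C}=0$. You avoid invoking the full identity $\mathfrak z(\mathfrak h_{\mathfrak k})=\mathfrak h$ and use only the weaker $\mathfrak z_{\mathfrak k}(\mathfrak h_{\mathfrak k})=\mathfrak h_{\mathfrak k}$; this forces you to first pass to a \emph{real} $f$ via Proposition~\ref{prop:pco}, show $f\in\mathfrak p$, and then close with the Cartan relation $[\mathfrak p,\mathfrak p]\subset\mathfrak k$. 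Your route is a couple of steps longer but has the mild advantage of not appealing to the fact that $\mathfrak z(\mathfrak t)$ is itself a Cartan subalgebra; the paper's argument is cleaner once that fact is in hand.
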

\begin{proof}
	If $\alpha\in R$, then $\alpha\in R^{\mathrm{re}}$ if and only if
	when $f\in \mathfrak g_{\alpha}$, 
	\begin{equation}\label{eq:gena3}
\left[\mathfrak h_{\mathfrak k},f\right]=0.
\end{equation}
If $\mathfrak h$ is fundamental, by (\ref{eq:gena1}), then $f\in 
\mathfrak h_{\C}$, so that $f=0$, which proves there are no real 
roots. Put
\begin{equation}\label{eq:zhk}
\mathfrak z\left(\mathfrak h_{\mathfrak k}\right)=\left\{f\in \mathfrak g, 
\left[\mathfrak h_{\mathfrak k},f\right]=0\right\}.
\end{equation}
Then $\mathfrak z\left(\mathfrak h_{\mathfrak k}\right)$ is a Lie 
subalgebra of $\mathfrak g$ such that $\mathfrak h \subset \mathfrak 
z\left(\mathfrak h_{\mathfrak k}\right)$.
If $\mathfrak h$ is not fundamental, $\mathfrak z\left(\mathfrak 
h_{\mathfrak k}\right)$ is strictly larger than  $\mathfrak h$, and there is a real root.  The proof of our proposition is completed. 
\end{proof}

If $\mathfrak h$ is a  Cartan subalgebra, the associated Cartan subgroup 
\index{H@$H$}%
$H \subset G$ is the stabilizer of $\mathfrak h$. Then $H$ is a Lie 
subgroup of $G$, with Lie algebra $\mathfrak h$.
\begin{proposition}\label{prop:peven}
	The vector spaces $\mathfrak i_{\mathfrak p}, \mathfrak 
	i_{\mathfrak k}, \mathfrak c_{\mathfrak 
	p}, \mathfrak c_{\mathfrak k}$ have even dimension. Also 
	$H\cap K$ preserves these vector spaces, and 
	the corresponding determinants are  equal to $1$. Also 
	$\mathfrak r_{\mathfrak p}, \mathfrak r_{\mathfrak k}$ (resp. 
	$\mathfrak c_{\mathfrak p}, \mathfrak c_{\mathfrak k} $)  have 
	the same dimension, and the actions of $H\cap K$ on 
	these two vector spaces are equivalent. In particular, we have the 
	identity 
	\begin{equation}\label{eq:grup1}
\dim 
\mathfrak p-\dim \mathfrak h_{\mathfrak p}=
\dim \mathfrak i_{\mathfrak p}+\frac{1}{2}\dim \mathfrak 
r+\frac{1}{2}\dim \mathfrak c.
\end{equation}
\end{proposition}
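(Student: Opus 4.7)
The plan is to reduce everything to the root space decomposition $\mathfrak h^{\perp}_{\C} = \oplus_{\alpha \in R} \mathfrak g_{\alpha}$ and analyze the action of $H \cap K$ on each $\mathfrak g_{\alpha}$ via characters. Using Propositions \ref{prop:pco} and \ref{prop:porth}, we know $\mathfrak i_{\C} = \oplus_{R^{\mathrm{im}}}\mathfrak g_{\alpha}$, $\mathfrak r_{\C} = \oplus_{R^{\mathrm{re}}}\mathfrak g_{\alpha}$, $\mathfrak c_{\C} = \oplus_{R^{\mathrm{c}}}\mathfrak g_{\alpha}$, and imaginary root spaces lie entirely in $\mathfrak p_{\C}$ or in $\mathfrak k_{\C}$. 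The idea is to group roots into orbits under the combined action of $\alpha \mapsto -\alpha$ and $\alpha \mapsto \theta\alpha$ and to read off the dimensions, determinants, and $H \cap K$-characters on each orbit.

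First I would set up the characters. Since $H$ centralizes $\mathfrak h$, for every $h \in H$ the operator $\Ad(h)$ preserves each $\mathfrak g_{\alpha}$, acting by a character $\xi_{\alpha}\colon H \to \C^{*}$. The $G$-invariance of $B$ combined with the nondegenerate pairing $\mathfrak g_{\alpha} \times \mathfrak g_{-\alpha} \to \C$ in \eqref{eq:coc3} forces
\begin{equation*}
\xi_{\alpha}(h)\,\xi_{-\alpha}(h) = 1 \quad \text{for all } h \in H.
\end{equation*}
For $h \in H \cap K$ we have $\theta(h) = h$, so the $\C$-linear extension of $\theta$ commutes with $\Ad(h)$; applying this to any $e_{\alpha} \in \mathfrak g_{\alpha}$ and using $\theta\mathfrak g_{\alpha} = \mathfrak g_{\theta\alpha}$ yields
\begin{equation*}
\xi_{\theta\alpha}(h) = \xi_{\alpha}(h) \quad \text{for all } h \in H \cap K.
\end{equation*}

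I would then run three cases. For $\alpha \in R^{\mathrm{im}}_{\mathfrak p}$, the orbit is $\{\alpha,-\alpha\}$ with both spaces in $\mathfrak p_{\C}$ (since $\mathfrak g_{-\alpha} = \overline{\mathfrak g_{\alpha}}$ and $\mathfrak p_{\C}$ is stable under conjugation), contributing $2$ real dimensions to $\mathfrak i_{\mathfrak p}$ with product of characters $\xi_{\alpha}\xi_{-\alpha} = 1$; summing over the pairs gives evenness of $\dim \mathfrak i_{\mathfrak p}$ and $\det \Ad(h)|_{\mathfrak i_{\mathfrak p}} = 1$. The same argument treats $\mathfrak i_{\mathfrak k}$. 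For $\alpha \in R^{\mathrm{c}}$, the orbit $\{\alpha, -\alpha, \theta\alpha, -\theta\alpha\}$ has size $4$; choosing bases $e_{\pm\alpha}$ of $\mathfrak g_{\pm\alpha}$, the vectors $e_{\alpha} - \theta e_{\alpha}$ and $e_{-\alpha} - \theta e_{-\alpha}$ span the $\mathfrak p_{\C}$-part while $e_{\pm\alpha} + \theta e_{\pm\alpha}$ span the $\mathfrak k_{\C}$-part, each of complex dimension $2$; a direct computation shows $H \cap K$ acts on both by the pair of characters $\xi_{\alpha}, \xi_{\alpha}^{-1}$, giving evenness, determinant $1$, and equivalent real representations of $H \cap K$ on $\mathfrak c_{\mathfrak p}$ and $\mathfrak c_{\mathfrak k}$. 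For $\alpha \in R^{\mathrm{re}}$, the orbit is $\{\alpha,-\alpha\}$ with $\theta\alpha = -\alpha$; combining $\xi_{\theta\alpha} = \xi_{\alpha}$ with $\xi_{-\alpha} = \xi_{\alpha}^{-1}$ gives $\xi_{\alpha}^{2} = 1$ on $H \cap K$, and the same basis computation shows each pair contributes one real dimension to $\mathfrak r_{\mathfrak p}$ and one to $\mathfrak r_{\mathfrak k}$ with the same sign character $\xi_{\alpha}$; hence $\dim \mathfrak r_{\mathfrak p} = \dim \mathfrak r_{\mathfrak k}$ and the two $H \cap K$-representations are equivalent.

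Finally, I would derive \eqref{eq:grup1} by writing $\mathfrak p = \mathfrak h_{\mathfrak p} \oplus \mathfrak h^{\perp}_{\mathfrak p}$ and $\mathfrak h^{\perp}_{\mathfrak p} = \mathfrak i_{\mathfrak p} \oplus \mathfrak r_{\mathfrak p} \oplus \mathfrak c_{\mathfrak p}$ from \eqref{eq:jou7} and \eqref{eq:spli1}, then substituting $\dim \mathfrak r_{\mathfrak p} = \tfrac{1}{2}\dim \mathfrak r$ and $\dim \mathfrak c_{\mathfrak p} = \tfrac{1}{2}\dim \mathfrak c$ from the preceding step. The main obstacle is purely bookkeeping: keeping track of which root contributes to which subspace and of the characters along each $\theta$-orbit; no input beyond Propositions \ref{prop:pco}, \ref{prop:porth}, the $B$-duality \eqref{eq:coc3}, and the relation \eqref{eq:cub1} is needed.
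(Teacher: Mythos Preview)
Your argument is correct but follows a genuinely different route from the paper's.

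The paper never decomposes into individual root spaces or invokes the characters $\xi_{\alpha}$ (those only appear later, in Theorem~\ref{thm:careg}). Instead it picks a single element $f_{\mathfrak k}\in\mathfrak h_{\mathfrak k}$ with $\langle\alpha,f_{\mathfrak k}\rangle\neq 0$ for every $\alpha\in R\setminus R^{\mathrm{re}}$, so that $\mathrm{ad}(f_{\mathfrak k})$ is a real, classically antisymmetric, invertible endomorphism of each of $\mathfrak i_{\mathfrak p},\mathfrak i_{\mathfrak k},\mathfrak c_{\mathfrak p},\mathfrak c_{\mathfrak k}$; this forces even dimension. Since $\Ad(k^{-1})$ for $k\in H\cap K$ commutes with $\mathrm{ad}(f_{\mathfrak k})$, the $-1$--eigenspace of $\Ad(k^{-1})$ is $\mathrm{ad}(f_{\mathfrak k})$--stable, hence even-dimensional, giving determinant $1$. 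For the equality of dimensions and the equivalence of representations on the $\mathfrak r$ and $\mathfrak c$ pieces, the paper picks $b_{\mathfrak p}\in\mathfrak h_{\mathfrak p}$ with $\langle\alpha,b_{\mathfrak p}\rangle\neq 0$ for all $\alpha\in R\setminus R^{\mathrm{im}}$; then $\mathrm{ad}(b_{\mathfrak p})$ is a real automorphism of $\mathfrak r$ and of $\mathfrak c$ that swaps their $\mathfrak p$-- and $\mathfrak k$--components and commutes with $H\cap K$, so it is itself the intertwiner. The final identity \eqref{eq:grup1} is obtained exactly as you do.

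What each approach buys: the paper's argument is short and entirely real --- the intertwiner $\mathrm{ad}(b_{\mathfrak p})$ is a genuine real isomorphism, so the equivalence of \emph{real} $H\cap K$--representations is immediate. Your character computation is more explicit and gives finer information (the actual characters on each orbit), but the step ``equivalent real representations of $H\cap K$ on $\mathfrak c_{\mathfrak p}$ and $\mathfrak c_{\mathfrak k}$'' deserves one more sentence: you have matched the characters on the complexifications, and you should either note that real representations of a compact group with isomorphic complexifications are isomorphic, or exhibit a real intertwiner (for instance $\mathrm{ad}(b_{\mathfrak p})$ as above). Also, strictly speaking your use of the characters $\xi_{\alpha}$ anticipates Theorem~\ref{thm:careg}; this is harmless since the relevant facts ($H$ preserves each $\mathfrak g_{\alpha}$ via a character, $\xi_{\alpha}\xi_{-\alpha}=1$, $\xi_{\theta\alpha}=\xi_{\alpha}$ on $H\cap K$) are elementary and independent of the rest of that theorem.
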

\begin{proof}
	If $\alpha\in R\setminus R^{\mathrm{re}}$, the 
	 vanishing locus of $\alpha$ in $\mathfrak h_{\mathfrak k}$ is a 
	hyperplane. Therefore we can find $f_{\mathfrak k}\in \mathfrak 
	h_{\mathfrak k}\setminus 0$ such for any $\alpha\in R\setminus 
	R^{\mathrm{re}}$, $\left\langle  \alpha,f_{\mathfrak 
	k}\right\rangle\neq 0$, which just says  that 
	$\mathrm{ad}\left(f_{\mathfrak k}\right)$ acts as an invertible 
	endomorphism of $\mathfrak i, \mathfrak c$. This endomorphism preserves 
	their $\mathfrak p$ and $\mathfrak k$ components, and it is 
	classically antisymmetric. This is only possible if these vector 
	spaces are even-dimensional. If $k\in  H\cap K$,  $\Ad\left(k^{-1}\right)$ preserves 
	these vector spaces and commutes with $\mathrm{ad}\left(f_{\mathfrak 
	k}\right)$. Therefore the eigenspaces associated with the eigenvalue 
	$-1$ are preserved by $\mathrm{ad}\left(f_{\mathfrak k}\right)$, so they 
	are even-dimensional. This forces the determinant of 
	$\Ad\left(k^{-1}\right)$ to be equal to $1$ on each of these 
	vector spaces. 
	
	We choose $ b_{\mathfrak p}\in \mathfrak h_{\mathfrak 
	p}\setminus 0$ such that for any $\alpha\in R\setminus 
	R^{\mathrm{im}}$, $\left\langle  \alpha,b_{\mathfrak 
	p}\right\rangle\neq 0$. Therefore	$\mathrm{ad}\left(b_{\mathfrak p}\right)$ acts as an automorphism of 
	$\mathfrak r, \mathfrak c$ that 
	exchanges the corresponding $\mathfrak p$ and $\mathfrak k$ 
	parts, and commutes with $\Ad\left(k^{-1}\right)$. 
	
		By (\ref{eq:jou7}), we get
		\begin{equation}\label{eq:grup2}
\mathfrak h_{\mathfrak p}^{\perp}= \mathfrak i_{\mathfrak p} \oplus 
\mathfrak r_{\mathfrak p} \oplus \mathfrak c_{\mathfrak p}.
\end{equation}
Using the results we already established and (\ref{eq:grup2}), we get 
(\ref{eq:grup1}). The proof of our proposition is completed. 
\end{proof}
\subsection{A positive root system}%
\label{subsec:posro}
Let $\mathfrak h$ be a $\theta$-stable Cartan subalgebra, and let $R$ 
denote the corresponding root system. Let $R_{+} \subset R$ be a 
positive root system. Set
\index{Rre@$R_{+}^{\mathrm{re}}$}%
\index{Rim@$R_{+}^{\mathrm{im}}$}%
\index{Rcp@$R^{\mathrm{c}}_{+}$}%
\begin{align}\label{eq:cor1}
&R_{+}^{\mathrm{re}}=R_{+}\cap R^{\mathrm{re}}, 
&R_{+}^{\mathrm{im}}=R_{+}\cap R^{\mathrm{im}},
\qquad R^{\mathrm{c}}_{+}=R_{+}\cap R^{\mathrm{c}}, 
\end{align}
so that
\begin{equation}\label{eq:cor1ax1}
R_{+}=R^{\mathrm{re}}_{+}\cup R_{+}^{\mathrm{im}}\cup R_{+}^{\mathrm{c}}.
\end{equation}

In the whole paper, we  choose $R_{+}$ such that  $-\theta$ 
preserves 
$R_{+}\setminus R^{\mathrm{im}}_{+}$.  Equivalently, we assume that if $\alpha\in 
R_{+}\setminus R^{\mathrm{im}}_{+}$,  then 
$\overline{\alpha}\in R_{+}$. 

Let us  explain how to do this. If $\alpha\in R\setminus 
R^{\mathrm{im}}$, the vanishing locus of $\alpha$ in $\mathfrak 
h_{\mathfrak p}$ is a hyperplane, and so there is 
$b_{\mathfrak p}\in \mathfrak h_{\mathfrak p}, \left\vert  b_{\mathfrak 
p}\right\vert=1$ such that for any 
$\alpha\in R\setminus R^{\mathrm{im}}$, $\left\langle  
\alpha,b_{\mathfrak p}\right\rangle \neq 0$. The same argument shows 
that there is $b_{\mathfrak k}\in h_{\mathfrak k}, \left\vert  
b_{\mathfrak k}\right\vert=1$ such that for $\alpha\in 
R^{\mathrm{im}}$, $\left\langle  \alpha,b_{\mathfrak 
k}\right\rangle\neq 0$. For $\epsilon>0$, $b_{\pm}=\pm b_{\mathfrak p}+ 
i\epsilon b_{\mathfrak k}\in \mathfrak h_{\mathfrak p} \oplus i \mathfrak h_{\mathfrak k}$, and 
$\theta$ interchanges $b_{+}$ and $b_{-}$. Also for $\epsilon>0$ 
small enough, for $\alpha\in R$, the real numbers $\left\langle  
\alpha,b_{\pm}\right\rangle$ do not vanish, and if $\alpha\in 
R\setminus R^{\mathrm{im}}$, they   have opposite signs.  Put
\begin{equation}\label{eq:cup1}
R_{+}=\left\{\alpha\in R,\left\langle  
\alpha,b_{+}\right\rangle>0\right\}.
\end{equation}
Then $R_{+}$ is a positive root system such that $-\theta$ preserves 
$R_{+}\setminus R_{+}^{\mathrm{im}}$.

Note  that $-\theta$ acts without fixed points on 
$R^{\mathrm{c}}_{+}$, so that $\left\vert R^{\mathrm{c}}_{+} \right\vert$ is even.
\begin{definition}\label{def:hm0}
	Put
	\index{cp@$\mathfrak c_{+}$}%
	\index{cm@$\mathfrak c_{-}$}%
	\begin{align}\label{eq:ham0}
&\mathfrak c_{+,\C}=\oplus_{\alpha\in 
R^{\mathrm{c}}_{+}} \mathfrak g_{\alpha},
&\mathfrak c_{-,\C}=\oplus_{\alpha\in 
-R^{\mathrm{c}}_{+}} \mathfrak g_{\alpha}.
\end{align}
\end{definition}
\begin{proposition}\label{prop:Ps}
	The vector spaces $\mathfrak c_{+,\C}, \mathfrak c_{-,\C}$ are the 
	complexifications of real Lie subalgebras $\mathfrak c_{+}, 
	\mathfrak c_{-}$ of $\mathfrak g$,  which have the same even dimension, and are such 
	that
	\begin{align}\label{eq:ham-1}
&\mathfrak c= \mathfrak c_{+} \oplus \mathfrak c_{-},
&\mathfrak c_{-}=\theta \mathfrak c_{+}.
\end{align}
Also $B$ vanishes on $\mathfrak c_{+}, \mathfrak c_{-}$ and induces 
the identification,
\begin{equation}\label{eq:ham-2}
\mathfrak c_{-} \simeq  \mathfrak c_{+}^{*}.
\end{equation}
The projections on $\mathfrak p, \mathfrak k$ map $\mathfrak 
c_{\pm}$ into $\mathfrak c_{\mathfrak p}, \mathfrak c_{\mathfrak k}$ 
isomorphically. Finally, the actions of $H\cap K$ on $\mathfrak c_{+}, \mathfrak c_{-}, \mathfrak 
c_{\mathfrak p}, \mathfrak c_{\mathfrak k}$ are equivalent.
\end{proposition}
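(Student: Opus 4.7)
My plan is to derive each listed property from the description of $\mathfrak{c}_{\pm,\C}$ as sums of weight spaces and the compatibility of the chosen $R_+$ with $\theta$ and complex conjugation.

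First I would establish the real structure and the Cartan involution exchange. Since every root lies in $\mathfrak{h}_{\mathfrak{p}}^{*}\oplus i\mathfrak{h}_{\mathfrak{k}}^{*}$, one directly computes $\bar{\alpha}=-\theta\alpha$. By the construction of $R_{+}$ in Subsection \ref{subsec:posro}, the involution $-\theta$ preserves $R_{+}\setminus R_{+}^{\mathrm{im}}$, and a fortiori preserves $R^{\mathrm{c}}_{+}$; hence complex conjugation permutes the summands of $\mathfrak{c}_{+,\C}$ via Proposition \ref{prop:pco}, so $\mathfrak{c}_{+,\C}$ is the complexification of $\mathfrak{c}_{+}:=\mathfrak{c}_{+,\C}\cap\mathfrak{g}$, and similarly for $\mathfrak{c}_{-}$. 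Using (\ref{eq:cub1}), $\theta\mathfrak{g}_{\alpha}=\mathfrak{g}_{\theta\alpha}$, and since $\theta R^{\mathrm{c}}_{+}=-\overline{R^{\mathrm{c}}_{+}}=-R^{\mathrm{c}}_{+}$, we obtain $\theta\mathfrak{c}_{+,\C}=\mathfrak{c}_{-,\C}$, giving $\mathfrak{c}_{-}=\theta\mathfrak{c}_{+}$. Combined with Proposition \ref{prop:psor} and the partition $R^{\mathrm{c}}=R^{\mathrm{c}}_{+}\sqcup(-R^{\mathrm{c}}_{+})$, this yields $\mathfrak{c}=\mathfrak{c}_{+}\oplus\mathfrak{c}_{-}$.

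Next, the dimensions and the bilinear form. Since $-\theta$ acts without fixed points on $R^{\mathrm{c}}_{+}$ (Subsection \ref{subsec:posro}), the integer $|R^{\mathrm{c}}_{+}|$ is even, whence $\dim_{\R}\mathfrak{c}_{\pm}=|R^{\mathrm{c}}_{+}|$ is even and, by Proposition \ref{prop:peven}, equals $\dim\mathfrak{c}_{\mathfrak{p}}=\dim\mathfrak{c}_{\mathfrak{k}}=|R^{\mathrm{c}}|/2$. Since $R^{\mathrm{c}}_{+}\cap(-R^{\mathrm{c}}_{+})=\emptyset$, the $B$-orthogonality between root spaces associated with non-opposite roots forces $B$ to vanish on each of $\mathfrak{c}_{+,\C}$ and $\mathfrak{c}_{-,\C}$; while the nondegenerate pairing $\mathfrak{g}_{\alpha}\times\mathfrak{g}_{-\alpha}\to\C$ of (\ref{eq:coc3}) assembles into a perfect duality $\mathfrak{c}_{+,\C}\times\mathfrak{c}_{-,\C}\to\C$, which descends via the real structure to $\mathfrak{c}_{-}\simeq\mathfrak{c}_{+}^{*}$.

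Finally, the projections, equivariance, and Lie subalgebra property. If $v\in\mathfrak{c}_{+}$ has $\pi_{\mathfrak{p}}v=0$, then $v=\theta v\in\theta\mathfrak{c}_{+}=\mathfrak{c}_{-}$, so $v\in\mathfrak{c}_{+}\cap\mathfrak{c}_{-}=0$; thus the restriction of $\pi_{\mathfrak{p}}$ to $\mathfrak{c}_{+}$ is injective and, by the dimension match, an isomorphism onto $\mathfrak{c}_{\mathfrak{p}}$, and likewise $\pi_{\mathfrak{k}}:\mathfrak{c}_{+}\simeq\mathfrak{c}_{\mathfrak{k}}$. Since $H\cap K$ preserves each $\mathfrak{c}_{\pm}$ and commutes with $\theta$, these projections are $H\cap K$-equivariant, producing the claimed equivalences. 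For the Lie subalgebra property, take $\alpha,\beta\in R^{\mathrm{c}}_{+}$ with $\alpha+\beta\in R$; then $\alpha+\beta\in R_{+}$, and the strict positivity of $\alpha(b_{\mathfrak{p}})$, $\beta(b_{\mathfrak{p}})$ for complex positive roots (small $\epsilon$ in the construction of $b_{+}$) rules out $\alpha+\beta\in R^{\mathrm{im}}_{+}$. The delicate case $\alpha+\beta\in R^{\mathrm{re}}_{+}$ is the main obstacle, and I would handle it by refining the genericity of $b_{\mathfrak{p}},b_{\mathfrak{k}}$ so that no $\mathfrak{h}_{\mathfrak{k}}$-components of two complex positive roots cancel on a root, or by a direct argument using Proposition \ref{prop:porth}. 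Once this is dispatched, $\alpha+\beta\in R^{\mathrm{c}}_{+}$, so $\mathfrak{c}_{+,\C}$ is closed under the bracket, and the real statement for $\mathfrak{c}_{+}$ follows by conjugation-invariance; the same applies to $\mathfrak{c}_{-}=\theta\mathfrak{c}_{+}$.
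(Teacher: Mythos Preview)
Your argument tracks the paper's proof almost step for step: the real structure via Proposition~\ref{prop:pco} and the $-\theta$-stability of $R^{\mathrm{c}}_{+}$, the exchange $\theta\mathfrak c_{+}=\mathfrak c_{-}$, the vanishing and duality for $B$ from the root-space orthogonality around (\ref{eq:coc3}), the even dimension from $|R^{\mathrm{c}}_{+}|$ even, and the projections via $\frac{1}{2}(1\mp\theta)$ together with the $H\cap K$-equivariance from Proposition~\ref{prop:peven}. For all of these the match with the paper is essentially exact.

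The one point of divergence is the Lie subalgebra claim. The paper's proof simply records ``The fact that these are Lie subalgebras is obvious'' and moves on. You correctly isolate the only nontrivial case, namely $\alpha,\beta\in R^{\mathrm{c}}_{+}$ with $\alpha+\beta\in R^{\mathrm{re}}_{+}$, but your proposed fixes do not close the gap. Refining the genericity of $b_{\mathfrak p},b_{\mathfrak k}$ cannot help: whether $\alpha+\beta$ is real is intrinsic to the root (it depends only on $(\alpha+\beta)|_{\mathfrak h_{\mathfrak k}}$), not on the choice of positive system; and nothing in Proposition~\ref{prop:porth} addresses sums of complex roots landing in $R^{\mathrm{re}}$. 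Indeed, taking $\beta=-\theta\alpha\in R^{\mathrm{c}}_{+}$ gives $\alpha+\beta=\alpha-\theta\alpha$, which vanishes on $\mathfrak h_{\mathfrak k}$, so whenever $\alpha-\theta\alpha$ happens to be a root one would have $[\mathfrak g_{\alpha},\mathfrak g_{-\theta\alpha}]\subset\mathfrak r_{\C}$, outside $\mathfrak c_{+,\C}$. So neither of your sketches actually rules this out.

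That said, the paper provides no argument here either, and a scan of the rest of the paper shows the Lie subalgebra property of $\mathfrak c_{\pm}$ is never used---only the dimension counts, the $\theta$-exchange, the $B$-duality, and the $H\cap K$-equivalences are invoked later (e.g.\ in Proposition~\ref{prop:proo} and equation (\ref{eq:cong3az1})). So your incomplete treatment of this point matches the paper's own level of detail, and the gap is inessential to everything downstream.
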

\begin{proof}
	By Proposition \ref{prop:pco}, $\mathfrak c_{+,\C}, \mathfrak 
	c_{-,\C}$ are  stable by conjugation, and so they are 
	complexifications of real vector spaces $\mathfrak c_{+}, 
	\mathfrak c_{-} $. The fact that these are Lie subalgebras is 
	obvious. Since $\left\vert  R_{+}^{\mathrm{c}}\right\vert$ is 
	even, these vector spaces are even-dimensional, and also they have the same dimension. By 
	Proposition \ref{prop:pco},  $\theta$ induces an isomorphism 
	of $\mathfrak c_{+}$ into $\mathfrak c_{-}$.  Using the 
	considerations that follow (\ref{eq:coc3}), we find that $B$ 
	vanishes on $\mathfrak c_{+}, \mathfrak c_{-}$, and we obtain 
	(\ref{eq:ham-2}). By Proposition \ref{prop:peven}, $\mathfrak 
	c_{+}, \mathfrak c_{-}, \mathfrak c_{\mathfrak p}, \mathfrak 
	c_{\mathfrak k}$ have the same even dimension. The projections on 
	$\mathfrak p, \mathfrak k$ are given respectively by 
	$\frac{1}{2}\left(1\mp\theta\right)$. Since $\theta$ exchanges 
	$\mathfrak c_{+}$ and $\mathfrak c_{-}$, they restrict to 
	isomorphisms on $\mathfrak c_{+}, \mathfrak c_{-}$. By Proposition 
	\ref{prop:peven}, we know that the actions $H\cap K$ on 
	$\mathfrak c_{\mathfrak p}, \mathfrak c_{\mathfrak k}$ are 
	equivalent. Since the adjoint action  of $H\cap K$ commutes with 
	$\theta$, the  corresponding representations of $H\cap K$ on 
	these vector spaces are equivalent. The proof of our proposition is completed. \end{proof}

If $f\in \mathfrak h$, then
\begin{equation}\label{eq:coc3a4}
\det\mathrm{ad}\left(f\right)_{\mathfrak h^{\perp}}=\prod_{\alpha\in 
R}^{}\left\langle  \alpha,f\right\rangle.
\end{equation}
\begin{definition}\label{def:dpi}
	Let 
\index{phg@$\pi^{\mathfrak h, \mathfrak g}$}%
	$\pi^{\mathfrak h, \mathfrak g}\in 
S\ac\left(\mathfrak h^{*}_{\C}\right)$ be such that if $h\in 
\mathfrak h_{\C}$, then
\begin{equation}\label{eq:inva4}
\pi^{\mathfrak h, \mathfrak g}\left(h\right)=\prod_{\alpha\in 
R_{+}}^{}\left\langle  \alpha,h\right\rangle.
\end{equation}
\end{definition}

 By (\ref{eq:coc3a4}), if $f\in \mathfrak h$,
\begin{equation}\label{eq:inva5}
\det \mathrm{ad}\left(f\right)_{\vert_{\mathfrak h^{\perp}}}=\pi^{\mathfrak h,\mathfrak g}\left(f\right)
\pi^{\mathfrak h, \mathfrak g}\left(-f\right).
\end{equation}
Also $f\in \mathfrak h$ is regular if and only if $\pi^{\mathfrak h, 
\mathfrak g}\left(f\right)\neq 0$.
\begin{proposition}\label{prop:Pvan}
	The function $\pi^{\mathfrak h,\mathfrak g}$ vanishes 
	identically on $\mathfrak h_{\mathfrak k}$ if and only if 
	$\mathfrak h$ is not fundamental.
\end{proposition}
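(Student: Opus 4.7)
The plan is to reduce the proposition directly to Proposition \ref{prop:Preal} by reading off which roots can or cannot vanish identically on $\mathfrak h_{\mathfrak k}$. Recall from Subsection \ref{subsec:reim} that every root $\alpha\in R$ lies in $\mathfrak h^{*}_{\mathfrak p}\oplus i\mathfrak h^{*}_{\mathfrak k}$, and that $\alpha$ vanishes on $\mathfrak h_{\mathfrak k}$ precisely when its $i\mathfrak h^{*}_{\mathfrak k}$ component is zero, i.e., $\alpha\in\mathfrak h^{*}_{\mathfrak p}$. Since $\theta$ acts as $-1$ on $\mathfrak h_{\mathfrak p}$ and as $+1$ on $\mathfrak h_{\mathfrak k}$, this is equivalent to $\theta\alpha=-\alpha$, that is, $\alpha\in R^{\mathrm{re}}$.

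For the ``if'' direction, suppose $\mathfrak h$ is not fundamental. By Proposition \ref{prop:Preal} there exists a real root $\alpha$. Replacing $\alpha$ by $-\alpha$ if necessary, we may assume $\alpha\in R^{\mathrm{re}}_{+}$. Then $\langle\alpha,h\rangle=0$ for every $h\in\mathfrak h_{\mathfrak k}$, so this factor of the product in (\ref{eq:inva4}) kills $\pi^{\mathfrak h,\mathfrak g}$ on $\mathfrak h_{\mathfrak k}$.

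For the ``only if'' direction, suppose $\mathfrak h$ is fundamental, so by Proposition \ref{prop:Preal} there are no real roots. Then for every $\alpha\in R_{+}$, the $i\mathfrak h^{*}_{\mathfrak k}$ component of $\alpha$ is nonzero, hence the linear form $h\mapsto\langle\alpha,h\rangle$ does not vanish identically on $\mathfrak h_{\mathfrak k}$. Since the complexified polynomial ring on $\mathfrak h_{\mathfrak k,\C}$ is an integral domain, the product of these finitely many nonzero linear forms is a nonzero polynomial on $\mathfrak h_{\mathfrak k}$; in particular $\pi^{\mathfrak h,\mathfrak g}$ does not vanish identically on $\mathfrak h_{\mathfrak k}$.

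There is no real obstacle here: the statement is essentially a repackaging of the characterization of fundamental Cartan subalgebras through the absence of real roots, together with the fact that a product of polynomials vanishes identically only if one of the factors does. If anything, the only point that deserves care is the identification of the condition ``$\alpha$ vanishes on $\mathfrak h_{\mathfrak k}$'' with $\alpha\in R^{\mathrm{re}}$, which I handled above by bookkeeping of the $\theta$-eigenvalues on $\mathfrak h$.
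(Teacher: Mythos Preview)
Your proof is correct and follows essentially the same route as the paper's: both reduce the question to Proposition \ref{prop:Preal} by observing that a root vanishes identically on $\mathfrak h_{\mathfrak k}$ if and only if it is real, and that a product of linear forms vanishes identically only if one factor does. Your version is slightly more explicit about the integral-domain step and the $\theta$-eigenvalue bookkeeping, but the argument is the same.
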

\begin{proof}
	Assume that $\mathfrak h$ is not fundamental. By Proposition 
	\ref{prop:Preal}, there are real roots, and so there are real 
	positive roots. If $\alpha\in R^{\mathrm{re}}_{+}$, then 
	$\alpha$ vanishes on $\mathfrak h_{\mathfrak k}$, and so 
	$\pi^{\mathfrak h, \mathfrak g}$ vanishes on $\mathfrak 
	h_{\mathfrak k}$. Conversely, if $\pi^{\mathfrak h,\mathfrak g}$ 
	vanishes on $\mathfrak h_{\mathfrak k}$, one of the $\alpha\in 
	R_{+}$ has to vanish identically on $\mathfrak h_{\mathfrak k}$, 
	so that $\alpha\in R^{\mathrm{re}}_{+}$,  and $\mathfrak h$ is 
	not fundamental. The proof of our proposition is completed. 
\end{proof}
\subsection{The case when $\mathfrak h$ is fundamental and the root 
system of $\left(\mathfrak h_{\mathfrak k},\mathfrak k\right)$}%
\label{subsec:rofu}
In this Subsection, we assume that $\mathfrak h$ is a $\theta$-stable 
fundamental Cartan subalgebra of $\mathfrak g$. By (\ref{eq:clo-2}), 
we get
\begin{equation}\label{eq:clo-3}
R^{\mathrm{c}}=R\setminus R^{\mathrm{im}}.
\end{equation}
By Proposition \ref{prop:porth}, 
$\mathfrak r=0$.  By (\ref{eq:jou7}), we have the orthogonal 
splitting,
\begin{align}\label{eq:ham1}
&\mathfrak h^{\perp}_{\mathfrak p}=\mathfrak i_{\mathfrak p} \oplus 
\mathfrak c_{\mathfrak p},
&\mathfrak h^{\perp}_{\mathfrak k}=\mathfrak i_{\mathfrak k} \oplus \mathfrak 
c_{\mathfrak k}.
\end{align}
Also $\mathfrak i_{\mathfrak p}, \mathfrak i_{\mathfrak k}$ have even 
dimension,  $\mathfrak c_{\mathfrak p}, \mathfrak c_{\mathfrak k}$ 
have the same even dimension, and the action of $H\cap K$ on these 
last two vector spaces are conjugate. 

The roots in $R$ do not vanish identically on $\mathfrak h_{\mathfrak 
k}$. We will now  reinforce the choice of positive roots made in 
Subsection \ref{subsec:posro}.  We may and we will assume that 
$b_{\mathfrak k}\in \mathfrak h_{\mathfrak k}, \left\vert  
b_{\mathfrak k}\right\vert=1$ has been chosen so that if $\alpha\in 
R$, $\left\langle  \alpha,b_{\mathfrak k}\right\rangle\neq 0$. 

As we saw in Subsection \ref{subsec:posro}, $-\theta$ acts without 
fixed points on $R^{\mathrm{c}}_{+}$. Also if 
$\alpha\in R^{\mathrm{c}}_{+}$, $-\theta 
\alpha\vert_{\mathfrak h_{\mathfrak k}}=-\alpha\vert_{\mathfrak 
h_{\mathfrak k}}$, so that the nonzero real  numbers $\left\langle  \alpha, ib_{\mathfrak 
k}\right\rangle$ and $\left\langle  -\theta \alpha,ib_{\mathfrak 
k}\right\rangle$ have opposite signs.

Set
\index{Rimkp@$R^{\mathrm{im}}_{\mathfrak k,+}$}%
	\index{Rcpp@$R^{\mathrm{c}}_{++}$}%
	\begin{align}\label{eq:ham3}
		&R^{\mathrm{im}}_{\mathfrak k,+}=R^{\mathrm{im}}_{\mathfrak 
		k}\cap R_{+},
&R^{\mathrm{c}}_{++}=\left\{\alpha\in R^{\mathrm{c}}_{+}, 
 \left\langle  \alpha,ib_{\mathfrak k}\right\rangle>0 \right\}.
\end{align}

\begin{definition}\label{def:Dpp}
	Let 
	\index{Rhk@$R\left(\mathfrak h_{\mathfrak k}, \mathfrak k\right)$}%
	$R\left(\mathfrak h_{\mathfrak k}, \mathfrak k\right)$ be the 
	root system associated with the pair  $\left(\mathfrak h_{\mathfrak k}, 
	\mathfrak k\right)$.
	If 
	\index{Rhkk@$R_{+}\left(\mathfrak h_{\mathfrak k}, \mathfrak k\right)$}%
	$R_{+}\left(\mathfrak h_{\mathfrak k}, \mathfrak k\right)$ is a 
positive root system for $\left(\mathfrak h_{\mathfrak k}, \mathfrak 
k\right)$, if $h_{\mathfrak k}\in \mathfrak h_{\mathfrak k,\C}$, put
\index{phk@$\pi^{\mathfrak h_{\mathfrak k}, \mathfrak k}$}%
\begin{equation}\label{eq:ham-3}
\pi^{\mathfrak h_{\mathfrak k}, \mathfrak k}\left(h_{\mathfrak 
k}\right)=\prod_{\beta\in R_{+}\left(\mathfrak h_{\mathfrak k}, 
\mathfrak k\right)}^{}\left\langle  \beta, h_{\mathfrak 
k}\right\rangle.
\end{equation}
	\end{definition}
Then $\left[\pi^{\mathfrak h_{\mathfrak k}, \mathfrak 
k}\right]^{2}\left(h_{\mathfrak k}\right)$ 
does not depend on the choice of $R_{+}\left(\mathfrak h_{\mathfrak k}, 
\mathfrak k\right)$. The  arguments above (\ref{eq:ham3}) show that if $h\in \mathfrak h_{\mathfrak k}$, 
then 
\begin{equation}\label{eq:ham-4}
\prod_{\alpha\in R_{+}^{\mathrm{c}}}^{}\left\langle  \alpha,h_{\mathfrak 
k}\right\rangle\ge 0.
\end{equation}
\begin{proposition}\label{prop:proo}
	The map $\alpha\in R_{\mathfrak k}^{\mathrm{im}}\cup 
	R^{\mathrm{c}}_{+} \to \alpha\vert_{\mathfrak 
	h_{\mathfrak k}}$ is injective, and gives the identification
	\begin{equation}\label{eq:ham4}
R\left(\mathfrak h_{\mathfrak k}, \mathfrak 
k\right)=R^{\mathrm{im}}_{\mathfrak k} \cup R^{\mathrm{c}}_{+}.
\end{equation}
A positive root system $R_{+}\left(\mathfrak h_{\mathfrak k}, 
\mathfrak k\right)$ for $\left(\mathfrak h_{\mathfrak k}, 
\mathfrak k\right)$ is given by
\begin{equation}\label{eq:ham5}
R_{+}\left(\mathfrak h_{\mathfrak k}, \mathfrak 
k\right)=R_{\mathfrak k,+}^{\mathrm{im}} \cup R_{++}^{\mathrm{c}}.
\end{equation}
If $h_{\mathfrak k}\in  \mathfrak h_{\mathfrak k,\C}$, then
\begin{equation}\label{eq:ham6}
\left[\pi^{h_{\mathfrak k}, \mathfrak 
k}\left(h_{\mathfrak k}\right)\right]^{2}=\left(-1\right)^{\frac{1}{2}\left\vert  
R_{+}^{\mathrm{c}}\right\vert}\left[\prod_{\alpha\in 
R^{\mathrm{im}}_{\mathfrak k,+}}^{}\left\langle  \alpha,h_{\mathfrak k}\right\rangle\right]^{2}
\prod_{\alpha\in R_{+}^{\mathrm{c}}}^{}\left\langle  
\alpha,h_{\mathfrak k}\right\rangle.
\end{equation}
\end{proposition}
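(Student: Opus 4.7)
The plan is to identify the root space decomposition of $\mathfrak{k}_{\C}$ with respect to $\mathfrak{h}_{\mathfrak{k},\C}$ by exploiting the decomposition $\mathfrak{g}_{\C}=\mathfrak h_{\C}\oplus\bigoplus_{\alpha\in R}\mathfrak g_{\alpha}$ and the projection $(1+\theta)/2:\mathfrak{g}_{\C}\to\mathfrak{k}_{\C}$. For $\alpha\in R^{\mathrm{im}}_{\mathfrak k}$, by Definition \ref{def:Dimco} one has $\mathfrak g_{\alpha}\subset\mathfrak k_{\C}$, and this one–dimensional subspace is a weight space for $\mathfrak h_{\mathfrak k,\C}$ with weight $\alpha\vert_{\mathfrak h_{\mathfrak k}}$. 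For $\alpha\in R^{\mathrm{c}}$, since $\theta\alpha=-\overline\alpha\ne\pm\alpha$ (by Proposition \ref{prop:Preal}, using that $\mathfrak h$ is fundamental, $R^{\mathrm{re}}=\emptyset$), the plane $\mathfrak g_{\alpha}\oplus\mathfrak g_{\theta\alpha}$ is $\theta$-stable and its intersection with $\mathfrak k_{\C}$ is the $1$-dimensional space $\frac{1+\theta}{2}\mathfrak g_{\alpha}$. Because $\theta$ acts trivially on $\mathfrak h_{\mathfrak k}$, this subspace is a weight space with weight $\alpha\vert_{\mathfrak h_{\mathfrak k}}=\theta\alpha\vert_{\mathfrak h_{\mathfrak k}}$.

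Next I would establish injectivity. The choice of $R_+$ (with $\overline{\alpha}\in R_+$ whenever $\alpha\in R_+\setminus R^{\mathrm{im}}_+$) forces exactly one of the pair $\{\alpha,\theta\alpha\}=\{\alpha,-\overline\alpha\}$ to lie in $R^{\mathrm{c}}_+$, since if $\alpha\in R_+$ then $-\overline\alpha\in R_-$. Hence distinct elements of $R^{\mathrm{c}}_+$ lie in distinct $\theta$-orbits and therefore produce distinct weight spaces of $\mathfrak k_{\C}$. Moreover, since the root spaces of the reductive pair $(\mathfrak h_{\mathfrak k},\mathfrak k)$ are one–dimensional, a weight $\gamma$ cannot arise both from some $\alpha\in R^{\mathrm{im}}_{\mathfrak k}$ and from some $\theta$-orbit in $R^{\mathrm{c}}$. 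This yields injectivity of $\alpha\mapsto\alpha\vert_{\mathfrak h_{\mathfrak k}}$ on $R^{\mathrm{im}}_{\mathfrak k}\cup R^{\mathrm{c}}_+$. Surjectivity and the identification (\ref{eq:ham4}) then follow by a dimension count: $\dim\mathfrak k-\dim\mathfrak h_{\mathfrak k}=\vert R^{\mathrm{im}}_{\mathfrak k}\vert+\vert R^{\mathrm{c}}\vert/2=\vert R^{\mathrm{im}}_{\mathfrak k}\cup R^{\mathrm{c}}_+\vert$.

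For the positive root system (\ref{eq:ham5}), the key observation is that for a root $\gamma$ of $\mathfrak k$, positivity with respect to the regular element $ib_{\mathfrak k}$ defines a positive system. For $\alpha\in R^{\mathrm{im}}_{\mathfrak k}$, one has $\alpha\in R^{\mathrm{im}}_{\mathfrak k,+}$ iff $\langle\alpha,ib_{\mathfrak k}\rangle>0$ (by the construction of $R_+$ in Subsection \ref{subsec:rofu}). For $\alpha\in R^{\mathrm{c}}_+$, the two roots $\alpha$ and $\overline\alpha$ both lie in $R^{\mathrm{c}}_+$ and restrict to $\pm\alpha\vert_{\mathfrak h_{\mathfrak k}}$; since $\langle\overline\alpha,ib_{\mathfrak k}\rangle=-\langle\alpha,ib_{\mathfrak k}\rangle$ (the pairing being real for $\alpha\vert_{\mathfrak h_{\mathfrak k}}$ imaginary-valued), exactly one of them sits in $R^{\mathrm{c}}_{++}$. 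Hence $R^{\mathrm{im}}_{\mathfrak k,+}\cup R^{\mathrm{c}}_{++}$ picks precisely one root from each pair $\{\gamma,-\gamma\}$ of $R(\mathfrak h_{\mathfrak k},\mathfrak k)$ and is cut out by the hyperplane $\{\gamma:\gamma(ib_{\mathfrak k})=0\}$, so it is a positive root system.

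Finally, for the formula (\ref{eq:ham6}), square the defining product and split $R^{\mathrm{c}}_+=R^{\mathrm{c}}_{++}\sqcup\overline{R^{\mathrm{c}}_{++}}$. For real $h_{\mathfrak k}\in\mathfrak h_{\mathfrak k}$, $\langle\alpha,h_{\mathfrak k}\rangle$ is purely imaginary (since $\alpha$ is imaginary-valued on $\mathfrak h_{\mathfrak k}$), so $\langle\overline\alpha,h_{\mathfrak k}\rangle=\overline{\langle\alpha,h_{\mathfrak k}\rangle}=-\langle\alpha,h_{\mathfrak k}\rangle$. Therefore $\prod_{\alpha\in R^{\mathrm{c}}_+}\langle\alpha,h_{\mathfrak k}\rangle=\prod_{\alpha\in R^{\mathrm{c}}_{++}}\bigl(-\langle\alpha,h_{\mathfrak k}\rangle^{2}\bigr)=(-1)^{|R^{\mathrm{c}}_+|/2}\prod_{\alpha\in R^{\mathrm{c}}_{++}}\langle\alpha,h_{\mathfrak k}\rangle^{2}$, which yields (\ref{eq:ham6}) on $\mathfrak h_{\mathfrak k}$ and extends to $\mathfrak h_{\mathfrak k,\C}$ by polynomial identity. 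The main subtlety is bookkeeping the interplay between the symmetries $\pm 1,\theta,\overline{\cdot}$ on $R^{\mathrm{c}}$ — in particular recognizing that the pair $\{\alpha,\overline\alpha\}\subset R^{\mathrm{c}}_+$ restricts to $\pm$ a single $\mathfrak k$-root, while the pair $\{\alpha,\theta\alpha\}$ restricts to the same $\mathfrak k$-root — and using our asymmetric choice of $R_+$ to extract a positive system.
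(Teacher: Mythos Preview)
Your argument is correct and follows essentially the same route as the paper: both identify the $\mathfrak h_{\mathfrak k}$-weights on $\mathfrak h_{\mathfrak k}^{\perp}\cap\mathfrak k=\mathfrak i_{\mathfrak k}\oplus\mathfrak c_{\mathfrak k}$ as $R^{\mathrm{im}}_{\mathfrak k}$ together with the restrictions of $R^{\mathrm{c}}_{+}$, and deduce injectivity from the one-dimensionality of root spaces for $(\mathfrak h_{\mathfrak k},\mathfrak k)$. The only organizational difference is that the paper invokes Proposition~\ref{prop:Ps} for the $H\cap K$-equivalence $\mathfrak c_{+}\simeq\mathfrak c_{\mathfrak k}$ (which is exactly your projection $(1+\theta)/2$), whereas you reconstruct this map by hand; your treatment of \eqref{eq:ham5} and \eqref{eq:ham6} via the decomposition $R^{\mathrm{c}}_{+}=R^{\mathrm{c}}_{++}\sqcup\overline{R^{\mathrm{c}}_{++}}$ is more explicit than the paper's terse ``by \eqref{eq:ham4} we get \eqref{eq:ham5}'', but amounts to the same computation.
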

\begin{proof}
By (\ref{eq:jou7}), we get
\begin{equation}\label{eq:ham7}
\mathfrak h_{\mathfrak k}^{\perp}= \mathfrak i_{\mathfrak k} \oplus \mathfrak 
c_{\mathfrak k}, 
\end{equation}
and the above splitting is preserved by $ \mathfrak 
h_{\mathfrak k}$. 
The weights for this action on $\mathfrak i_{\mathfrak k}$ are given 
by $R^{\mathrm{im}}_{\mathfrak k}$. By Proposition \ref{prop:Ps}, 
$\mathfrak c_{\mathfrak k}$ and $\mathfrak c_{+}$ are equivalent 
under the action of $\mathfrak h_{\mathfrak k}$. By the first 
equation in (\ref{eq:ham0}), the weights for the action of $\mathfrak 
h_{\mathfrak k}$ on $\mathfrak c_{+}$ are given by the restriction of 
$R_{+}^{\mathrm{c}}$ to $\mathfrak h_{\mathfrak k}$. 
Since the weights for the action of $\mathfrak h_{\mathfrak k}$ on 
$\mathfrak c_{\mathfrak k}$ are  nonzero and of multiplicity  
$1$, 
the map $\alpha\in R_{\mathfrak k}^{\mathrm{im}}\cup R_{+}^{\mathrm{c}} \to \alpha\vert_{\mathfrak h_{\mathfrak 
k}}$ gives the identification in (\ref{eq:ham4}). By 
(\ref{eq:ham4}), we get (\ref{eq:ham5}). Using (\ref{eq:ham-3}) and 
the above results, we get (\ref{eq:ham6}). The proof of our proposition is completed. 
\end{proof}
\begin{remark}\label{rem:impo}
	The results contained in Proposition \ref{prop:proo} will play an 
	important role in the proof of the limit results of Subsection 
	\ref{subsec:jgnore}.
\end{remark}
\subsection{Cartan subgroups and regular elements}%
\label{subsec:casg}
Assume that $\mathfrak h$ is $\theta$-stable.
Then $\theta$ restricts to an involution of $H$, and (\ref{eq:inva3}) 
is the corresponding Cartan splitting of $\mathfrak h$. Also $B$ restricts to a  $H$ and $\theta$ invariant 
symmetric nondegenerate bilinear form 
\index{Bh@$B\vert_{\mathfrak h}$}%
$B\vert_{\mathfrak h}$ on $\mathfrak 
h$, so that $H$ is a
reductive subgroup of $G$.

We still assume $\mathfrak h$ to be $\theta$-stable. Let $Z_{G}\left(H\right) \subset G$ be the centralizer of $H$,  and 
let $N_{G}\left(H\right) \subset G$ be its normalizer. Then $Z_{G}\left(H\right)$ is included in $H$, it is just the 
center $Z\left(H\right)$ of  $H$. As in \cite[p. 131]{Knapp86}, the analytic Weyl group 
\index{WHG@$W\left(H:G\right)$}%
$W\left(H:G\right)$ is defined 
as the quotient
\begin{equation}\label{eq:inva3a1}
W\left(H:G\right)=N_{G}\left(H\right)/Z_{G}\left(H\right).
\end{equation}

Put
\begin{align}\label{eq:inva3a2}
&Z_{K}\left(H\right)=Z_{G}\left(H\right) \cap K, 
&N_{K}\left(H\right)=N_{G}\left(H\right)\cap K.
\end{align}
Then $N_{K}\left(H\right)/Z_{K}\left(H\right)$ embeds in 
$W\left(H:G\right)$. By \cite[p. 131]{Knapp86}, this embedding is an 
isomorphism, i.e., 
\begin{equation}\label{eq:inva3a2x1}
W\left(H:G\right)=N_{K}\left(H\right)/Z_{K}\left(H\right).
\end{equation}
By 
\cite[eq. (5.6)]{Knapp86}, $W\left(H:G\right) \subset 
W\left(\mathfrak h_{\C}:\mathfrak g_{\C}\right)$.

By \cite[p. 130]{Knapp86}, an element $\gamma\in G$ is said to be regular if 
$\mathfrak  z\left(\gamma\right)$ is a Cartan subalgebra.  If $H$ is the corresponding Cartan subgroup, then 
$\gamma\in H$.  By \cite[Theorem 
5.22]{Knapp86}, the set 
\index{Greg@$G^{\mathrm{reg}}$}%
$G^{\mathrm{reg}} \subset G$ of regular 
elements is open and 
conjugation invariant. More precisely, if $H_{1},\ldots,H_{\ell}$ 
denotes 
the finite family of nonconjugate Cartan subgroups, by \cite[Theorem 
5.22]{Knapp86}, $G^{\mathrm{reg}}$ splits as the disjoint union of open sets
\begin{equation}\label{eq:inva6a1bi}
G^{\mathrm{reg}}=\cup_{i=1}^{\ell}G^{\mathrm{reg}}_{H_{i}}, 
\end{equation}
where $G^{\mathrm{reg}}_{H_{i}}$ denote the open set of elements of 
$G^{\mathrm{reg}}$ that are conjugate to an element of $H_{i}$.

If  $\gamma\in H$, $\Ad\left(\gamma\right)$ acts on $\mathfrak g$ and fixes $\mathfrak 
h$. Since $\Ad\left(\gamma\right)$ preserves $B$, it also acts on 
$\mathfrak h^{\perp}$, so that $1-\Ad\left(\gamma\right)$ acts on 
$\mathfrak h^{\perp}$.  Then $\gamma$ is regular if and only this 
endomorphism is invertible, i.e.,  
$\det \left( 1-\Ad\left(\gamma\right)\right)\vert_{\mathfrak h^{\perp}}\neq 0 $. 
\subsection{Cartan subgroups and semisimple elements}%
\label{subsec:semisi}
The following result is established in \cite[Part I, Section 2.3, 
Theorem 4]{Vara77}.
\begin{proposition}\label{prop:semis}
	A group element $\gamma\in G$ is semisimple if and only if it 
	lies in a Cartan subgroup. 
\end{proposition}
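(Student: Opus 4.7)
The plan is to prove both directions separately.

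For the direction $\Leftarrow$, suppose $\gamma$ lies in a Cartan subgroup $H$ with Lie algebra $\mathfrak{h}$. By conjugating, I may assume $\mathfrak{h}$ is $\theta$-stable, so that $H$ itself is $\theta$-stable. Decompose $\gamma=e^{a}k^{-1}$ via (\ref{eq:fac1}). Since $\theta(\gamma)=e^{-a}k^{-1}$ also lies in $H$, the product $e^{2a}=\gamma\theta(\gamma)^{-1}$ is in $H$, so that $\Ad(e^{2a})=e^{2\ad(a)}$ fixes $\mathfrak{h}$ pointwise. As $\ad(a)$ is symmetric with real eigenvalues (since $a\in\mathfrak{p}$), this forces $\ad(a)\vert_{\mathfrak{h}}=0$; self-centralization of the Cartan subalgebra then gives $a\in\mathfrak{h}_{\mathfrak{p}}$. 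Consequently $k^{-1}=e^{-a}\gamma\in H\cap K$, and $\Ad(k)a=a$ because $k$ centralizes the abelian $\mathfrak{h}$. Thus $e^{a}$ and $k^{-1}$ commute, and by the characterization of semisimple elements recalled in Subsection \ref{subsec:sesidis} (commuting hyperbolic times elliptic decomposition), $\gamma$ is semisimple.

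For the direction $\Rightarrow$, write $\gamma=e^{a}k^{-1}$ as in (\ref{eq:dis4a}). The element $k^{-1}$ is elliptic when viewed inside the reductive subgroup $Z(a)$ (it fixes $x_{0}$ in the totally geodesic subspace $Z(a)/K(a)\subset X$). By the classical fact that elliptic elements of reductive groups lie in Cartan subgroups---ultimately resting on the statement that elements of connected compact Lie groups lie in maximal tori---we may find a $\theta$-stable Cartan subgroup $H_{L}$ of $Z(a)$ containing $k^{-1}$, with Lie algebra $\mathfrak{h}_{L}$. Since $a$ is central in $\mathfrak{z}(a)$ and every Cartan subalgebra of a reductive Lie algebra contains its center, $a\in\mathfrak{h}_{L}$. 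Hence $e^{a}\in\exp(\mathfrak{h}_{L})\subset H_{L}$, and $\gamma\in H_{L}$.

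To finish, I verify that $H_{L}$ is in fact a Cartan subgroup of $G$, not merely of $Z(a)$: any $X\in\mathfrak{z}_{\mathfrak{g}}(\mathfrak{h}_{L})$ commutes with $a\in\mathfrak{h}_{L}$, so $X\in\mathfrak{z}(a)$, and therefore $\mathfrak{z}_{\mathfrak{g}}(\mathfrak{h}_{L})=\mathfrak{z}_{\mathfrak{z}(a)}(\mathfrak{h}_{L})=\mathfrak{h}_{L}$ by the Cartan property in $\mathfrak{z}(a)$; thus $\mathfrak{h}_{L}$ is self-centralizing in $\mathfrak{g}$, i.e.\ a Cartan subalgebra, and its associated Cartan subgroup $Z_{G}(\mathfrak{h}_{L})$ coincides with $H_{L}$ by a similar argument. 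The main obstacle in this approach is the structural fact about elliptic elements in $Z(a)$; it is delicate when $K(a)=Z(a)\cap K$ is disconnected and is handled by the argument of \cite[Part I, \S 2.3, Theorem 4]{Vara77}.
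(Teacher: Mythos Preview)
Your proof is correct and in fact supplies more than the paper's own treatment: the paper cites \cite[Part I, Section 2.3, Theorem 4]{Vara77} for the full statement and only gives a direct argument for the implication $\Leftarrow$.

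For $\Leftarrow$, the paper's route is shorter. Once $\mathfrak h$ is $\theta$-stable, $H$ is itself a reductive group with Cartan involution $\theta\vert_H$, so the Cartan decomposition of $H$ directly yields $\gamma=e^{a}k^{-1}$ with $a\in\mathfrak h_{\mathfrak p}$ and $k\in H\cap K$; then $\Ad(k)a=a$ because $k$ centralizes $\mathfrak h\ni a$. Your argument---starting from the global decomposition (\ref{eq:fac1}) and deducing $a\in\mathfrak h_{\mathfrak p}$ from $e^{2a}=\gamma\,\theta(\gamma)^{-1}\in H$ together with the real-spectrum property of $\ad(a)$---is perfectly valid and has the virtue of not presupposing the Cartan decomposition of the possibly disconnected group $H$.

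For $\Rightarrow$, your reduction to finding a Cartan subgroup of $Z(a)$ containing the elliptic element $k^{-1}$ is sound, and your verification that such a Cartan subgroup is already a Cartan subgroup of $G$ (using $a\in\mathfrak h_L$ and self-centralization) essentially anticipates Proposition~\ref{prop:psesicen}. The delicacy you flag is genuine: $K(a)$ need not be connected (already for $G=SL_2(\R)$ and $a$ a nonzero element of $\mathfrak h_{\mathfrak p}$ one finds $K(a)=\{\pm I\}$), so $k^{-1}$ need not lie in a maximal torus of $K(a)^{0}$, and the final appeal to \cite{Vara77} is appropriate. The paper does not attempt this reduction at all; it simply invokes the same reference for the whole implication.
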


Let us give a direct proof of part of our proposition. Le $\mathfrak h$ be a 
$\theta$-stable Cartan subalgebra, and let $H$ be the corresponding 
Cartan subgroup. If $\gamma\in H$, then $\mathfrak h \subset  \mathfrak 
z\left(\gamma\right)$. Moreover, $\gamma$ can be written uniquely 
in the form
\begin{align}\label{eq:bunb1}
&\gamma=e^{a}k^{-1}, &a\in \mathfrak h_{\mathfrak p}, \qquad k\in 
H\cap K.
\end{align}
Since $a\in \mathfrak h_{\mathfrak p}, k\in H$, then 
$\Ad\left(k\right)a=a$, which guarantees that $\gamma$ is semisimple in 
$G$.

Let $\mathfrak h \subset \mathfrak g$ be a Cartan subalgebra, and let 
$H \subset G$ be the associated Cartan subgroup.  If $\gamma\in H$, then $\mathfrak h \subset \mathfrak 
z\left(\gamma\right)$, so that $\mathfrak h$ is a Cartan subalgebra 
of $\mathfrak z\left(\gamma\right)$. In particular $G$ and 
$Z^{0}\left(\gamma\right)$ have the same complex rank.
\begin{proposition}\label{prop:psesicen}
	Any $\theta$-stable Cartan subalgebra $\mathfrak h_{0}$ of $\mathfrak 
	z\left(\gamma\right)$ is also a Cartan subalgebra of $\mathfrak 
	g$.
\end{proposition}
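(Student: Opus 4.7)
The plan is to verify the defining properties of a Cartan subalgebra of $\mathfrak{g}$ (in the Wallach sense used throughout the paper): $\mathfrak{h}_0$ should be an abelian subalgebra whose elements are ad-semisimple in $\mathfrak{g}$, and it should be maximal with these properties. Abelianness is immediate from $\mathfrak{h}_0$ being a Cartan subalgebra of $\mathfrak{z}(\gamma)$. The paragraph preceding the proposition records that $G$ and $Z^0(\gamma)$ share the same complex rank $r$, so $\dim \mathfrak{h}_0 = r$. Notably, the $\theta$-stability hypothesis will play no role in the argument.

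The core step is to transfer ad-semisimplicity from the subalgebra $\mathfrak{z}(\gamma)$ to $\mathfrak{g}$. I would argue via the abstract Jordan decomposition in the reductive Lie algebra $\mathfrak{g}$: writing $x = x_s + x_n$ for $x \in \mathfrak{h}_0$, this decomposition is preserved by the Lie algebra automorphism $\mathrm{Ad}(\gamma)$. Since $\mathrm{Ad}(\gamma) x = x$, uniqueness of the Jordan decomposition gives $\mathrm{Ad}(\gamma) x_s = x_s$ and $\mathrm{Ad}(\gamma) x_n = x_n$, so $x_s, x_n \in \mathfrak{z}(\gamma)$. Restriction of a semisimple (resp.\ nilpotent) endomorphism to an invariant subspace is again semisimple (resp.\ nilpotent), so $x = x_s + x_n$ is also the Jordan decomposition of $x$ in $\mathfrak{z}(\gamma)$. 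Since $x$ is ad-semisimple in $\mathfrak{z}(\gamma)$ by hypothesis, this forces $x_n = 0$, and hence $x$ is ad-semisimple in $\mathfrak{g}$.

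With ad-semisimplicity in hand, $\mathfrak{h}_0$ is an abelian subalgebra of $\mathfrak{g}$ consisting of ad-semisimple elements, and so it is contained in some Cartan subalgebra $\tilde{\mathfrak{h}}$ of $\mathfrak{g}$. A dimension count finishes the proof: $\dim \mathfrak{h}_0 = r = \dim \tilde{\mathfrak{h}}$, whence $\mathfrak{h}_0 = \tilde{\mathfrak{h}}$. The main subtlety is the Jordan-decomposition step — for an arbitrary Lie subalgebra of $\mathfrak{g}$ such a transfer of ad-semisimplicity can fail; the key input making it work here is that $\mathfrak{z}(\gamma)$ is the fixed-point subalgebra of the semisimple automorphism $\mathrm{Ad}(\gamma)$, which is what allows the Jordan components to stay inside $\mathfrak{z}(\gamma)$.
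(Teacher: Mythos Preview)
Your route via the abstract Jordan decomposition is genuinely different from the paper's. The paper \emph{does} use the $\theta$-stability hypothesis: since $\mathfrak h_{0}=(\mathfrak h_{0})_{\mathfrak p}\oplus(\mathfrak h_{0})_{\mathfrak k}$, an element $f_{\mathfrak p}\in(\mathfrak h_{0})_{\mathfrak p}$ has $\mathrm{ad}(f_{\mathfrak p})$ symmetric for the positive-definite form $\langle\cdot,\cdot\rangle=-B(\cdot,\theta\cdot)$, while $f_{\mathfrak k}\in(\mathfrak h_{0})_{\mathfrak k}$ has $\mathrm{ad}(f_{\mathfrak k})$ antisymmetric; both are therefore semisimple on $\mathfrak g$, and since $\mathfrak h_{0}$ is abelian their sum is semisimple. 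The rank equality then finishes the argument, exactly as in your last paragraph. Your approach is more general (it dispenses with $\theta$-stability), at the price of invoking the Jordan decomposition machinery; the paper's approach is more elementary but tied to the Cartan-involution structure.

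There is one step in your argument that needs a word more. From ``$x=x_{s}+x_{n}$ restricts to a decomposition with $\mathrm{ad}_{\mathfrak z(\gamma)}(x_{s})$ semisimple, $\mathrm{ad}_{\mathfrak z(\gamma)}(x_{n})$ nilpotent, $[x_{s},x_{n}]=0$'', uniqueness of the Jordan decomposition of the \emph{endomorphism} $\mathrm{ad}_{\mathfrak z(\gamma)}(x)$ only yields $\mathrm{ad}_{\mathfrak z(\gamma)}(x_{n})=0$, i.e.\ $x_{n}$ lies in the center of $\mathfrak z(\gamma)$. This does not yet force $x_{n}=0$: the abstract Jordan decomposition in a reductive Lie algebra is unique only under the normalization $x_{n}\in[\mathfrak z(\gamma),\mathfrak z(\gamma)]$, which you have not checked (you only know $x_{n}\in[\mathfrak g,\mathfrak g]$). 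The repair is immediate using the input you already quoted: the paragraph before the proposition gives a Cartan subalgebra $\mathfrak h$ of $\mathfrak g$ with $\mathfrak h\subset\mathfrak z(\gamma)$; the center of $\mathfrak z(\gamma)$ is contained in every Cartan subalgebra of $\mathfrak z(\gamma)$, hence in $\mathfrak h$; so $x_{n}\in\mathfrak h$ is ad-semisimple in $\mathfrak g$, and being also ad-nilpotent, $\mathrm{ad}_{\mathfrak g}(x_{n})=0$. Thus $x_{n}$ is central in $\mathfrak g$ and lies in $[\mathfrak g,\mathfrak g]$, so $x_{n}=0$.
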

\begin{proof}
	As we saw in Subsection \ref{subsec:sesidis}, 
	$Z^{0}\left(\gamma\right)$ is a connected reductive group and 
	$\theta$ induces on $Z^{0}\left(\gamma\right)$ a corresponding 
	Cartan involution.  Since $\mathfrak h_{0}$  is commutative and 
	$\theta$-stable, and since its action on $\mathfrak g$ preserves 
	$B$,  it acts on $\mathfrak g$ by semisimple 
	endomorphisms of $\mathfrak g$. Since $G$ and 
	$Z^{0}\left(\gamma\right)$ have the same complex rank, $\mathfrak 
	h_{0}$ is a Cartan subalgebra of $\mathfrak g$. The proof of our proposition is completed. 
\end{proof}
\subsection{Root systems and their characters}%
\label{subsec:rosisi}
Let $\mathfrak h \subset \mathfrak g$ be a $\theta$-stable Cartan 
subalgebra. We use the notation of the previous subsections.

Take  $\gamma\in H$. As we saw after  Proposition 
\ref{prop:semis}, if $\gamma\in H$, we can write $\gamma$ uniquely in 
the form
\begin{align}\label{eq:guing1}
&\gamma=e^{a}k^{-1}, &a\in \mathfrak h_{\mathfrak p},\qquad  k\in H\cap K,
\end{align}
so that 
\begin{equation}\label{eq:guing2}
\Ad\left(k\right)a=a.
\end{equation}
Let $R\left(\gamma\right),R\left(a\right)$  be the root systems 
associated with $\left(\mathfrak h, \mathfrak 
z\left(\gamma\right)\right), \left(\mathfrak h, \mathfrak 
z\left(a\right)\right)$.  We will denote with  extra subscripts  the 
corresponding real, imaginary, and complex roots.
\begin{theorem}\label{thm:careg}
	If $\gamma\in H$, for any $\alpha\in R$, $\Ad\left(\gamma\right)$ 
	preserves the $1$-dimensional complex line $\mathfrak 
	g_{\alpha}$. For every $\alpha\in R$, there is a character 
	$\xi_{\alpha}: H\to \C^{*}$ such that $\Ad\left(\gamma\right)$ 
	acts on $\mathfrak g_{\alpha}$ by multiplication by 
	$\xi_{\alpha}\left(\gamma\right)$. If  $\alpha\in R$, 
	\begin{equation}\label{eq:comm2}
\xi_{\alpha}\xi_{-\alpha}=1.
\end{equation}
If $\alpha \in R$,  if $f\in \mathfrak h, k\in H\cap K$, then
\begin{align}\label{eq:comm2a0}
&\xi_{\alpha}\left(e^{f}\right)=e^{\left\langle  
\alpha,f\right\rangle},
&\left\vert  \xi_{\alpha}\left(k\right)\right\vert=1.
\end{align}
In particular, if $\gamma\in H$ is taken as in (\ref{eq:guing1}), then
\begin{align}\label{eq:comm2a-1}
&\xi_{\alpha}\left(\gamma\right)=e^{\left\langle  
\alpha,a\right\rangle}\xi_{\alpha}\left(k^{-1}\right),
&\xi_{-\theta\alpha}\left(\gamma\right)=\overline{\xi_{\alpha}\left(\gamma\right)}.
\end{align}
If $\alpha\in R^{\mathrm{re}}$, then 
$\xi_{\alpha}\left(\gamma\right)\in \R^{*}$, if $\alpha\in 
R^{\mathrm{im}}$, then $\left\vert  
\xi_{\alpha}\left(\gamma\right)\right\vert=1$. If
$\alpha\in R^{\mathrm{re}}$, the restriction of $\xi_{\alpha}$ to  
$H\cap K$ takes its values in $\left\{-1,+1\right\}$.

Also
\begin{equation}\label{eq:comm2a1}
\det\left(1-\Ad\left(\gamma\right)\right)\vert_{\mathfrak h^{\perp}}=\prod_{\alpha\in 
R}^{}\left(1-\xi_{\alpha}\left(\gamma\right)\right),
\end{equation}
and $\gamma$ is regular if and only if for any $\alpha\in R$, 
$\xi_{\alpha}\left(\gamma\right) \neq 1$. 

The following identities hold:
\index{Rreg@$R^{\mathrm{re}}\left(\gamma\right)$}%
\index{Rimg@$R^{\mathrm{im}}\left(\gamma\right)$}%
\begin{align}\label{eq:comm2a2}
&R\left(\gamma\right)=\left\{\alpha\in R, 
\xi_{\alpha}\left(\gamma\right)=1\right\},\qquad R\left(a\right)=\left\{\alpha\in R, 
\left\langle  
\alpha,a\right\rangle=0\right\},\nonumber \\
&R^{\mathrm{re}}\left(\gamma\right)=R\left(\gamma\right)\cap 
R^{\mathrm{re}}, \,\,
R^{\mathrm{im}}\left(\gamma\right)=R\left(\gamma\right)\cap 
R^{\mathrm{im}}, \,\,
R^{\mathrm{c}}\left(\gamma\right)=R\left(\gamma\right)\cap 
R^{\mathrm{c}},  \\
&R^{\mathrm{re}}\left(a\right)=R\left(a\right)\cap R^{\mathrm{re}},
\,\, R^{\mathrm{im}}\left(a\right) =R\left(a\right)\cap R^{\mathrm{im}}, 
\,\, R^{\mathrm{c}}\left(a\right)=R\left(a\right)\cap R^{\mathrm{c}}. \nonumber 
\end{align}
Also $R_{+}\left(\gamma\right)=R\left(\gamma\right) \cap R_{+}, 
R_{+}\left(a\right)=R\left(a\right)\cap R_{+}$ are  positive root 
systems 
for $\left(\mathfrak h, \mathfrak z\left(\gamma\right)\right), 
\left(\mathfrak h, \mathfrak z\left(a\right)\right)$. 
\end{theorem}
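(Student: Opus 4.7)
The plan is to exploit the weight space decomposition (\ref{eq:comm-1a}) together with the fact that $H$ centralizes $\mathfrak{h}$ to define the characters $\xi_\alpha$, and then derive each assertion from standard properties of $\Ad$, $B$, $\theta$, and complex conjugation.

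First, since $H$ has Lie algebra $\mathfrak{h}$ and $H$ centralizes $\mathfrak{h}$, $\Ad(\gamma)$ commutes with $\ad(h)$ for every $h\in\mathfrak{h}$, hence preserves the weight space $\mathfrak{g}_\alpha$. Since $\dim_{\mathbf C}\mathfrak g_\alpha=1$, the action is by a scalar $\xi_\alpha(\gamma)$, and multiplicativity of $\Ad$ makes $\xi_\alpha$ a character. The identity $\xi_\alpha\xi_{-\alpha}=1$ then comes from $B$-invariance of $\Ad(\gamma)$ together with the nondegenerate pairing $\mathfrak g_\alpha\times\mathfrak g_{-\alpha}\to\mathbf C$ of (\ref{eq:coc3}). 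The two formulas in (\ref{eq:comm2a0}) are immediate: for $f\in\mathfrak h$ one has $\Ad(e^f)=e^{\ad(f)}$ acting on $\mathfrak g_\alpha$ by $e^{\langle\alpha,f\rangle}$; for $k\in H\cap K$, $\Ad(k)$ is orthogonal for $\langle\,,\,\rangle$, so all its eigenvalues have modulus $1$.

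For $\gamma=e^a k^{-1}$ with $\Ad(k)a=a$, the factors $e^a$ and $k^{-1}$ commute, giving the first identity in (\ref{eq:comm2a-1}). The second identity relates $\xi_{-\theta\alpha}$ to $\overline{\xi_\alpha}$: by Proposition \ref{prop:pco}, complex conjugation sends $\mathfrak g_\alpha$ to $\mathfrak g_{-\theta\alpha}$, and since $\Ad(\gamma)$ is a real endomorphism of $\mathfrak g_{\mathbf C}$, conjugating $\Ad(\gamma)u=\xi_\alpha(\gamma)u$ for $u\in\mathfrak g_\alpha$ yields $\Ad(\gamma)\bar u=\overline{\xi_\alpha(\gamma)}\bar u$. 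Combining this with $\xi_\alpha\xi_{-\alpha}=1$ handles the real and imaginary cases: for $\alpha\in R^{\mathrm{re}}$ one gets $\xi_\alpha(\gamma)\in\mathbf R^*$, and for $\alpha\in R^{\mathrm{im}}$ one gets $|\xi_\alpha(\gamma)|=1$. The statement about $k\in H\cap K$ for $\alpha\in R^{\mathrm{re}}$ then follows by intersecting $\mathbf R^*$ with the unit circle.

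The determinant identity (\ref{eq:comm2a1}) reads off at once from the weight space decomposition (\ref{eq:comm-1a}), which diagonalizes $1-\Ad(\gamma)$ on $\mathfrak h^\perp_{\mathbf C}$ with eigenvalues $1-\xi_\alpha(\gamma)$. The regularity criterion follows because, since $\mathfrak h\subset\mathfrak z(\gamma)$ and both have the same rank once $\mathfrak z(\gamma)$ is a Cartan subalgebra, $\gamma$ is regular iff $\mathfrak z(\gamma)\cap\mathfrak h^\perp=0$, i.e.\ iff $1-\Ad(\gamma)$ is invertible on $\mathfrak h^\perp$.

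For the last block of identities, the formulas $R(\gamma)=\{\alpha:\xi_\alpha(\gamma)=1\}$ and $R(a)=\{\alpha:\langle\alpha,a\rangle=0\}$ come from identifying $\mathfrak z(\gamma)\cap\mathfrak h^\perp_{\mathbf C}=\bigoplus_{\xi_\alpha(\gamma)=1}\mathfrak g_\alpha$ and $\mathfrak z(a)\cap\mathfrak h^\perp_{\mathbf C}=\bigoplus_{\langle\alpha,a\rangle=0}\mathfrak g_\alpha$; each is a $\theta$-stable reductive subalgebra containing $\mathfrak h$, and its roots are exactly the weights appearing in these direct sums. The intersections with $R^{\mathrm{re}}$, $R^{\mathrm{im}}$, $R^{\mathrm{c}}$ then agree by definition, since $\theta$ acts on $R(\gamma)$ and $R(a)$ by restriction of its action on $R$. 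Finally, that $R_+(\gamma)=R(\gamma)\cap R_+$ is a positive system for $(\mathfrak h,\mathfrak z(\gamma))$ (and similarly for $a$) follows from the construction of $R_+$ via the regular element $b_+\in\mathfrak h_{\mathfrak p}\oplus i\mathfrak h_{\mathfrak k}$ in (\ref{eq:cup1}): this same $b_+$ separates positive and negative roots inside the subsystem. The only genuinely delicate point is the conjugation identity $\xi_{-\theta\alpha}=\overline{\xi_\alpha}$, and that is controlled entirely by Proposition \ref{prop:pco}; once it is established, every other assertion drops out mechanically.
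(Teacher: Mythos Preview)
Your proof is correct and follows essentially the same approach as the paper. There are two minor local differences worth noting: for the identity $\xi_{-\theta\alpha}(\gamma)=\overline{\xi_\alpha(\gamma)}$ you argue directly via complex conjugation and Proposition~\ref{prop:pco}, whereas the paper argues via $\theta$ and the fact that $\theta(\gamma)=e^{-a}k^{-1}$; and for the claim that $R_+(\gamma)$ is a positive system you invoke the explicit regular element $b_+$ from (\ref{eq:cup1}), whereas the paper uses the more intrinsic Weyl chamber argument (any positive Weyl chamber for $(\mathfrak h,\mathfrak g)$ sits inside a Weyl chamber for $(\mathfrak h,\mathfrak z(\gamma))$), which has the slight advantage of applying to any choice of $R_+$, not only the one constructed in Subsection~\ref{subsec:posro}.
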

\begin{proof}
	If $\gamma\in H$, then $\Ad\left(\gamma\right)$ 
fixes $\mathfrak h$, and so if $h\in \mathfrak h$, we have the 
commutation relation in $\End\left(\mathfrak g\right)$, 
\begin{equation}\label{eq:comm1}
\left[\Ad\left(\gamma\right), \mathrm{ad}\left(h\right)\right]=0. 
\end{equation}
By (\ref{eq:coci1}),
(\ref{eq:comm1}), we deduce that for any $\alpha\in R$, 
$\Ad\left(\gamma\right)$ preserves $\mathfrak g_{\alpha}$.  Since 
$\mathfrak g_{\alpha}$ is a complex line, $H$ acts on $\mathfrak 
g_{\alpha}$ via a character $\xi_{\alpha}$.

Since $\Ad\left(\gamma\right)$ preserves $B$, if $f,f'\in \mathfrak 
g_{\C}$, we get
\begin{equation}\label{eq:gung1}
B\left(\Ad\left(\gamma\right)f,f'\right)=B\left(f,\Ad\left(\gamma\right)^{-1}f'\right). 
\end{equation}
 Take $\alpha\in R$. By (\ref{eq:gung1}), if   $f\in \mathfrak g_{\alpha}, f'\in 
\mathfrak g_{-\alpha}$,  then
\begin{equation}\label{eq:gung2}
\xi_{\alpha}\left(\gamma\right)B\left(f,f'\right)=\xi_{-\alpha}^{-1}\left(\gamma\right)B\left(f,f'\right).
\end{equation}
As we saw in Subsection \ref{subsec:reglie}, if $\alpha\in R$, the 
pairing between $\mathfrak g_{\alpha}$ and $\mathfrak g_{-\alpha}$ via 
$B$ is nondegenerate. By (\ref{eq:gung2}), we get (\ref{eq:comm2}). 

The first equation 
in (\ref{eq:comm2a0}) is trivial. Since $\xi_{\alpha}$ restricts to a character 
of the compact group $H\cap K$, we get the second equation in 
(\ref{eq:comm2a0}). The first equation in (\ref{eq:comm2a-1}) follows from the 
previous considerations. Since 
$\theta\left(\gamma\right)=e^{-a}k^{-1}$, and since $\theta$ maps 
$\mathfrak g_{\alpha}$ into $\mathfrak g_{\theta \alpha}$, we obtain the second 
equation in (\ref{eq:comm2a-1}). From this second equation, we deduce 
that if $\alpha\in R^{\mathrm{re}}$, then 
$\xi_{\alpha}\left(\gamma\right)$ is real, and if $\alpha\in 
R^{\mathrm{im}}$, then $\left\vert  
\xi_{\alpha}\left(\gamma\right)\right\vert=1$. If $\gamma\in H\cap K, \alpha\in 
R^{\mathrm{re}}$, we know that $\xi_{\alpha}\left(\gamma\right)\in 
\R^{*}, \left\vert  \xi_{\alpha}\left(\gamma\right)\right\vert=1$, so 
that $\xi_{\alpha}\left(\gamma\right)=\pm 1$.

Equations (\ref{eq:comm2a1}), (\ref{eq:comm2a2}) are trivial. By 
(\ref{eq:comm2a1}), $\gamma$ is regular if and only if for $\alpha\in 
R$, $\xi_{\alpha}\left(\gamma\right)\neq 1$.

Now we proceed as in 
\cite[Theorem 1.38]{BismutLabourie99}. If $\kappa \subset 
\mathfrak h$ is a positive Weyl chamber for $\left(\mathfrak h, 
\mathfrak g\right)$, the forms in $R$ do not vanish on $\kappa$, so 
that $\kappa$ is included in a $\mathfrak z\left(\gamma\right)$ Weyl 
chamber. It follows that $R\left(\gamma\right)\cap R_{+}$ is a 
positive root system on $\left(\mathfrak h, \mathfrak 
z\left(\gamma\right)\right)$.  The same argument is valid for 
$R\left(a\right)$.  
 The proof of our 
theorem is completed.  
\end{proof}
\subsection{Real roots, imaginary roots, and semisimple elements}%
\label{subsec:reimsesi}
We still take $\gamma$ as in Subsection \ref{subsec:rosisi}.
When taking the intersection of $\mathfrak i, \mathfrak r, \mathfrak c$ with 
$\mathfrak z\left(\gamma\right), \mathfrak z\left(a\right), \mathfrak 
z\left(k\right)$, this will be indicated with a  parenthesis 
containing the corresponding argument.  The intersection with 
$\mathfrak z^{\perp}\left(\gamma\right), \mathfrak 
z^{\perp}\left(a\right), \mathfrak z^{\perp}\left(k\right)$ will be 
denoted with an extra $\perp$.  These vector spaces also have a $\mathfrak p$ and a 
$\mathfrak k$ component.

By construction,
\begin{equation}\label{eq:rim1}
R^{\mathrm{im}}\left(\gamma\right)=R^{\mathrm{im}}\left(k\right).
\end{equation}
As in (\ref{eq:bax1}), (\ref{eq:bax1z1}), we get
\index{Rimpk@$R^{\mathrm{im}}_{\mathfrak p}\left(k\right)$}%
\index{Rimkk@$R^{\mathrm{im}}_{\mathfrak k}\left(k\right)$}%
\begin{align}\label{eq:rim1ax1}
&R^{\mathrm{im}}\left(\gamma\right)=R^{\mathrm{im}}_{\mathfrak p}\left(\gamma\right)\cup R^{\mathrm{im}}_{\mathfrak 
k}\left(\gamma\right),
&R^{\mathrm{im}}\left(k\right)=R^{\mathrm{im}}_{\mathfrak p}\left(k\right)\cup R^{\mathrm{im}}_{\mathfrak k}\left(k\right). 
\end{align}
To make the notation  simpler, in (\ref{eq:rim1ax1}), we did not use 
instead the notation $\mathfrak 
p\left(\gamma\right), \mathfrak k\left(\gamma\right), \mathfrak 
p\left(k\right), \mathfrak k\left(k\right)$.

\begin{proposition}\label{prop:ispa}
	The following identities holds:
	\begin{align}\label{eq:cong5a3}
&\mathfrak i \subset \mathfrak z\left(a\right), &\mathfrak i\left(\gamma\right)= \mathfrak 
i\left(k\right),\qquad\mathfrak i^{\perp}\left(k\right) \subset \mathfrak 
z_{a}^{\perp}\left(\gamma\right). 
\end{align}
Also
	\begin{align}\label{eq:cong5a6}
&\mathfrak i\left(k\right)_{\C}=\oplus_{\alpha\in 
R^{\mathrm{im}}\left(k\right)} \mathfrak g_{\alpha},
&\mathfrak i^{\perp}\left(k\right)_{\C}=\oplus _{\alpha\in 
R^{\mathrm{im}}\setminus R^{\mathrm{im}}\left(k\right)}\mathfrak 
g_{\alpha}.
\end{align}
Moreover,  
\begin{align}\label{eq:cong5a6a}
	&\mathfrak z\left(a\right)_{\C}=\mathfrak h_{\C} \bigoplus \oplus 
	_{\alpha\in R\left(a\right)}\mathfrak g_{\alpha}, \nonumber \\
&\mathfrak z^{\perp}\left(a\right) \subset \mathfrak r 
\oplus \mathfrak c, \\
&\mathfrak z^{\perp}\left(a\right)_{\C}=\oplus_{\alpha\in R\setminus 
R\left(a\right)} \mathfrak g_{\alpha}. \nonumber 
\end{align}
If $\gamma$ is regular, then
\begin{equation}\label{eq:cong5a5}
\mathfrak i\left(k\right)=0.
\end{equation}
\end{proposition}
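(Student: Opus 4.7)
My plan is to unwind the definitions and reduce everything to the facts established in Propositions \ref{prop:porth}, \ref{prop:Preal}, and Theorem \ref{thm:careg}. The key observation is that once the normalization $\gamma=e^ak^{-1}$ with $a\in\mathfrak h_{\mathfrak p}$, $k\in H\cap K$ from \eqref{eq:guing1} is in force, everything reduces to weight-space bookkeeping with respect to $\mathfrak h$.

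First I would prove $\mathfrak i \subset \mathfrak z(a)$: since $a\in\mathfrak h_{\mathfrak p}$ and $\mathfrak i=\ker\mathrm{ad}(\mathfrak h_{\mathfrak p})\cap\mathfrak h^{\perp}$, every element of $\mathfrak i$ commutes with $a$. Combined with $\mathfrak z(\gamma)=\mathfrak z(a)\cap\mathfrak z(k)$ from \eqref{eq:inva18a}, this immediately gives
\begin{equation*}
\mathfrak i(\gamma)=\mathfrak i\cap\mathfrak z(\gamma)=\mathfrak i\cap\mathfrak z(a)\cap\mathfrak z(k)=\mathfrak i\cap\mathfrak z(k)=\mathfrak i(k).
\end{equation*}
For the inclusion $\mathfrak i^{\perp}(k)\subset\mathfrak z_a^{\perp}(\gamma)$, note that $\mathfrak i^{\perp}(k)=\mathfrak i\cap\mathfrak z^{\perp}(k)\subset\mathfrak z(a)$ by the first step, and since $\mathfrak z(\gamma)\subset\mathfrak z(k)$, the orthogonal $\mathfrak z^{\perp}(k)$ sits inside the orthogonal of $\mathfrak z(\gamma)$; intersecting with $\mathfrak z(a)$ lands inside $\mathfrak z_a^{\perp}(\gamma)$.

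Next I would handle the root-space descriptions. By \eqref{eq:jou1} we have $\mathfrak i_{\C}=\bigoplus_{\alpha\in R^{\mathrm{im}}}\mathfrak g_{\alpha}$, and by Theorem \ref{thm:careg} each $\mathfrak g_{\alpha}$ is $\Ad(k)$-stable with $k$ acting by $\xi_{\alpha}(k)$. Intersecting with $\mathfrak z(k)$ picks out exactly those $\alpha$ with $\xi_{\alpha}(k)=1$, i.e. $\alpha\in R^{\mathrm{im}}(k)$, giving \eqref{eq:cong5a6}(left). For the orthogonal complement, recall $R^{\mathrm{im}}(k)$ is stable under $\alpha\mapsto-\alpha$ (since $\xi_{-\alpha}=\xi_{\alpha}^{-1}$), and the $B$-pairing between $\mathfrak g_{\alpha}$ and $\mathfrak g_{\beta}$ is zero unless $\beta=-\alpha$; hence the $B$-orthogonal within $\mathfrak i_{\C}$ is exactly $\bigoplus_{\alpha\in R^{\mathrm{im}}\setminus R^{\mathrm{im}}(k)}\mathfrak g_{\alpha}$, giving \eqref{eq:cong5a6}(right). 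The identities in \eqref{eq:cong5a6a} are the same game played with $\mathrm{ad}(a)$ instead of $\Ad(k)$: since $\mathfrak z(a)=\ker\mathrm{ad}(a)$, $\mathrm{ad}(a)$ acts on $\mathfrak g_{\alpha}$ as the scalar $\langle\alpha,a\rangle$, and $R(a)=\{\alpha:\langle\alpha,a\rangle=0\}$ by \eqref{eq:comm2a2}; this yields the first and third lines. The second line, $\mathfrak z^{\perp}(a)\subset\mathfrak r\oplus\mathfrak c$, is equivalent to $\mathfrak z^{\perp}(a)\cap\mathfrak i=0$, which follows from $\mathfrak i\subset\mathfrak z(a)$ together with $\mathfrak i\cap\mathfrak i^{\perp}=0$.

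Finally, for \eqref{eq:cong5a5}, if $\gamma$ is regular then $\mathfrak z(\gamma)=\mathfrak h$, so
\begin{equation*}
\mathfrak i(k)=\mathfrak i\cap\mathfrak z(k)\subset\mathfrak z(a)\cap\mathfrak z(k)=\mathfrak z(\gamma)=\mathfrak h,
\end{equation*}
but $\mathfrak i\subset\mathfrak h^{\perp}$, so $\mathfrak i(k)=0$. None of the steps look genuinely hard; the only thing to be careful about is the bookkeeping of which complement is taken inside which ambient space (orthogonal in $\mathfrak i$ versus in $\mathfrak g$ versus in $\mathfrak z(a)$), and that the $\theta$-stability and the factorization $\gamma=e^ak^{-1}$ make $\mathrm{ad}(a)$ and $\Ad(k)$ simultaneously diagonalizable on the $\mathfrak g_{\alpha}$, which is what lets us freely intersect centralizers with root-space decompositions.
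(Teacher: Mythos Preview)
Your proof is correct and follows essentially the same route as the paper: both derive everything from $a\in\mathfrak h_{\mathfrak p}$ (giving $\mathfrak i\subset\mathfrak z(a)$), the factorization $\mathfrak z(\gamma)=\mathfrak z(a)\cap\mathfrak z(k)$ from \eqref{eq:inva18a}, and the root-space decomposition \eqref{eq:coci1}. One small phrasing issue: the inclusion $\mathfrak z^{\perp}(a)\subset\mathfrak r\oplus\mathfrak c$ is not merely \emph{equivalent} to $\mathfrak z^{\perp}(a)\cap\mathfrak i=0$ for an arbitrary subspace, but what you actually use---that $\mathfrak i\subset\mathfrak z(a)$ forces $\mathfrak z^{\perp}(a)\perp\mathfrak i$, hence $\mathfrak z^{\perp}(a)$ lies in the orthogonal of $\mathfrak i$ inside $\mathfrak h^{\perp}$---is exactly right and matches the paper's one-line justification.
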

\begin{proof}
	By the first identity in (\ref{eq:cong1}), since $a\in \mathfrak 
	h_{\mathfrak p}$, we get the first identity in (\ref{eq:cong5a3}).
	 Combining  the third identity in (\ref{eq:inva18a}) with this 
	 first identity,   we 
	get  the second identity in (\ref{eq:cong5a3}). The third 
	identity in (\ref{eq:cong5a3}) is a consequence of the first two. 
	By (\ref{eq:coci1}), we  get (\ref{eq:cong5a6}) and the first and the 
	third equations in 
	(\ref{eq:cong5a6a}), the second equation being a consequence of 
	the first equation in (\ref{eq:cong5a3}).  Also 
	$\gamma$ is regular if and only if $\mathfrak 
	z\left(\gamma\right)=\mathfrak h$. Since $\mathfrak h\cap 
	\mathfrak i=0$, by the second identity in (\ref{eq:cong5a3}), we get (\ref{eq:cong5a5}).
	The proof of our proposition is completed. 
\end{proof}

\subsection{Cartan subalgebras and differential operators}%
\label{subsec:care}
Let $\mathfrak h $ be a $\theta$-stable Cartan subalgebra. There is a natural 
projection $\mathfrak g^{*}\to \mathfrak h^{*}$.

By (\ref{eq:cor0}), there is a well-defined projection $\mathfrak g\to 
\mathfrak h$.  To the splitting (\ref{eq:cor0}) corresponds the 
dual splitting
\begin{equation}\label{eq:inva3b}
\mathfrak g^{*}=\mathfrak h^{*} \oplus \mathfrak h^{*\perp}.
\end{equation}
The projections $S\ac\left(\mathfrak g^{*}\right)\to S\ac\left(\mathfrak 
h^{*}\right),S\ac\left(\mathfrak g\right)\to S\ac\left(\mathfrak 
h\right)$ associated with (\ref{eq:cor0}), (\ref{eq:inva3b}) 
are just the restriction 
\index{r@$r$}%
$r$ of 
polynomials on $\mathfrak g$ to $\mathfrak h$, or of 
polynomials on $\mathfrak g^{*}$ to $\mathfrak h^{*}$.

The Lie algebra $\mathfrak g$ acts as an algebra of derivations on 
$S\ac\left(\mathfrak g^{*}\right)$. 
\begin{definition}\label{def:dinv}
	Let 
\index{Ig@$I\ac\left(\mathfrak g^{*}\right)$}%
$I\ac\left(\mathfrak g^{*}\right)  \subset S\ac\left(\mathfrak g^{*}\right)$  
be the algebra of invariant elements in $S\ac\left(\mathfrak 
g^{*}\right)$, i.e., the algebra of the elements of 
$S\ac\left(\mathfrak g^{*}\right)$ on which the derivations 
associated with $\mathfrak g$ vanish.  Let
\index{Ihg@$I\ac\left(\mathfrak h^{*}_{\C}, \mathfrak g^{*}_{\C}\right)$}%
$I\ac\left(\mathfrak h^{*}_{\C}, \mathfrak g^{*}_{\C}\right)$ be the 
algebra of $W\left(\mathfrak h_{\C}:\mathfrak 
g_{\C}\right)$-invariant elements in $S\ac\left(\mathfrak 
h^{*}_{\C}\right)$.
\end{definition}

Recall that 
\begin{equation}\label{eq:chif5}
S\ac\left(\mathfrak h_{\C}^{*}\right)=S\ac\left(\mathfrak 
h^{*}\right)_{\C}.
\end{equation}
In particular $S\ac\left(\mathfrak h_{\C}^{*}\right)$ is equipped 
with a natural conjugation. 
\begin{proposition}\label{prop:conja}
	The algebra $I\ac\left(\mathfrak h_{\C}^{*}, \mathfrak 
	g_{\C}^{*}\right)$ is preserved under complex conjugation. There is a real 
	algebra
	\index{Ihg@$I\ac\left(\mathfrak h^{*}, \mathfrak g^{*}\right)$}%
	$I\ac\left(\mathfrak h^{*}, \mathfrak g^{*}\right)\subset 
	S\ac\left(\mathfrak  h^{*}\right)$ such that
	\begin{equation}\label{eq:chif6}
I\ac\left(\mathfrak h^{*}_{\C}, \mathfrak 
g^{*}_{\C}\right)=I\ac\left(\mathfrak h^{*}, \mathfrak 
g^{*}\right)_{\C}.
\end{equation}
The map $r: S\ac\left(\mathfrak g^{*}\right)\to 
S\ac\left(\mathfrak h^{*}\right) $ induces the canonical isomorphism
\begin{equation}\label{eq:chif7}
r:I\ac\left(\mathfrak g^{*}\right) \simeq I\ac\left(\mathfrak h^{*},\mathfrak 
g^{*}\right).
\end{equation}
\end{proposition}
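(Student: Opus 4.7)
The plan is to derive all three assertions from the classical complex Chevalley restriction theorem, combined with Proposition \ref{prop:pstab}, which identifies how complex conjugation interacts with the Weyl group. I would proceed in the three stages stated in the proposition.

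For the first assertion, I would start from the elementary identity $\overline{w\cdot p}=\overline{w}\cdot\overline{p}$ for the natural action of $W\left(\mathfrak h_{\C}:\mathfrak g_{\C}\right)$ on $S\ac\left(\mathfrak h_{\C}^{*}\right)$, with conjugation on $\mathfrak h_{\C}$ defined using the real form $\mathfrak h$. By Proposition \ref{prop:pstab}, the map $w\mapsto\overline{w}$ is a bijection of $W\left(\mathfrak h_{\C}:\mathfrak g_{\C}\right)$, so Weyl invariance is preserved under complex conjugation, which is precisely part~(1).

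For the second assertion, I would define $I\ac\left(\mathfrak h^{*},\mathfrak g^{*}\right):=I\ac\left(\mathfrak h_{\C}^{*},\mathfrak g_{\C}^{*}\right)\cap S\ac\left(\mathfrak h^{*}\right)$. Decomposing any $p\in I\ac\left(\mathfrak h_{\C}^{*},\mathfrak g_{\C}^{*}\right)$ as $\tfrac{1}{2}(p+\overline{p})+i\cdot\tfrac{1}{2i}(p-\overline{p})$ and applying part~(1), both summands lie in $I\ac\left(\mathfrak h^{*},\mathfrak g^{*}\right)$, which yields the identification (\ref{eq:chif6}).

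For the third assertion (\ref{eq:chif7}), I would invoke the classical (complex) Chevalley restriction theorem to obtain the isomorphism $r_{\C}: I\ac\left(\mathfrak g_{\C}^{*}\right)\xrightarrow{\sim} I\ac\left(\mathfrak h_{\C}^{*},\mathfrak g_{\C}^{*}\right)$. Since $\mathfrak h$ is a real subspace of $\mathfrak g$, the restriction map $r_{\C}$ commutes with complex conjugation, and one obtains the real isomorphism (\ref{eq:chif7}) by taking conjugation-invariant real subalgebras on both sides. The step requiring the most attention is surjectivity at the real level: given $q\in I\ac\left(\mathfrak h^{*},\mathfrak g^{*}\right)$, the complex Chevalley theorem furnishes a unique $p\in I\ac\left(\mathfrak g_{\C}^{*}\right)$ with $r_{\C}(p)=q$, and the equality $r_{\C}(\overline{p})=\overline{q}=q=r_{\C}(p)$ together with uniqueness forces $\overline{p}=p$, so $p\in I\ac\left(\mathfrak g^{*}\right)$. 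This Galois-descent step is the main (though modest) obstacle; everything else is formal once Proposition \ref{prop:pstab} is available.
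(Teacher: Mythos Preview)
Your proposal is correct and follows essentially the same approach as the paper: both use Proposition~\ref{prop:pstab} to deduce that $I\ac\left(\mathfrak h_{\C}^{*},\mathfrak g_{\C}^{*}\right)$ is conjugation-stable, and then pass from the complex Chevalley restriction isomorphism (\ref{eq:chif8}) to its real form by Galois descent. The paper simply states these steps more tersely, treating the descent as immediate, whereas you spell it out.
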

\begin{proof}
	By Proposition \ref{prop:pstab}, $W\left(\mathfrak h_{\C}: 
	\mathfrak g_{\C}\right)$ is preserved by conjugation. Therefore 
	$I\ac\left(\mathfrak h_{\C}^{*}, \mathfrak g_{\C}^{*}\right)$ is 
	 preserved by conjugation, which gives  (\ref{eq:chif6}). From the obvious isomorphism
	\begin{equation}\label{eq:chif8}
r:I\ac\left(\mathfrak g_{\C}^{*}\right) \to I\ac\left(\mathfrak 
h_{\C}^{*},\mathfrak g_{\C}^{*}\right), 
\end{equation}
we get (\ref{eq:chif7}). The proof of our proposition is completed. 
\end{proof}

What we did for $\mathfrak g^{*}$ can also be done for $\mathfrak g$. 
The same argument as in (\ref{eq:chif7}) leads to the identification
\index{Ihg@$I\ac\left(\mathfrak h,\mathfrak g\right)$}%
\begin{equation}\label{eq:inva6a}
r: I\ac\left(\mathfrak g\right) \simeq I\ac\left(\mathfrak 
h,\mathfrak g\right).
\end{equation}

As we saw in Subsection \ref{subsec:lial}, $S\ac\left(\mathfrak 
g\right)$ can be identified with the algebra 
\index{Dg@$D\ac\left(\mathfrak g\right)$}%
$D\ac\left(\mathfrak g\right)$ of real differential operators on 
$\mathfrak g$ with constant coefficients, so 
that $I\ac\left(\mathfrak g\right)$ is identified with the algebra 
\index{Dig@$D_{I}\ac\left(\mathfrak g\right)$}%
$D_{I}\ac\left(\mathfrak g\right)$ of  real
  differential operators with constant coefficients on $\mathfrak g$ 
  which commute with the above $\mathfrak g$-derivations.  
Similarly $I\ac\left(\mathfrak h,\mathfrak g\right)$ can be 
identified with the algebra 
\index{DIhg@$D_{I}\ac\left(\mathfrak h,\mathfrak g\right)$}%
$D\ac_{I}\left(\mathfrak h,\mathfrak g\right)$  of real  differential 
operators on $\mathfrak h$ with constant coefficients  that are $W\left(\mathfrak 
h_{\C}:\mathfrak g_{\C}\right)$-invariant.

Let $R_{+} \subset R$ be a positive root system as in Subsection 
\ref{subsec:posro}. Recall that the associated polynomial $\pi^{\mathfrak h, 
\mathfrak g}\in S\ac\left(\mathfrak h^{*}_{\C}\right)$ was defined in 
(\ref{eq:inva4}). 
If $A\in 
I\ac\left(\mathfrak g\right) =D_{I}\left(\mathfrak g\right) $, if 
$f\in I\ac\left(\mathfrak g^{*}\right)$, then $Af\in 
I\ac\left(\mathfrak g^{*}\right)$, so that $r \left( Af \right)  \in 
I\ac\left(\mathfrak h^{*},\mathfrak g^{*}\right)$. Also 
$r\left(A\right)\in I\ac\left(\mathfrak h, \mathfrak 
g\right)=D_{I}\left(\mathfrak h, \mathfrak g\right)$.  By 
\cite[Lemmas 6 and 8]{Harish57}, if $f\in I\ac\left(\mathfrak 
g^{*}\right)$, 
\begin{equation}\label{eq:inva6b}
r\left(Af\right)=\frac{1}{\pi^{\mathfrak h,\mathfrak 
g}}r\left(A\right)\pi^{\mathfrak h, \mathfrak g}rf.
\end{equation}
Let $C^{\infty,\mathfrak g}\left(\mathfrak g,\R\right)$ be the vector space of 
 smooth real functions on $\mathfrak g$ that vanish under the above 
 $\mathfrak g$-derivations. Then 
 (\ref{eq:inva6b}) extends to $f\in C^{ \infty ,\mathfrak g}\left(\mathfrak 
g,\R\right)$.
\section{Root systems and the function $\mathcal{J}_{\gamma}$}%
\label{sec:rofu}
The purpose of this Section is to give a drastically simplified 
version of the function $\mathcal{J}_{\gamma}\left(\Yok\right)$ introduced in 
Definition \ref{def:Jg}. This will be done by expressing this 
function in terms of a positive root system. Imaginary roots will 
play an essential role in this expression. In particular, the 
function $\mathcal{L}_{\gamma}$ introduced in Definition \ref{DLg} 
will turn out not to depend on $a$.

This section is organized as follows. In Subsection 
\ref{subsec:detad}, if $\mathfrak h$ is a $\theta$-stable Cartan subalgebra and $H$ 
is the corresponding Cartan subgroup, if $\gamma\in H$, we give 
explicit formulas for the 
determinant of $1-\Ad\left(\gamma\right)$ on various subspaces in terms of a 
positive root system.

In Subsection \ref{subsec:Kg}, we establish our formula for 
$\mathcal{J}_{\gamma}\left(\Yok\right)$ using the root system.

We use the assumptions and the notation of Section \ref{sec:geofori}.
\subsection{The determinant of $1-\Ad\left(\gamma\right)$}%
\label{subsec:detad}
Let $\mathfrak h  \subset \mathfrak g$ be a $\theta$-stable Cartan subalgebra, and let $H 
\subset G$ be the corresponding Cartan subgroup. 
Put
\index{h@$\mathfrak h^{\perp}_{\pm}$}%
\begin{align}\label{eq:crin1}
&\mathfrak h^{\perp}_{+}=\bigoplus_{\alpha\in R_{+}}\mathfrak 
g_{\alpha},&\mathfrak h^{\perp}_{-}=\bigoplus_{\alpha\in 
R_{+}}\mathfrak g_{-\alpha}.
\end{align}
By (\ref{eq:comm-1a}),  we get
\begin{equation}\label{eq:crin2}
\mathfrak h_{\C}^{\perp}=\mathfrak h^{\perp}_{+} \oplus \mathfrak 
h^{\perp}_{-}.
\end{equation}

Let $\gamma\in H$ be written as in (\ref{eq:guing1}). 
By Theorem \ref{thm:careg}, we obtain
\begin{equation}\label{eq:grug2}
\det\Ad\left(\gamma\right)\vert_{\mathfrak h^{\perp}_{+}}=\prod_{\alpha\in 
R_{+}}^{}\xi_{\alpha}\left(\gamma\right).
\end{equation}
We write (\ref{eq:grug2}) in the form
\begin{equation}\label{eq:grug2a1}
\det\Ad\left(\gamma\right)\vert_{\mathfrak h^{\perp}_{+}}=\prod_{\alpha\in 
R^{\mathrm{c}}_{+}}^{}\xi_{\alpha}\left(\gamma\right)
\prod_{\alpha\in R_{+}^{\mathrm{re}}}^{}\xi_{\alpha}\left(\gamma\right)
\prod_{\alpha\in 
R_{+}^{\mathrm{im}}}^{}\xi_{\alpha}\left(\gamma\right).
\end{equation}
By the considerations we made in Subsection \ref{subsec:posro}, 
$-\theta$ acts without fixed points on   $R^{\mathrm{c}}_{+}$. By  
 Theorem \ref{thm:careg}, in the right-hand side of 
(\ref{eq:grug2a1}), the first term is positive, the second is a product 
of nonzero real numbers, and the third term is a 
product of complex numbers of module $1$.

If $\alpha\in R_{+}$, we choose a square 
root $\xi_{\alpha}^{1/2}\left(k^{-1}\right)$ of 
$\xi_{\alpha}\left(k^{-1}\right)$.  In view of the 
second identity in (\ref{eq:comm2a-1}), if $\alpha\in R_{+}^{\mathrm{c}}$, we may and we will 
assume that
\begin{equation}\label{eq:gru2z1}
\xi_{-\theta\alpha}^{1/2}\left(k^{-1}\right)=\overline{\xi_{\alpha}^{1/2}\left(k^{-1}\right)}.
\end{equation}

For $\alpha\in R_{+}$, we choose the square root 
$\xi_{\alpha}^{1/2}\left(\gamma\right)$ so that
\begin{equation}\label{eq:gru2z2}
\xi_{\alpha}^{1/2}\left(\gamma\right)=e^{\left\langle  
\alpha,a\right\rangle/2}\xi^{1/2}_{\alpha}\left(k^{-1}\right).
\end{equation}
By (\ref{eq:gru2z1}), (\ref{eq:gru2z2}), if $\alpha\in 
R_{+}^{\mathrm{c}}$, then
\begin{equation}\label{eq:gru2z3}
\xi^{1/2}_{-\theta\alpha}\left(\gamma\right)=\overline{\xi_{\alpha}^{1/2}\left(\gamma\right)}.
\end{equation}

A square root of 
$\det\Ad\left(\gamma\right)\vert_{\mathfrak h^{\perp}_{+}}$ in (\ref{eq:grug2}) 
is given by
\begin{equation}\label{eq:gru2a1b}
\det\Ad\left(\gamma\right)\vert_{\mathfrak h^{\perp}_{+}}^{1/2}=
\prod_{\alpha\in R_{+}}^{}\xi_{\alpha}^{1/2}\left(\gamma\right).
\end{equation}
By proceeding as in (\ref{eq:grug2a1}), we can rewrite 
(\ref{eq:gru2a1b}) in the form
\begin{equation}\label{eq:grug2a1c}
\det\Ad\left(\gamma\right)\vert_{\mathfrak h^{\perp}_{+}}^{1/2}=\prod_{\alpha\in 
R_{+}^{\mathrm{c}}}^{}\xi_{\alpha}^{1/2}\left(\gamma\right)
\prod_{\alpha\in R_{+}^{\mathrm{re}}}^{}\xi^{1/2}_{\alpha}\left(\gamma\right)
\prod_{\alpha\in R_{+}^{\mathrm{im}}}^{}\xi^{1/2}_{\alpha}\left(\gamma\right).
\end{equation}
Using  (\ref{eq:gru2z2}),  
(\ref{eq:gru2z3}), we find that the 
first product in the right hand-side of (\ref{eq:grug2a1c}) is 
positive,  the
second product is either a nonzero  real number, or the
product of $\sqrt{-1}$ by a nonzero real number, and the third 
product is of  module  $1$.

\begin{definition}\label{def:sign}
	Put
	\begin{equation}\label{eq:grai1}
\epsilon_{D}\left(\gamma\right)=\mathrm{sgn} \prod_{\alpha\in 
R_{+}^{\mathrm{re}}\setminus 
R_{+}^{\mathrm{re}}\left(\gamma\right)}^{}\left(1-\xi_{\alpha}^{-1}\left(\gamma\right)\right).
\end{equation}
\end{definition}

Recall that if $\alpha\in R^{\mathrm{re}}$, then 
$\xi_{\alpha}\left(k^{-1}\right)=\pm 1$. 

\begin{proposition}\label{prop:si}
	The following identity holds:
	\begin{equation}\label{eq:grai2}
\epsilon_{D}\left(\gamma\right)=\mathrm{sgn} \prod_{\alpha\in 
R_{+}^{\mathrm{re}}\setminus 
R_{+}^{\mathrm{re}}\left(a\right)}^{}\left(1-\xi_{\alpha}^{-1}\left(\gamma\right)\right).
\end{equation}
\end{proposition}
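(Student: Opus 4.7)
The plan is to compare the two index sets $A := R_{+}^{\mathrm{re}}\setminus R_{+}^{\mathrm{re}}\left(\gamma\right)$ (appearing in Definition~\ref{def:sign}) and $B := R_{+}^{\mathrm{re}}\setminus R_{+}^{\mathrm{re}}\left(a\right)$ (appearing in the target identity), and to check that the extra factors contributed by $A\setminus B$ are positive, so that the two signs coincide.

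First I would observe the inclusion $R^{\mathrm{re}}\left(\gamma\right)\subset R^{\mathrm{re}}\left(a\right)$. Indeed, for $\alpha\in R^{\mathrm{re}}$, Theorem~\ref{thm:careg} (especially the last part of \eqref{eq:comm2a-1} and the remarks following it) gives
\begin{equation*}
\xi_{\alpha}\left(\gamma\right)=e^{\left\langle \alpha,a\right\rangle}\xi_{\alpha}\left(k^{-1}\right),\qquad \xi_{\alpha}\left(k^{-1}\right)\in\{-1,+1\}.
\end{equation*}
If $\xi_{\alpha}\left(\gamma\right)=1$, then the strictly positive real factor $e^{\left\langle \alpha,a\right\rangle}$ must equal $1$, which forces $\left\langle \alpha,a\right\rangle=0$; hence $\alpha\in R^{\mathrm{re}}\left(a\right)$. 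Intersecting with $R_{+}$ we obtain $R_{+}^{\mathrm{re}}\left(\gamma\right)\subset R_{+}^{\mathrm{re}}\left(a\right)$, and therefore $B\subset A$.

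Next I analyze the complement $A\setminus B = R_{+}^{\mathrm{re}}\left(a\right)\setminus R_{+}^{\mathrm{re}}\left(\gamma\right)$. For $\alpha$ in this set, $\left\langle \alpha,a\right\rangle=0$ but $\xi_{\alpha}\left(\gamma\right)\neq 1$. Using the formula above, $\xi_{\alpha}\left(\gamma\right)=\xi_{\alpha}\left(k^{-1}\right)\in\{-1,+1\}$, so necessarily $\xi_{\alpha}\left(\gamma\right)=-1$. Consequently
\begin{equation*}
1-\xi_{\alpha}^{-1}\left(\gamma\right)=2>0\qquad\text{for every }\alpha\in A\setminus B.
\end{equation*}

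Splitting the product over $A$ as the product over $B$ times the product over $A\setminus B$, the latter factor is $2^{\left\vert A\setminus B\right\vert}>0$ and therefore drops out when taking signs. This yields $\epsilon_{D}\left(\gamma\right)=\mathrm{sgn}\prod_{\alpha\in A}\left(1-\xi_{\alpha}^{-1}\left(\gamma\right)\right)=\mathrm{sgn}\prod_{\alpha\in B}\left(1-\xi_{\alpha}^{-1}\left(\gamma\right)\right)$, which is precisely \eqref{eq:grai2}. The argument is essentially bookkeeping; the only subtlety is the dichotomy $\xi_{\alpha}\left(k^{-1}\right)=\pm1$ on real roots restricted to $H\cap K$, which is already built into Theorem~\ref{thm:careg}, so there is no real obstacle.
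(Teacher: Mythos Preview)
Your proof is correct and follows essentially the same approach as the paper: both arguments hinge on the observation that for $\alpha\in R_{+}^{\mathrm{re}}\left(a\right)\setminus R_{+}^{\mathrm{re}}\left(\gamma\right)$ one has $\xi_{\alpha}\left(\gamma\right)=\xi_{\alpha}\left(k^{-1}\right)=-1$, whence $1-\xi_{\alpha}^{-1}\left(\gamma\right)=2>0$, so these factors do not affect the sign. Your write-up is slightly more explicit in establishing the inclusion $R_{+}^{\mathrm{re}}\left(\gamma\right)\subset R_{+}^{\mathrm{re}}\left(a\right)$ and in spelling out the product splitting, but the substance is identical.
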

\begin{proof}
	If $\alpha\in R\left(a\right)$, by (\ref{eq:comm2a-1}), 
	$\xi_{\alpha}\left(\gamma\right)=\xi_{\alpha}\left(k^{-1}\right)$. If $\alpha\notin R\left(\gamma\right)$, by (\ref{eq:comm2a2}), $\xi_{\alpha}\left(\gamma\right)\neq 1$. By Theorem \ref{thm:careg}, if $\alpha\in R^{\mathrm{re}}\left(a\right)\setminus R^{\mathrm{re}}\left(\gamma\right)$, we have $\xi_{\alpha}\left(\gamma\right)=-1$, so that $1-\xi_{\alpha}^{-1}\left(\gamma\right)=2$. This completes the proof of our proposition.
\end{proof}

\begin{theorem}\label{thm:extraide}
	The following identities hold:
	\begin{align}\label{eq:cong6} 
&\det\left(1-\Ad\left(\gamma\right)\right)\vert_{\mathfrak 
z^{\perp}\left(a\right)} \nonumber \\
& \qquad \qquad=\left(-1\right)^{\left\vert  R_{+}\setminus 
R_{+}\left(a\right)\right\vert}\prod_{\alpha\in R_{+}\setminus 
R_{+}\left(a\right)}^{}\left(\xi_{\alpha}^{1/2}\left(\gamma\right)-\xi_{\alpha}^{-1/2}\left(\gamma\right)\right)^{2}, \nonumber \\
&\left\vert \det\left(1-\Ad\left(\gamma\right)\right)\vert_{\mathfrak 
z^{\perp}\left(a\right)} 
\right\vert^{1/2}=\epsilon_{D}\left(\gamma\right)\prod_{\alpha\in 
R_{+}\setminus 
R_{+}\left(a\right)}^{}\left(\xi_{\alpha}^{1/2}\left(\gamma\right)-\xi_{\alpha}^{-1/2}\left(\gamma\right)\right)\\
&\qquad\qquad\qquad\prod_{\alpha\in R^{\mathrm{re}}_{+}\setminus 
R^{\mathrm{re}}_{+}\left(a\right)}^{}\xi_{\alpha}^{-1/2}\left(k^{-1}\right), \nonumber \\
&\det\left(1-\Ad\left(k^{-1}\right)\right)\vert_{\mathfrak 
z_{a}^{\perp}\left(\gamma\right)}=\left(-1\right)^{\left\vert  
R_{+}\left(a\right)\setminus 
R_{+}\left(\gamma\right)\right\vert}  \nonumber \\
&\qquad \qquad \prod_{\alpha\in 
R_{+}\left(a\right)\setminus 
R_{+}\left(\gamma\right)}^{}\left(\xi_{\alpha}^{1/2}\left(\gamma\right)-\xi_{\alpha}^{-1/2}\left(\gamma\right)\right)^{2}. \nonumber 
\end{align}
\end{theorem}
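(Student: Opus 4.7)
The plan is to reduce each of the three identities to a weight‐space computation on $\mathfrak{h}^{\perp}_{\C}$ using the fact, recorded in Theorem \ref{thm:careg}, that $\Ad(\gamma)$ acts on $\mathfrak{g}_{\alpha}$ by $\xi_{\alpha}(\gamma)$, and then to pair roots suitably.

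\textbf{First identity.} By the third equation of (\ref{eq:cong5a6a}), $\mathfrak{z}^{\perp}(a)_{\C}=\oplus_{\alpha\in R\setminus R(a)}\mathfrak{g}_{\alpha}$. Hence
\[
\det\bigl(1-\Ad(\gamma)\bigr)\big|_{\mathfrak{z}^{\perp}(a)}
=\prod_{\alpha\in R\setminus R(a)}\bigl(1-\xi_{\alpha}(\gamma)\bigr).
\]
Since $R(a)$ is $\{-1\}$-stable, I group terms in pairs $\alpha,-\alpha$ and use $\xi_{-\alpha}(\gamma)=\xi_{\alpha}^{-1}(\gamma)$, together with the elementary identity $(1-\xi)(1-\xi^{-1})=-(\xi^{1/2}-\xi^{-1/2})^{2}$. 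Summing $|R_{+}\setminus R_{+}(a)|$ minus signs yields the first formula of (\ref{eq:cong6}).

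\textbf{Second identity.} Taking absolute values in the first identity,
\[
\bigl|\det\bigl(1-\Ad(\gamma)\bigr)\big|_{\mathfrak{z}^{\perp}(a)}\bigr|^{1/2}
=\prod_{\alpha\in R_{+}\setminus R_{+}(a)}\bigl|\xi_{\alpha}^{1/2}(\gamma)-\xi_{\alpha}^{-1/2}(\gamma)\bigr|.
\]
Since imaginary roots vanish on $\mathfrak{h}_{\mathfrak{p}}$, every imaginary root lies in $R(a)$, so $R_{+}\setminus R_{+}(a)=(R_{+}^{\mathrm{c}}\setminus R_{+}^{\mathrm{c}}(a))\cup(R_{+}^{\mathrm{re}}\setminus R_{+}^{\mathrm{re}}(a))$. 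I split the product according to this decomposition. On $R_{+}^{\mathrm{c}}\setminus R_{+}^{\mathrm{c}}(a)$, which is stable under $-\theta$ with no fixed points, pairs $(\alpha,-\theta\alpha)$ contribute a positive real factor because $\xi_{-\theta\alpha}^{1/2}(\gamma)=\overline{\xi_{\alpha}^{1/2}(\gamma)}$ by (\ref{eq:gru2z3}); consequently the complex contribution to $\prod_{R_{+}\setminus R_{+}(a)}(\xi_{\alpha}^{1/2}-\xi_{\alpha}^{-1/2})$ already coincides with its modulus. On $R_{+}^{\mathrm{re}}\setminus R_{+}^{\mathrm{re}}(a)$, I multiply each factor by the compensating $\xi_{\alpha}^{-1/2}(k^{-1})$; using (\ref{eq:gru2z2}) and the case analysis $\xi_{\alpha}(k^{-1})=\pm 1$, a direct computation shows
\[
\bigl(\xi_{\alpha}^{1/2}(\gamma)-\xi_{\alpha}^{-1/2}(\gamma)\bigr)\xi_{\alpha}^{-1/2}(k^{-1})
\]
is real, with the same sign as $1-\xi_{\alpha}^{-1}(\gamma)$ (equal to $2\sinh(\langle\alpha,a\rangle/2)$ or to $2\cosh(\langle\alpha,a\rangle/2)>0$ respectively). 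Multiplying these signs over $R_{+}^{\mathrm{re}}\setminus R_{+}^{\mathrm{re}}(a)$ gives $\epsilon_{D}(\gamma)$ by Definition \ref{def:sign}. Combining the two contributions yields the second formula of (\ref{eq:cong6}).

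\textbf{Third identity.} For $\alpha\in R(a)$, one has $\langle\alpha,a\rangle=0$, so by (\ref{eq:comm2a-1}) $\xi_{\alpha}(\gamma)=\xi_{\alpha}(k^{-1})$. Since $\mathfrak{z}_{a}^{\perp}(\gamma)_{\C}=\oplus_{\alpha\in R(a)\setminus R(\gamma)}\mathfrak{g}_{\alpha}$, the same pairing argument as for the first identity gives
\[
\det\bigl(1-\Ad(k^{-1})\bigr)\big|_{\mathfrak{z}_{a}^{\perp}(\gamma)}
=\prod_{\alpha\in R(a)\setminus R(\gamma)}\bigl(1-\xi_{\alpha}(\gamma)\bigr),
\]
and pairing $\alpha$ with $-\alpha$ produces the third formula of (\ref{eq:cong6}).

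The only delicate step is the sign bookkeeping in the second identity: tracking how the pairings under $-\theta$ on complex roots and the $\pm 1$ values of $\xi_{\alpha}(k^{-1})$ on real roots combine to single out precisely the sign $\epsilon_{D}(\gamma)$.
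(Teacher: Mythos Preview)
Your argument is correct and follows essentially the same route as the paper: the root-space product from (\ref{eq:cong5a6a}), the $-\theta$ pairing on complex roots via (\ref{eq:gru2z3}), and the $\pm 1$ case analysis on real roots. One small slip: in the second identity you take the product of signs over $R_{+}^{\mathrm{re}}\setminus R_{+}^{\mathrm{re}}(a)$ and invoke Definition~\ref{def:sign}, but that definition runs over $R_{+}^{\mathrm{re}}\setminus R_{+}^{\mathrm{re}}(\gamma)$; you need Proposition~\ref{prop:si} (which shows the extra factors over $R_{+}^{\mathrm{re}}(a)\setminus R_{+}^{\mathrm{re}}(\gamma)$ all contribute $+1$) to bridge the two.
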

\begin{proof}
Using  Theorem \ref{thm:careg} and the third identity in 
(\ref{eq:cong5a6a}), we get
\begin{equation}\label{eq:pep1}
\det\left(1-\Ad\left(\gamma\right)\right)\vert_{\mathfrak 
z^{\perp}\left(a\right)}=\prod_{\alpha\in R_{+}\setminus 
R_{+}\left(a\right)}^{}\left(1-\xi_{\alpha}\left(\gamma\right)\right)
\left(1-\xi_{\alpha}^{-1}\left(\gamma\right)\right),
\end{equation}
from which the first equation in (\ref{eq:cong6}) follows. 

By proceeding as in  Subsection \ref{subsec:posro}, we find that 
$-\theta$ acts on    $R_{+}\setminus 
\left( R_{+}\left(a\right)\cup R_{+}^{\mathrm{re}} \right)$ without 
fixed points, so that $\left\vert R_{+}\setminus 
\left( R_{+}\left(a\right)\cup R_{+}^{\mathrm{re}} \right) 
\right\vert$ is even, and so
\begin{equation}\label{eq:pep2}
\left(-1\right)^{\left\vert  R_{+}\setminus 
R_{+}\left(a\right)\right\vert} 
=\left(-1\right)^{\left\vert  
R_{+}^{\mathrm{re}}\setminus 
R^{\mathrm{re}}_{+}\left(a\right)\right\vert}.
\end{equation}

The same arguments also show that
\begin{equation}\label{eq:pep1a0}
\prod_{\alpha\in R_{+}\setminus 
\left( R_{+}\left(a\right)\cup R_{+}^{\mathrm{re}} \right) }^{}\left(\xi_{\alpha}^{1/2}\left(\gamma\right)-\xi_{\alpha}^{-1/2}\left(\gamma\right)\right)
\end{equation}
is a positive number, and also that
\begin{multline}\label{eq:pep1a-1}
\prod_{\alpha\in R_{+}\setminus 
\left( R_{+}\left(a\right)\cup R_{+}^{\mathrm{re}} 
\right)}^{}\left(1-\xi_{\alpha}\left(\gamma\right)\right)
\left(1-\xi_{\alpha}^{-1}\left(\gamma\right)\right) \\
=\prod_{\alpha\in R_{+}\setminus 
\left( R_{+}\left(a\right)\cup R_{+}^{\mathrm{re}} \right) 
}^{}\left(\xi_{\alpha}^{1/2}\left(\gamma\right)-\xi_{\alpha}^{-1/2}\left(\gamma\right)\right)^{2}.
\end{multline}

Moreover, we have the identity of nonzero real numbers,
\begin{multline}\label{eq:pep3}
\prod_{\alpha\in R_{+}^{\mathrm{re}}\setminus 
R_{+}^{\mathrm{re}}\left(a\right)}^{}\left(1-\xi_{\alpha}\left(\gamma\right)\right)\left(1-\xi_{\alpha}^{-1}\left(\gamma\right)\right)\\
=\left(-1\right)^{\left\vert  R^{\mathrm{re}}_{+}
\setminus 
R^{\mathrm{re}}_{+}\left(a\right)\right\vert}\prod_{\alpha\in  R^{\mathrm{re}}_{+}
\setminus 
R^{\mathrm{re}}_{+}\left(a\right)}^{}\left(1-\xi_{\alpha}^{-1}\left(\gamma\right)\right)^{2}\xi_{\alpha}\left(\gamma\right).
\end{multline}
By Theorem \ref{thm:careg}, if $\alpha\in R_{+}^{\mathrm{re}}$, then
\begin{align}\label{eq:pep3x1}
&\xi_{\alpha}\left(\gamma\right)=e^{\left\langle  
\alpha,a\right\rangle}\xi_{\alpha}\left(k^{-1}\right),
&\xi_{\alpha}\left(k^{-1}\right)=\pm 1.
\end{align}
 Using Proposition 
\ref{prop:si} and (\ref{eq:pep3}), we get
\begin{multline}\label{eq:pep4}
\left\vert  \prod_{\alpha\in R_{+}^{\mathrm{re}}\setminus 
R_{+}^{\mathrm{re}}\left(a\right)}^{}\left(1-\xi_{\alpha}\left(\gamma\right)\right)\left(1-\xi_{\alpha}^{-1}\left(\gamma\right)\right)\right\vert^{1/2} \\
=\epsilon_{D}\left(\gamma\right)\prod_{\alpha\in R^{\mathrm{re}}_{+}\setminus R^{\mathrm{re}}_{+}\left(a\right)}^{}\left(1-\xi_{\alpha}^{-1}\left(\gamma\right)\right)e^{
\left\langle  \alpha,a\right\rangle/2}.
\end{multline}
Equation (\ref{eq:pep4}) can be rewritten in the form
\begin{multline}\label{eq:pep5}
\left\vert  \prod_{\alpha\in R_{+}^{\mathrm{re}}\setminus 
R_{+}^{\mathrm{re}}\left(a\right)}^{}\left(1-\xi_{\alpha}\left(\gamma\right)\right)\left(1-\xi_{\alpha}^{-1}\left(\gamma\right)\right)\right\vert^{1/2}\\
=\epsilon_{D}\left(\gamma\right)\prod_{\alpha\in R^{\mathrm{re}}_{+}\setminus R^{\mathrm{re}}_{+}\left(a\right)}^{}\left(\xi_{\alpha}^{1/2}\left(\gamma\right)-\xi_{\alpha}^{-1/2}\left(\gamma\right)\right)\prod_{\alpha\in R^{\mathrm{re}}_{+}\setminus 
R^{\mathrm{re}}_{+}\left(a\right)}^{}\xi_{\alpha}^{-1/2}\left(k^{-1}\right).
\end{multline}
By (\ref{eq:pep1})--(\ref{eq:pep5}), we get the second identity in 
(\ref{eq:cong6}). 

Since on $\mathfrak z_{a}^{\perp}\left(\gamma\right)$, $\Ad\left(\gamma\right)$ acts 
like $\Ad\left(k^{-1}\right)$, the proof of the third identity in (\ref{eq:cong6}) is 
the same as the proof of the first identity, which completes the 
proof of our theorem.
\end{proof}
\begin{remark}\label{rem:Rsign}
	By the first two equations in (\ref{eq:cong6}), we deduce that
	\begin{equation}\label{eq:sign0}
\mathrm{sgn} 
\det\left(1-\Ad\left(\gamma\right)\right)\vert_{\mathfrak 
z^{\perp}\left(a\right)}=\left(-1\right)^{\left\vert  R_{+}\setminus 
R_{+}\left(a\right)\right\vert}\prod_{\alpha\in 
R_{+}^{\mathrm{re}}\setminus 
R_{+}^{\mathrm{re}}\left(a\right)}^{}\xi_{\alpha}^{-1}\left(k^{-1}\right).
\end{equation}
Also
\begin{equation}\label{eq:sign1a}
\left\vert  R_{+}\setminus R_{+}\left(a\right)\right\vert=\dim 
\mathfrak p^{\perp}\left(a\right).
\end{equation}
Using (\ref{eq:sign1a}), we can rewrite (\ref{eq:sign0}) in the form
	\begin{equation}\label{eq:sign1}
\mathrm{sgn} 
\det\left(1-\Ad\left(\gamma\right)\right)\vert_{\mathfrak 
z^{\perp}\left(a\right)}=\left(-1\right)^{\dim \mathfrak 
p^{\perp}\left(a\right)}\det \Ad\left(k\right)\vert_{
\mathfrak r^{\perp}_{\mathfrak p}\left(a\right)}.
\end{equation}
Using Proposition \ref{prop:peven}, we get
\begin{equation}\label{eq:sign2}
\det \Ad\left(k\right)\vert_{
\mathfrak r^{\perp}_{\mathfrak p}\left(a\right)}=\det \Ad\left(k\right)\vert_{
\mathfrak p^{\perp}\left(a\right)}.
\end{equation}
By (\ref{eq:sign1}), (\ref{eq:sign2}), we get
\begin{equation}\label{eq:sign3}
\mathrm{sgn} 
\det\left(1-\Ad\left(\gamma\right)\right)\vert_{\mathfrak 
z^{\perp}\left(a\right)}=\left(-1\right)^{\dim \mathfrak 
p^{\perp}\left(a\right)}\det \Ad\left(k\right)\vert_{\mathfrak 
p^{\perp}\left(a\right)},
\end{equation}
a result already established in \cite[Proposition 5.4.1]{Bismut08b}.
\end{remark}

Let 
	\index{i@$\mathfrak i^{\perp}$}%
	$\mathfrak i^{\perp}$ be the orthogonal space to $\mathfrak 
	i$ in $\mathfrak h^{\perp}$.
By (\ref{eq:inva18a}), $\mathfrak z\left(\gamma\right) \subset 
\mathfrak z\left(a\right)$, and by (\ref{eq:cong5a3}), $\mathfrak i 
\subset \mathfrak z\left(a\right)$. Therefore,
\begin{equation}\label{eq:pep5a1}
\mathfrak z^{\perp}\left(a\right) \subset \mathfrak 
z^{\perp}\left(\gamma\right)\cap \mathfrak i^{\perp}.
\end{equation}
Similarly, we have the inclusion
\begin{equation}\label{eq:pep5a2}
R_{+}\left(\gamma\right)\cup R^{\mathrm{im}}_{+} \subset 
R_{+}\left(a\right).
\end{equation}
\begin{theorem}\label{thm:extraidebis}
	The following identities hold:
	\begin{align}\label{eq:cong6bis} 
&\det\left(1-\Ad\left(\gamma\right)\right)\vert_{\mathfrak 
z^{\perp}\left(\gamma\right)\cap \mathfrak i^{\perp}}
=\left(-1\right)^{\left\vert  R_{+}\setminus 
\left( R_{+}\left(\gamma\right)\cup R^{\mathrm{im}}_{+}\right) \right\vert} \nonumber \\
&\qquad \qquad \prod_{\alpha\in   R_{+}\setminus 
\left( R_{+}\left(\gamma\right)\cup R^{\mathrm{im}}_{+}\right)  
}^{}\left(\xi_{\alpha}^{1/2}\left(\gamma\right)-\xi_{\alpha}^{-1/2}\left(\gamma\right)\right)^{2},   \nonumber \\
&\left\vert \det\left(1-\Ad\left(\gamma\right)\right)\vert_{\mathfrak 
z^{\perp}\left(\gamma\right)\cap \mathfrak i^{\perp}} 
\right\vert^{1/2} =\epsilon_{D}\left(\gamma\right)\\
& \qquad \prod_{\alpha\in 
R_{+}\setminus 
\left( R_{+}\left(\gamma\right)\cup R^{\mathrm{im}}_{+} \right) 
}^{}\left(\xi_{\alpha}^{1/2}\left(\gamma\right)-\xi_{\alpha}^{-1/2}\left(\gamma\right)\right)
\prod_{\alpha\in R^{\mathrm{re}}_{+}\setminus 
R^{\mathrm{re}}_{+}\left(\gamma\right)}^{}\xi_{\alpha}^{-1/2}\left(k^{-1}\right). \nonumber 
\end{align}
\end{theorem}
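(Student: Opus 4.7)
The plan is to run the same analysis that proved Theorem \ref{thm:extraide}, with $R_{+}(a)$ replaced everywhere by $R_{+}(\gamma)\cup R^{\mathrm{im}}_{+}$. Since $\mathfrak h\subset\mathfrak z(\gamma)$, we have $\mathfrak z^{\perp}(\gamma)\subset\mathfrak h^{\perp}$; combining Theorem \ref{thm:careg} with Proposition \ref{prop:ispa} and the fact that $B$ pairs $\mathfrak g_{\alpha}$ nondegenerately only with $\mathfrak g_{-\alpha}$, I would first identify
\[
(\mathfrak z^{\perp}(\gamma)\cap\mathfrak i^{\perp})_{\C}=\bigoplus_{\alpha\in R\setminus(R(\gamma)\cup R^{\mathrm{im}})}\mathfrak g_{\alpha}.
\]
Since $\Ad(\gamma)$ acts on $\mathfrak g_{\alpha}$ by the scalar $\xi_{\alpha}(\gamma)$, the determinant factors as $\prod_{\alpha\in R\setminus(R(\gamma)\cup R^{\mathrm{im}})}(1-\xi_{\alpha}(\gamma))$, and pairing $\alpha$ with $-\alpha$ via $(1-x)(1-x^{-1})=-(x^{1/2}-x^{-1/2})^{2}$ together with $\xi_{-\alpha}=\xi_{\alpha}^{-1}$ produces the first identity of (\ref{eq:cong6bis}).

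For the second identity, I would split $R_{+}\setminus(R_{+}(\gamma)\cup R^{\mathrm{im}}_{+})$ as the disjoint union of its complex part $R^{\mathrm{c}}_{+}\setminus R^{\mathrm{c}}_{+}(\gamma)$ and its real part $R^{\mathrm{re}}_{+}\setminus R^{\mathrm{re}}_{+}(\gamma)$, using $R^{\mathrm{re}}\cap R^{\mathrm{im}}=\emptyset$. By the choice of $R_{+}$ in Subsection \ref{subsec:posro}, $-\theta$ preserves $R^{\mathrm{c}}_{+}$ without fixed points, and it also preserves $R^{\mathrm{c}}_{+}(\gamma)$ because $\xi_{-\theta\alpha}(\gamma)=\overline{\xi_{\alpha}(\gamma)}$; hence it acts freely on the complex part, which therefore has even cardinality. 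Moreover, by the compatibility (\ref{eq:gru2z3}), the two factors attached to a $-\theta$-pair multiply to $|\xi_{\alpha}^{1/2}(\gamma)-\xi_{\alpha}^{-1/2}(\gamma)|^{2}\ge 0$, so the absolute-value square root of the complex contribution is the positive product $\prod_{\alpha\in R^{\mathrm{c}}_{+}\setminus R^{\mathrm{c}}_{+}(\gamma)}(\xi_{\alpha}^{1/2}(\gamma)-\xi_{\alpha}^{-1/2}(\gamma))$.

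For the real part, the computation (\ref{eq:pep3})--(\ref{eq:pep5}) carries over verbatim with $R^{\mathrm{re}}_{+}(a)$ replaced by $R^{\mathrm{re}}_{+}(\gamma)$: from $\xi_{\alpha}(\gamma)=e^{\langle\alpha,a\rangle}\xi_{\alpha}(k^{-1})$ with $\xi_{\alpha}(k^{-1})=\pm 1$, the square root of the real contribution equals $\epsilon_{D}(\gamma)\prod_{\alpha\in R^{\mathrm{re}}_{+}\setminus R^{\mathrm{re}}_{+}(\gamma)}(\xi_{\alpha}^{1/2}(\gamma)-\xi_{\alpha}^{-1/2}(\gamma))\xi_{\alpha}^{-1/2}(k^{-1})$, where $\epsilon_{D}(\gamma)$ is exactly the sign in Definition \ref{def:sign}. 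Combining the two contributions, together with the overall parity $(-1)^{|R_{+}\setminus(R_{+}(\gamma)\cup R^{\mathrm{im}}_{+})|}=(-1)^{|R^{\mathrm{re}}_{+}\setminus R^{\mathrm{re}}_{+}(\gamma)|}$, which follows from the evenness of the complex part, yields the second identity. The only real obstacle is bookkeeping: tracking the square-root conventions (\ref{eq:gru2z1})--(\ref{eq:gru2z3}) consistently across the complex and real subsets; no new idea beyond those used in the proof of Theorem \ref{thm:extraide} is required.
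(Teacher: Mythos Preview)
Your proposal is correct and follows essentially the same approach as the paper: both proofs proceed by rerunning the argument of Theorem \ref{thm:extraide} with $R_{+}(a)$ replaced by $R_{+}(\gamma)\cup R^{\mathrm{im}}_{+}$, splitting into the complex and real contributions, using the free $-\theta$-action on $R^{\mathrm{c}}_{+}\setminus R^{\mathrm{c}}_{+}(\gamma)$ for evenness and positivity, and carrying over (\ref{eq:pep3})--(\ref{eq:pep5}) with $R^{\mathrm{re}}_{+}(\gamma)$ in place of $R^{\mathrm{re}}_{+}(a)$. Your writeup is in fact more explicit than the paper's terse proof, which simply refers back to the corresponding steps in Theorem \ref{thm:extraide}.
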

\begin{proof}
	The proof of the first identity in (\ref{eq:cong6bis}) is the 
	same as the proof of the first identity in (\ref{eq:cong6}) that 
	was given in Theorem \ref{thm:extraide}. Instead of 
	(\ref{eq:pep2}), we get
	\begin{equation}\label{eq:cong6bisa}
\left(-1\right)^{\left\vert  
R_{+}\setminus\left(R_{+}\left(\gamma\right)\cup 
R^{\mathrm{im}}_{+}\right)\right\vert}=\left(-1\right)^{\left\vert  R^{\mathrm{re}}_{+}\setminus
R^{\mathrm{re}}_{+}\left(\gamma\right)\right\vert}.
\end{equation}
	If in (\ref{eq:pep1a0}), we 
	replace $R_{+}\setminus\left(R_{+}\left(a\right)\cup 
	R_{+}^{\mathrm{re}}\right)$ by 
	$R_{+}\setminus\left(R_{+}\left(\gamma\right)\cup 
	R_{+}^{\mathrm{re}}\cup R^{\mathrm{im}}_{+}\right)$, the 
	conclusions remain valid. Similarly, (\ref{eq:pep3}),
	(\ref{eq:pep4}) remain valid when replacing 
	$R^{\mathrm{re}}_{+}\setminus R^{\mathrm{re}}_{+}\left(a\right)$ 
	by $R^{\mathrm{re}}_{+}\setminus 
	R_{+}^{\mathrm{re}}\left(\gamma\right)$. This completes the proof of 
	our theorem.
\end{proof}
\subsection{Evaluation of the function $\mathcal{J}_{\gamma}$ on 
$i \mathfrak h_{\mathfrak k}$}%
\label{subsec:Kg}
Let $\mathfrak h \subset \mathfrak g$ be a $\theta$-stable Cartan 
subalgebra. Take $\gamma\in H$. Then $\gamma$ is semisimple, and 
$\mathfrak h \subset \mathfrak z\left(\gamma\right)$. In particular, 
$\mathfrak h_{\mathfrak k} \subset \mathfrak k\left(\gamma\right)$, 
so that functions defined on $i \mathfrak 
k\left(\gamma\right)$ restrict to functions on $i \mathfrak 
h_{\mathfrak k}$. 

Recall that the function $\mathcal{L}_{\gamma}\left(\Yok\right), 
\mathcal{M}_{\gamma}\left(\Yok\right)$ on 
$i \mathfrak k\left(\gamma\right)$ were  defined in Definition 
\ref{DLg}. 
\begin{definition}\label{def:Kg}
	If $h_{\mathfrak k}\in i \mathfrak h_{\mathfrak k}$, put
	\index{Lkh@$\mathscr{L}_{k^{-1}}\left(h_{\mathfrak k}\right)$}%
\begin{equation}\label{eq:congo0a1}
\mathscr{L}_{k^{-1}}\left(h_{\mathfrak 
k}\right)=\frac{\det\left(1-\Ad\left(k^{-1}e^{-h_{\mathfrak k}}\right)\right)\vert_{\mathfrak i_{\mathfrak 
	k}^{\perp}\left(k\right)}}{\det\left(1-    
	\Ad\left(k^{-1}e^{-h_{\mathfrak k}}\right)\right)\vert_{\mathfrak  i_{\mathfrak 
	p}^{\perp}\left(k\right)}}.
\end{equation}
\end{definition}
Like the function 
$\mathcal{J}_{\gamma}\left(\Yok\right)$ in (\ref{eq:ret33}),  the function 
$\mathscr{L}_{k^{-1}}\left(h_{\mathfrak k}\right)$ is a smooth 
function of $h_{\mathfrak k}$, which verifies estimates similar to 
(\ref{eq:comm-1}).
Exactly the same arguments as in \cite[Section 5.5]{Bismut08b} and after 
(\ref{eq:congo-1}) show that there is 
an unambiguously defined square root
\index{Mkh@$\mathscr{M}_{k^{-1}}\left(h_{\mathfrak k}\right)$}%
\begin{equation}\label{eq:bint1}
\mathscr{M}_{k^{-1}}\left(h_{\mathfrak k}\right)=\left[\frac{1}{\det\left(1-\Ad\left(k^{-1}\right)\right)\vert 
_{\mathfrak 
i^{\perp}\left(k\right)}}\mathscr{L}_{k^{-1}}\left(h_{\mathfrak 
k}\right)\right]^{1/2}.
\end{equation}
This square root is positive for $h_{\mathfrak k}=0$.
\begin{theorem}\label{thm:TJy}
	If $h_{\mathfrak k}\in i\mathfrak h_{\mathfrak k}$, then 
	\begin{align}\label{eq:congo1}
		&\frac{\widehat{A}\left(\mathrm{ad}\left(h_{\mathfrak 
k}\right)\vert_{\mathfrak 
p\left(\gamma\right)}\right)}{\widehat{A}\left(\mathrm{ad}\left(h_{\mathfrak 
k}\right)\vert_{\mathfrak k\left(\gamma\right)}\right)}=
\frac{\widehat{A}\left(\mathrm{ad}\left(h_{\mathfrak 
k}\right)\vert_{\mathfrak i_{\mathfrak p}\left(k\right)}\right)}{\widehat{A}\left(\mathrm{ad}\left(h_{\mathfrak 
k}\right)\vert_{\mathfrak i_{\mathfrak k}\left(k\right)}\right)},\\
&\mathcal{L}_{\gamma}\left(h_{\mathfrak k}\right)	
=\mathscr{L}_{k^{-1}}\left(h_{\mathfrak k}\right). \nonumber 
\end{align}
In particular $\mathcal{L}_{\gamma}\left(h_{\mathfrak 
k}\right)$ does 
not depend on $a$. 

If $h_{\mathfrak k}\in i \mathfrak  h_{\mathfrak k}$, we have the 
identity,
\begin{equation}\label{eq:idfan1}
\mathcal{J}_{\gamma}\left(h_{\mathfrak k}\right)=\frac{1}{\left\vert  
\det\left(1-\Ad\left(\gamma\right)\right)\vert_{\mathfrak 
z^{\perp}\left(\gamma\right)\cap \mathfrak 
i^{\perp}}\right\vert^{1/2}}\frac{\widehat{A}\left(\mathrm{ad}\left(h_{\mathfrak k}\right)\vert_{ 
\mathfrak i_{\mathfrak 
p}\left(k\right)}\right)}{\widehat{A}\left(\mathrm{ad}\left(h_{\mathfrak 
k}\right)\vert_{\mathfrak i_{\mathfrak k}\left(k\right)}\right)}
\mathscr{M}_{k^{-1}}\left(h_{\mathfrak k}\right).
\end{equation}
This identity can be written in the form,
\begin{multline}\label{eq:idfan2}
\mathcal{J}_{\gamma}\left(h_{\mathfrak 
k}\right) \\
=\frac{\left(-1\right)^{\left\vert  R^{\mathrm{im}}_{ \mathfrak 
p,+}\setminus 
R^{\mathrm{im}}_{\mathfrak 
p,+}\left(k\right)\right\vert}\epsilon_{D}\left(\gamma\right)
\prod_{\alpha\in 
R_{+}^{\mathrm{re}}\setminus 
R_{+}^{\mathrm{re}}\left(\gamma\right)}^{}\xi_{\alpha}^{1/2}\left(k^{-1}\right)}{
\prod_{\alpha\in R_{+}\setminus 
R_{+}\left(\gamma\right)}^{}\left(\xi^{1/2}_{\alpha}\left(\gamma\right)-\xi^{-1/2}_{\alpha}\left(\gamma\right)\right)}
\frac{\prod_{\alpha\in R_{ \mathfrak 
p,+}^{\mathrm{im}}\left(k\right)}^{}\widehat{A}\left(\left\langle  
\alpha,h_{\mathfrak k}\right\rangle\right)}{\prod_{\alpha\in  
R_{\mathfrak k,+}^{\mathrm{im}}\left(k\right)}^{}\widehat{A}\left(\left\langle  
\alpha,h_{\mathfrak k}\right\rangle\right)}\\
\frac{\prod_{\alpha\in R_{\mathfrak k,+}^{\mathrm{im}}\setminus 
	R_{\mathfrak k,+}^{\mathrm{im}}\left(k\right)}^{}\left( 
	\xi_{\alpha}^{1/2}\left(k^{-1}e^{-h_{\mathfrak k}}\right)- \xi^{-1/2}_{\alpha}\left(k^{-1}e^{-h_{\mathfrak k}}\right)\right)}{\prod_{\alpha\in R_{\mathfrak p,+}^{\mathrm{im}}\setminus 
	R_{\mathfrak p,+}^{\mathrm{im}}\left(k\right)}^{}\left( 
	\xi_{\alpha}^{1/2}\left(k^{-1}e^{-h_{\mathfrak k}}\right)- 
	\xi^{-1/2}_{\alpha}\left(k^{-1}e^{-h_{\mathfrak k}}\right)\right)}.
\end{multline}
\end{theorem}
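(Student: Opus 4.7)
The overall strategy is to decompose every determinant and $\widehat{A}$-factor appearing in Definitions \ref{DLg} and \ref{def:Jg} according to the root space decomposition of $\mathfrak g_{\C}$ with respect to $\mathfrak h$, and to exploit the fact that for $h_{\mathfrak k}\in i\mathfrak h_{\mathfrak k}$, both $\mathrm{ad}(h_{\mathfrak k})$ and $\Ad(k^{-1}e^{-h_{\mathfrak k}})$ commute with $\theta$ and thus preserve the Cartan decomposition. The crucial observation is that for any root $\alpha$, one has $\langle\theta\alpha,h_{\mathfrak k}\rangle=\langle\alpha,h_{\mathfrak k}\rangle$ (since $\theta$ fixes $\mathfrak h_{\mathfrak k}$ pointwise) and $\xi_{\theta\alpha}(k^{-1})=\xi_\alpha(k^{-1})$ (since $\theta(k^{-1})=k^{-1}$). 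Consequently, for a non-imaginary root $\alpha$, the two endomorphisms $\mathrm{ad}(h_{\mathfrak k})$ and $\Ad(k^{-1}e^{-h_{\mathfrak k}})$ act on both $\mathfrak g_\alpha$ and $\mathfrak g_{\theta\alpha}$ by the same scalar, and hence contribute identically to the $\mathfrak p$- and $\mathfrak k$-parts of $\mathfrak g_\alpha\oplus \mathfrak g_{\theta\alpha}$. So every non-imaginary root cancels in the $\mathfrak p/\mathfrak k$-ratios that appear below.

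I first treat the two subsidiary identities in (\ref{eq:congo1}). For the $\widehat{A}$-ratio, write $\mathfrak z(\gamma)_{\C}=\mathfrak h_{\C}\oplus \bigoplus_{\alpha\in R(\gamma)}\mathfrak g_\alpha$: on $\mathfrak h_{\C}$, $\mathrm{ad}(h_{\mathfrak k})$ vanishes and contributes $\widehat{A}(0)=1$, and on each $\mathfrak g_\alpha$ it acts by $\langle\alpha,h_{\mathfrak k}\rangle$. By the cancellation above, non-imaginary roots drop out, leaving only $R^{\mathrm{im}}(\gamma)$. Imaginary roots satisfy $\mathfrak g_\alpha\subset \mathfrak p_{\C}$ or $\mathfrak g_\alpha\subset\mathfrak k_{\C}$ by Proposition \ref{prop:porth}; using (\ref{eq:rim1}) and (\ref{eq:cong5a3}), $R^{\mathrm{im}}(\gamma)=R^{\mathrm{im}}(k)$ and $\mathfrak i(\gamma)=\mathfrak i(k)$, which yields the first identity. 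The proof of $\mathcal L_\gamma(h_{\mathfrak k})=\mathscr L_{k^{-1}}(h_{\mathfrak k})$ is formally identical: the same cancellation applied to $\Ad(k^{-1}e^{-h_{\mathfrak k}})$ on $\mathfrak z_a^\perp(\gamma)_{\C}=\bigoplus_{\alpha\in R(a)\setminus R(\gamma)}\mathfrak g_\alpha$ leaves only the imaginary roots $R^{\mathrm{im}}\setminus R^{\mathrm{im}}(k)$, which precisely fill out $\mathfrak i^\perp(k)_{\C}$. The $a$-independence of $\mathcal L_\gamma$ is then automatic, since $\mathscr L_{k^{-1}}$ involves only $k$ and $h_{\mathfrak k}$.

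To derive (\ref{eq:idfan1}), I substitute the first two identities into (\ref{eq:ret33}) and reorganize the factor $\det(1-\Ad(k^{-1}))|_{\mathfrak z_a^\perp(\gamma)}$ inside $\mathcal M_\gamma$. Writing $\mathfrak z_a^\perp(\gamma)=\mathfrak i^\perp(k)\oplus \mathfrak d$, where $\mathfrak d$ collects the real and complex root contributions, the determinant splits into factors over $\mathfrak i^\perp(k)$ and $\mathfrak d$; the first factor combines with $\mathscr L_{k^{-1}}(h_{\mathfrak k})$ inside $\mathcal M_\gamma$ to form $\mathscr M_{k^{-1}}(h_{\mathfrak k})^2$. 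Since $\mathfrak z_a^\perp(\gamma)\subset \mathfrak z(a)$ and $\Ad(e^a)=\mathrm{Id}$ on $\mathfrak z(a)$, one has $\Ad(\gamma)=\Ad(k^{-1})$ on $\mathfrak d$, so the remaining factor $\det(1-\Ad(k^{-1}))|_{\mathfrak d}=\det(1-\Ad(\gamma))|_{\mathfrak d}$ combines with $\det(1-\Ad(\gamma))|_{\mathfrak z^\perp(a)}$ to produce $\det(1-\Ad(\gamma))|_{\mathfrak z^\perp(\gamma)\cap \mathfrak i^\perp}$, using $\mathfrak z^\perp(\gamma)\cap \mathfrak i^\perp=\mathfrak z^\perp(a)\oplus \mathfrak d$ (valid because $\mathfrak i\subset \mathfrak z(a)$ implies $\mathfrak z^\perp(a)\subset\mathfrak i^\perp$). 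The square-root branches are pinned down by their common positivity at $h_{\mathfrak k}=0$ together with analytic continuation, whence (\ref{eq:idfan1}).

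Finally, (\ref{eq:idfan2}) is obtained by applying Theorem \ref{thm:extraidebis} to the denominator of (\ref{eq:idfan1}) and expanding $\mathscr M_{k^{-1}}(h_{\mathfrak k})$ root by root over the positive imaginary roots in $R^{\mathrm{im}}_+\setminus R^{\mathrm{im}}_+(k)$. On each pair $\{\alpha,-\alpha\}$, the identity $(1-\xi)(1-\xi^{-1})=-(\xi^{1/2}-\xi^{-1/2})^2$ converts the determinantal contributions into the product form of (\ref{eq:idfan2}); the sign $(-1)^{|R^{\mathrm{im}}_{\mathfrak p,+}\setminus R^{\mathrm{im}}_{\mathfrak p,+}(k)|}$ arises from the $\mathfrak p$-component of $\mathfrak i^\perp(k)$, while $\epsilon_D(\gamma)$ and the characters $\xi_\alpha^{1/2}(k^{-1})$ over $R^{\mathrm{re}}_+\setminus R^{\mathrm{re}}_+(\gamma)$ come from Theorem \ref{thm:extraidebis}. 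The $\widehat{A}$-factors are simultaneously expanded root by root on $\mathfrak i_{\mathfrak p}(k)_{\C}=\bigoplus_{\alpha\in R^{\mathrm{im}}_{\mathfrak p}(k)}\mathfrak g_\alpha$ and on $\mathfrak i_{\mathfrak k}(k)_{\C}=\bigoplus_{\alpha\in R^{\mathrm{im}}_{\mathfrak k}(k)}\mathfrak g_\alpha$, yielding the products of $\widehat{A}(\langle\alpha,h_{\mathfrak k}\rangle)$ over $R^{\mathrm{im}}_{\mathfrak p,+}(k)$ and $R^{\mathrm{im}}_{\mathfrak k,+}(k)$. The main obstacle of the whole argument is the careful bookkeeping of signs and square-root branches, which must be reconciled with the positivity conventions of Definitions \ref{DLg} and \ref{def:Jg}; once this is done, formula (\ref{eq:idfan2}) follows.
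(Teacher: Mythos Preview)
Your proof is correct and follows essentially the same route as the paper. The only notable difference is in the justification of (\ref{eq:congo1}): you argue directly on root spaces, pairing each non-imaginary $\alpha$ with $\theta\alpha$ and observing that $\langle\theta\alpha,h_{\mathfrak k}\rangle=\langle\alpha,h_{\mathfrak k}\rangle$ and $\xi_{\theta\alpha}(k^{-1})=\xi_\alpha(k^{-1})$, so that the $\mathfrak p$- and $\mathfrak k$-contributions of $\mathfrak g_\alpha\oplus\mathfrak g_{\theta\alpha}$ cancel in every ratio. The paper instead invokes Proposition~\ref{prop:peven}, which asserts that $\mathfrak i^{\perp}_{\mathfrak p}$ and $\mathfrak i^{\perp}_{\mathfrak k}$ are equivalent as representations of $H\cap K$; since $b_{\mathfrak p}\in\mathfrak h\subset\mathfrak z(\gamma)$, the intertwiner $\mathrm{ad}(b_{\mathfrak p})$ used there also restricts to the $\mathfrak z(\gamma)$- and $\mathfrak z_a^{\perp}(\gamma)$-pieces, so the cancellation goes through. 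These are two packagings of the same observation; your version is more explicit, the paper's more structural. The derivations of (\ref{eq:idfan1}) via the splitting $\mathfrak z^{\perp}(\gamma)\cap\mathfrak i^{\perp}=\mathfrak z^{\perp}(a)\oplus\bigl(\mathfrak z_a^{\perp}(\gamma)\cap\mathfrak i^{\perp}\bigr)$ and of (\ref{eq:idfan2}) via Theorem~\ref{thm:extraidebis} and the root-by-root expansion of $\mathscr M_{k^{-1}}$ match the paper's (\ref{eq:guni3}) and (\ref{eq:congo2})--(\ref{eq:congo5}) essentially line for line.
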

\begin{proof}
	 By (\ref{eq:jou7}),  we get
	\begin{equation}\label{eq:guni1}
\mathfrak i^{\perp}= \mathfrak r \oplus \mathfrak c.
\end{equation}
Also $\mathfrak i^{\perp}$ splits as 
\begin{equation}\label{eq:guni2}
\mathfrak i^{\perp}= \mathfrak i^{\perp}_{\mathfrak p} \oplus \mathfrak 
i^{\perp}_{\mathfrak k}.
\end{equation}
By Proposition \ref{prop:peven}, as 
	representations of $H\cap K$,  $\mathfrak i^{\perp}_{\mathfrak 
	p}$ and $\mathfrak i^{\perp}_{\mathfrak 
	k}$ are equivalent, so that 
(\ref{eq:congo1}) holds. 

Observe that 
$\det\left(1-\Ad\left(k^{-1}\right)\right)\vert _{\mathfrak 
z_{a}^{\perp}\left(\gamma\right)\cap \mathfrak i^{\perp}}>0$, and so 
this number has a positive square root.  Moreover,
\begin{multline}\label{eq:guni3}
\det\left(1-\Ad\left(\gamma\right)\right)\vert_{\mathfrak 
z^{\perp}\left(\gamma\right)\cap \mathfrak i^{\perp}}=
\det\left(1-\Ad\left(\gamma\right)\right)\vert_{\mathfrak 
z^{\perp}\left(a\right)}\\
\det\left(1-\Ad\left(k^{-1}\right)\right)\vert_{\mathfrak 
z_{a}^{\perp}\left(\gamma\right)\cap \mathfrak i^{\perp}}.
\end{multline}
By (\ref{eq:congo0}), (\ref{eq:congo-1}), (\ref{eq:ret33}),  
(\ref{eq:congo1}), and (\ref{eq:guni3}), we get (\ref{eq:idfan1}).

Clearly,
\begin{equation}\label{eq:congo2}
\frac{\widehat{A}\left(\mathrm{ad}\left(h_{\mathfrak k}\right)\vert_{ 
\mathfrak i_{\mathfrak 
p}\left(k\right)}\right)}{\widehat{A}\left(\mathrm{ad}\left(h_{\mathfrak 
k}\right)\vert_{\mathfrak i_{\mathfrak k}\left(k\right)}\right)}=
\frac{\prod_{\alpha\in 
R^{\mathrm{im}}_{\mathfrak p,+}\left(k\right)}^{}\widehat{A}\left(\left\langle  
\alpha,h_{\mathfrak k}\right\rangle\right)}{\prod_{\alpha\in 
R^{\mathrm{im}}_{\mathfrak k,+}\left(k\right)}^{}\widehat{A}\left(\left\langle  
\alpha,h_{\mathfrak k}\right\rangle\right)}.
\end{equation}

By proceeding as in the proof of the third identity in (\ref{eq:cong6}), 
we get
\begin{multline}\label{eq:congo3}
\det\left(1-\Ad\left(k^{-1}\right) \right) \vert_{\mathfrak 
i^{\perp}\left(k\right)} \\
=\left(-1\right)^{\left\vert  R^{\mathrm{im}}_{+}\setminus
R_{+}^{\mathrm{im}}\left(k\right)\right\vert} 
\prod_{\alpha\in 
R^{\mathrm{im}}_{+}\setminus 
R^{\mathrm{im}}_{+}\left(k\right)}^{}\left(\xi_{\alpha}^{1/2}\left(k^{-1}\right)-\xi_{\alpha}^{-1/2}\left(k^{-1}\right)\right)^{2}.
\end{multline}
The same argument shows that
\begin{multline}\label{eq:congo4}
\mathscr{L}_{k^{-1}}\left(h_{\mathfrak 
k}\right)=\left(-1\right)^{\left\vert  R^{\mathrm{im}}_{+}\setminus
R_{+}^{\mathrm{im}}\left(k\right)\right\vert}\\
\frac{\prod_{\alpha\in R_{\mathfrak k,+}^{\mathrm{im}}\setminus 
	R_{ \mathfrak 
	k,+}^{\mathrm{im}}\left(k\right)}^{}\left(\xi_{\alpha}^{1/2}\left(k^{-1}e^{-h_{\mathfrak k}}\right)- \xi^{-1/2}_{\alpha}\left(k^{-1}e^{-h_{\mathfrak k}}\right)\right)^{2}}{
	\prod_{\alpha\in R_{\mathfrak p,+}^{\mathrm{im}}\setminus 
	R_{\mathfrak p,+}^{\mathrm{im}}\left(k\right)}^{}\left( 
	\xi_{\alpha}^{1/2}\left(k^{-1}e^{-h_{\mathfrak 
	k}}\right)-\xi^{-1/2}_{\alpha}\left(k^{-1}e^{-h_{\mathfrak k}}\right)\right)^{2}}.
\end{multline}

By (\ref{eq:congo3}), (\ref{eq:congo4}), and keeping in mind the fact 
that we take the properly positive square root in (\ref{eq:bint1}), 
we get
\begin{multline}\label{eq:congo5}
\mathscr{M}_{k^{-1}}\left(h_{\mathfrak 
k}\right)=
\frac{\left(-1\right)^{\left\vert  R^{\mathrm{im}}_{ \mathfrak 
p,+}/
R^{\mathrm{im}}_{\mathfrak p,+}\left(k\right)\right\vert}}{\prod_{\alpha\in 
R^{\mathrm{im}}_{+}\setminus 
R^{\mathrm{im}}_{+}\left(k\right)}^{}\left(\xi_{\alpha}^{1/2}\left(k^{-1}\right)-\xi_{\alpha}^{-1/2}\left(k^{-1}\right)\right)}\\
\frac{\prod_{\alpha\in R_{\mathfrak k,+}^{\mathrm{im}}\setminus 
	R_{\mathfrak k,+}^{\mathrm{im}}\left(k\right)}^{}\left( 
	\xi_{\alpha}^{1/2}\left(k^{-1}e^{-h_{\mathfrak k}}\right)- 
	\xi^{-1/2}_{\alpha}\left(k^{-1}e^{-h_{\mathfrak k}}\right)\right)}{
	\prod_{\alpha\in R_{\mathfrak p,+}^{\mathrm{im}}\setminus 
	R_{\mathfrak p,+}^{\mathrm{im}}\left(k\right)}^{}\left( 
	\xi_{\alpha}^{1/2}\left(k^{-1}e^{-h_{\mathfrak k}}\right)- 
	\xi^{-1/2}_{\alpha}\left(k^{-1}e^{-h_{\mathfrak 
	k}}\right)\right)}.
\end{multline}
In the first product in the right-hand side of (\ref{eq:congo5}), we 
may as well replace $k^{-1}$ by $\gamma$. 

By combining the second identity in (\ref{eq:cong6bis}), 
(\ref{eq:idfan1}), (\ref{eq:congo2}), and (\ref{eq:congo5}), we get
(\ref{eq:idfan2}).
The proof of our theorem is completed. 
\end{proof}
\section{The function $\mathcal{J}_{\gamma}$ when $\gamma$ is regular}%
\label{sec:reg}
The purpose of this Section is to study extra properties of the 
function $\mathcal{J}_{\gamma}$ when $\gamma$ is regular.

This section is organized as follows. In Subsection 
\ref{subsec:nereg}, if $\mathfrak h$ is a $\theta$-stable Cartan 
subalgebra and if $H$ is the corresponding  Cartan subgroup, if 
$\gamma\in H$, we describe a neighborhood of $\gamma$ in $H$.

In Subsection \ref{subsec:greg}, if $\gamma\in H$, we define the 
$\gamma$-regular elements in $\mathfrak h$, which are such that a 
small perturbation of $\gamma$ by a $\gamma$-regular element  is regular.

In Subsection \ref{subsec:DHg},  following Harish-Chandra 
\cite{Harish65}, we introduce the function 
$D_{H}$ on $H$. This function is  an analogue of the 
denominator in the Lefschetz formulas. 

Finally, in Subsection \ref{subsec:Jgreg}, we specialize the formula obtained 
in Theorem \ref{thm:TJy} for $\mathcal{J}_{\gamma}\left(h_{\mathfrak k}\right)$ to the case where $\gamma\in 
H^{\mathrm{reg}}$. As a consequence,  we  prove  the unexpected 
result that the function 
$\left(\gamma,h_{\mathfrak k}\right)\in H^{\mathrm{reg}}\times i 
\mathfrak h_{\mathfrak k}\to \mathcal{J}_{\gamma}\left(h_{\mathfrak 
k}\right)\in \C$ is smooth. 

We make the same assumptions and we use the  same notation as in  Section  
\ref{sec:rofu}.
\subsection{A neighborhood of $\gamma$ in $H$}%
\label{subsec:nereg}
If $b\in \mathfrak h$, $b$ splits as 
\begin{align}\label{eq:bon1}
&b= b_{\mathfrak p} + b_{\mathfrak k}, &b_{\mathfrak p}\in 
\mathfrak h_{\mathfrak p}, \qquad b_{\mathfrak 
k}\in \mathfrak h_{\mathfrak k}.
\end{align}
Put
\begin{equation}\label{eq:bon2}
\gamma'=\gamma e^{b}.
\end{equation}
Then $\gamma'\in  H\cap Z\left(\gamma\right)$. 

Set
\begin{align}\label{eq:bon3}
&a'=a+ b_{\mathfrak p}, &k'=ke^{-b_{\mathfrak k}}.
\end{align}
Then
\begin{align}\label{eq:bon3a1}
&\gamma'=e^{a'}k^{\prime -1}, &a'\in \mathfrak h_{\mathfrak p}, 
\qquad k'\in H\cap K\left(\gamma\right),\qquad \Ad\left(k'\right)a'=a'.
\end{align}
Also $\Ad\left(\gamma'\right)$ 
preserves the splitting $\mathfrak g=\mathfrak z\left(\gamma\right) 
\oplus \mathfrak z^{\perp}\left(\gamma\right)$. Since 
$1-\Ad\left(\gamma\right)$ is invertible on $\mathfrak 
z^{\perp}\left(\gamma\right)$, 
 we conclude that  for $\epsilon>0$ small 
enough, if $\left\vert  b\right\vert\le \epsilon$, 
\begin{equation}\label{eq:bon3a7}
\mathfrak h \subset \mathfrak z\left(\gamma'\right) \subset \mathfrak z\left(\gamma\right).
\end{equation}

Let 
\index{Hreg@$H^{\reg}$}%
$H^{\reg}$ be the set of regular elements in $H$. 
Assume temporarily that $\gamma\in H^{\mathrm{reg}}$,  i.e., $\mathfrak 
z\left(\gamma\right)= \mathfrak h$.  By (\ref{eq:bon3a7}),  for 
$\epsilon>0$ small enough if $\left\vert  b\right\vert\le \epsilon$, 
then
\begin{equation}\label{eq:bon3a9}
\mathfrak z\left(\gamma'\right)= \mathfrak h,
\end{equation}
i.e., $\gamma'\in H^{\mathrm{reg}}$, which is a 
trivial conclusion. Since $b\in \mathfrak h$, we conclude that $\gamma'\in 
Z\left(\gamma\right),\gamma\in Z\left(\gamma'\right)$. A priori, 
$Z\left(\gamma\right)$ and $Z\left(\gamma'\right)$ may be distinct.
Still we have the obvious identity 
\begin{equation}\label{eq:coc0}
Z^{0}\left(\gamma\right)=Z^{0}\left(\gamma'\right)=H^{0}.
\end{equation}
\subsection{The $\gamma$-regular elements in $\mathfrak h$}%
\label{subsec:greg}
We no longer assume  $\gamma$ to be regular. By 
(\ref{eq:cong5a6a}), we get 
\begin{equation}\label{eq:bon3a12}
\mathfrak 
z_{a}^{\perp}\left(\gamma\right)_{\C}= \oplus_{\alpha\in 
R\left(a\right)\setminus R\left(\gamma\right) }\mathfrak g_{\alpha}.
\end{equation}
Let 
\index{ha@$\mathfrak h^{\perp}_{a}$}%
$\mathfrak h^{\perp}_{a}$ denote the orthogonal space 
to $\mathfrak h$ in $\mathfrak z\left(a\right)$. Then we have the 
splitting
\begin{equation}\label{eq:bon3a12x1}
\mathfrak h_{a}^{\perp}=\mathfrak h_{a, \mathfrak p}^{\perp} \oplus 
\mathfrak h_{a, \mathfrak k}^{\perp}.
\end{equation}
By 
(\ref{eq:cong5a6a}), we get 
\begin{equation}\label{eq:bon3a12a1}
\mathfrak h_{a,\C}^{\perp}=\oplus_{\alpha\in R\left(a\right)}\mathfrak g_{\alpha}.
\end{equation}
\begin{definition}\label{def:elreg}
An element $h\in \mathfrak h$ is said to be $\gamma$-regular if for 
$\alpha\in R\left(\gamma\right)$, $\left\langle  \alpha,h\right\rangle\neq 0$. 	
\end{definition}

 The $\gamma$-regular elements in $\mathfrak h$ are exactly the 
 regular elements in $\mathfrak h$  viewed as a Cartan 
 subalgebra of $\mathfrak z\left(\gamma\right)$. The $\gamma$-regular elements lie in the complement of a finite family of 
hyperplanes in $\mathfrak h$.

Since $\mathfrak h$ is a Cartan subalgebra of $\mathfrak 
z\left(\gamma\right)$, we define the function 
\index{phz@$\pi^{\mathfrak h, \mathfrak z\left(\gamma\right)}$}%
$\pi^{\mathfrak h, \mathfrak z\left(\gamma\right)}$ 
on $\mathfrak h_{\C}$ as in (\ref{eq:inva4}), i.e.,
\begin{equation}\label{eq:inva4bis}
\pi^{\mathfrak h,\mathfrak 
z\left(\gamma\right)}\left(h\right)=\prod_{\alpha\in 
R_{+}\left(\gamma\right)}^{}\left\langle  \alpha,h\right\rangle.
\end{equation}
Then $h\in \mathfrak h$ is $\gamma$-regular if and only if 
$\pi^{\mathfrak h,\mathfrak z\left(\gamma\right)}\left(h\right)\neq 
0$.

Now we use the notation of Subsection \ref{subsec:nereg}.
\begin{proposition}\label{prop:greg}
	There
exists $\epsilon>0$ such that  if $b\in 
\mathfrak h$ is $\gamma$-regular, and $\left\vert  b\right\vert\le \epsilon$,
if $\gamma'=\gamma e^{b}$, then $\gamma'\in H^{\mathrm{reg}}$. 
\end{proposition}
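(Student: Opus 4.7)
My plan is to characterize regularity of $\gamma'$ via the characters $\xi_\alpha$, then split the condition according to whether $\alpha$ lies in $R(\gamma)$ or not, and show that both cases are controlled by the two hypotheses on $b$.

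Since $\gamma' = \gamma e^{b}$ lies in $H$, by Theorem \ref{thm:careg}, $\gamma'$ is regular if and only if $\xi_\alpha(\gamma') \ne 1$ for every $\alpha \in R$. Because $\xi_\alpha$ is a character on $H$ and $\xi_\alpha(e^b) = e^{\langle \alpha, b\rangle}$ by (\ref{eq:comm2a0}), we have
\begin{equation}
\xi_\alpha(\gamma') = \xi_\alpha(\gamma)\, e^{\langle \alpha, b\rangle}.
\end{equation}
The condition to verify thus splits into two cases according to the partition $R = R(\gamma) \sqcup \bigl(R \setminus R(\gamma)\bigr)$ in (\ref{eq:comm2a2}).

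First I would handle the roots $\alpha \in R \setminus R(\gamma)$. By the definition of $R(\gamma)$ in (\ref{eq:comm2a2}), for such $\alpha$ we have $\xi_\alpha(\gamma) \ne 1$. Since $R$ is finite and the map $b \mapsto \xi_\alpha(\gamma) e^{\langle \alpha, b\rangle}$ is continuous with value $\xi_\alpha(\gamma)$ at $b=0$, there is $\epsilon_1 > 0$ such that whenever $|b| \le \epsilon_1$, $\xi_\alpha(\gamma') \ne 1$ for every $\alpha \in R \setminus R(\gamma)$.

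Next I would handle the roots $\alpha \in R(\gamma)$, where $\xi_\alpha(\gamma) = 1$, so $\xi_\alpha(\gamma') = e^{\langle \alpha, b\rangle}$. This equals $1$ precisely when $\langle \alpha, b\rangle \in 2\pi i \mathbf{Z}$. Writing $b = b_\mathfrak{p} + b_\mathfrak{k}$ as in (\ref{eq:bon1}), the root $\alpha$ is real on $\mathfrak{h}_\mathfrak{p}$ and imaginary on $\mathfrak{h}_\mathfrak{k}$, so the values $\langle \alpha, b \rangle$ stay in a bounded subset of $\mathbf{C}$ as $|b|$ stays bounded. Choose $\epsilon_2 > 0$ small enough that $|\langle \alpha, b\rangle| < 2\pi$ for all $\alpha \in R(\gamma)$ and all $|b| \le \epsilon_2$. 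Then the only element of $2\pi i\mathbf{Z}$ reachable is $0$, so $\xi_\alpha(\gamma') = 1$ forces $\langle \alpha, b\rangle = 0$; the hypothesis that $b$ is $\gamma$-regular, i.e., $\pi^{\mathfrak{h}, \mathfrak{z}(\gamma)}(b) \ne 0$, excludes this. Taking $\epsilon = \min(\epsilon_1, \epsilon_2)$ yields the result. There is no serious obstacle here; the argument is entirely elementary once the characterization of regularity via the $\xi_\alpha$ is in place.
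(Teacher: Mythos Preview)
Your proof is correct and follows essentially the same approach as the paper. The paper phrases the argument in terms of centralizers, first invoking (\ref{eq:bon3a7}) to get $\mathfrak z(\gamma')\subset\mathfrak z(\gamma)$ for small $b$ (which is your continuity step for $\alpha\in R\setminus R(\gamma)$), and then checking that $e^{\langle\alpha,b\rangle}\neq 1$ for $\alpha\in R(\gamma)$; you phrase the same two steps directly via the characters $\xi_\alpha$ and the regularity criterion of Theorem~\ref{thm:careg}.
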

\begin{proof}
	For $\epsilon>0$ small enough, (\ref{eq:bon3a7}) holds, so that
	\begin{equation}\label{eq:gurf1}
\mathfrak z\left(\gamma'\right)= \mathfrak z\left(\gamma\right)\cap 
\mathfrak z\left(e^{b}\right).
\end{equation}
By (\ref{eq:coci1}), we get
\begin{equation}\label{eq:gurf2}
\mathfrak z\left(\gamma\right)_{\C}= \mathfrak h_{\C} \bigoplus \oplus 
_{\alpha\in R\left(\gamma\right)}\mathfrak g_{\alpha}.
\end{equation}
For $\alpha\in R\left(\gamma\right)$, $e^{b}$ acts on $\mathfrak 
g_{\alpha}$ by multiplication by $e^{\left\langle  
\alpha,b\right\rangle}$. For $\epsilon>0$ small enough, if $b$ is 
$\gamma$-regular and $\left\vert  b\right\vert\le \epsilon$, for 
$\alpha\in R\left(\gamma\right)$, $e^{\left\langle  
\alpha,b\right\rangle}\neq 1$. By (\ref{eq:gurf1}), (\ref{eq:gurf2}), 
we conclude that under the given conditions on $b$, $\mathfrak 
z\left(\gamma'\right)= \mathfrak h$, i.e., $\gamma'$ is regular. The proof of our proposition is completed. 
\end{proof}
\subsection{The function $D_{H}\left(\gamma\right)$}%
\label{subsec:DHg}
Here, we follow Harish-Chandra \cite[Section 
19]{Harish65}.
\begin{definition}\label{def:DH}
	If $\gamma\in H^{\mathrm{reg}}$, put\footnote{In \cite[Section 
	18]{Harish65},  Harish-Chandra 
	assumes  $G$ to be acceptable,  i.e., $\rho^{\mathfrak g} $ is 
	assumed to be a weight, so that 
$D_{H}\left(\gamma\right)$ can be globally defined. Here, we only 
need a local definition of $D_{H}\left(\gamma\right)$, and we do not 
need this assumption.}
	\index{DHg@$D_{H}\left(\gamma\right)$}%
	\begin{equation}\label{eq:dHg}
D_{H}\left(\gamma\right)=\prod_{\alpha\in 
R_{+}}^{}\left(\xi_{\alpha}^{1/2}\left(\gamma\right)-\xi_{\alpha}^{-1/2}\left(\gamma\right)\right).
\end{equation}
\end{definition}

Using (\ref{eq:comm2a1}) and proceeding as in the proof of Theorem \ref{thm:extraide}, we get
\begin{equation}\label{eq:congo13}
\det\left(1-\Ad\left(\gamma\right)\right)\vert_{\mathfrak h^{\perp}}=
\left(-1\right)^{\left\vert  
R_{+}\right\vert}D_{H}^{2}\left(\gamma\right).
\end{equation}
By (\ref{eq:congo13}), we deduce that if $\gamma\in H^{\mathrm{reg}}$, 
then
\begin{equation}\label{eq:congo14}
\left\vert  \det\left(1-\Ad\left(\gamma\right)\right)\vert_{\mathfrak 
h^{\perp}}\right\vert=\left\vert  
D_{H}\left(\gamma\right)\right\vert^{2},
\end{equation}
so that $D_{H}\left(\gamma\right)\neq 0$.
\subsection{The function $\mathcal{J}_{\gamma}$ when $\gamma$ is 
regular}%
\label{subsec:Jgreg}
In this subsection,  we assume that $\gamma\in H^{\mathrm{reg}}$, i.e., 
$D_{H}\left(\gamma\right)\neq 0$.  

By (\ref{eq:congo0a1}), (\ref{eq:bint1}), we get
\begin{align}\label{eq:congo0a1beb}
&\mathscr{L}_{k^{-1}}\left(h_{\mathfrak k}\right)=\frac{\det\left(1-
    \Ad\left(k^{-1}e^{-h_{\mathfrak k}}\right)\right)\vert_{\mathfrak i_{\mathfrak 
	k}}}{\det\left(1-
    \Ad\left(k^{-1}e^{-h_{\mathfrak k}}\right)\right)\vert_{\mathfrak  i_{\mathfrak 
	p}}},\\
	&\mathscr{M}_{k^{-1}}\left(h_{\mathfrak 
	k}\right)=\left[\frac{1}{\det\left(1-\Ad\left(k^{-1}\right)\right)\vert _{\mathfrak i}}\mathscr{L}_{k^{-1}}
	\left(h_{\mathfrak k}\right)\right]^{1/2}. \nonumber 
\end{align}

By (\ref{eq:grai1}), $\epsilon_{D}\left(\gamma\right)$  is given 
by
\begin{equation}\label{eq:congo12}
\epsilon_{D}\left(\gamma\right)=\mathrm{sgn}\prod_{\alpha\in 
R_{+}^{\mathrm{re}}}^{}\left(1-\xi_{\alpha}^{-1}\left(\gamma\right)\right).
\end{equation}
The function $\epsilon_{D}\left(\gamma\right)$ is locally constant on 
$H^{\mathrm{reg}}$.
\begin{theorem}\label{thm:caregbi}
If $h_{\mathfrak k}\in i \mathfrak  h_{\mathfrak k}$, we have the 
identity,
\begin{equation}\label{eq:idfan1beb}
\mathcal{J}_{\gamma}\left(h_{\mathfrak k}\right)=\frac{1}{\left\vert  
\det\left(1-\Ad\left(\gamma\right)\right)\vert_{\mathfrak 
i^{\perp}}\right\vert^{1/2}}
\mathscr{M}_{k^{-1}}\left(h_{\mathfrak k}\right).
\end{equation}
This identity can be written in the form,
\begin{multline}\label{eq:idfan2beb}
\mathcal{J}_{\gamma}\left(h_{\mathfrak 
k}\right)=\frac{\left(-1\right)^{\left\vert  
R^{\mathrm{im}}_{\mathfrak p,+} 
\right\vert}\epsilon_{D}\left(\gamma\right)\prod_{\alpha\in 
R_{+}^{\mathrm{re}}}^{}\xi_{\alpha}^{1/2}\left(k^{-1}\right)}{
D_{H}\left(\gamma\right)}
\\
\frac{\prod_{\alpha\in R_{\mathfrak k,+}^{\mathrm{im}}
	}^{}\left( \xi_{\alpha}^{1/2}\left(k^{-1}e^{-h_{\mathfrak 
	k}}\right)- \xi^{-1/2}_{\alpha}\left(k^{-1}e^{-h_{\mathfrak 
	k}}\right)\right)}{\prod_{\alpha\in R_{\mathfrak 
	p,+}^{\mathrm{im}}}^{}\left( 
	\xi_{\alpha}^{1/2}\left(k^{-1}e^{-h_{\mathfrak k}}\right)- 
	\xi^{-1/2}_{\alpha}\left(k^{-1}e^{-h_{\mathfrak k}}\right)\right)}.
\end{multline}
The function $\left(\gamma,h_{\mathfrak k}\right)\in 
H^{\mathrm{reg}}\times i \mathfrak h_{\mathfrak k}\to 
\mathcal{J}_{\gamma}\left(h_{\mathfrak k}\right)\in \C$ is smooth.
\end{theorem}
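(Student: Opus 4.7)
The plan is to directly specialize Theorem \ref{thm:TJy} to the case $\gamma \in H^{\mathrm{reg}}$. The key input is that $\gamma$ regular means $\mathfrak z(\gamma) = \mathfrak h$, so $R(\gamma) = \emptyset$; moreover, by (\ref{eq:cong5a5}), $\mathfrak i(k) = 0$, hence also $R^{\mathrm{im}}(k) = \emptyset$ (this follows either from (\ref{eq:rim1}) or directly from (\ref{eq:cong5a6})). In particular, $\mathfrak p(\gamma) = \mathfrak h_{\mathfrak p}$ and $\mathfrak k(\gamma) = \mathfrak h_{\mathfrak k}$, so the ambient spaces entering the definition of $\mathcal J_{\gamma}$ become independent of $\gamma$.

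First, substitute these vanishings into (\ref{eq:idfan1}). The two $\widehat A$-factors collapse to $1$ because $\mathfrak i_{\mathfrak p}(k) = \mathfrak i_{\mathfrak k}(k) = 0$, while $\mathfrak z^\perp(\gamma) \cap \mathfrak i^\perp = \mathfrak h^\perp \cap \mathfrak i^\perp = \mathfrak i^\perp$ since by construction $\mathfrak i^\perp \subset \mathfrak h^\perp$. This yields (\ref{eq:idfan1beb}), together with the simplified formulas (\ref{eq:congo0a1beb}) for $\mathscr L_{k^{-1}}$ and $\mathscr M_{k^{-1}}$ (which are just the definitions (\ref{eq:congo0a1}), (\ref{eq:bint1}) in the case $\mathfrak i(k)=0$). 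Next, substituting the same vanishings into (\ref{eq:idfan2}) gives (\ref{eq:idfan2beb}): the denominator $\prod_{\alpha \in R_+ \setminus R_+(\gamma)}(\xi_\alpha^{1/2}(\gamma) - \xi_\alpha^{-1/2}(\gamma))$ becomes exactly $D_H(\gamma)$ by Definition \ref{def:DH}; the $\widehat A$-ratio becomes $1$; the residual products in the final factor now range over all of $R^{\mathrm{im}}_{\mathfrak p,+}$ and $R^{\mathrm{im}}_{\mathfrak k,+}$; and the sign reduces to $(-1)^{\left\vert R^{\mathrm{im}}_{\mathfrak p,+}\right\vert}$.

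For the smoothness claim, I would read off the formula (\ref{eq:idfan2beb}). On $H^{\mathrm{reg}}$, the denominator $D_H(\gamma)$ never vanishes by (\ref{eq:congo14}); the sign $\epsilon_D(\gamma)$ is locally constant by (\ref{eq:congo12}), since each real factor $1 - \xi_\alpha^{-1}(\gamma)$ stays nonzero on $H^{\mathrm{reg}}$; the characters $\xi_\alpha$ are real-analytic on $H$; and the square-root factors $\xi_\alpha^{1/2}$, once normalized as in (\ref{eq:gru2z1})--(\ref{eq:gru2z3}), admit locally smooth branches in $\gamma = e^a k^{-1}$ (note that the factor $\xi_\alpha^{-1/2}(k^{-1})$ for $\alpha \in R^{\mathrm{re}}_+$ is locally constant on $H^{\mathrm{reg}}$, since it takes values in $\{\pm 1\}$ and is continuous). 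Hence the right-hand side of (\ref{eq:idfan2beb}) is smooth on $H^{\mathrm{reg}} \times i\mathfrak h_{\mathfrak k}$. The main point to verify is that the ambiguity in choosing square roots does not obstruct global smoothness on a neighborhood of a given $\gamma_0 \in H^{\mathrm{reg}}$: this is ensured by the compatibility conventions (\ref{eq:gru2z1})--(\ref{eq:gru2z3}), and can alternatively be derived from the analytic construction of $\mathscr M_{k^{-1}}(h_{\mathfrak k})$ in Section 5.5 of \cite{Bismut08b} combined with identity (\ref{eq:idfan1beb}). I do not anticipate a serious obstacle beyond this bookkeeping of branches.
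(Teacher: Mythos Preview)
Your proposal is correct and follows essentially the same approach as the paper: the formulas are obtained by specializing Theorem~\ref{thm:TJy} to the regular case (where $R(\gamma)=\emptyset$ and $\mathfrak i(k)=0$), and smoothness is read off from the explicit formula after fixing compatible local branches of the square roots $\xi_\alpha^{1/2}(k'^{-1})$ near a given $\gamma$. The paper's proof is terser but makes exactly the same moves, including the explicit choice $\xi_\alpha^{1/2}(k'^{-1})=e^{\langle\alpha,b_{\mathfrak k}/2\rangle}\xi_\alpha^{1/2}(k^{-1})$ to handle the branch bookkeeping you flag at the end.
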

\begin{proof}
	The first part of our theorem is a trivial consequence of Theorem 
	\ref{thm:TJy}. For $b_{\mathfrak k}\in \mathfrak h_{\mathfrak 
	k}$, for $\left\vert  b_{\mathfrak k}\right\vert$ small enough, 
	we take 
	\begin{equation}\label{eq:chif-1}
\xi_{\alpha}^{1/2}\left(k^{\prime -1}\right)=e^{\left\langle  
	\alpha,b_{\mathfrak 
	k}/2\right\rangle}\xi^{1/2}_{\alpha}\left(k^{-1}\right).
\end{equation}
By 
	(\ref{eq:gru2z1}), (\ref{eq:chif-1}),  we deduce that if $\alpha\in 
	R_{+}^{\mathrm{c}}$, then
	\begin{equation}\label{eq:chif-2}
\xi_{-\theta\alpha}^{1/2}\left(k^{\prime 
-1}\right)=\overline{\xi_{\alpha}^{1/2}\left(k^{\prime -1}\right)}.
\end{equation}
The stated smoothness is an obvious consequence of 
	the above formulas. The proof of our theorem is completed. 
\end{proof}
\section{The Harish-Chandra isomorphism}%
\label{sec:haricha}
In this section, if $\mathfrak h \subset \mathfrak g$ is a Cartan 
subalgebra, we describe the Harish-Chandra isomorphism of algebras
\index{fHC@$\phi_{\mathrm{HC }}$}%
$\phi_{\mathrm{HC}}:Z\left(\mathfrak g\right) \simeq I\ac\left(\mathfrak h,\mathfrak 
g\right)$. Also we explain the action of $Z\left(\mathfrak g\right)$ 
on $C^{\infty }\left(X,F\right)$,  and we introduce certain  semisimple orbital integrals in which 
$Z\left(\mathfrak g\right)$  appears.

This section is organized as follows. In Subsection 
\ref{subsec:cenenv}, we introduce the center of the enveloping 
algebra $Z\left(\mathfrak g\right)$. 

In Subsection \ref{subsec:cofo}, we recall some properties of the 
complex
Harish-Chandra isomorphism $\phi_{\mathrm{HC}}: Z\left(\mathfrak 
g_{\C}\right) \simeq  I\ac\left(\mathfrak h_{\C}, \mathfrak 
g_{\C}\right)$, including some aspects of its construction.

In Subsection \ref{subsec:refo}, we show that there is a real form 
of the Harish-Chandra isomorphism 
$\phi_{\mathrm{HC}}:Z\left(\mathfrak h\right) \simeq 
I\ac\left(\mathfrak h,\mathfrak g\right)$.

In Subsection \ref{subsec:duflo}, we recall the relation of the 
Harish-Chandra isomorphism to the Duflo isomorphism that was 
established in \cite{Duflo70}.

In Subsection \ref{subsec:cacasi}, we consider the case of the 
Casimir.

In Subsection \ref{subsec:actF}, we describe the action of 
$Z\left(\mathfrak g\right)$ on $C^{ \infty }\left(X,F\right)$.

Finally, in Subsection \ref{subsec:sesiorb}, we consider the orbital 
integrals in which $Z\left(\mathfrak g\right)$ appears.
\subsection{The center of the enveloping algebra}%
\label{subsec:cenenv}
Recall that the enveloping algebra 
\index{Ug@$U\left(\mathfrak g\right)$}%
$U\left(\mathfrak g\right)$ was 
introduced in Subsection \ref{subsec:cas}.
Then $U\left(\mathfrak g\right)$ is a 
filtered algebra, and the corresponding $\mathrm{Gr}$ is just the 
algebra of polynomials $S\ac\left(\mathfrak g\right)$ on $\mathfrak 
g^{*}$.  

Note that $\mathfrak g$ acts by derivations on  
$U\left(\mathfrak g\right)$. 
Recall that  
\index{Zg@$Z\left(\mathfrak g\right)$}%
$Z\left(\mathfrak g\right)$ is the center of 
$U\left(\mathfrak g\right)$, i.e., it is the kernel of the above 
derivations.

Observe that $G$ acts both on the left and on the right on $C^{\infty 
}\left(G,\R\right)$ by the formula
\begin{align}\label{eq:us0}
&\gamma_{L} s\left(g\right)=s\left(\gamma^{-1}g\right),
&\gamma_{R}s\left(g\right)=s\left(g\gamma\right),
\end{align}
 and these two actions commute. They are intertwined by 
 the involution induced by the involution 
 \index{s@$\sigma$}%
 $g\to \sigma g=g^{-1}$. 
 Let 
 \index{DLG@$D_{L}\left(G\right)$}%
 $D_{L}\left(G\right)$ be 
the Lie algebras of  left-invariant real differential operators on 
$G$. As we saw in Subsection \ref{subsec:cas}, $U\left(\mathfrak 
g\right)$ can be identified with $D_{L}\left(G\right)$. The algebra 
$D_{L}\left(G\right)$ commutes with the left action of $G$.

If 
\index{g@$\mathfrak g_{-}$}%
$\mathfrak g_{-}$ is the Lie algebra $\mathfrak g$ with the negative of the 
original Lie bracket, the  isomorphism of $\mathfrak g$ $f\to 
-f$ identifies $\mathfrak g$ and $\mathfrak g_{-}$. This isomorphism 
is induced by the involution $\sigma$.

Let 
\index{Ug@$U\left(\mathfrak g_{-}\right)$}%
$U\left(\mathfrak g_{-}\right)$ be the enveloping algebra 
associated with $\mathfrak g_{-}$. Then $U\left(\mathfrak 
g_{-}\right)$ can be identified with the algebra of right-invariant 
real
differential operators 
\index{DRG@$D_{R}\left(G\right)$}%
$D_{R}\left(G\right)$. This algebra commutes 
with the right action of $G$. Also the isomorphism $f\to -f$ induces an 
identification of $U\left(\mathfrak g\right)$ and $U\left(\mathfrak 
g_{-}\right)$. This identification is still induced by $\sigma$.

We  equip $U\left(\mathfrak g\right), U\left(\mathfrak 
g_{-}\right)$ with the antiautomorphism 
\index{st@$*$}%
$*$ which is just the adjoint 
in the classical $L_{2}$ sense when identifying $U\left(\mathfrak 
g\right), U\left(\mathfrak g_{-}\right)$ with $D_{L}\left(G\right), 
D_{R}\left(G\right)$. This involution extends to a $\C$-linear 
involution of $U\left(\mathfrak g_{\C}\right), U\left(\mathfrak 
g_{-,\C}\right)$.

By definition,   $Z\left(\mathfrak g\right) \subset U\left(\mathfrak 
g\right)$ is the subalgebra of $D_{L}\left(G\right)$ which commutes with 
right multiplication. Equivalently
\begin{equation}\label{eq:us1}
Z\left(\mathfrak g\right)= D_{L}\left(G\right) \cap 
D_{R}\left(G\right).
\end{equation}
Note that $*$ induces an automorphism of $Z\left(\mathfrak g\right)$, 
which is an involution, and which we still denote $*$.

The isomorphism of $U\left(\mathfrak g\right)$ with $U\left(\mathfrak 
g_{-}\right)$   which was described before is the one induced by 
$\sigma$. It  induces the obvious 
isomorphism of $D_{L}\left(G\right)$ with  $D_{R}\left(G\right)$. 
This way, we obtain an automorphism  $\sigma$ of $
Z\left(\mathfrak g\right)$, which is also an involution.

Clearly, 
\begin{equation}\label{eq:clo-7}
Z\left(\mathfrak g_{\C}\right)=Z\left(\mathfrak g\right)_{\C}. 
\end{equation}
Equivalently, $Z\left(\mathfrak g_{\C}\right)$ is equipped with a 
complex conjugation, and $Z\left(\mathfrak g\right)$ is the algebra 
of complex conjugation invariants in $Z\left(\mathfrak 
g_{\C}\right)$. Also $*$ and $\sigma$ extend to complex automorphisms 
of $Z\left(\mathfrak g_{\C}\right)$. 
\subsection{The complex form of the Harish-Chandra isomorphism}%
\label{subsec:cofo}
Let $\mathfrak h \subset \mathfrak g$ be a $\theta$-stable Cartan  
subalgebra. By \cite[Theorem 8.18]{Knapp86},  there is  a canonical
Harish-Chandra isomorphism  of filtered algebras,
\begin{equation}\label{eq:inva7}
\phi_{\mathrm{HC}}: Z\left(\mathfrak g_{\C}\right)  \simeq  I\ac\left(\mathfrak 
h_{\C}, \mathfrak g_{\C}\right).
\end{equation}

We need to  describe the 
Harish-Chandra isomorphism in more detail.  We fix a positive root 
	$R_{+}$ as in Subsection \ref{subsec:posro}. Put
	\begin{equation}\label{eq:clo-9}
\mathscr P=\sum_{\alpha\in R_{+}}^{}U\left(\mathfrak 
g_{\C}\right)\mathfrak g_{\alpha}.
\end{equation}
Observe that $S\ac\left(\mathfrak h_{\C}\right) =
U\left(\mathfrak h_{\C}\right)$, and also that $U\left(\mathfrak 
h_{\C}\right) \subset U\left(\mathfrak g_{\C}\right)$, so that 
$S\ac\left(\mathfrak h_{\C}\right) \subset U\left(\mathfrak 
g_{\C}\right)$. By \cite[Lemma 8.17]{Knapp86}, we get
\begin{align}\label{eq:clo-10}
&S\ac\left(\mathfrak h_{\C} \right) \cap \mathscr P =0,
&Z\left(\mathfrak g_{\C}\right) \subset  S\ac\left(\mathfrak 
h_{\C}\right) \oplus  \mathscr P.
\end{align}
Let $\phi_{1,R_{+}}$ be the projection from $Z\left(\mathfrak 
g_{\C}\right)$ on $S\ac\left(\mathfrak h_{\C}\right)$. 

Recall that $S\ac\left(\mathfrak h_{\C}\right)$ is the algebra of 
polynomials on $\mathfrak h^{*}_{\C}$, and that 
\index{rg@$\rho^{\mathfrak g}$}%
$\rho^{\mathfrak 
g}\in \mathfrak h_{\C}^{*}$ is the half sum of the roots in $R_{+}$.  
Let $\phi_{2,R_{+}}$ be the filtered
automorphism of $S\ac\left(\mathfrak h_{\C}\right)$ that is such that 
if $f\in S\ac\left(\mathfrak h_{\C}\right)$, if $h^{*}\in 
\mathfrak h_{\C}^{*}$,  then
\begin{equation}\label{eq:clo-11}
\phi_{2,R_{+}}f\left(h^{*}\right)=f\left(h^{*}-\rho^{\mathfrak 
g}\right).
\end{equation}

The fundamental result of Harish-Chandra \cite[Lemmas 
18-20]{Harish56},  \cite[Theorem 
8.18]{Knapp86} is that 
$\phi_{2,R_{+}}\phi_{1,R_{+}}$ maps $Z\left(\mathfrak g_{\C}\right)$  onto $I\ac\left(\mathfrak h_{\C}, 
\mathfrak g_{\C}\right)$, that it induces an isomorphism of filtered algebras 
 that does not depend on the choice of 
$R_{+}$. This is exactly the Harish-Chandra isomorphism 
$\phi_{\mathrm{HC}}: Z\left(\mathfrak g_{\C}\right) \simeq 
I\ac\left(\mathfrak h_{\C}, \mathfrak g_{\C}\right)$.

Now we  proceed as in  Subsection \ref{subsec:care}, i.e.,  we identify $I\ac\left(\mathfrak 
h_{\C}, \mathfrak g_{\C}\right)$ with the algebra 
\index{DIhg@$D_{I}\ac\left(\mathfrak h_{\C}, \mathfrak g_{\C}\right)$}%
$D_{I}\ac\left(\mathfrak h_{\C}, \mathfrak g_{\C}\right)$ of   holomorphic differential 
operators on $\mathfrak h_{\C}$  with constant complex coefficients which 
are $W\left(\mathfrak h_{\C}:\mathfrak g_{\C}\right)$-invariant. The 
same arguments as in Subsection \ref{subsec:care} show that there is 
an algebra 
\index{DIhg@$D_{I}\ac\left(\mathfrak h,\mathfrak g\right)$}%
$D_{I}\ac\left(\mathfrak h, \mathfrak g\right)$ of real 
differential operators with constant coefficients on $\mathfrak h$ 
such that
\begin{equation}\label{eq:clo-5}
D\ac_{I}\left(\mathfrak h_{\C}, \mathfrak 
g_{\C}\right)=D\ac_{I}\left(\mathfrak h, \mathfrak g\right)_{\C},
\end{equation}
and that $I\ac\left(\mathfrak h, \mathfrak g\right)$ can be 
identified with $D\ac_{I}\left(\mathfrak  h, \mathfrak g\right)$.

We will now use the assumptions and notation of Subsection 
\ref{subsec:casg}. 
Let $C^{\infty, G}\left(G^{\mathrm{reg}},\C\right)$ denote the $\Ad$-invariant 
smooth complex functions on the open set $G^{\mathrm{reg}}$. 

Let $C^{\infty ,
W\left(H:G\right)}\left(H^{\mathrm{reg}},\C\right)$ be the smooth 
$W\left(H:G\right)$-invariant functions on $H^{\mathrm{reg}}$. There 
is a restriction map 
$$r:C^{\infty, 
G}\left(G^{\mathrm{reg}},\C\right)\to C^{\infty, 
W\left(H:G\right)}\left(H^{\mathrm{reg}},\C\right).$$ 

Observe that $ Z\left(\mathfrak g_{\C}\right)$ acts on 
$C^{\infty, G}\left(G^{\mathrm{reg}},\C\right)$, and 
$I\ac\left(\mathfrak h_{\C}, \mathfrak g_{\C}\right)$ acts on 
$C^{\infty, 
W\left(H:G\right)}\left(H^{\mathrm{reg}},\C\right)$.

Let $L\in Z\left(\mathfrak g_{\C} \right)$. By \cite[Lemma 
13]{Harish65}, \cite[Theorem 10.33]{Knapp86},  if $f\in C^{\infty, 
G}\left(G^{\mathrm{reg}},\C\right)$, on $H^{\mathrm{reg}}$, we have 
the identity
\begin{equation}\label{eq:inva17}
rLf=\frac{1}{D_{H}}\left(\phi_{\mathrm{HC}} L\right)D_{H}rf.
\end{equation}
\subsection{The real form of the Harish-Chandra isomorphism}%
\label{subsec:refo}
The involution $h\to -h$ induces an involution of 
$I\ac\left(\mathfrak h_{\C},\mathfrak g_{\C}\right) \simeq 
D_{I}\ac\left(\mathfrak h_{\C}, 
\mathfrak g_{\C}\right)$.  If $N$ counts the degree in 
$I\ac\left(\mathfrak h_{\C}, \mathfrak 
g_{\C}\right)$, this involution is just 
$\left(-1\right)^{N}$. We 
still denote this involution by $*$.

In Proposition \ref{prop:conja}, we proved that $I\ac\left(\mathfrak 
h_{\C}, \mathfrak g_{\C}\right)$ is preserved by complex conjugation. 
At the end of Subsection \ref{subsec:cenenv}, 
we proved that $Z\left(\mathfrak g_{\C}\right)$ is also
preserved by complex conjugation. Observe that $\theta$ acts on 
$Z\left(\mathfrak g_{\C}\right), I\ac\left(\mathfrak h_{\C}, 
\mathfrak g_{\C}\right)$ and preserves $Z\left(\mathfrak g\right), 
I\ac\left(\mathfrak h,\mathfrak g\right)$.
\begin{theorem}\label{thm:pinv}
	If $L\in Z\left(\mathfrak g_{\C }\right)$, then 
	\begin{align}\label{eq:star1}
&\phi_{\mathrm{HC}} \left( L^{*} \right) =\left(\phi_{\mathrm{HC}} 
L\right)^{*},
&\phi_{\mathrm{HC}} \left( \overline{L} \right) 
=\overline{\phi_{\mathrm{HC}}\left( L \right)},\qquad 
\phi_{\mathrm{HC}}\theta L=\theta\phi_{\mathrm{HC}}L.
\end{align}
On $Z\left(\mathfrak g_{\C}\right)$, the involutions 
	$\sigma$ and $*$ coincide. Finally, $\phi_{\mathrm{HC}}$ induces an isomorphism 
	of real filtered algebras: 
	\begin{equation}\label{eq:clo-8}
Z\left(\mathfrak g\right) \simeq I\ac\left(\mathfrak 
	h, \mathfrak g\right).
\end{equation} 
\end{theorem}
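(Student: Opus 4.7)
The plan is to establish the three identities in (\ref{eq:star1}), and then deduce the coincidence $\sigma=*$ on $Z\left(\mathfrak g_{\C}\right)$ and the real-form statement (\ref{eq:clo-8}) as direct consequences. For the second and third identities --- compatibility with complex conjugation and with $\theta$ --- the method is naturality, exploiting that $\phi_{\mathrm{HC}}=\phi_{2,R_{+}}\phi_{1,R_{+}}$ is independent of the choice of positive root system $R_{+}$. Fix $R_{+}$ and let $\psi$ denote either complex conjugation or $\theta$. Since $\psi$ is a real-linear involution of $\mathfrak g_{\C}$ that preserves $\mathfrak h_{\C}$ and permutes the roots (for conjugation, $\overline{\mathfrak g_{\alpha}}=\mathfrak g_{-\theta\alpha}$ by Proposition \ref{prop:pco}, so $\overline{R_{+}}=-\theta R_{+}$ is again a positive root system; similarly $\theta R_{+}$ is one), we have $\mathscr{P}_{\psi R_{+}}=\psi\left(\mathscr{P}_{R_{+}}\right)$ and $\rho^{\psi R_{+}}=\psi\rho^{R_{+}}$. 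A direct computation then yields $\phi_{1,\psi R_{+}}\psi=\psi\phi_{1,R_{+}}$ on $Z\left(\mathfrak g_{\C}\right)$ and $\phi_{2,\psi R_{+}}\psi=\psi\phi_{2,R_{+}}$ on $S\ac\left(\mathfrak h_{\C}\right)$. Composing and invoking the independence of $R_{+}$ gives $\phi_{\mathrm{HC}}\left(\psi L\right)=\psi\phi_{\mathrm{HC}}L$.

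For the first identity $\phi_{\mathrm{HC}}\left(L^{*}\right)=\left(\phi_{\mathrm{HC}}L\right)^{*}$, I would argue via infinitesimal characters of finite-dimensional irreducible $\mathfrak g_{\C}$-representations. Let $V$ be such a representation with highest weight $\mu$, so its infinitesimal character is $\chi_{V}\left(L\right)=\left(\phi_{\mathrm{HC}}L\right)\left(\lambda\right)$ at $\lambda=\mu+\rho^{\mathfrak g}$. Since $V^{*}$ inherits its $U\left(\mathfrak g_{\C}\right)$-module structure through the antipode, which is precisely $*$, the element $L$ acts on $V^{*}$ by the scalar $\chi_{V}\left(L^{*}\right)$, so $\chi_{V^{*}}\left(L\right)=\chi_{V}\left(L^{*}\right)=\left(\phi_{\mathrm{HC}}L^{*}\right)\left(\lambda\right)$. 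On the other hand, $V^{*}$ has highest weight $-w_{0}\mu$ (with $w_{0}$ the longest Weyl element), and using $w_{0}\rho^{\mathfrak g}=-\rho^{\mathfrak g}$, its infinitesimal character is at $-w_{0}\lambda$; the $W$-invariance of $\phi_{\mathrm{HC}}L$ and the definition of $*$ on $I\ac\left(\mathfrak h_{\C},\mathfrak g_{\C}\right)$ as $p\mapsto p\left(-\cdot\right)$ then give
\begin{equation*}
\chi_{V^{*}}\left(L\right)=\left(\phi_{\mathrm{HC}}L\right)\left(-w_{0}\lambda\right)=\left(\phi_{\mathrm{HC}}L\right)\left(-\lambda\right)=\left(\left(\phi_{\mathrm{HC}}L\right)^{*}\right)\left(\lambda\right).
\end{equation*}
Combining, $\phi_{\mathrm{HC}}L^{*}$ and $\left(\phi_{\mathrm{HC}}L\right)^{*}$ agree at $\lambda=\mu+\rho^{\mathfrak g}$ for every dominant integral $\mu$, a Zariski-dense subset of $\mathfrak h_{\C}^{*}$, so the polynomial identity follows.

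The coincidence $\sigma=*$ on $Z\left(\mathfrak g_{\C}\right)$ holds because on the commutative center both involutions reduce to the algebra automorphism induced by $X\mapsto-X$: by definition, $*$ is the anti-automorphism extending this map, while the computation in Subsection \ref{subsec:cenenv} shows that $\sigma$ sends a left-invariant field $X\in D_{L}\left(G\right)$ to $-X$ viewed as a right-invariant field, and the distinction between left and right multiplication is immaterial on $Z\left(\mathfrak g_{\C}\right)$. Finally, the real isomorphism (\ref{eq:clo-8}) is immediate: by Proposition \ref{prop:conja} and (\ref{eq:clo-7}), $I\ac\left(\mathfrak h,\mathfrak g\right)$ and $Z\left(\mathfrak g\right)$ are the complex-conjugation-fixed subalgebras of their complexifications, and the conjugation-compatibility just proved ensures that $\phi_{\mathrm{HC}}$ restricts to a filtered $\R$-algebra isomorphism between them.

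The main obstacle is the $*$-compatibility. The character argument must be handled carefully: one needs the identification of $*$ as the antipode of the Hopf algebra structure on $U\left(\mathfrak g_{\C}\right)$, the representation-theoretic identity $\lambda_{V^{*}}=-w_{0}\lambda_{V}$ (which itself uses $w_{0}\rho^{\mathfrak g}=-\rho^{\mathfrak g}$), and the Zariski-density of the parametrizing set $\{\mu+\rho^{\mathfrak g}:\mu\text{ dominant integral}\}$ in $\mathfrak h_{\C}^{*}$. The remaining identities reduce to formal manipulations based on the independence of $\phi_{\mathrm{HC}}$ from the choice of $R_{+}$.
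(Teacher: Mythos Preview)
Your proposal is correct. For the second and third identities in (\ref{eq:star1}) and for (\ref{eq:clo-8}) you follow the paper's approach exactly: naturality of the construction $\phi_{\mathrm{HC}}=\phi_{2,R_{+}}\phi_{1,R_{+}}$ under the root-permuting involutions, combined with independence from the choice of $R_{+}$.

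The genuine differences are in the other two claims. For the $*$-compatibility, the paper simply cites Harish-Chandra \cite[Lemma 20]{Harish56}, whereas you supply a self-contained argument via infinitesimal characters of finite-dimensional irreducibles and Zariski density. Your argument is valid: the key identifications --- that $*$ coincides with the antipode of $U\left(\mathfrak g_{\C}\right)$, that $\chi_{V^{*}}\left(L\right)=\chi_{V}\left(L^{*}\right)$, and that $\{\mu+\rho^{\mathfrak g}:\mu\text{ dominant integral}\}$ is Zariski-dense --- are all standard. This has the advantage of being elementary and independent of Harish-Chandra's original computation.

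For $\sigma=*$ on $Z\left(\mathfrak g_{\C}\right)$, the paper argues via convolution identities on $C^{\infty,c}\left(G,\R\right)$: for $L\in Z\left(\mathfrak g_{\C}\right)$ one computes $f*Lh$ in two ways (once using $L\in D_{L}\left(G\right)$ and the transpose, once using $L\in D_{R}\left(G\right)$ followed by the $\sigma$-action) to obtain $\left(L^{*}f\right)*h=\left(\left(\sigma L\right)f\right)*h$. Your argument is more algebraic: both $\sigma$ and $*$ are induced by $X\mapsto-X$ on generators, one as an algebra map $U\left(\mathfrak g\right)\to U\left(\mathfrak g_{-}\right)\simeq U\left(\mathfrak g\right)^{\mathrm{op}}$ and the other as the antipode, and on the commutative subalgebra $Z\left(\mathfrak g_{\C}\right)$ the distinction between homomorphism and anti-homomorphism disappears. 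This is correct but terse; it might help to make explicit that $U\left(\mathfrak g_{-}\right)$ is canonically $U\left(\mathfrak g\right)^{\mathrm{op}}$, so that the composite is literally the antipode.
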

\begin{proof}
	The first equation in (\ref{eq:star1}) was established by Harish-Chandra  \cite[Lemma 
	20]{Harish56}. For the proof of the next two equations, we will 
	 follow  Harish-Chandra, and use the notation in 
	Subsection \ref{subsec:cofo}.

Observe that $\overline{R}_{+}$ is also a positive root system. More 
precisely, by (\ref{eq:cor1ax1}), we get
\begin{equation}\label{eq:clo-10a}
\overline{R}_{+}=\overline{R}^{\mathrm{im}}_{+}\cup 
\overline{R}^{\mathrm{re}}_{+}\cup \overline{R}^{\mathrm{c}}_{+}.
\end{equation}
From the properties of $R_{+}$,  (\ref{eq:clo-10a}) can be rewritten in the form
\begin{equation}\label{eq:clo-11a}
\overline{R}_{+}=\left( -R^{\mathrm{im}}_{+}\right)  \cup R^{\mathrm{re}}_{+}\cup 
R_{+}^{\mathrm{c}}.
\end{equation}

Let $\overline{\mathscr P}$ denote the conjugate of $\mathscr P$ in 
$U\left(\mathfrak g_{\C}\right)$. By (\ref{eq:coci2}), 
$\overline{\mathscr P}$ is just the object defined in 
(\ref{eq:clo-9}) associated with $\overline{R}_{+}$.  
We deduce that if $L\in Z\left(\mathfrak g_{\C}\right)$, we have the 
identity in $S\ac\left(\mathfrak h_{\C}\right)$, 
\begin{equation}\label{eq:clo-12}
\phi_{1,\overline{R}_{+}}\overline{L}=\overline{\phi_{1,R_{+}}L}.
\end{equation}
Also $\overline{\rho^{\mathfrak g}}\in \mathfrak h_{\C}^{*}$ is the 
half sum of the roots in $\overline{R}_{+}$. By (\ref{eq:clo-11a}), if 
$f\in S\ac\left(\mathfrak h_{\C}\right)$, then
\begin{equation}\label{eq:clo-13}
\phi_{2,\overline{R}_{+}}\overline{f}=\overline{\phi_{2,R_{+}}f}.
\end{equation}

By (\ref{eq:clo-12}), (\ref{eq:clo-13}), we get the identity in 
$S\ac\left(\mathfrak h_{\C}\right)$,
\begin{equation}\label{eq:clo-14}
\phi_{2,\overline{R}_{+}}\phi_{1,\overline{R}_{+}}\overline{L}=\overline{\phi_{2,R_{+}}
\phi_{1,R_{+}}L}.
\end{equation}

Let us now use Harish-Chandra's result described after (\ref{eq:clo-11}). For $L\in 
Z\left(\mathfrak g_{\C}\right)$, we can rewrite (\ref{eq:clo-14}) as 
an identity in $S\ac\left(\mathfrak h_{\C}\right)$, 
\begin{equation}\label{eq:clo-15}
\phi_{\mathrm{HC}}\overline{L}=\overline{\phi_{\mathrm{HC}}L}.
\end{equation}
But Harish-Chandra gives more, namely that the image of 
$\phi_{\mathrm{HC}}$ is exactly $I\ac\left(\mathfrak h_{\C}, 
\mathfrak g_{\C}\right)$. By (\ref{eq:clo-15}),  $I\ac\left(\mathfrak h_{\C}, \mathfrak g_{\C}\right)$ is 
preserved by complex conjugation, which we already knew by 
Proposition \ref{prop:conja}, and we also obtain the second equation 
in (\ref{eq:star1}), and (\ref{eq:clo-8}). 

Also $\theta R_{+}=-\overline{R}_{+}$
is a positive root system, and the corresponding half-sum of roots is 
given by $\theta\rho^{\mathfrak g}$. As in (\ref{eq:clo-12}), if 
$L\in Z\left(\mathfrak g_{\C}\right)$, then
\begin{equation}\label{eq:clo-16}
\phi_{1,\theta R_{+}}\theta L=\theta\phi_{1,R_{+}}L.
\end{equation}
If $f\in  S\ac\left(\mathfrak h_{\C}\right)$, then
\begin{equation}\label{eq:clo-17}
\phi_{2,\theta R_{+}}\theta f=\theta \phi_{2,R_{+}}f.
\end{equation}
By (\ref{eq:clo-16}), (\ref{eq:clo-17}), we conclude that
\begin{equation}\label{eq:clo-18}
\phi_{2,\theta R_{+}}\phi_{1,\theta 
R_{+}}\theta L=\theta\phi_{2,R_{+}}\phi_{1,R_{+}}L.
\end{equation}
Using again the result of Harish-Chandra, from (\ref{eq:clo-18}), we 
obtain the third equation in (\ref{eq:star1}).

If $f,h\in C^{\infty,c}\left(G,\R\right)$, the convolution 
	$f*h\in C^{\infty ,c}\left(G,\R\right)$ is defined by the formula,
	\begin{equation}\label{eq:sipa1}
		f*h\left(g\right)=\int_{G}^{}f\left(g^{-1}g'\right)h\left(g'\right)dg'.
\end{equation}
 If $A\in 
D_{R}\left(G\right),B\in D_{L}\left(G\right)$, we get easily
\begin{align}\label{eq:sipa2}
&f*Ah=A\left(f*h\right),&f*Bh=\left( B^{*}f \right) *h,\,\,B\left(f*h\right)=\left( \left(\sigma B\right)f
\right) *h.
\end{align}
By (\ref{eq:us1}), (\ref{eq:sipa2}), we conclude that if $L\in Z\left(\mathfrak 
g_{\C}\right)$, then
\begin{equation}\label{eq:sipa4-a1}
\left( L^{*}f \right) *h=\left( \left(\sigma L\right)f \right) *h.
\end{equation}
from which we get
\begin{equation}\label{eq:sipa5}
L^{*}f=\sigma Lf.
\end{equation}
This completes the proof of our theorem.
\end{proof}
\subsection{The Duflo and the Harish-Chandra isomorphisms}%
\label{subsec:duflo}
Here,  
\index{Sg@$S\ac\left[\left[\mathfrak g^{*}\right]\right]$}%
$S\ac\left[\left[\mathfrak g^{*}\right]\right]$ denotes the algebra of 
formal power series $\alpha=\sum_{i=0}^{+ \infty 
}\alpha_{i},\alpha_{i}\in S^{i}\left[\mathfrak g^{*}\right]$. Then 
$\mathfrak g$ still acts on $S\ac\left[\left[\mathfrak 
g^{*}\right]\right]$ as an algebra of derivations. Let
\index{Ig@$I\ac\left[\left[\mathfrak g^{*}\right]\right]$}%
$I\ac\left[\left[\mathfrak g^{*}\right]\right]$ be the subalgebra of 
invariant elements in $S\ac\left[\left[\mathfrak g^{*}\right]\right]$. 

As in Subsection \ref{subsec:lial}, $S\ac\left[\left[\mathfrak 
g^{*}\right]\right]$ can be identified with the algebra 
\index{Dg@$D\ac\left[\left[\mathfrak g^{*}\right]\right]$}%
$D\ac\left[\left[\mathfrak g^{*}\right]\right]$ of formal real partial
differential operators with constant coefficients on $\mathfrak 
g^{*}$, and $I\ac\left[\left[\mathfrak g^{*}\right]\right]$ with the 
algebra of formal real invariant differential operators with constant 
coefficients
\index{DIg@$D_{I}\ac\left[\left[\mathfrak g^{*}\right]\right]$}%
$D_{I}\ac\left[\left[\mathfrak g^{*}\right]\right]$, which acts 
on $S\ac\left(\mathfrak g\right)$.

Then $\widehat{A}^{-1}\left(\ad\left(\cdot\right)\right)\in 
I\ac\left[\left[\mathfrak g^{*}\right]\right]$.
 In the sequel, we view 
 $\widehat{A}^{-1}\left(\ad\left(\cdot\right)\right)$ as an element 
 of $D\ac_{I}\left[\left[\mathfrak g^{*}\right]\right]$.

Let 
\index{tP@$\tau_{\mathrm{PBW}}$}%
$\tau_{\mathrm{PBW}}$ be the Poincaré-Birkhoff-Witt isomorphism of 
filtered vector spaces $S\ac\left(\mathfrak g\right) \simeq 
U\left(\mathfrak g\right)$. Then $\tau_{\mathrm{PBW}}$ induces an identification of filtered 
vector spaces $I\ac\left(\mathfrak g\right) \simeq Z\left(\mathfrak 
g\right)$.

\begin{definition}\label{def:Ddufl}
	Put
	\index{tD@$\tau_{\mathrm{D}}$}%
\begin{equation}\label{eq:cliff1}
 \tau_{\mathrm{D}}=\tau_{\mathrm{PBW}} 
 \widehat{A}^{-1}\left(\mathrm{ad}\left(\cdot\right)\right):	S\ac(\mathfrak{g})\to U(\mathfrak{g}).
\end{equation}
\end{definition}
Then $\tau_{\mathrm{D}}$ is an isomorphism of filtered vector spaces, 
which  commutes with $\theta$.

A  result by Duflo \cite[Théorème V.2]{Duflo70} asserts that 
when restricted to $I\ac\left(\mathfrak g\right)$, 
$\tau_{\mathrm{D}}$ induces an isomorphism of filtered algebras,
\begin{equation}\label{eq:cliff2}
 	I\ac(\mathfrak{g})\simeq  Z(\mathfrak{g}).
\end{equation}

By  \cite[Lemme V.1]{Duflo70}, we have the commutative diagram
\begin{equation}\label{eq:DHC}
	\xymatrix{
    I\ac (\mathfrak{g}_{\C}) \ar[rr]^{\tau_{\mathrm{D}}}\ar[dr]_{r} & & 
	Z(\mathfrak{g}_{\C}) \ar@{->}[dl]^{\phi_{\mathrm{HC}}} \\
     & I(\mathfrak{h}_{\C},\mathfrak{g}_{\C}) &
    }.
	\end{equation}
By Theorem \ref{thm:pinv} and by (\ref{eq:DHC}), we get the commutative diagram
\begin{equation}\label{eq:DHC1}
    \xymatrix{
    I(\mathfrak{g}) \ar[rr]^{\tau_{\mathrm{D}}}\ar[dr]_{r} & & 
	Z(\mathfrak{g}) \ar@{->}[dl]^{\phi_{\mathrm{HC}}} \\
     & I(\mathfrak{h},\mathfrak{g}) &
    },
\end{equation}
and the morphisms in (\ref{eq:DHC1}) commute with $\theta$. 
\subsection{The case of the Casimir}%
\label{subsec:cacasi}
Note that $B^{*}\vert_{\mathfrak h}\in I^{2}\left(\mathfrak h,
\mathfrak g\right)$ corresponds to the Laplacian 
$\Delta^{\mathfrak h}$ on $\mathfrak h$  associated with 
$B\vert_{ \mathfrak h}$.  
The following result of Harish-Chandra is established in  \cite[Example 
5.64]{Knapp02} as a consequence of the constructions in Subsection 
\ref{subsec:cofo}. 
\begin{proposition}\label{prop:cas}
	We have the identity:
\begin{equation}\label{eq:inva9}
\phi_{\mathrm{HC}} C^{\mathfrak g}=-\Delta^{\mathfrak h}+B^{*}\left(\rho^{\mathfrak 
g},\rho^{\mathfrak g}\right).
\end{equation}
\end{proposition}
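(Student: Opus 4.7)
The plan is to compute $\phi_{\mathrm{HC}} C^{\mathfrak{g}}$ by running $C^{\mathfrak{g}}$ through the two steps $\phi_{1,R_+}$ (projection modulo $\mathscr{P}$) and $\phi_{2,R_+}$ (the $\rho^{\mathfrak{g}}$-shift) that were recalled in Subsection \ref{subsec:cofo}, using a basis of $\mathfrak{g}_{\C}$ adapted to the root decomposition (\ref{eq:coci1}).

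First I would choose a $B$-orthonormal basis $h_1,\ldots,h_r$ of $\mathfrak{h}$, together with vectors $e_{\alpha}\in\mathfrak{g}_{\alpha}$ for each $\alpha\in R$, normalized so that $B(e_{\alpha},e_{-\alpha})=1$. Since $\mathfrak{g}_{\alpha}$ and $\mathfrak{g}_{\beta}$ are $B$-orthogonal unless $\beta=-\alpha$, and each $\mathfrak{g}_{\alpha}$ is $B$-orthogonal to $\mathfrak{h}_{\C}$, the definition (\ref{eq:inva8a1}) of the Casimir becomes, as an element of $U(\mathfrak{g}_{\C})$,
\begin{equation*}
C^{\mathfrak{g}}=-\sum_{j=1}^{r}h_j^2-\sum_{\alpha\in R_+}\bigl(e_{-\alpha}e_{\alpha}+e_{\alpha}e_{-\alpha}\bigr).
\end{equation*}

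Next I would use the commutation relation $[e_{\alpha},e_{-\alpha}]=h_{\alpha}$, where $h_{\alpha}\in\mathfrak{h}_{\C}$ is the element defined by $B(h_{\alpha},h)=\langle\alpha,h\rangle$ for all $h\in\mathfrak{h}$. This follows from $\ad$-invariance of $B$. Rewriting each $e_{\alpha}e_{-\alpha}=e_{-\alpha}e_{\alpha}+h_{\alpha}$ gives
\begin{equation*}
C^{\mathfrak{g}}=-\sum_{j=1}^{r}h_j^2-\sum_{\alpha\in R_+}h_{\alpha}-2\sum_{\alpha\in R_+}e_{-\alpha}e_{\alpha}.
\end{equation*}
The last sum lies in $\mathscr{P}=\sum_{\alpha\in R_+}U(\mathfrak{g}_{\C})\mathfrak{g}_{\alpha}$, so by the definition of $\phi_{1,R_+}$,
\begin{equation*}
\phi_{1,R_+}(C^{\mathfrak{g}})=-\sum_{j=1}^{r}h_j^2-\sum_{\alpha\in R_+}h_{\alpha}.
\end{equation*}
Identifying $S\ac(\mathfrak{h}_{\C})$ with polynomials on $\mathfrak{h}_{\C}^*$, the first term evaluates at $\lambda\in\mathfrak{h}_{\C}^*$ to $-B^*(\lambda,\lambda)$ (since $\{h_j\}$ is $B$-orthonormal), and since $\sum_{\alpha\in R_+}h_{\alpha}$ is the element of $\mathfrak{h}_{\C}$ corresponding to $2\rho^{\mathfrak{g}}$ under $B$, the second term evaluates to $2B^*(\rho^{\mathfrak{g}},\lambda)$. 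Hence
\begin{equation*}
\phi_{1,R_+}(C^{\mathfrak{g}})(\lambda)=-B^*(\lambda,\lambda)-2B^*(\rho^{\mathfrak{g}},\lambda).
\end{equation*}

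Applying the shift $\phi_{2,R_+}$ of (\ref{eq:clo-11}), which replaces $\lambda$ by $\lambda-\rho^{\mathfrak{g}}$, a straightforward expansion cancels the linear terms and produces
\begin{equation*}
\phi_{\mathrm{HC}}(C^{\mathfrak{g}})(\lambda)=-B^*(\lambda,\lambda)+B^*(\rho^{\mathfrak{g}},\rho^{\mathfrak{g}}).
\end{equation*}
Under the identification of $I\ac(\mathfrak{h},\mathfrak{g})$ with $D_I\ac(\mathfrak{h},\mathfrak{g})$ explained in Subsection \ref{subsec:lial} and Subsection \ref{subsec:care}, the quadratic polynomial $\lambda\mapsto B^*(\lambda,\lambda)$ is precisely the Laplacian $\Delta^{\mathfrak{h}}$ associated with $B\vert_{\mathfrak{h}}$, so this rewrites as (\ref{eq:inva9}). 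The only nontrivial point is the bookkeeping in the commutator computation together with correctly tracking the identification $\sum_{\alpha\in R_+}h_{\alpha}\leftrightarrow 2\rho^{\mathfrak{g}}$; everything else is formal manipulation within $U(\mathfrak{g}_{\C})$ and $S\ac(\mathfrak{h}_{\C})$.
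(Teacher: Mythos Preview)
Your proof is correct and is precisely the standard computation that the paper defers to by citing \cite[Example 5.64]{Knapp02}; the paper gives no argument of its own beyond pointing to that reference, and your root-space decomposition of $C^{\mathfrak g}$ followed by the $\rho^{\mathfrak g}$-shift is exactly what is done there. One cosmetic remark: since $B$ is indefinite on $\mathfrak h$, ``$B$-orthonormal basis'' is a slight abuse---either work in $\mathfrak h_{\C}$ or take a basis with $B(h_i,h_j)=\epsilon_i\delta_{ij}$ and its $B$-dual---but your identification of $-\sum h_j^*h_j$ with $-B^*(\lambda,\lambda)$ and hence with $-\Delta^{\mathfrak h}$ is correct regardless.
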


\begin{proposition}\label{prop:tdu}
	The following identity holds:
	\begin{equation}\label{eq:clif0}
\tau_{\mathrm{D}}^{-1}C^{\mathfrak g}=-B^{*}+B^{*}\left(\rho^{\mathfrak g}, 
\rho^{\mathfrak g}\right).
\end{equation}
\end{proposition}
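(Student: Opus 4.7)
The plan is to deduce the identity directly from the commutativity of Duflo's triangle (\ref{eq:DHC1}) combined with the Harish-Chandra formula (\ref{eq:inva9}) for the image of the Casimir. Since $C^{\mathfrak g}\in Z(\mathfrak g)$ and $\tau_{\mathrm D}:I\ac(\mathfrak g)\to Z(\mathfrak g)$ is an isomorphism by Duflo's theorem (\ref{eq:cliff2}), the element $P:=\tau_{\mathrm D}^{-1}C^{\mathfrak g}$ lies in $I\ac(\mathfrak g)$. On the other hand, the candidate $Q:=-B^{*}+B^{*}(\rho^{\mathfrak g},\rho^{\mathfrak g})$ is clearly $G$-invariant, hence also belongs to $I\ac(\mathfrak g)$.

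By the commutativity of (\ref{eq:DHC1}) together with Proposition \ref{prop:cas},
\begin{equation*}
r(P)=r(\tau_{\mathrm D}^{-1}C^{\mathfrak g})=\phi_{\mathrm{HC}}(C^{\mathfrak g})=-\Delta^{\mathfrak h}+B^{*}(\rho^{\mathfrak g},\rho^{\mathfrak g}).
\end{equation*}
At the same time, under the identification $I\ac(\mathfrak h,\mathfrak g)\simeq D_{I}\ac(\mathfrak h,\mathfrak g)$ explained in Subsection \ref{subsec:care}, the Chevalley restriction $r$ sends $B^{*}\in I^{2}(\mathfrak g)$ to $B^{*}|_{\mathfrak h}\in S^{2}(\mathfrak h)$, whose associated constant coefficient operator on $\mathfrak h$ is precisely $\Delta^{\mathfrak h}$; the constant term is unchanged. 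Consequently
\begin{equation*}
r(Q)=-\Delta^{\mathfrak h}+B^{*}(\rho^{\mathfrak g},\rho^{\mathfrak g})=r(P).
\end{equation*}

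Since $r:I\ac(\mathfrak g)\simeq I\ac(\mathfrak h,\mathfrak g)$ is the isomorphism (\ref{eq:inva6a}), we conclude $P=Q$, which is (\ref{eq:clif0}). In this approach there is no real obstacle: the only point requiring care is the bookkeeping identifying the invariant quadratic polynomial $B^{*}\in I^{2}(\mathfrak g)$ with the Laplacian $\Delta^{\mathfrak h}$ after restriction, but this is forced by the conventions fixed in Subsection \ref{subsec:lial}. A more computational alternative would compute $\tau_{\mathrm D}^{-1}C^{\mathfrak g}=\widehat{A}(\mathrm{ad}(\cdot))\,\tau_{\mathrm{PBW}}^{-1}C^{\mathfrak g}$ by first observing that $\sum_i[e_i^{*},e_i]=0$ forces $\tau_{\mathrm{PBW}}^{-1}C^{\mathfrak g}=-B^{*}$, and then extracting the scalar contribution of $\widehat{A}(\mathrm{ad}(\cdot))$ on a quadratic polynomial via Kostant's strange formula (\ref{eq:inva10}); but this is less transparent than the diagrammatic route.
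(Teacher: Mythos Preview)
Your proof is correct and takes the diagrammatic route: you use the compatibility (\ref{eq:DHC1}) of the Duflo and Harish-Chandra isomorphisms together with Proposition~\ref{prop:cas} to identify $r(\tau_{\mathrm D}^{-1}C^{\mathfrak g})$, and then conclude by injectivity of $r$. The paper instead gives the direct computation you sketch at the end: it expands $\widehat A^{-1}(\ad(\cdot))$ to second order, applies it to $B^{*}$, and identifies the resulting scalar correction via Kostant's strange formula (\ref{eq:inva10}), arriving at $\tau_{\mathrm D}B^{*}=-C^{\mathfrak g}+B^{*}(\rho^{\mathfrak g},\rho^{\mathfrak g})$. The paper explicitly acknowledges in Remark~\ref{rem:casi} that, via (\ref{eq:DHC1}), Propositions~\ref{prop:cas} and~\ref{prop:tdu} can be derived from each other --- which is exactly your argument. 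Your route is cleaner but imports Proposition~\ref{prop:cas} as a black box; the paper's computation is self-contained modulo (\ref{eq:inva10}) and makes the appearance of $B^{*}(\rho^{\mathfrak g},\rho^{\mathfrak g})$ through Kostant's formula explicit, so it furnishes an independent consistency check among (\ref{eq:inva9}), (\ref{eq:inva10}), and (\ref{eq:DHC1}).
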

\begin{proof}
	Clearly,
	\begin{equation}\label{eq:clif-1}
\widehat{A}^{-1}\left(x\right)=1+\frac{1}{24}x^{2}+\ldots
\end{equation}
Also $B^{*}\in I^{2}\left(\mathfrak g\right)$. Let 
$e_{1}\ldots,e_{m+n}$ be a basis of $\mathfrak g$, and let 
$e^{*}_{1},\ldots,e^{*}_{m+n}$ be the basis of $\mathfrak g$ which is 
dual with respect to $B$. By (\ref{eq:clif-1}), 
we get
\begin{equation}\label{eq:clif-2}
\widehat{A}^{-1}\left(\ad\left(\cdot\right)\right)B^{*}=B^{*}+\frac{1}{24}\Tr^{\mathfrak g }\left[\ad\left(e_{i}\right)
\ad\left(e_{j}\right)\right]B^{*}\left(e_{i}^{*},e_{j}^{*}\right).
\end{equation}
Equation (\ref{eq:clif-2}) can be written in the form
\begin{equation}\label{eq:clif-3}
\widehat{A}^{-1}\left(\ad\left(\cdot\right)\right)B^{*}=B^{*}-\frac{1}{24}\Tr^{\mathfrak g}\left[C^{\mathfrak g, \mathfrak g}\right].
\end{equation}
By (\ref{eq:inva10}), we can rewrite (\ref{eq:clif-3}) in the form
\begin{equation}\label{eq:clif-4}
\widehat{A}^{-1}\left(\ad\left(\cdot\right)\right)B^{*}=B^{*}+B^{*}\left(\rho^{\mathfrak g},\rho^{\mathfrak g}\right).
\end{equation}
By (\ref{eq:cliff1}), (\ref{eq:clif-4}), we get
\begin{equation}\label{eq:clif-5}
\tau_{\mathrm{D}}B^{*}=-C^{\mathfrak g}+B^{*}\left(\rho^{\mathfrak 
g},\rho^{\mathfrak g}\right),
\end{equation}
which is equivalent to (\ref{eq:clif0}). The proof of our proposition is completed. 
\end{proof}
\begin{remark}\label{rem:casi}
	Using (\ref{eq:DHC1}), Propositions \ref{prop:cas} and 
	\ref{prop:tdu} can be derived from each other.
\end{remark}
\subsection{The action of $Z\left(\mathfrak g\right)$ on $C^{\infty 
}\left(X,F\right)$}%
\label{subsec:actF}
Note that $G$ acts on the left on $C^{\infty }\left(G,E\right)$ as in 
(\ref{eq:us0}), and there is a corresponding action of 
$D_{R}\left(G\right)$.  Also $K$ acts on $C^{\infty }\left(G,E\right)$ by the formula
\begin{equation}\label{eq:bena2}
k_{R}s\left(g\right)=\rho^{E}\left(k\right)s\left(gk\right),
\end{equation}
and this action of $K$ commutes with the left action of $G$. 
Moreover, we have the identity 
\begin{equation}\label{eq:bena3}
C^{\infty }\left(X,F\right)=\left[C^{\infty }\left(G,E\right)
\right]^{K}.
\end{equation}

Then $D_{R}\left(G\right)$ commutes with the action of $K$ on 
$C^{\infty }\left(G,E\right)$. As a subalgebra of 
$D_{R}\left(G\right)$, $Z\left(\mathfrak g\right)$ also acts on 
$C^{\infty }\left(G,E\right)$, and its action 
commutes with the left action of $G$ and with the right action of 
$K$. The action of $D_{R}\left(G\right)$ descends to $C^{\infty}
\left(X,F\right)$, so that the action of $Z\left(\mathfrak g\right)$ descends to 
$C^{\infty }\left(X,F\right)$ and commutes with the left action of $G$. 
\subsection{The semisimple orbital integrals involving 
$Z\left(\mathfrak g\right)$}%
\label{subsec:sesiorb}
Let 
\index{S@$\mathscr S$}%
$\mathscr S$ be the algebra of differential operators acting on 
$C^{\infty }\left(X,F\right)$ with uniformly bounded coefficients 
together with their derivatives of any order.\footnote{These are 
linear combinations of operators $\n^{F}_{U_{1}}\ldots 
\n^{F}_{U_{k}}$, where $U_{1},\ldots,U_{k}$ are smooth bounded vector fields 
with uniformly bounded covariant derivatives of any order.}
\begin{definition}\label{def:Dker}
	Let 
	\index{Cb@$C^{\infty,b}\left(X,F\right)$}%
	$C^{\infty,b}\left(X,F\right)$ be the vector space of smooth 
	sections of $F$ on $X$ which are bounded together with their  
	covariant
	derivatives of any order.
	
	Let $\mathcal{Q}$ be the  space of smooth kernels 
	$Q\left(x,x'\right)\vert_{x,x'\in X}$ acting on $C^{\infty,b}\left(X,F\right)$ and commuting with the 
left action of $G$ such that there exists $C>0$, and for any 
$S,S'\in \mathscr S$, there exists $C_{S,S'}>0$ for which
\begin{equation}\label{eq:siga2}
\left\vert  SQS'\left(x,x'\right)\right\vert\le 
C_{S,S'}\exp\left(-Cd^{2}\left(x,x'\right)\right).
\end{equation}
\end{definition}
The same arguments as in \cite[Proposition 4.1.2]{Bismut08b} shows 
that the vector space $\mathcal{Q}$ is an algebra with respect to the 
composition of operators.

In particular $D_{R}\left(G\right)$ commutes with $Q$, and so 
$Z\left(\mathfrak g\right)$ commutes with $Q$. 
\begin{proposition}\label{prop:kerce}
	If $L\in Z\left(\mathfrak g\right), Q\in \mathcal{Q}$, then 
	$LQ\in \mathcal{Q}$.
\end{proposition}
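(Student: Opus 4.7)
The plan is to reduce everything to the assertion that every $L\in Z(\mathfrak g)$, viewed as a differential operator on $C^{\infty}(X,F)$, belongs to the algebra $\mathscr S$ of differential operators with uniformly bounded coefficients and uniformly bounded covariant derivatives of all orders. Once this is established, the proof is essentially formal: $\mathscr S$ is stable under composition, so if $S,S'\in \mathscr S$, then $SL\in \mathscr S$ and
\begin{equation*}
S(LQ)S'(x,x')=(SL)Q S'(x,x'),
\end{equation*}
whose absolute value is bounded by $C_{SL,S'}\exp(-Cd^{2}(x,x'))$ thanks to $Q\in \mathcal Q$. The commutation with the left $G$-action is inherited from the fact that both $L$ (by Subsection \ref{subsec:actF}) and $Q$ commute with it. Finally, since $L\in \mathscr S$ preserves $C^{\infty,b}(X,F)$ and $Q$ maps $C^{\infty,b}(X,F)$ to itself by virtue of the kernel estimate, so does $LQ$.

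The only real point is therefore to verify that the action of $Z(\mathfrak g)$ on $C^{\infty}(X,F)$ lands in $\mathscr S$. I would argue this as follows. Fix an orthonormal basis $e_{1},\dots,e_{m}$ of $\mathfrak p$ and note that the associated $K$-equivariant frame of $\mathfrak p$ descends to a smooth local frame of $TX$ in any normal coordinate neighborhood of $x_{0}=p1$. Via left translation by $G$, which acts by isometries of $(X,g^{TX})$ lifted to $F$, one obtains a corresponding frame centered at any point $x=g x_{0}$. Because $L$ is $G$-invariant on $C^{\infty}(X,F)$, its expression in these translated coordinates is independent of $x$; hence its coefficients and all their covariant derivatives are bounded uniformly in $x\in X$. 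In terms of the footnote description of $\mathscr S$, one can write $L$ as a finite sum of compositions $\nabla^{F}_{U_{1}}\cdots \nabla^{F}_{U_{k}}$ with vector fields $U_{i}$ coming, at $x_{0}$, from elements of $\mathfrak p$ and extended by $G$-translation, so each $U_{i}$ is bounded together with all its covariant derivatives. This inclusion $Z(\mathfrak g)\subset \mathscr S$ is the main (mild) obstacle; it is folklore but should be stated carefully.

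Given this, the conclusion is immediate. For any $S,S'\in \mathscr S$, the composition $SL\in \mathscr S$, and by the defining estimate \eqref{eq:siga2} applied to $Q$ with the pair $(SL,S')$,
\begin{equation*}
\left|S(LQ)S'(x,x')\right|=\left|(SL)QS'(x,x')\right|\le C_{SL,S'}\exp\!\left(-Cd^{2}(x,x')\right),
\end{equation*}
which is exactly \eqref{eq:siga2} for $LQ$. Combined with the $G$-equivariance and the preservation of $C^{\infty,b}(X,F)$ noted above, this shows $LQ\in \mathcal Q$ and completes the proof.
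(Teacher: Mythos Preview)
Your argument is correct. The key step—that $L$, being $G$-invariant on the homogeneous space $X$, has uniformly bounded coefficients and hence lies in $\mathscr S$—is valid, and once this is in hand the rest is formal, as you say. One small imprecision: the sentence about writing $L$ via vector fields ``coming from $\mathfrak p$ and extended by $G$-translation'' does not quite work, since the isotropy $K$ obstructs defining global vector fields on $X$ this way; but your preceding paragraph (that the expression of $L$ in normal coordinates based at $gx_{0}$ is independent of $g$) already establishes $L\in\mathscr S$, and you rightly flag this inclusion as the only nontrivial point.

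The paper argues differently. It does not claim $L\in\mathscr S$. Instead it first uses the $G$-equivariance of $LQ$ to reduce the estimate \eqref{eq:siga2} to the single base point $x=x_{0}$, and then regards $L$ concretely as a polynomial in the infinitesimal generators of the left $G$-action, i.e.\ in the Killing fields $U^{X}$ for $U\in\mathfrak g$. These fields are \emph{not} bounded—the paper records the Jacobi-field bound $|U^{X}(x)|\le C\exp\bigl(cd(x_{0},x)\bigr)$ together with its analogue for covariant derivatives—but at the fixed point $x_{0}$ they and all their derivatives are controlled, so $S(LQ)S'(x_{0},x')$ becomes a finite combination of terms to which the defining estimate \eqref{eq:siga2} for $Q$ applies. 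Your route is more intrinsic and bypasses the Killing-field discussion entirely; the paper's stays closer to the explicit realization of the $Z(\mathfrak g)$-action on $C^{\infty}(X,F)$ described in Subsection~\ref{subsec:actF}.
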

\begin{proof}
	Since $L\in Z\left(\mathfrak g\right)$, $L$ commutes with the 
	left action of $G$, and so $LQ$ commutes with this action of $G$. 
	We fix $x_{0}=p1\in G$. Since $LQ$ commutes with $G$, and $G$ 
	acts isometrically on $X$, to 
	establish (\ref{eq:siga2}) for $LQ$, we may as well take 
	$x=x_{0}$. If $U\in \mathfrak g$, and if $U^{X}$ is the 
	corresponding vector field on $X$, since $U^{X}$ is a Jacobi 
	field along the geodesics in $X$, there exist $C>0, c>0$ such 
	that
	\begin{equation}\label{eq:sipa4}
\left\vert  U^{X}\left(x\right)\right\vert\le 
C\exp\left(cd\left(x_{0},x\right)\right).
\end{equation}
The above estimate is also valid for the corresponding covariant 
derivatives. From (\ref{eq:siga2}), we get the estimate (\ref{eq:siga2}) 
for $LQ$ when $x=x_{0}$. The proof of our proposition is completed. 
\end{proof}

Let $\gamma\in G$ be semisimple. By (\ref{eq:fina6}), 
(\ref{eq:siga2}), we have the analogue of (\ref{eq:fina7}), i.e., if 
$f\in \mathfrak p^{\perp}\left(\gamma\right)$, then
\begin{equation}\label{eq:fina7a1}
\left\vert Q\left(\gamma^{-1}e^{f}x_{0},e^{f}x_{0} 
\right)\right\vert\le C_{\gamma}\exp\left(-c_{\gamma}\left\vert  
f\right\vert^{2}\right).
\end{equation}

In \cite[Definition 4.2.2]{Bismut08b}, if $\gamma\in G$ is semisimple, 
if $Q\in \mathcal{Q}$, the 
orbital integral $\Tr^{\left[\gamma\right]}\left[Q\right]$ is 
defined by a formula  similar to (\ref{eq:fina6a1}), i.e., 
\begin{equation}\label{eq:sira0}
\Tr^{\left[\gamma\right]}\left[Q\right]=
\int_{\mathfrak p^{\perp}\left(\gamma\right)}^{}\Tr\left[\gamma_{*}
Q\left(\gamma^{-1}e^{f}x_{0}, 
e^{f}x_{0}\right) \right]r\left(f\right)df.
\end{equation}
The estimates (\ref{eq:fina5}), (\ref{eq:fina7a1}) guarantees that the 
integral in (\ref{eq:sira0}) is well-defined.

By Proposition 
\ref{prop:kerce}, if $L\in Z\left(\mathfrak g\right)$, $LQ\in 
\mathcal{Q}$, and so $\Tr^{\left[\gamma\right]}\left[LQ\right]$ is 
also well-defined.
\section{The center of $U\left(\mathfrak g\right)$ and the regular 
orbital integrals}%
\label{sec:cenreg}
The purpose of this Section is to evaluate the orbital integrals 
for kernels of the 
form $L\mu\left(\sqrt{C^{\mathfrak g,X}+A}\right)$ associated with 
regular elements in $G$, when $L\in Z\left(\mathfrak g\right)$. To 
establish our formula, we will use the main result of \cite{Bismut08b} 
described in Theorem \ref{thm:Ttrfin}, the smoothness properties of the function 
$\mathcal{J}_{\gamma}\left(h_{\mathfrak k}\right)$ that were obtained 
in Section \ref{sec:reg}, and  the Harish-Chandra 
isomorphism in the form given in equation (\ref{eq:inva17}).

This section is organized as follows. In Subsection 
\ref{subsec:cenreg}, we recall the classical result of Harish-Chandra \cite[Theorem 
3]{Harish57a}, \cite[Section 18]{Harish66} that expresses 
certain orbital integrals on $H^{\mathrm{reg}}$ via the action of 
$Z\left(\mathfrak g\right)$ on 
the orbital integral as a function of $\gamma$. 

In Subsection \ref{subsec:georeg}, using Theorem \ref{thm:Ttrfin}, we 
obtain our formula.
\subsection{The algebra $Z\left(\mathfrak g\right)$ and the regular orbital 
integrals}%
\label{subsec:cenreg}
Let $L\in Z\left(\mathfrak g\right), Q\in \mathcal{Q}$, by 
Proposition \ref{prop:kerce}, $LQ\in \mathcal{Q}$. If $f\in C^{\infty 
}\left(G^{\mathrm{reg}}, \C\right)$, $Lf$ is a smooth function on 
$G^{\mathrm{reg}}$. For greater clarity, this function will be 
denoted instead $L_{\gamma}f$.

Now we give another proof of a result of Harish-Chandra  
\cite[Theorem 3]{Harish57a}, \cite[Section 18]{Harish66}, \cite[Proposition 
11.9]{Knapp86}. 
\begin{proposition}\label{prop:pdifre}
	If $Q\in \mathcal{Q}$, the map $\gamma\in 
	G^{\mathrm{reg}}\to\Tr^{\left[\gamma\right]}\left[Q\right]$ is 
	smooth. If $L\in Z\left(\mathfrak g\right)$, we have the identity of smooth functions on 
	$G^{\mathrm{reg}}$: 
	\begin{equation}\label{eq:diff1}
\Tr^{\left[\gamma\right]}\left[LQ\right]=\left(\sigma 
L\right)_{\gamma}\Tr^{\left[\gamma\right]}\left[Q\right].
\end{equation}
\end{proposition}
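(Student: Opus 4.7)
The proof has two parts. First, for smoothness of $\gamma \mapsto \Tr^{[\gamma]}[Q]$ on $G^{\mathrm{reg}}$: using the splitting (\ref{eq:inva6a1bi}) and the fact that the orbital integral is an $\Ad$-invariant function of $\gamma$, it suffices to establish smoothness on each $H^{\mathrm{reg}}$ for a Cartan subgroup $H$. For $\gamma$ regular in $H$, the connected centralizer $Z^0\left(\gamma\right) = H^0$ is independent of $\gamma$, so the integration domain and invariant measure $dv^0$ in (\ref{eq:fina8}) are fixed. The Gaussian estimate (\ref{eq:siga2}), applied to the $\gamma$-derivatives of the integrand (which correspond to the action of bounded differential operators $S \in \mathscr S$ on $Q$), yields uniform bounds that justify differentiation under the integral sign.

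For the identity, I would interpret the $G$-invariant kernel $Q$ as defining a convolution operator on $G$ with matrix-valued kernel $Q^E : G \to \End\left(E\right)$, analogous to the lift in (\ref{eq:fina8}). The third identity in (\ref{eq:sipa2}), applied to $L \in Z\left(\mathfrak g\right) \subset D_L\left(G\right)$, yields $L\left(Q^E * s\right) = \left(\left(\sigma L\right) Q^E\right) * s$, so that $\left(LQ\right)^E = \left(\sigma L\right) Q^E$, where $\sigma L$ acts as a bi-invariant differential operator on $G$. Taking fiberwise trace gives
\begin{equation*}
\Tr^E\left[\left(LQ\right)^E\left(g\right)\right] = \left(\sigma L\right)_g \Tr^E\left[Q^E\left(g\right)\right].
\end{equation*}
Since $\sigma L \in Z\left(\mathfrak g\right)$ is bi-invariant and hence $\Ad$-invariant, its action commutes with conjugation: for any fixed $v \in G$ and smooth $F$ on $G$,
\begin{equation*}
\left[\left(\sigma L\right)_g F\left(g\right)\right]\bigg|_{g=v^{-1}\gamma v} = \left(\sigma L\right)_\gamma\left[F\left(v^{-1}\gamma v\right)\right].
\end{equation*}
Substituting into (\ref{eq:fina8}) and pulling $\left(\sigma L\right)_\gamma$ outside the integral over $H^0 \backslash G$ (legitimate by the smoothness bounds of the first part, applied to $LQ \in \mathcal Q$ via Proposition \ref{prop:kerce}) yields the identity $\Tr^{[\gamma]}\left[LQ\right] = \left(\sigma L\right)_\gamma \Tr^{[\gamma]}\left[Q\right]$.

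The main obstacle is carefully justifying the convolution identity $\left(LQ\right)^E = \left(\sigma L\right) Q^E$ in the $\End\left(E\right)$-valued, $K$-equivariant setting. The formula (\ref{eq:sipa2}) is stated in the excerpt for scalar-valued functions on $G$, whereas $Q^E$ takes values in $\End\left(E\right)$ and satisfies the $K$-bi-equivariance corresponding to the lift $C^\infty\left(X,F\right) \hookrightarrow C^\infty\left(G,E\right)$. One needs to verify that the action of $L \in Z\left(\mathfrak g\right)$ factors through the $G$-side of this equivariance and commutes with the $K$-reduction, so that the scalar convolution identity can be applied fiberwise. Once this is in place, the $\sigma$ appearing in the final formula is precisely the obstruction transferring $L$ from the source of the convolution to the kernel, as encoded by $L^* = \sigma L$ in (\ref{eq:sipa5}).
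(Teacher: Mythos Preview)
Your approach is essentially the same as the paper's. For smoothness, the paper also reduces to $H^{\mathrm{reg}}$, fixes the integration domain via $Z^{0}(\gamma')=H^{0}$, and uses the estimates (\ref{eq:siga2}) together with the growth bound (\ref{eq:sipa4}) on the Killing vector fields to dominate the derivatives; you invoke the same ingredients, just phrased through (\ref{eq:fina8}) rather than the geometric form (\ref{eq:sira0}). For the identity, the paper dispatches it in a single sentence (``$\sigma L$ is the image of $L$ under $g\mapsto g^{-1}$''), and your convolution argument via (\ref{eq:sipa2}) is precisely the unpacking of that sentence---your ``main obstacle'' about the $\End(E)$-valued, $K$-equivariant setting is a routine check rather than a genuine difficulty, since $L\in Z(\mathfrak g)$ acts as a scalar differential operator on $G$ commuting with both the left $G$-action and the right $K$-action used in (\ref{eq:bena3}).
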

\begin{proof}
	The map $\left(\gamma,g\right)\in H^{\mathrm{reg}}\times G/H\to 
	g^{-1}\gamma g\in G^{\mathrm{reg}}$ is locally a 
	diffeomorphism. Since $\Tr^{\left[\gamma\right]}\left[Q\right]$ 
	is invariant by conjugation, to obtain  the required smoothness, 
	it is enough to prove that $\gamma\in 
	H^{\mathrm{reg}}\to\Tr^{\left[\gamma\right]}\left[Q\right]\in \C$ 
	is smooth.
	
	Put
	\begin{equation}\label{eq:sira1}
K^{0}\left(H\right)=H^{0}\cap K.
\end{equation}
As we saw in Subsection \ref{subsec:sesidis}, $K^{0}\left(H\right)$ 
is a maximal compact subgroup of $H^{0}$. 

	Using the notation in 
Subsection \ref{subsec:nereg},  if $\gamma\in 
H^{\mathrm{reg}}$, and  if $\gamma'\in H$ is close  to $\gamma$, 
 then $\gamma'\in 
	H^{\reg}$ and (\ref{eq:coc0}) holds. 
	Using equation 
	(\ref{eq:dist13}) in Theorem \ref{Ttotal}, we deduce that
	\begin{equation}\label{eq:bobe2}
X\left(\gamma'\right)=H^{0}/K^{0}\left(H\right).
\end{equation}
In particular $X\left(\gamma'\right)$ does not depend on $\gamma'$, 
and $\mathfrak p^{\perp}\left(\gamma'\right)= \mathfrak h_{\mathfrak 
p}^{\perp}$. 
 By 
(\ref{eq:sira0}), we get
\begin{equation}\label{eq:siga4}
\Tr^{\left[\gamma'\right]}\left[Q\right]=\int_{\mathfrak h_{\mathfrak 
p}^{\perp}}^{}\Tr\left[\gamma'_{*}Q\left(\gamma^{\prime-1}e^{f}x_{0},e^{f}x_{0} \right) \right]r\left(f\right)df.
\end{equation}

For $\gamma'\in H$ close enough to $\gamma\in H$, it is elementary to make 
the estimate in \cite[Theorem 3.4.1 ]{Bismut08b}, which was explained 
in (\ref{eq:fina6}), uniform, so that there exists $C>0$ such that if 
$\gamma'\in H^{\mathrm{reg}}$ is close enough to $\gamma$, if $f\in \mathfrak h_{\mathfrak 
p}^{\perp},\left\vert  f\right\vert\ge 1$, 
\begin{equation}\label{eq:sira2}
d_{\gamma'}\left(e^{f}x_{0}\right)\ge C\left\vert  
f\right\vert.
\end{equation}
By combining (\ref{eq:siga2}) with $S=1,S'=1$ and (\ref{eq:sira2}), 
for $\gamma'\in H$ close enough to $\gamma$, there exist $C>0,c>0$ 
such that if $f\in \mathfrak 
h^{\perp}_{\mathfrak p},\left\vert  f\right\vert\ge 1$, we get
\begin{equation}\label{eq:sira4}
\left\vert  
\gamma_{*}Q\left(\gamma^{-1}e^{f}x_{0},e^{f}x_{0}\right)\right\vert\le C\exp\left(-c\left\vert  f\right\vert^{2}\right).
\end{equation}
Using dominated convergence, by (\ref{eq:fina5}), (\ref{eq:sira4}), 
we deduce that $\Tr^{\left[\gamma\right]}\left[Q\right]$ is a 
continuous function of $\gamma\in H^{\mathrm{reg}}$. 

Let us now prove the above function is smooth on $H^{\mathrm{reg}}$. 
The argument is essentially the same as before, by combining the 
 estimates in (\ref{eq:siga2}) with $S$ arbitrary, together  with 
 uniform estimates given in (\ref{eq:sipa4}).

Equation (\ref{eq:diff1}) just reflects the fact that $\sigma L$ 
	is the image of $L$ by the map $g\to g^{-1}$.
\end{proof}
\subsection{A geometric formula for the regular orbital integrals}%
\label{subsec:georeg}
In the sequel we take the function $\mu\in 
\mathcal{S}^{\mathrm{even}}\left(\R\right)$ as in Subsection \ref{subsec:wake}. Let 
$A\in \R,L\in Z\left(\mathfrak g\right)$.

Put
	\index{hi@$\mathfrak h_{i}$}%
	\begin{equation}\label{eq:last1}
\mathfrak h_{i}= \mathfrak h_{\mathfrak p} \oplus i \mathfrak 
h_{\mathfrak k}.
\end{equation}

Recall that $\phi_{\mathrm{HC}}L\in D_{I}^{\ac}\left(\mathfrak 
h\right)$. This differential operator acts on smooth functions on 
$\mathfrak h$, but as explained in Subsection  \ref{subsec:lial}, it 
also acts on smooth functions on $\mathfrak h_{i}$.

In the next statement, the smooth kernel $\left( 
\phi_{\mathrm{HC}} L \right) \mu\left(\sqrt{\phi_{\mathrm{HC}} 
C^{\mathfrak g}+A}\right)$ on $\mathfrak h_{i}$ acts on the distribution 
$$\mathcal{J}_{\gamma}\left(h_{\mathfrak 
k}\right)\Tr^{E}\left[\rho^{E}\left(k^{-1}e^{-h_{\mathfrak 
k}}\right)\right]\delta_{a}.$$
\begin{theorem}\label{thm:Tgenorb}
	The following identity holds on $H^{\mathrm{reg}}$:
	\begin{multline}\label{eq:diff2}
\Tr^{\left[\gamma\right]}\left[L\mu\left(\sqrt{C^{\mathfrak g,X}+A}\right)\right]\\
=\left( \phi_{\mathrm{HC}} L \right) \mu\left(\sqrt{\phi_{\mathrm{HC}} C^{\mathfrak g}+A}\right)\left[\mathcal{J}_{\gamma}\left(h_{\mathfrak 
k}\right)\Tr^{E}\left[\rho^{E}\left(k^{-1}e^{-h_{\mathfrak k}}\right)\right]\delta_{a}\right]\left(0\right).
\end{multline}
\end{theorem}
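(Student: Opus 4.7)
The plan is to combine three ingredients. First, Proposition~\ref{prop:pdifre} moves $L$ outside the orbital integral: setting $f(\gamma) = \Tr^{[\gamma]}[\mu(\sqrt{C^{\mathfrak{g},X}+A})]$, the LHS equals $(\sigma L)_\gamma f(\gamma)$. Second, Theorem~\ref{thm:Ttrfin} combined with Proposition~\ref{prop:cas} specializes, for $\gamma \in H^{\mathrm{reg}}$ (where $\mathfrak{z}(\gamma) = \mathfrak{h}$), to $f(\gamma) = (K * T_\gamma)(0)$, where $K$ is the convolution kernel of $\mu(\sqrt{\phi_{\mathrm{HC}} C^{\mathfrak{g}} + A})$ on $\mathfrak{h}_i$ and $T_\gamma = \mathcal{J}_\gamma(h_{\mathfrak{k}}) \Tr^E[\rho^E(k^{-1}e^{-h_{\mathfrak{k}}})] \delta_a$. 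Third, the Harish-Chandra transfer formula \eqref{eq:inva17}, together with Theorem~\ref{thm:pinv} giving $\phi_{\mathrm{HC}}(\sigma L) = (\phi_{\mathrm{HC}} L)^{*}$, converts the LHS into
\begin{equation*}
(\sigma L)_\gamma f(\gamma) = \frac{1}{D_H(\gamma)} (\phi_{\mathrm{HC}} L)^{*} (D_H f)(\gamma).
\end{equation*}

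The main step, which I expect to be the hardest, is a translation-covariance identity for $D_H(\gamma) T_\gamma$. Near a base point $\gamma_0 = e^{a_0} k_0^{-1} \in H^{\mathrm{reg}}$, I parametrize $\gamma = \gamma_0 e^Y$ with $Y = Y_{\mathfrak{p}} + Y_{\mathfrak{k}} \in \mathfrak{h}$; commutativity inside $H$ yields $a = a_0 + Y_{\mathfrak{p}}$ and $k = k_0 e^{-Y_{\mathfrak{k}}}$. The claim is
\begin{equation*}
D_H(\gamma) T_\gamma = \tau_Y\bigl(D_H(\gamma_0) T_{\gamma_0}\bigr),
\end{equation*}
where $\tau_Y$ is translation by $Y$ on $\mathfrak{h}_i$, with the $Y_{\mathfrak{k}}$ component acting on the imaginary direction $i\mathfrak{h}_{\mathfrak{k}}$ via holomorphic extension. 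To prove it I invoke the explicit formula \eqref{eq:idfan2beb}: $D_H(\gamma) \mathcal{J}_\gamma(h_{\mathfrak{k}})$ equals $(-1)^{|R^{\mathrm{im}}_{\mathfrak{p},+}|} \epsilon_D(\gamma)$ times a product of characters $\xi_\alpha^{\pm 1/2}$ evaluated at $k^{-1}$ (for $\alpha \in R^{\mathrm{re}}_+$) and at $k^{-1} e^{-h_{\mathfrak{k}}}$ (for $\alpha \in R^{\mathrm{im}}_+$). Since real roots vanish on $\mathfrak{h}_{\mathfrak{k}}$, the factors $\xi_\alpha^{1/2}(k^{-1})$ are unchanged by $k_0 \mapsto k_0 e^{-Y_{\mathfrak{k}}}$; the sign $\epsilon_D(\gamma)$ is locally constant on $H^{\mathrm{reg}}$; and since $H$ is abelian, $k^{-1} e^{-h_{\mathfrak{k}}} = k_0^{-1} e^{-(h_{\mathfrak{k}} - Y_{\mathfrak{k}})}$, so the remaining character factors and $\Tr^E[\rho^E(k^{-1} e^{-h_{\mathfrak{k}}})]$ are obtained from their $\gamma_0$ counterparts by the substitution $h_{\mathfrak{k}} \mapsto h_{\mathfrak{k}} - Y_{\mathfrak{k}}$. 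Combining with $\delta_{a_0 + Y_{\mathfrak{p}}} = \tau_{Y_{\mathfrak{p}}} \delta_{a_0}$ establishes the claim, and convolving with $K$ yields $(D_H f)(\gamma_0 e^Y) = D_H(\gamma_0) (K * T_{\gamma_0})(-Y)$.

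The proof closes with a clean parity cancellation. Writing $h(Z) := (K * T_{\gamma_0})(Z)$, I have $(D_H f)(\gamma_0 e^Y) = D_H(\gamma_0) h(-Y)$. For any constant-coefficient operator $D$ on $\mathfrak{h}$, the chain rule gives $D_Y [h(-Y)]|_{Y=0} = D^{*} h(0)$ where $D^{*} = (-1)^N D$ on each homogeneous piece. Applying with $D = (\phi_{\mathrm{HC}} L)^{*}$ produces
\begin{equation*}
(\phi_{\mathrm{HC}} L)^{*} (D_H f)(\gamma_0) = D_H(\gamma_0) \bigl((\phi_{\mathrm{HC}} L)^{*}\bigr)^{*} h(0) = D_H(\gamma_0) (\phi_{\mathrm{HC}} L) (K * T_{\gamma_0})(0).
\end{equation*}
The double $*$ annihilates, the $D_H(\gamma_0)$ cancels the $1/D_H(\gamma_0)$ from the Harish-Chandra formula, and what remains is exactly $(\phi_{\mathrm{HC}} L) \mu(\sqrt{\phi_{\mathrm{HC}} C^{\mathfrak{g}} + A})[T_{\gamma_0}](0)$, the RHS of \eqref{eq:diff2}. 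The smoothness of $(\gamma, h_{\mathfrak{k}}) \mapsto \mathcal{J}_\gamma(h_{\mathfrak{k}})$ on $H^{\mathrm{reg}} \times i\mathfrak{h}_{\mathfrak{k}}$ established in Theorem~\ref{thm:caregbi}, together with the smoothness of $f$ on $G^{\mathrm{reg}}$ from Proposition~\ref{prop:pdifre}, justifies all the differentiations and the pointwise evaluation at $Y = 0$.
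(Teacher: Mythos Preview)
Your proposal is correct and follows essentially the same approach as the paper's proof: both combine Proposition~\ref{prop:pdifre}, the Harish-Chandra transfer formula~\eqref{eq:inva17}, the identity $\phi_{\mathrm{HC}}(\sigma L)=(\phi_{\mathrm{HC}}L)^{*}$ from Theorem~\ref{thm:pinv}, and the explicit product formula~\eqref{eq:idfan2beb} for $D_H(\gamma)\mathcal{J}_\gamma(h_{\mathfrak{k}})$ to recognize that the dependence on the perturbation parameter enters only through $b_{\mathfrak{k}}-h_{\mathfrak{k}}$ and a shift of $\delta_a$, so that the action of $(\phi_{\mathrm{HC}}L)^{*}$ in the group variable transfers, with the sign flip that cancels the~$*$, to the action of $\phi_{\mathrm{HC}}L$ on the convolution side. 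Your packaging of this step as a single translation-covariance identity $D_H(\gamma)T_\gamma=\tau_Y(D_H(\gamma_0)T_{\gamma_0})$ is a clean way to phrase what the paper does more discursively in the paragraph surrounding~\eqref{eq:idfan2bebe}--\eqref{eq:congo22}.
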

\begin{proof}
	If $\gamma\in H^{\mathrm{reg}}$, then $\mathfrak 
	z\left(\gamma\right)= \mathfrak h$, and $\mathfrak 
	k\left(\gamma\right)=\mathfrak h_{\mathfrak k}$. When $L=1$, our theorem is 
	just Theorem \ref{thm:Ttrfin} combined with Proposition 
	\ref{prop:cas}. When $L\in Z\left(\mathfrak g\right)$ is arbitrary, we use equation  (\ref{eq:inva17}) and Proposition 
	\ref{prop:pdifre}. Here $\phi_{\mathrm{HC}} \sigma L$ is a differential 
	operator on $\mathfrak h$.  We find that on $H^{\mathrm{reg}}$, 
\begin{multline}\label{eq:bon9}
\Tr^{\left[\gamma\right]}\left[L\mu\left(\sqrt{C^{\mathfrak 
g,X}+A}\right)\right]\\
=\frac{1}{D_{H}\left(\gamma\right)}\left(\phi_{\mathrm{HC}} \sigma
L\right)\left[D_{H}\left(\gamma\right)r\Tr^{\left[\gamma\right]}\left[
\mu\left(\sqrt{C^{\mathfrak g,X}+A}\right)\right]\right].
\end{multline}
By Theorem \ref{thm:pinv}, we get
\begin{equation}\label{eq:bon9a1}
\phi_{\mathrm{HC}}\sigma L=\left( \phi_{\mathrm{HC}} L \right) ^{*}.
\end{equation}
We combine Theorem \ref{thm:Ttrfin} and equations (\ref{eq:inva9}), 
(\ref{eq:bon9}),  and
(\ref{eq:bon9a1}). We get
\begin{multline}\label{eq:bon9b}
\Tr^{\left[\gamma\right]}\left[L\mu\left(\sqrt{C^{\mathfrak 
g,X}+A}\right)\right]=\frac{1}{D_{H}\left(\gamma\right)}\left(\phi_{\mathrm{HC}} 
L\right)^{*}\\
\left[D_{H}\left(\gamma\right)r\mu
\left(\sqrt{\phi_{\mathrm{HC}} C^{\mathfrak 
g}+A}\right)\left[\mathcal{J}_{\gamma}\left(h_{\mathfrak 
k}\right)\Tr^{E}\left[\rho^{E} \left( k^{-1}e^{-h_{\mathfrak 
k}}\right)\right]\delta_{a}\right]\left(0\right) \right].
\end{multline}

For 
greater clarity, we  fix $\gamma\in H^{\mathrm{reg}}$, and for 
$b\in \mathfrak h$ with $\left\vert  b\right\vert$ small enough, we 
take $\gamma'=\gamma e^{b}$ as in (\ref{eq:bon2}),  so that the 
differential operator $ \left(  \phi_{\mathrm{HC}} L \right)^{*}$ 
acts on the variable $b\in \mathfrak h$. This action will be 
now be denoted $\left(\phi_{\mathrm{HC}} L\right)^{*}_{b}$. Also we use the 
notation of Subsection \ref{subsec:nereg}. 

By equation (\ref{eq:bon3}) and  equation (\ref{eq:idfan2beb}) in Theorem \ref{thm:caregbi}, 
and we get
\begin{multline}\label{eq:idfan2bebe}
D_{H}\left(\gamma'\right)\mathcal{J}_{\gamma'}\left(h_{\mathfrak 
k}\right)=\left(-1\right)^{\left\vert  R^{\mathrm{im}}_{ \mathfrak 
p,+}\right\vert}\epsilon_{D}\left(\gamma'\right)\prod_{\alpha\in 
R_{+}^{\mathrm{re}}}^{} \xi_{\alpha}^{1/2}\left(k^{-1}e^{b_{\mathfrak 
k}}\right)\\
\frac{\prod_{\alpha\in R_{ \mathfrak 
k,+}^{\mathrm{im}}}^{}\left( \xi_{\alpha}^{1/2}\left(k^{-1}e^{b_{\mathfrak 
k}-h_{\mathfrak 
k}}\right)-\xi_{\alpha}^{-1/2}\left(k^{-1}e^{b_{\mathfrak 
k}-h_{\mathfrak k}}\right) \right) }{\prod_{\alpha\in R_{\mathfrak p,+}^{\mathrm{im}}}^{}\left(\xi_{\alpha}^{1/2}\left(k^{-1}e^{b_{\mathfrak 
	k}- h_{\mathfrak k}}\right)- 
	\xi^{-1/2}_{\alpha}\left(k^{-1}e^{b_{\mathfrak k}-h_{\mathfrak 
	k}}\right) \right)}.
\end{multline}

By the same argument as in (\ref{eq:chif-1}), (\ref{eq:chif-2}),    
if $\alpha\in R^{\mathrm{re}}_{+}$, we can choose 
$\xi_{\alpha}^{1/2}\left(k^{-1}e^{b_{\mathfrak k}}\right)$ so that 
for $\left\vert  b_{\mathfrak k}\right\vert$ small enough,
\begin{equation}\label{eq:congo20a1}
\xi_{\alpha}^{1/2}\left(k^{
-1}e^{b_{\mathfrak k}}\right)=\xi_{\alpha}^{1/2}\left(k^{-1}\right),
\end{equation}
i.e., (\ref{eq:congo20a1}) does not depend on $b$. Similarly, 
$\epsilon_{D}\left(\gamma'\right)$ is locally constant on 
$H^{\mathrm{reg}}$. Therefore, the product of these terms in the 
right hand-side of (\ref{eq:idfan2bebe}) is unaffected by  the 
action of 
$\left(\phi_{\mathrm{HC}} L\right)^{*}_{b}$ in the right-hand side of 
(\ref{eq:bon9b}).

Also we have the identity,
\begin{equation}\label{eq:congo22}
\Tr^{E}\left[\rho^{E}\left(k^{ \prime -1}e^{-h_{\mathfrak 
k}}\right)\right]=\Tr^{E}\left[\rho^{E}\left(k^{-1}e^{b_{\mathfrak k}-h_{\mathfrak 
k}}\right)\right].
\end{equation}

The right-hand sides of (\ref{eq:idfan2bebe}) and (\ref{eq:congo22}) 
depend on $b_{\mathfrak k}-h_{\mathfrak k}$. When only considering 
the action of $\left( \phi_{\mathrm{HC}} L \right)^{*}_{b}$ in the variable 
$b_{\mathfrak k}$, this action 
 can instead be transferred to the variable $ h_{\mathfrak 
k}$ with a correcting sign.  This argument still does not take into account the fact that 
$\left(\phi_{\mathrm{HC}} L\right)^{*}_{b}$ also acts in the variable $b_{\mathfrak p}$. However, 
differentiating a smooth kernel at the terminal point is equivalent to 
compose the smooth kernel with the same change of signs as before. By 
combining (\ref{eq:bon9b})--(\ref{eq:congo22}),  this 
ultimately  explains the  disappearance of $*$, and to equation 
(\ref{eq:diff2}). The proof of our theorem is completed. 
\end{proof}
\section{The function  $\mathcal{J}_{\gamma}$ and the  limit of  regular orbital integrals}%
\label{sec:rojg}
In this section, we verify the compatibility of our formula for 
 regular orbital integrals of Theorem 
\ref{thm:Tgenorb} with the limit theorems obtained by Harish-Chandra 
for such orbital integrals. Key properties of the function 
$\mathcal{J}_{\gamma}$ play a key role in the proofs. 

This section is organized as follows. In Subsection 
\ref{subsec:jgnore}, given $\gamma\in H$ not necessarily regular, we 
study the function $\mathcal{J}_{\gamma'}\left(h_{\mathfrak 
k}\right)$ for $\gamma'\in H^{\mathrm{reg}}$  close to $\gamma$. 

In Subsection \ref{subsec:moha}, if $L\in Z\left(\mathfrak g\right)$, 
we define the proper image $L^{\mathfrak 
z\left(\gamma\right)}\in D_{I}\ac\left(\mathfrak 
z\left(\gamma\right)\right)$.

In Subsection \ref{subsec:ros}, using a formula by Rossmann, we 
express a smooth kernel involving $\Delta^{\mathfrak h}$ as the 
restriction of another kernel for $\mathfrak z\left(\gamma\right)$.

Finally, in  Subsection \ref{subsec:aadif}, we compute the limit of orbit 
integrals as $\gamma'\in H^{\mathrm{reg}}$ converges to $\gamma\in H$. 
\subsection{The function $\mathcal{J}_{\gamma}$ when $\gamma$ is not 
regular}%
\label{subsec:jgnore}
Let $\gamma\in G$ be a semisimple element  as in 
(\ref{eq:dis4a}). Then $Z^{0}\left(\gamma\right) \subset G$ is a 
reductive Lie group. Let $\mathfrak h \subset \mathfrak 
z\left(\gamma\right)$ be a $\theta$-stable Cartan subalgebra of $\mathfrak 
z\left(\gamma\right)$. As we saw in Proposition \ref{prop:psesicen}, 
$\mathfrak h$ is also a Cartan subalgebra of $\mathfrak g$.  Let $H 
\subset G$ be the corresponding Cartan 
subgroup. Recall that the function 
\index{phz@$\pi^{\mathfrak h, \mathfrak z\left(\gamma\right)}$}%
$\pi^{\mathfrak h, \mathfrak z\left(\gamma\right)}$ on $\mathfrak 
h_{\C}$ was introduced in (\ref{eq:inva4bis}).
\begin{definition}\label{def:gim}
	An element $h_{\mathfrak k}\in i \mathfrak h_{\mathfrak k}$ is said to be 
	$\gamma\, \mathrm{im}$-regular if for  $\alpha\in 
	R^{\mathrm{im}}\left(k\right)$, $\left\langle  
	\alpha,h_{\mathfrak k}\right\rangle\neq 0$. 
\end{definition}
The vanishing locus of an imaginary root being a hyperplane in $i 
\mathfrak h_{\mathfrak k}$, the set of 
$\gamma\, \mathrm{im}$-regular 
elements has full Lebesgue measure.

We use the conventions of Section \ref{sec:rofu}, where we explained 
in particular how to choose the 
$\xi_{\alpha}^{1/2}\left(\gamma\right)\vert_{\alpha\in R_{+}}$. We extend the definition of $D_{H}\left(\gamma\right)$ for $\gamma\in 
H^{\mathrm{reg}}$ in Definition \ref{def:DH} to general elements  $\gamma\in 
H$ by the formula
\index{DHg@$D_{H}\left(\gamma\right)$}%
\begin{equation}\label{eq:gena-1}
D_{H}\left(\gamma\right)=\prod_{\alpha\in R_{+}\setminus 
R_{+}\left(\gamma\right)}^{}\left(\xi_{\alpha}^{1/2}\left(\gamma\right)-\xi_{\alpha}^{-1/2}\left(\gamma\right)\right).
\end{equation}
By Theorem \ref{thm:careg}, if $\alpha\in 
R^{\mathrm{re}}_{+}\left(\gamma\right)\cup R_{+}^{\mathrm{im}}\left(k\right)$,  then 
$\xi_{\alpha}\left(k^{-1}\right)=1$, and 
$\xi_{\alpha}^{1/2}\left(k^{-1}\right)=\xi_{\alpha}^{-1/2}\left(k^{-1}\right)=\pm 1$,  
so that $\prod_{\alpha\in 
R^{\mathrm{re}}_{+}\left(\gamma\right)\cup 
R^{\mathrm{im}}_{+}\left(k\right)}^{}\xi^{1/2}_{\alpha}\left(k^{-1}\right)$ is equal to $\pm 1$.

If  $\mathfrak h$ is  a $\theta$-stable fundamental Cartan 
subalgebra of $\mathfrak  z\left(\gamma\right)$, we will use the 
notation introduced in Subsection \ref{subsec:rofu}, except that 
$\mathfrak g$ is now replaced by $\mathfrak z\left(\gamma\right)$, 
and the pair $\left(\mathfrak h_{\mathfrak k}, \mathfrak k\right)  $ is 
replaced by the pair $\left( \mathfrak h_{\mathfrak k},\mathfrak 
k\left(\gamma\right) \right) $.  Let $R\left(\mathfrak h_{\mathfrak k}, 
\mathfrak k\left(\gamma\right)\right)$ denote the associated root 
system. Let $R_{+}\left(\mathfrak h_{\mathfrak k}, \mathfrak 
k\left(\gamma\right)\right)$ denote a positive root system. As in 
(\ref{eq:ham-3}), if $h_{\mathfrak k}\in \mathfrak h_{\mathfrak k}$, set
\index{phk@$\pi^{\mathfrak h_{\mathfrak k}, \mathfrak 
k\left(\gamma\right)}$}%
\begin{equation}\label{eq:ham8}
\pi^{\mathfrak h_{\mathfrak k}, \mathfrak 
k\left(\gamma\right)}\left(h_{\mathfrak k}\right)=\prod_{\beta\in 
R_{+}\left(\mathfrak h_{\mathfrak k}, \mathfrak 
k\left(\gamma\right)\right)}^{}\left\langle  \beta,h_{\mathfrak 
k}\right\rangle.
\end{equation}
By (\ref{eq:ham6}), we get
\begin{equation}\label{eq:ham9}
\left[\pi^{h_{\mathfrak k}, \mathfrak 
k\left(\gamma\right)}\left(h_{\mathfrak k}\right)\right]^{2} 
=\left(-1\right)^{\frac{1}{2}\left\vert  
R^{\mathrm{c}}_{+}\left(\gamma\right)\right\vert}\left[\prod_{\alpha\in 
R^{\mathrm{im}}_{\mathfrak 
k,+}\left(k\right)}^{}\left\langle  \alpha,h_{\mathfrak 
k}\right\rangle\right]^{2}
\prod_{\alpha\in R^{\mathrm{c}}_{+}\left(\gamma\right)}^{}\left\langle  
\alpha,h_{\mathfrak k}\right\rangle.
\end{equation}

We no longer assume $\mathfrak h$ to be fundamental in $\mathfrak 
z\left(\gamma\right)$. 

In the sequel, we use the notation of Subsection \ref{subsec:nereg}. 
In particular $\gamma\in H$ is fixed, and for $b\in \mathfrak h$, 
$\gamma'=\gamma e^{b}$. In particular, equations (\ref{eq:bon3}), 
(\ref{eq:bon3a1}) hold.

We establish the following important result.
\begin{theorem}\label{thm:conv}
For $\epsilon>0$ small enough,  there exist $c>0,C>0$ such that for $b\in \mathfrak h$ 
$\gamma$-regular with $\left\vert  
b\right\vert\le \epsilon, h_{\mathfrak k}\in i \mathfrak h_{\mathfrak 
k}$, then
\begin{equation}\label{eq:capr14}
\left\vert  \pi^{\mathfrak h, \mathfrak 
z\left(\gamma\right)}\left(b_{\mathfrak p}+h_{\mathfrak 
k}\right)D_{H}\left(\gamma'\right)\mathcal{J}_{\gamma'}\left( h_{\mathfrak k}\right)\right\vert
\le C\exp\left(c\left\vert  h_{\mathfrak k}\right\vert\right).
\end{equation}
If $h_{\mathfrak k}\in i \mathfrak h_{\mathfrak k}$ is not 
$\gamma\, \mathrm{im}$-regular, the left-hand side of (\ref{eq:capr14})  
vanishes.

If $\mathfrak h$ is not the fundamental Cartan subalgebra of $\mathfrak 
z\left(\gamma\right)$, for $h_{\mathfrak k}\in i \mathfrak 
h_{\mathfrak k}$, as $b\in \mathfrak h$ $\gamma$-regular tends to $0$, 
\begin{equation}\label{eq:gena5}
\pi^{\mathfrak h,\mathfrak z\left(\gamma\right)}\left(b_{\mathfrak 
p}+ h_{\mathfrak 
k}\right)D_{H}\left(\gamma'\right)\mathcal{J}_{\gamma'}\left(h_{\mathfrak k}\right)\to 0.
\end{equation}
	
If $\mathfrak h$ is the fundamental Cartan subalgebra of $\mathfrak 
z\left(\gamma\right)$,  if $ h_{\mathfrak k}\in i \mathfrak 
h_{\mathfrak k}$ is $\gamma\,\mathrm{im}$-regular, as 
	$b\in \mathfrak h$ $\gamma$-regular tends to $0$, 	
	\begin{multline}\label{eq:gena19}
\pi^{\mathfrak h,\mathfrak z\left(\gamma\right)}\left(b_{\mathfrak 
p}+ h_{\mathfrak 
k}\right)D_{H}\left(\gamma'\right)\mathcal{J}_{\gamma'}\left(h_{\mathfrak k}\right)\\
\to \left(-1\right)^{\frac{1}{2}\left\vert  
R^{\mathrm{c}}_{+}\left(\gamma\right)\right\vert+\left\vert  R_{\mathfrak k,+}^{\mathrm{im}}\left(k\right)\right\vert}\left[\pi^{\mathfrak 
h_{\mathfrak k}, \mathfrak k\left(\gamma\right)}\left(h_{\mathfrak 
k}\right)\right]^{2}\\
\prod_{\alpha\in 
R^{\mathrm{im}}_{
+}\left(k\right)}^{}\xi_{\alpha}^{1/2}\left(k^{-1}\right)
D_{H}\left(\gamma\right)
\mathcal{J}_{\gamma}\left(h_{\mathfrak 
k}\right).
\end{multline}
\end{theorem}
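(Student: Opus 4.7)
The plan is to apply the explicit formula (\ref{eq:idfan2bebe}) for $D_H(\gamma')\mathcal{J}_{\gamma'}(h_{\mathfrak k})$ (valid since $\gamma'\in H^{\mathrm{reg}}$) and analyze it factor by factor in tandem with the linear factors comprising $\pi^{\mathfrak h, \mathfrak z(\gamma)}(b_{\mathfrak p}+h_{\mathfrak k})$. In formula (\ref{eq:idfan2bebe}), the original $D_H(\gamma')$ denominator cancels, leaving a bounded prefactor (the product of $\epsilon_D(\gamma')$, the sign $(-1)^{\left\vert R^{\mathrm{im}}_{\mathfrak p,+}\right\vert}$, and the real-root characters $\xi^{1/2}_\alpha(k^{\prime-1})$) together with a ratio of $\sinh$-type factors indexed by $R^{\mathrm{im}}_{\mathfrak k,+}$ and $R^{\mathrm{im}}_{\mathfrak p,+}$.

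For the bound (\ref{eq:capr14}), each factor of the im-numerator and im-denominator is, as a function of $h_{\mathfrak k}\in i\mathfrak h_{\mathfrak k}$, dominated by $C\exp(c_\alpha\left\vert h_{\mathfrak k}\right\vert)$, so exponential growth in $h_{\mathfrak k}$ is clear. The delicate issue is that, for $\alpha\in R^{\mathrm{im}}_{\mathfrak p,+}(k)$, the corresponding denominator factor $2\sinh\bigl((\langle\alpha,b_{\mathfrak k}\rangle-\langle\alpha,h_{\mathfrak k}\rangle)/2\bigr)$ can be arbitrarily small as $b\to 0$. I would pair each such factor with the linear factor $\langle\alpha,h_{\mathfrak k}\rangle$ of $\pi^{\mathfrak h,\mathfrak z(\gamma)}(b_{\mathfrak p}+h_{\mathfrak k})$ coming from the inclusion $R^{\mathrm{im}}_+(k)\subset R_+(\gamma)$. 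Using that $\langle\alpha,b_{\mathfrak k}\rangle$ is purely imaginary and $\langle\alpha,h_{\mathfrak k}\rangle$ real, the identity $|2\sinh((iv-u)/2)|^2=\sinh^2(u/2)+\sin^2(v/2)$ shows that the resulting ratio is bounded uniformly over $b\in \mathfrak h$ $\gamma$-regular with $|b|\le\epsilon$ and $h_{\mathfrak k}\in i\mathfrak h_{\mathfrak k}$. The vanishing claim for $h_{\mathfrak k}$ not $\gamma\,\mathrm{im}$-regular is then immediate: if some $\alpha_0\in R^{\mathrm{im}}(k)=R^{\mathrm{im}}(\gamma)$ satisfies $\langle\alpha_0,h_{\mathfrak k}\rangle=0$, then $\pi^{\mathfrak h,\mathfrak z(\gamma)}(b_{\mathfrak p}+h_{\mathfrak k})$ contains the zero factor $\langle\alpha_0,h_{\mathfrak k}\rangle$ (as $\alpha_0$ vanishes on $b_{\mathfrak p}$), and the remaining factors are finite for $b$ $\gamma$-regular.

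For the limits, the im-denominator is bounded below by $|2\sinh(\langle\alpha,h_{\mathfrak k}\rangle/2)|$ for $\gamma\,\mathrm{im}$-regular $h_{\mathfrak k}$, so $D_H(\gamma')\mathcal{J}_{\gamma'}(h_{\mathfrak k})$ has a finite limit as $b\to 0$. If $\mathfrak h$ is not fundamental in $\mathfrak z(\gamma)$, Proposition \ref{prop:Preal} (applied within $\mathfrak z(\gamma)$) gives $R^{\mathrm{re}}(\gamma)\neq\emptyset$, and so $\pi^{\mathfrak h,\mathfrak z(\gamma)}(b_{\mathfrak p}+h_{\mathfrak k})$ contains at least one factor of the form $\langle\alpha,b_{\mathfrak p}\rangle\to 0$, yielding (\ref{eq:gena5}).

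When $\mathfrak h$ is fundamental in $\mathfrak z(\gamma)$, $R_+(\gamma)=R^{\mathrm{im}}_+(k)\cup R^{\mathrm{c}}_+(\gamma)$ and
\begin{equation*}
\pi^{\mathfrak h,\mathfrak z(\gamma)}(b_{\mathfrak p}+h_{\mathfrak k})\to\prod_{\alpha\in R^{\mathrm{im}}_+(k)}\langle\alpha,h_{\mathfrak k}\rangle\,\prod_{\alpha\in R^{\mathrm{c}}_+(\gamma)}\langle\alpha,h_{\mathfrak k}\rangle,
\end{equation*}
while each $\sinh$ factor in (\ref{eq:idfan2bebe}) converges to its $b=0$ value. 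I would then compare this explicit limit with the formula (\ref{eq:idfan2}) for $D_H(\gamma)\mathcal{J}_{\gamma}(h_{\mathfrak k})$ (with $\gamma$ nonregular), using that $\widehat{A}(x)=(x/2)/\sinh(x/2)$ to rewrite the ratio $\prod_{R^{\mathrm{im}}_{\mathfrak p,+}(k)}\widehat{A}/\prod_{R^{\mathrm{im}}_{\mathfrak k,+}(k)}\widehat{A}$ as a combination of linear factors $\langle\alpha,h_{\mathfrak k}\rangle$ and $\sinh$ factors. The key algebraic input is equation (\ref{eq:ham9}) of Proposition \ref{prop:proo}, applied to the pair $(\mathfrak h_{\mathfrak k},\mathfrak k(\gamma))$ inside $\mathfrak z(\gamma)$, which converts $\prod_{R^{\mathrm{c}}_+(\gamma)}\langle\alpha,h_{\mathfrak k}\rangle$ into $(-1)^{|R^{\mathrm{c}}_+(\gamma)|/2}[\pi^{\mathfrak h_{\mathfrak k},\mathfrak k(\gamma)}(h_{\mathfrak k})]^2$ divided by $[\prod_{R^{\mathrm{im}}_{\mathfrak k,+}(k)}\langle\alpha,h_{\mathfrak k}\rangle]^2$; after this substitution, the linear factors from $\pi^{\mathfrak h,\mathfrak z(\gamma)}$ and from the $\widehat{A}$ ratios combine to produce exactly $[\pi^{\mathfrak h_{\mathfrak k},\mathfrak k(\gamma)}(h_{\mathfrak k})]^2$ on the right-hand side of (\ref{eq:gena19}). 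The main obstacle will be the careful bookkeeping of the sign exponents $(-1)^{|\cdot|}$ coming from passages between $R^{\mathrm{im}}_{\mathfrak p,+}$, $R^{\mathrm{im}}_{\mathfrak k,+}$ and their subsets indexed by $k$, of the choices of square roots $\xi^{1/2}_\alpha(k^{-1})$, and of the real-root prefactors $\prod_{R^{\mathrm{re}}_+}\xi^{1/2}_\alpha(k^{-1})$ which must reduce correctly to $\prod_{R^{\mathrm{re}}_+\setminus R^{\mathrm{re}}_+(\gamma)}\xi^{1/2}_\alpha(k^{-1})$ given that $R^{\mathrm{re}}_+(\gamma)=\emptyset$.
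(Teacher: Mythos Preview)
Your proposal is correct and follows essentially the same route as the paper: both arguments start from the explicit product formula (\ref{eq:idfan2beb}) for $D_H(\gamma')\mathcal{J}_{\gamma'}(h_{\mathfrak k})$, split $\pi^{\mathfrak h,\mathfrak z(\gamma)}(b_{\mathfrak p}+h_{\mathfrak k})$ according to the real/imaginary/complex decomposition of $R_+(\gamma)$, pair the imaginary-root linear factors with the corresponding $\sinh$-type denominator factors via the inequality $|\sinh((x+iy)/2)|\ge|\sinh(x/2)|$, invoke Proposition~\ref{prop:Preal} for the non-fundamental case, and in the fundamental case compare with (\ref{eq:idfan2}) using (\ref{eq:ham9}) to produce $[\pi^{\mathfrak h_{\mathfrak k},\mathfrak k(\gamma)}(h_{\mathfrak k})]^2$. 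The only point you leave implicit that the paper makes explicit is the uniform lower bound on the denominator factors for $\alpha\in R^{\mathrm{im}}_{\mathfrak p,+}\setminus R^{\mathrm{im}}_{\mathfrak p,+}(k)$, which the paper obtains from (\ref{eq:gena14}) using that $\xi_\alpha(k^{-1})$ lies on the unit circle away from $1$; this is a routine detail and your identity $|2\sinh((iv-u)/2)|^2=\sinh^2(u/2)+\sin^2(v/2)$ in fact already contains it.
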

\begin{proof}
By (\ref{eq:inva4bis}), we get
\begin{equation}\label{eq:gena10}
\pi^{\mathfrak h, \mathfrak z\left(\gamma\right)}\left(b_{\mathfrak 
p}+h_{\mathfrak k}\right)=\prod_{\alpha\in R_{+}\left(\gamma\right)}^{}
\left\langle  \alpha,b_{\mathfrak p}+h_{\mathfrak k}\right\rangle.
\end{equation}
We can rewrite (\ref{eq:gena10}), in the form
\begin{equation}\label{eq:gena11}
\pi^{\mathfrak h, \mathfrak z\left(\gamma\right)}\left(b_{\mathfrak 
p}+h_{\mathfrak k}\right)=\prod_{\alpha\in R^{\mathrm{re}}_{+}\left(\gamma\right)}^{}\left\langle  
\alpha,b_{\mathfrak p}\right\rangle\prod_{\alpha\in 
R_{+}^{\mathrm{im}}\left(k\right)}^{}\left\langle  
\alpha,h_{\mathfrak k}\right\rangle
\prod_{\alpha\in 
R_{+}^{\mathrm{c}}\left(\gamma\right)}^{}\left\langle  
\alpha,b_{\mathfrak p}+h_{\mathfrak k}\right\rangle.
\end{equation}
By (\ref{eq:gena11}), we deduce that if $h_{\mathfrak k}\in i 
\mathfrak h_{\mathfrak k}$ is not $\gamma\,\mathrm{im}$-regular, 
(\ref{eq:gena11}) vanishes.

If $\alpha\in R^{\mathrm{im}}\left(k\right)$, then 
$\xi_{\alpha}\left(k^{-1}\right)=1$, so that 
$\xi_{\alpha}^{1/2}\left(k^{-1}\right)=\pm 1$. Since $\alpha\in 
R\left(\gamma\right)$,
 if $b\in \mathfrak h$ is 
$\gamma$-regular, then $\left\langle\alpha,b_{\mathfrak k}  
\right\rangle\neq 0$. If $\epsilon>0$ is small enough, if $b\in 
\mathfrak h$ is $\gamma$-regular, and $\left\vert  b_{\mathfrak k}\right\vert\le 
\epsilon$, if $ h_{\mathfrak k}\in i \mathfrak h_{\mathfrak k}$, then $1-e^{-\left\langle  \alpha,b_{\mathfrak 
k}-h_{\mathfrak k}\right\rangle}\neq 0$, 
so that the following expression is well-defined,
\begin{multline}\label{eq:gena11a}
\prod_{\alpha\in R^{\mathrm{im}}_{\mathfrak p,+}\left(k\right)}^{}\frac{
\left\langle  \alpha,h_{\mathfrak 
k}\right\rangle}{\xi_{\alpha}^{1/2}\left(k^{-1}e^{b_{\mathfrak k}-h_{\mathfrak 
k}}\right)-\xi_{\alpha}^{-1/2}\left(k^{-1}e^{b_{\mathfrak k}-h_{\mathfrak 
k}}\right)}\\
=\prod_{\alpha\in R^{\mathrm{im}}_{\mathfrak p,+}\left(k\right)}^{}\frac{
\left\langle  \alpha,h_{\mathfrak 
k}\right\rangle} {1-e^{-\left\langle  \alpha,b_{\mathfrak k}-h_{\mathfrak 
k}\right\rangle}}\xi_{\alpha}^{-1/2}\left(k^{-1}e^{b_{\mathfrak k}-h_{\mathfrak 
k}}\right).
\end{multline}
Equation (\ref{eq:gena11a}) can be rewritten in the form
\begin{multline}\label{eq:gena11aax}
\prod_{\alpha\in R^{\mathrm{im}}_{\mathfrak p,+}\left(k\right)}^{}\frac{
\left\langle  \alpha,h_{\mathfrak 
k}\right\rangle}{\xi_{\alpha}^{1/2}\left(k^{-1}e^{b_{\mathfrak k}-h_{\mathfrak 
k}}\right)-\xi_{\alpha}^{-1/2}\left(k^{-1}e^{b_{\mathfrak k}-h_{\mathfrak 
k}}\right)}\\
=\prod_{\alpha\in R^{\mathrm{im}}_{\mathfrak p,+}\left(k\right)}^{}
\frac{\left\langle  \alpha,h_{\mathfrak k}\right\rangle /2}{\sinh\left(
\left\langle  \alpha,b_{\mathfrak k}-h_{\mathfrak 
k}\right\rangle/2\right)}\xi_{\alpha}^{-1/2}\left(k^{-1}\right).
\end{multline}

When $h_{\mathfrak k}$ is $\gamma$ $\mathrm{im}$-regular, we have an 
identity similar to (\ref{eq:gena11aax}),
\begin{multline}\label{eq:gena11aay}
\prod_{\alpha\in R^{\mathrm{im}}_{\mathfrak 
k,+}\left(k\right)}^{}\frac{\xi_{\alpha}^{1/2}\left(k^{-1}e^{b_{\mathfrak 
k}-h_{\mathfrak 
k}}\right)-\xi_{\alpha}^{-1/2}\left(k^{-1}e^{b_{\mathfrak 
k}-h_{\mathfrak k}}\right)}{\left\langle  \alpha,h_{\mathfrak 
k}\right\rangle}\\
=\prod_{\alpha\in R^{\mathrm{im}}_{\mathfrak 
k,+}\left(k\right)}^{}\frac{\sinh\left(\left\langle  
\alpha,b_{\mathfrak k}-h_{\mathfrak k}\right\rangle/2\right)}{\left\langle  
\alpha,h_{\mathfrak k}\right\rangle/2}\xi_{\alpha}^{1/2}\left(k^{-1}\right).
\end{multline}

If $x\in \R, y\in \R$, 
\begin{equation}\label{eq:capr22a1}
\left\vert  1-e^{x+iy}\right\vert\ge \left\vert  1-e^{x}\right\vert. 
\end{equation}
By (\ref{eq:capr22a1}), we deduce that
\begin{equation}\label{eq:capr22a1x}
\left\vert  
\sinh\left(\left(x+iy\right)/2\right)\right\vert\ge\left\vert  
\sinh\left(x/2\right)\right\vert.
\end{equation}

By (\ref{eq:capr22a1x}), there exists $C>0$ such that  if $e^{x+iy}\neq 1$, 
\begin{equation}\label{eq:capr22a2}
\left\vert  \frac{x}{\sinh\left(\left(x+iy\right)/2\right)}\right\vert\le C. 
\end{equation}
By (\ref{eq:gena11aax}), (\ref{eq:capr22a2}),  if $h_{\mathfrak k}\in i 
\mathfrak h_{\mathfrak k}$ is $\gamma$ $\mathrm{im}$-regular, we get
\begin{equation}\label{eq:gena12}
\left\vert  \prod_{\alpha\in R^{\mathrm{im}}_{\mathfrak p,+}\left(k\right)}^{}\frac{
\left\langle  \alpha,h_{\mathfrak 
k}\right\rangle}{\xi_{\alpha}^{1/2}\left(k^{-1}e^{b_{\mathfrak k}-h_{\mathfrak 
k}}\right)-\xi_{\alpha}^{-1/2}\left(k^{-1}e^{b_{\mathfrak k}-h_{\mathfrak 
k}}\right)}\right\vert\le C.
\end{equation}
The $\gamma$-regularity of $b$ does not play any role in the above 
estimate.

If $\alpha\in R_{\mathfrak p,+}^{\mathrm{im}}\setminus 
R_{\mathfrak p,+}^{\mathrm{im}}\left(k\right)$, then 
$\xi_{\alpha}\left(k^{-1}\right)\neq 1$, so that for $\epsilon>0$ 
small enough, if $\left\vert  b_{\mathfrak k}\right\vert\le 
\epsilon$, the complex numbers 
$\xi_{\alpha}\left(k^{-1}e^{b_{\mathfrak k}}\right)$ which have 
module $1$, stay away from $1$. Given $\eta\in \left]0,\pi\right[$, 
there is $C_{\eta}>0$ such that
if $x\in \R_{+}, y\in \left[\eta,2\pi-\eta\right]$, then
\begin{equation}\label{eq:gena14}
\left\vert  xe^{iy}-x^{-1}e^{-iy}\right\vert\ge C_{\eta}.
\end{equation}
By  (\ref{eq:gena14}), we deduce that for 
$\epsilon>0$ small enough,  if 
$\left\vert  b_{\mathfrak k}\right\vert\le \epsilon$, and $h_{\mathfrak k}\in i 
\mathfrak h_{\mathfrak k}$, then
\begin{multline}\label{eq:gena13}
\prod_{\alpha\in R^{\mathrm{im}}_{\mathfrak p,+}\setminus 
R^{\mathrm{im}}_{\mathfrak p,+}\left(k\right)}^{}\frac{
1}{\xi_{\alpha}^{1/2}\left(k^{-1}e^{b_{\mathfrak k}-h_{\mathfrak 
k}}\right)-\xi_{\alpha}^{-1/2}\left(k^{-1}e^{b_{\mathfrak k}-h_{\mathfrak 
k}}\right)}\\
=\prod_{\alpha\in R^{\mathrm{im}}_{\mathfrak p,+}\setminus 
R^{\mathrm{im}}_{\mathfrak p,+}\left(k\right)}^{}\frac{\xi_{\alpha}^{-1/2}\left(k^{-1}e^{b_{\mathfrak k}-h_{\mathfrak 
k}}\right)} {1-e^{-\left\langle  \alpha,b_{\mathfrak k}-h_{\mathfrak 
k}\right\rangle}\xi_{\alpha}^{-1}\left(k^{-1}\right)}
\end{multline}
is well-defined, and moreover,
\begin{equation}\label{eq:gena15}
\left\vert  \prod_{\alpha\in R_{\mathfrak p,+}^{\mathrm{im}}\setminus 
R^{\mathrm{im}}_{\mathfrak p,+}\left(k\right)}^{}\frac{
1}{\xi_{\alpha}^{1/2}\left(k^{-1}e^{b_{\mathfrak k}-h_{\mathfrak 
k}}\right)-\xi_{\alpha}^{-1/2}\left(k^{-1}e^{b_{\mathfrak k}-h_{\mathfrak 
k}}\right)}\right\vert\le C.
\end{equation}

By combining (\ref{eq:idfan2beb}), (\ref{eq:gena11}), (\ref{eq:gena12}), 
and (\ref{eq:gena15}), we get (\ref{eq:capr14}), and also the fact 
that if $h_{\mathfrak k}\in i \mathfrak h_{\mathfrak k}$ is not 
$\gamma$ $\mathrm{im}$-regular, the 
left-hand side of (\ref{eq:capr14}) vanishes.

By Proposition \ref{prop:Preal}, $\mathfrak h$ is the fundamental 
Cartan subalgebra of $\mathfrak z\left(\gamma\right)$  if and only if 
$R^{\mathrm{re}}\left(\gamma\right)$ is empty.

If $\mathfrak h$ is not the fundamental Cartan 
subalgebra of $\mathfrak z\left(\gamma\right)$, 
$R^{\mathrm{re}}_{+}\left(\gamma\right)$ is nonempty. Using 
(\ref{eq:idfan2beb}), 
(\ref{eq:gena11}) and the previous bounds, we get (\ref{eq:gena5}). 

From now on, we assume that  $\mathfrak h$ is the fundamental Cartan subalgebra of $\mathfrak 
z\left(\gamma\right)$. By (\ref{eq:grai1}), since 
$R^{\mathrm{re}}_{+}\left(\gamma\right)$ is empty, we get
\begin{equation}\label{eq:gena20}
\epsilon_{D}\left(\gamma\right)=\mathrm{sgn}\prod_{\alpha\in 
R^{\mathrm{re}}_{+}}^{}\left(1-\xi_{\alpha}^{-1}\left(\gamma\right)\right).
\end{equation}
For $\alpha\in 
R^{\mathrm{re}}_{+}$, then
$\xi_{\alpha}\left(\gamma\right)\neq 1$ so that  for 
$\epsilon>0$ small enough, if $\left\vert  b_{\mathfrak 
p}\right\vert\le \epsilon$, 
\begin{equation}\label{eq:gena21}
\epsilon_{D}\left(\gamma'\right)=\epsilon_{D}\left(\gamma\right).
\end{equation}
Also if $\alpha\in R^{\mathrm{re}}_{+}$, 
$\xi_{\alpha}\left(k'\right)=\xi_{\alpha}\left(k\right)$. By 
the above, it follows that for $b\in \mathfrak h$ $\gamma$-regular close enough to $0$, 
\begin{equation}\label{eq:clo1}
\epsilon_{D}\left(\gamma'\right)\prod_{\alpha\in 
R^{\mathrm{re}}_{+}}^{}\xi_{\alpha}^{1/2}\left(k^{ \prime -1}\right)=\epsilon_{D}
\left(\gamma\right)\prod_{\alpha\in 
R^{\mathrm{re}}_{+}}^{}\xi_{\alpha}^{1/2}\left(k^{-1}\right).
\end{equation}

By (\ref{eq:gena11aax}), (\ref{eq:gena11aay}), and (\ref{eq:capr22a2}),   
  if  $h_{\mathfrak k}\in i \mathfrak 
h_{\mathfrak k}$ is $\gamma$ im-regular, if $b\in \mathfrak h$ 
$\gamma$-regular tends to $0$, 
\begin{align}\label{eq:gena17}
&\prod_{\alpha\in R^{\mathrm{im}}_{\mathfrak p,+}\left(k\right)}^{}\frac{
\left\langle  \alpha,h_{\mathfrak 
k}\right\rangle}{\xi_{\alpha}^{1/2}\left(k^{-1}e^{b_{\mathfrak k}-h_{\mathfrak 
k}}\right)-\xi_{\alpha}^{-1/2}\left(k^{-1}e^{b_{\mathfrak k}-h_{\mathfrak 
k}}\right)} \nonumber \\
&\qquad \to \left(-1\right)^{\left\vert  R_{\mathfrak 
p,+}^{\mathrm{im}}\left(k\right)\right\vert}
\prod_{\alpha\in R^{\mathrm{im}}_{\mathfrak p,+}\left(k\right)}^{}
\widehat{A}\left(\left\langle  \alpha,h_{\mathfrak 
k}\right\rangle\right)\xi_{\alpha}^{-1/2}\left(k^{-1}\right),\\
&\prod_{\alpha\in R^{\mathrm{im}}_{\mathfrak 
k,+}\left(k\right)}^{}\frac{\xi_{\alpha}^{1/2}\left(k^{-1}e^{b_{\mathfrak 
k}-h_{\mathfrak 
k}}\right)-\xi_{\alpha}^{-1/2}\left(k^{-1}e^{b_{\mathfrak 
k}-h_{\mathfrak k}}\right)}{\left\langle  \alpha,h_{\mathfrak 
k}\right\rangle} \nonumber \\
&\qquad\to \left(-1\right)^{\left\vert  R^{\mathrm{im}}_{\mathfrak 
k,+}\left(k\right)\right\vert}\prod_{\alpha\in 
R^{\mathrm{im}}_{\mathfrak 
k,+}\left(k\right)}^{}\frac{\xi_{\alpha}^{1/2}\left(k^{-1}\right)}{\widehat{A}\left(\left\langle  
\alpha,h_{\mathfrak 
k}\right\rangle\right)}. \nonumber 
\end{align}

By (\ref{eq:idfan2beb}), by the considerations after 
(\ref{eq:gena-1}), by (\ref{eq:ham9}), (\ref{eq:gena11}),   (\ref{eq:clo1}),  
and (\ref{eq:gena17}),    we deduce that if 
$h_{\mathfrak k}\in i \mathfrak h_{\mathfrak k}$ is 
$\gamma$ im-regular, then 
\begin{multline}\label{eq:gena18}
\pi^{\mathfrak h,\mathfrak z\left(\gamma\right)}\left(b_{\mathfrak 
p}+ h_{\mathfrak 
k}\right)D_{H}\left(\gamma'\right)\mathcal{J}_{\gamma'}\left(h_{\mathfrak 
k}\right)\\
\to 
\epsilon_{D}\left(\gamma\right)\left(-1\right)^{\left\vert  
R_{\mathfrak p,+}^{\mathrm{im}}\right\vert+\left\vert  
R^{\mathrm{im}}_{+}\left(k\right)\right\vert+\frac{1}{2}\left\vert  
R_{+}^{\mathrm{c}}\left(\gamma\right)\right\vert}\\
\prod_{\alpha\in  R^{\mathrm{re}}_{+}\cup R^{\mathrm{im}}_{+}\left(k\right)}^{}\xi_{\alpha}^{1/2}\left(k^{-1}\right)
\left[\pi^{\mathfrak h_{\mathfrak k},\mathfrak 
k\left(\gamma\right)}\left(h_{\mathfrak k}\right)\right]^{2}
\frac{\prod_{\alpha\in R^{\mathrm{im}}_{\mathfrak p,+}\left(k\right)}^{}
\widehat{A}\left(\left\langle  \alpha,h_{\mathfrak 
k}\right\rangle\right)}{\prod_{\alpha\in R^{\mathrm{im}}_{\mathfrak k,+}\left(k\right)}^{}
\widehat{A}\left(\left\langle  \alpha,h_{\mathfrak 
k}\right\rangle\right)}\\
\frac{\prod_{\alpha\in R_{\mathfrak k,+}^{\mathrm{im}}\setminus 
	R_{\mathfrak k,+}^{\mathrm{im}}\left(k\right)}^{}\left( 
	\xi_{\alpha}^{1/2}\left(k^{-1}e^{-h_{\mathfrak k}}\right)- \xi^{-1/2}_{\alpha}\left(k^{-1}e^{-h_{\mathfrak k}}\right)\right)}{\prod_{\alpha\in R_{\mathfrak p,+}^{\mathrm{im}}\setminus 
	R_{\mathfrak p,+}^{\mathrm{im}}\left(k\right)}^{}\left( 
	\xi_{\alpha}^{1/2}\left(k^{-1}e^{-h_{\mathfrak k}}\right)- 
	\xi^{-1/2}_{\alpha}\left(k^{-1}e^{-h_{\mathfrak k}}\right)\right)}.
\end{multline}
By comparing (\ref{eq:idfan2}) and the right-hand side of 
(\ref{eq:gena18}), we get (\ref{eq:gena19}). The proof of our theorem is completed. 
\end{proof}
\subsection{The Lie algebra $\mathfrak z\left(\gamma\right)$ and the 
isomorphisms of Harish-Chandra and Duflo}%
\label{subsec:moha}
Here, we will use  results contained in Subsections \ref{subsec:care}, 
\ref{subsec:cenenv}, and \ref{subsec:duflo}. 

Let $\mathfrak h$ be a 
$\theta$-stable Cartan subalgebra of $\mathfrak z\left(\gamma\right)$. 
We have the Harish-Chandra and Duflo isomorphisms of filtered 
algebras:
\begin{align}\label{eq:mos1}
&\phi_{\mathrm{HC}}: Z\left(\mathfrak g\right) \simeq 
I\ac\left(\mathfrak h,\mathfrak g\right),
&\tau_{\mathrm{D}}: I\ac\left(\mathfrak g\right) \simeq Z\left(\mathfrak 
g\right).
\end{align}
As explained in Subsection \ref{subsec:duflo}, the above isomorphisms 
are compatible.

By   (\ref{eq:inva6a}), we have the isomorphisms, 
\begin{align}\label{eq:com4}
&r: I\ac\left(\mathfrak g\right) \simeq I\ac\left(\mathfrak 
h,\mathfrak g\right),&r: I\ac\left(\mathfrak 
z\left(\gamma\right)\right) \simeq I\ac\left(\mathfrak h, \mathfrak 
z\left(\gamma\right)\right).
\end{align}

Recall that we have the splitting
\begin{equation}\label{eq:comm3}
\mathfrak g= \mathfrak z\left(\gamma\right) \oplus \mathfrak 
z^{\perp}\left(\gamma\right).
\end{equation}
Let 
\index{rzg@ $r_{\mathfrak z\left(\gamma\right)}$}%
$r_{\mathfrak z\left(\gamma\right)}$ denote the projection 
$\mathfrak g\to \mathfrak z\left(\gamma\right)$. This map induces a 
corresponding morphism of $\Z$-graded algebras $r_{\mathfrak z\left(\gamma\right)}:I\ac\left(\mathfrak g\right)\to 
I\ac\left(\mathfrak z\left(\gamma\right)\right)$. 
   Let
\index{i@$i$}%
$i$ be the obvious 
morphism $I\ac\left(\mathfrak h, \mathfrak g\right)\to 
I\ac\left(\mathfrak h, \mathfrak z\left(\gamma\right)\right)$.

We have the 
commutative diagram
\begin{equation}\label{eq:com2}
  \xymatrixcolsep{3pc}\xymatrix{
    I\ac(\mathfrak{g}) \ar^{r}[r]^{}\ar^{r_{\mathfrak 
	z\left(\gamma\right)}}[d]& I\ac\left(\mathfrak h, \mathfrak 
	g\right) \ar[d]^{i}\\
	I\ac\left(\mathfrak z\left(\gamma\right)\right)\ar^{r}[r] 
	& I\ac\left(\mathfrak h, \mathfrak z\left(\gamma\right)\right) 
    }.
\end{equation}
\begin{definition}\label{def:duzg}
	If $L\in Z\left(\mathfrak g\right)$, let 
	\index{Lzg@$L^{\mathfrak z\left(\gamma\right)}$}%
	$L^{\mathfrak z\left(\gamma\right)}\in I\ac\left(\mathfrak 
	z\left(\gamma\right)\right)$ be given by
	\begin{equation}\label{eq:com3}
L^{\mathfrak z\left(\gamma\right)}=r_{\mathfrak 
z\left(\gamma\right)}\tau^{-1}_{\mathrm D}\left(L\right).
\end{equation}
\end{definition}

The map $L\in Z\left(\mathfrak g\right)\to L^{\mathfrak 
z\left(\gamma\right)}\in I\ac\left(\mathfrak 
z\left(\gamma\right)\right)$ is a morphism of filtered algebras.
\begin{proposition}\label{prop:idz}
	If $L\in Z\left(\mathfrak g\right)$, the following identity holds:
	\begin{equation}\label{eq:com5}
L^{\mathfrak z\left(\gamma\right)}=r^{-1}i\phi_{\mathrm{HC}}L.
\end{equation}
\end{proposition}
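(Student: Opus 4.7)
The claim is essentially a diagram chase: I would show that the identity
$$L^{\mathfrak z\left(\gamma\right)}=r^{-1}i\phi_{\mathrm{HC}}L$$
is equivalent, via the isomorphism $r:I\ac\left(\mathfrak z\left(\gamma\right)\right)\simeq I\ac\left(\mathfrak h,\mathfrak z\left(\gamma\right)\right)$ of (\ref{eq:com4}), to the statement
$$r\, r_{\mathfrak z\left(\gamma\right)}\tau_{\mathrm D}^{-1}L=i\,\phi_{\mathrm{HC}}L\qquad \text{in }I\ac\left(\mathfrak h,\mathfrak z\left(\gamma\right)\right).$$
The plan is then to reduce this to the two commutative diagrams already at our disposal: the compatibility (\ref{eq:DHC1}) between the Duflo and Harish-Chandra isomorphisms, and the restriction diagram (\ref{eq:com2}).

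First I would apply (\ref{eq:com2}) to the element $\tau_{\mathrm D}^{-1}L\in I\ac\left(\mathfrak g\right)$, which yields
$$r\, r_{\mathfrak z\left(\gamma\right)}\tau_{\mathrm D}^{-1}L=i\, r\,\tau_{\mathrm D}^{-1}L.$$
Then I would apply (\ref{eq:DHC1}), which expresses the Harish-Chandra isomorphism as the composition $r\circ \tau_{\mathrm D}^{-1}$, so that $r\tau_{\mathrm D}^{-1}L=\phi_{\mathrm{HC}}L$. Inserting this into the previous display gives the desired equality.

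The only real content beyond the diagram chase is to check that the commutative diagram (\ref{eq:com2}) is genuine; once this is granted, the proof collapses to two lines. This is essentially a verification that restriction of invariants commutes, via the splitting $\mathfrak g=\mathfrak z\left(\gamma\right)\oplus \mathfrak z^{\perp}\left(\gamma\right)$ and its orthogonality with respect to $B$, with restriction to the common Cartan subalgebra $\mathfrak h\subset \mathfrak z\left(\gamma\right)\subset \mathfrak g$: indeed, a polynomial on $\mathfrak g$ restricted first to $\mathfrak z\left(\gamma\right)$ and then to $\mathfrak h$ gives the same answer as restricting directly to $\mathfrak h$, while the image of $I\ac\left(\mathfrak h,\mathfrak g\right)$ under $i$ consists of polynomials invariant under the Weyl subgroup $W\left(\mathfrak h_{\C}:\mathfrak z\left(\gamma\right)_{\C}\right)\subset W\left(\mathfrak h_{\C}:\mathfrak g_{\C}\right)$. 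The main subtlety to watch for is thus the compatibility of $r_{\mathfrak z\left(\gamma\right)}$ with the invariance conditions defining $I\ac$, but this is immediate from the fact that the splitting is $\Ad\left(Z\left(\gamma\right)\right)$-equivariant. Beyond this, no further computation is required.
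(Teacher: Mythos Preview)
Your argument is correct and matches the paper's proof exactly: the paper simply cites (\ref{eq:DHC1}), (\ref{eq:com2}), and the definition (\ref{eq:com3}), which is precisely the diagram chase you carry out. Your additional justification of why (\ref{eq:com2}) commutes is more detail than the paper provides, but it is accurate and does no harm.
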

\begin{proof}
	This follows from (\ref{eq:DHC1}), (\ref{eq:com2}), and 
	(\ref{eq:com3}).
\end{proof}

As we saw in Subsections \ref{subsec:lial} and \ref{subsec:care}, we have the identification
\begin{equation}\label{eq:com6}
I\ac\left(\mathfrak z\left(\gamma\right)\right) \simeq 
D_{I}\ac\left(\mathfrak z\left(\gamma\right)\right).
\end{equation}
In the sequel, when there is no ambiguity, if $L\in 
Z\ac\left(\mathfrak g\right)$, $L^{\mathfrak z\left(\gamma\right)}$ 
will be considered as an element of $D_{I}\ac\left(\mathfrak 
z\left(\gamma\right)\right)$.
\begin{proposition}\label{prop:czg}
	The following identity holds:
	\index{Cgz@$\left(C^{\mathfrak g}\right)^{\mathfrak 
z\left(\gamma\right)}$}%
	\begin{equation}\label{eq:clif0a}
\left(C^{\mathfrak g}\right)^{\mathfrak 
z\left(\gamma\right)}=-\Delta^{\mathfrak 
z\left(\gamma\right)}+B^{*}\left(\rho^{\mathfrak g},\rho^{\mathfrak 
g}\right).
\end{equation}
\end{proposition}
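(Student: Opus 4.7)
The plan is to combine Definition \ref{def:duzg} with Proposition \ref{prop:tdu}. By definition, $\left(C^{\mathfrak g}\right)^{\mathfrak z\left(\gamma\right)} = r_{\mathfrak z\left(\gamma\right)}\tau_{\mathrm{D}}^{-1}\left(C^{\mathfrak g}\right)$, and Proposition \ref{prop:tdu} identifies $\tau_{\mathrm{D}}^{-1}C^{\mathfrak g} = -B^{*} + B^{*}\left(\rho^{\mathfrak g},\rho^{\mathfrak g}\right)$ as an element of $I\ac\left(\mathfrak g\right)$. Since $r_{\mathfrak z\left(\gamma\right)}: I\ac\left(\mathfrak g\right) \to I\ac\left(\mathfrak z\left(\gamma\right)\right)$ is an algebra morphism, the scalar term $B^{*}\left(\rho^{\mathfrak g},\rho^{\mathfrak g}\right)$ passes through unchanged, and the whole proof reduces to computing $r_{\mathfrak z\left(\gamma\right)}\left(B^{*}\right)$.

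For this step I would view $B^{*} \in S^{2}\left(\mathfrak g\right)$ as a quadratic polynomial on $\mathfrak g^{*}$, on which $r_{\mathfrak z\left(\gamma\right)}$ acts by restriction along the dual inclusion $\mathfrak z\left(\gamma\right)^{*} \hookrightarrow \mathfrak g^{*}$ associated with the projection $\mathfrak g \to \mathfrak z\left(\gamma\right)$. Because the splitting $\mathfrak g = \mathfrak z\left(\gamma\right) \oplus \mathfrak z^{\perp}\left(\gamma\right)$ is $B$-orthogonal by the very definition of $\mathfrak z^{\perp}\left(\gamma\right)$, in an adapted basis the Gram matrix of $B$ is block diagonal, and hence so is its inverse; consequently the restricted form is precisely $\left(B\vert_{\mathfrak z\left(\gamma\right)}\right)^{*} \in S^{2}\left(\mathfrak z\left(\gamma\right)\right)$. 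Under the identification $I\ac\left(\mathfrak z\left(\gamma\right)\right) \simeq D_{I}\ac\left(\mathfrak z\left(\gamma\right)\right)$ recalled in Subsection \ref{subsec:lial}, this element corresponds exactly to the generalized Laplacian $\Delta^{\mathfrak z\left(\gamma\right)}$ attached to $B\vert_{\mathfrak z\left(\gamma\right)}$ in Subsection \ref{subsec:geoorb}, which yields (\ref{eq:clif0a}).

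No substantive obstacle is expected here: once Proposition \ref{prop:tdu} is granted, the statement is a direct unwinding of the definitions. As a consistency check, when $\gamma$ is regular and $\mathfrak z\left(\gamma\right) = \mathfrak h$ is a Cartan subalgebra, the morphism $r_{\mathfrak z\left(\gamma\right)}$ specializes to the restriction $r$ from the commutative diagram (\ref{eq:DHC1}), and the identity (\ref{eq:clif0a}) recovers Proposition \ref{prop:cas}.
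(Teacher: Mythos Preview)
Your proof is correct. It differs from the paper's only in which leg of the commutative diagram (\ref{eq:com2}) you traverse: the paper invokes Proposition \ref{prop:cas} and (\ref{eq:com5}), computing $\left(C^{\mathfrak g}\right)^{\mathfrak z\left(\gamma\right)}=r^{-1}i\,\phi_{\mathrm{HC}}C^{\mathfrak g}$ via the Harish-Chandra side $I\ac\left(\mathfrak h,\mathfrak g\right)$, whereas you use Definition \ref{def:duzg} directly with Proposition \ref{prop:tdu}, going through the Duflo side $I\ac\left(\mathfrak g\right)$. Since Remark \ref{rem:casi} already observes that Propositions \ref{prop:cas} and \ref{prop:tdu} are equivalent through (\ref{eq:DHC1}), the two arguments are formally interchangeable; your route has the mild advantage of never introducing a Cartan subalgebra of $\mathfrak z\left(\gamma\right)$, while the paper's route reuses the Harish-Chandra computation that is needed elsewhere anyway.
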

\begin{proof}
	By Proposition \ref{prop:cas} and by (\ref{eq:com5}), we get 
	(\ref{eq:clif0a}).
\end{proof}

\subsection{An application of Rossmann's formula}%
\label{subsec:ros}
We know that $a\in \mathfrak h_{\mathfrak p}$. Since $a$ is in the 
center of $\mathfrak z\left(\gamma\right)$, if $\alpha\in 
R\left(\gamma\right)$, then $\left\langle  \alpha,a\right\rangle=0$. 
By (\ref{eq:inva4bis}), we deduce that if $h\in \mathfrak h$, 
\begin{equation}\label{eq:auj1a2}
\pi^{\mathfrak h,\mathfrak z\left(\gamma\right)}\left(h+a\right)=
\pi^{\mathfrak h, \mathfrak z\left(\gamma\right)}\left(h\right).
\end{equation}

We identify $\mathfrak h$ and $\mathfrak h^{*}$ via the form $B$, so 
that 
\begin{equation}\label{eq:inva24a1}
S\ac\left(\mathfrak h^{*}\right) \simeq S\ac\left(\mathfrak h\right).
\end{equation}
As we saw in Subsection \ref{subsec:lial}, 
$S\ac\left(\mathfrak h\right)$ can be identified with the 
algebra $D\ac\left(\mathfrak h\right)$ of differential operators with constant coefficients on 
$\mathfrak h$. Let $\overline{\pi}^{\mathfrak h, \mathfrak 
z\left(\gamma\right)}\in D\ac\left(\mathfrak h_{\C}\right) $ 
denote  the  differential operator on $\mathfrak h_{\C}$ associated with $\pi^{\mathfrak h, \mathfrak 
z\left(\gamma\right)}\in S\ac\left(\mathfrak h_{\C}\right)$. 

Recall that 
\index{hi@$\mathfrak h_{i}$}%
$\mathfrak h_{i}$ was defined in (\ref{eq:last1}). Let $\mu\in \mathcal{S}^{\mathrm{even}}\left(\R\right)$ be taken as 
in Subsection \ref{subsec:wake}. If $A\in \R$, let $\mu\left(\sqrt{-\Delta^{\mathfrak 
	h}+A}\right)\left(h\right)$ be the smooth convolution kernel on $\mathfrak 
	h_{i}$ associated 
	with the operator $\mu\left(\sqrt{-\Delta^{\mathfrak 
	h}+A}\right)$. If $f\in C^{\infty ,c}\left(\mathfrak h_{\mathfrak 
	p}\oplus i\mathfrak h_{\mathfrak k},\R\right)$, then
	\begin{equation}\label{eq:tann1}
\mu\left(\sqrt{-\Delta^{\mathfrak 
	h}+A}\right)f\left(h\right)=\int_{\mathfrak h_{\mathfrak  p}
	\oplus i \mathfrak h_{\mathfrak k}}^{}\mu\left(\sqrt{-\Delta^{\mathfrak 
	h}+A}\right)\left(h-h'\right)f\left(h'\right)dh'.
\end{equation}
We use the corresponding notation on 
\index{zig@$\mathfrak z_{i}\left(\gamma\right)$}%
$\mathfrak 
z_{i}\left(\gamma\right)= \mathfrak p\left(\gamma\right) \oplus i 
\mathfrak k\left(\gamma\right)$.

Here, we use the notation of Subsection \ref{subsec:moha}. In the 
	sequel, $\phi_{\mathrm{HC}} L,L^{\mathfrak z\left(\gamma\right)}$ 
	are viewed as differential operators in
	 $D\ac_{I}\left(\mathfrak h\right), 
	 D\ac_{I}\left(\mathfrak z\left(\gamma\right)\right)$. These 
	 differential operators can be composed with the above 
	 convolution kernels.
\begin{proposition}\label{prop:keha}
We have the identity of smooth functions on $\mathfrak h_{\mathfrak 
p} \oplus i \mathfrak h_{\mathfrak k}$, 
	\begin{multline}\label{eq:inva24a2}
\overline{\pi}^{\mathfrak h, \mathfrak 
	z\left(\gamma\right)}\left(\phi_{\mathrm{HC}} L \right)\mu\left(\sqrt{-\Delta^{\mathfrak 
	h}+A}\right)\left(h\right) \\
	= \pi^{\mathfrak h,\mathfrak 
	z\left(\gamma\right)}\left(-2\pi h\right)L^{\mathfrak z\left(\gamma\right)}\mu\left(\sqrt{-\Delta^{\mathfrak 
	z\left(\gamma\right)}+A}\right)\left(h\right).
\end{multline}
\end{proposition}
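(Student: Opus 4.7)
The plan is to proceed in two steps: first reduce to the case $L = 1$ via the Harish-Chandra radial part formula (\ref{eq:inva6b}), and then establish the $L = 1$ case by Euclidean Fourier analysis, the essential input being the harmonicity of the Weyl denominator $\pi^{\mathfrak{h},\mathfrak{z}(\gamma)}$.

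For the reduction, apply (\ref{eq:inva6b}) on $\mathfrak{z}(\gamma)$ in place of $\mathfrak{g}$, with $A = L^{\mathfrak{z}(\gamma)} \in I\ac(\mathfrak{z}(\gamma))$ and $f = \mu(\sqrt{-\Delta^{\mathfrak{z}(\gamma)}+A})$. The function $f$ is radial on the Euclidean space $\mathfrak{z}_i(\gamma)$, hence invariant under the Weyl group $W(\mathfrak{h}_{\C}:\mathfrak{z}(\gamma)_{\C})$, and Proposition \ref{prop:idz} gives $r(L^{\mathfrak{z}(\gamma)}) = \phi_{\mathrm{HC}} L$ on $\mathfrak{h}$. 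Together this yields, at every $h \in \mathfrak{h}_i$,
\[
\pi^{\mathfrak{h},\mathfrak{z}(\gamma)}(h)\cdot L^{\mathfrak{z}(\gamma)}\mu(\sqrt{-\Delta^{\mathfrak{z}(\gamma)}+A})(h) = (\phi_{\mathrm{HC}} L)\bigl[\pi^{\mathfrak{h},\mathfrak{z}(\gamma)}\cdot \mu(\sqrt{-\Delta^{\mathfrak{z}(\gamma)}+A})|_{\mathfrak{h}_i}\bigr](h).
\]
Multiplying both sides by $(-2\pi)^N = \pi^{\mathfrak{h},\mathfrak{z}(\gamma)}(-2\pi h)/\pi^{\mathfrak{h},\mathfrak{z}(\gamma)}(h)$ and using that $\overline{\pi}^{\mathfrak{h},\mathfrak{z}(\gamma)}$ and $\phi_{\mathrm{HC}} L$ commute as constant-coefficient operators on $\mathfrak{h}_{\C}$, one sees that (\ref{eq:inva24a2}) is equivalent to the case $L = 1$, namely
\[
\overline{\pi}^{\mathfrak{h},\mathfrak{z}(\gamma)}\mu(\sqrt{-\Delta^{\mathfrak{h}}+A})(h) = \pi^{\mathfrak{h},\mathfrak{z}(\gamma)}(-2\pi h)\,\mu(\sqrt{-\Delta^{\mathfrak{z}(\gamma)}+A})|_{\mathfrak{h}_i}(h).
\]

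To prove this $L = 1$ identity, I express both sides as inverse Fourier integrals on $\mathfrak{h}_i^{*}$, respectively $\mathfrak{z}_i(\gamma)^{*}$, of the radial multiplier $\mu(\sqrt{4\pi^2 B^{*}(\cdot,\cdot)+A})$. Let $V\subset \mathfrak{z}_i(\gamma)$ denote the $B$-orthogonal complement of $\mathfrak{h}_i$, a Euclidean subspace of real dimension $2N$ with $N = |R_{+}(\gamma)|$. Integrating out $V^{*}$ in the Fourier integral for the right-hand side produces an effective radial multiplier
\[
\tilde\mu(s) = \int_{V^{*}} \mu(\sqrt{4\pi^2(s + B^{*}(\zeta,\zeta)) + A})\,d\zeta.
\]
Translating multiplication by $h$ into differentiation in $\xi$ on the Fourier side, the $L = 1$ identity becomes the symbol identity
\[
(-2\pi)^{N}\,\pi^{*}(\xi)\,\mu(\sqrt{4\pi^2 B^{*}(\xi,\xi)+A}) = \overline{\pi}_\xi\,\tilde\mu(B^{*}(\xi,\xi))
\]
on $\mathfrak{h}_i^{*}$, where $\pi^{*}$ is the polynomial on $\mathfrak{h}_i^{*}$ obtained from $\pi^{\mathfrak{h},\mathfrak{z}(\gamma)}$ via the $B$-identification, and $\overline{\pi}_\xi$ is the associated constant-coefficient differential operator.

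The core algebraic input is that $\pi^{\mathfrak{h},\mathfrak{z}(\gamma)}$ is harmonic with respect to $\Delta^{\mathfrak{h}}$: otherwise $\Delta^{\mathfrak{h}}\pi^{\mathfrak{h},\mathfrak{z}(\gamma)}$ would be a nonzero $W(\mathfrak{h}_{\C}:\mathfrak{z}(\gamma)_{\C})$-anti-invariant polynomial of degree $N - 2 < N$, contradicting the Chevalley characterization of $\pi^{\mathfrak{h},\mathfrak{z}(\gamma)}$ as the minimal-degree anti-invariant. The classical identity that for any harmonic polynomial $P$ of degree $N$ and smooth $F$,
\[
\overline{P}[F(B^{*}(\xi,\xi))] = 2^{N} P^{*}(\xi)\,F^{(N)}(B^{*}(\xi,\xi)),
\]
then reduces the symbol identity to the scalar equation $\tilde\mu^{(N)}(s) = (-\pi)^{N}\mu(\sqrt{4\pi^2 s + A})$. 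The substitution $u = s + |\zeta|^2$ in polar coordinates on $V^{*}$ rewrites $\tilde\mu(s) = \tfrac{\omega_{2N-1}}{2}\int_{s}^{\infty}\mu(\sqrt{4\pi^2 u + A})(u-s)^{N-1}\,du$, and $N$ differentiations together with the volume formula $\omega_{2N-1} = 2\pi^{N}/(N-1)!$ for the unit sphere in $\R^{2N}$ produce exactly the required constant. The most delicate step will be verifying the harmonic-polynomial-on-radial-function identity in the complex setting of $\mathfrak{h}_{i}^{*}$ and combining it with Chevalley's theorem; all convergence and interchange-of-limits issues are routine thanks to the Gaussian decay (\ref{eq:fourier2}) of $\widehat\mu$.
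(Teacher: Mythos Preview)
Your argument is correct, and it follows a genuinely different route from the paper's proof. The paper works entirely on the Fourier side: it computes the Fourier transform on $\mathfrak h_i$ of the left-hand side of (\ref{eq:inva24a2}) and then invokes Rossmann's formula \cite{Rossmann78}, \cite{Vergne79} (for the compact form $\mathfrak u(\gamma)\simeq\mathfrak z_i(\gamma)$) to identify this with the inverse Fourier transform on $\mathfrak z_i(\gamma)$ of the $\Ad$-invariant extension $L^{\mathfrak z(\gamma)}(2i\pi h^{*})\mu(\ldots)$, which gives the right-hand side in one stroke without ever isolating the case $L=1$.

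Your approach instead first reduces to $L=1$ via the radial-part identity (\ref{eq:inva6b}) (applied on $\mathfrak z_i(\gamma)$ with Cartan $\mathfrak h_i$) together with Proposition~\ref{prop:idz}, and then proves the $L=1$ case by hand: harmonicity of $\pi^{\mathfrak h,\mathfrak z(\gamma)}$ plus Hobson's formula for a harmonic polynomial differential operator applied to a radial function, combined with the explicit $2N$-dimensional fibre integration, produces exactly the constant $(-2\pi)^{N}$. This is more elementary and self-contained, as it avoids citing the coadjoint-orbit machinery behind Rossmann; the price is the two-step structure and the explicit computation. In effect your $L=1$ computation is a direct proof of the instance of Rossmann's formula (the compact-group case, which goes back to Harish-Chandra) that the paper takes as a black box. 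A minor point of presentation: (\ref{eq:inva6b}) as stated in the paper is on the real form $\mathfrak g$; you are using it on $\mathfrak z_i(\gamma)\simeq\mathfrak u(\gamma)$, so you should remark that the identity is purely algebraic and holds for any real form.
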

\begin{proof}
	In the proof, we will identify $\mathfrak z_{i}\left(\gamma\right)=\mathfrak p\left(\gamma\right) \oplus i \mathfrak k\left(\gamma\right)$ 
	as a real vector space to $\mathfrak u\left(\gamma\right)= i \mathfrak p\left(\gamma\right) \oplus 
	\mathfrak k\left(\gamma\right)$, which is the compact form of 
	$\mathfrak z\left(\gamma\right)$.

We identify the real Euclidean vector space $\mathfrak h_{i}$ to its 
dual by its scalar product. Let $\mathcal{F}^{\mathfrak h_{i}}$ denote the classical Fourier transform on 
	$\mathfrak h_{i}$. If $f\in \mathcal{S}\left(\mathfrak 
	h_{i} \right) $, if $h^{*}\in \mathfrak h_{i}$, then
	\begin{equation}\label{eq:gena22}
\mathcal{F}^{\mathfrak h_{i}}f\left(h^{*}\right)=\int_{\mathfrak 
h_{i}}^{}\exp\left(-2i\pi 
B\left(h,h^{*}\right)\right)f\left(h\right)dh.
\end{equation}
Put
\begin{equation}\label{eq:gena23}
\breve{\mathcal{F}}^{\mathfrak 
h_{i}}f\left(h^{*}\right)=\mathcal{F}^{\mathfrak h_{i}}\left(-h^{*}\right).
\end{equation}

Then
	\begin{multline}\label{eq:inva24a3}
\mathcal{F}^{\mathfrak h_{i}}\left[\overline{\pi}^{\mathfrak h, \mathfrak 
	z\left(\gamma\right)}\left( \phi_{\mathrm{HC}} L \right) \mu\left(\sqrt{-\Delta^{\mathfrak 
	h}+A}\right)\right]\left(h^{*}\right)\\
	=\pi^{\mathfrak h, \mathfrak z\left(\gamma\right)}\left(2i\pi h^{*}\right)
	\left(\phi_{\mathrm{HC}} L\right)\left(2i\pi h^{*}\right)\mu\left(\sqrt{4\pi^{2}B\left(h^{*},h^{*}\right)+A}\right).
\end{multline}
By (\ref{eq:inva24a3}), we get
\begin{multline}\label{eq:inva24a4}
\overline{\pi}^{\mathfrak h, \mathfrak 
	z\left(\gamma\right)}\left(\phi_{\mathrm{HC}} L\right)\mu\left(\sqrt{-\Delta^{\mathfrak 
	h}+A}\right)\left(h\right) \\
	=  
	\breve{\mathcal{F}}^{\mathfrak h_{i}}
	\left[\pi^{\mathfrak h, \mathfrak z\left(\gamma\right)}
	\left(2i\pi 
	h^{*}\right)\left(\phi_{\mathrm{HC}} L\right)\left(2i\pi h^{*}\right)\mu\left(\sqrt{4\pi^{2}B\left(h^{*},h^{*}\right)+A} \right) \right]\left(h\right).
\end{multline}

We can define the Fourier transform $\mathcal{F}^{
\mathfrak z_{i}\left(\gamma\right)}$ on the Euclidean vector space 
\index{zig@$\mathfrak z_{i}\left(\gamma\right)$}%
$\mathfrak z_{i}\left(\gamma\right)$, which is canonically 
identified to the Lie algebra 
\index{ug@$\mathfrak u\left(\gamma\right)$}%
$\mathfrak u\left(\gamma\right)$. 
The function $B\left(h^{*},h^{*}\right)$ extends to a 
$\Ad \left( \mathfrak u\left(\gamma\right) \right) $-invariant function on $\mathfrak 
u\left(\gamma\right)$. By Proposition \ref{prop:idz}, the 
$\Ad \left( \mathfrak u\left(\gamma\right) \right) $-invariant function 
$L^{\mathfrak z\left(\gamma\right)}$ on $\mathfrak z_{i}\left(\gamma\right)$ restricts to the 
function $\phi_{\mathrm{HC}}L$ on $\mathfrak h_{i}$. By Rossmann's 
formula \cite[Theorem p. 209]{Rossmann78}, 
\cite[Theorem p. 13]{Vergne79}, we get
\begin{multline}\label{eq:ibva24a5}
\breve{\mathcal{F}}^{\mathfrak h_{i}}
	\left[\pi^{\mathfrak h, \mathfrak z\left(\gamma\right)}
	\left(2i\pi 
	h^{*}\right)\left(\phi_{\mathrm{HC}} L\right)\left(2i\pi h^{*}\right)\mu\left(\sqrt{4\pi^{2}B\left(h^{*},h^{*}\right)+A}\right)\right]\left(h\right)\\
	=\pi^{\mathfrak h,\mathfrak 
	z\left(\gamma\right)}\left(-2\pi h\right)\breve{\mathcal{F}}^{\mathfrak 
	z_{i}\left(\gamma\right)}\left[
	L^{\mathfrak z\left(\gamma\right)}\left(2i\pi 
	h^{*}\right)\mu\left(\sqrt{4\pi^{2}B\left(h^{*},h^{*}\right)+A}\right)\right]
	\left(h\right).
\end{multline}
By (\ref{eq:inva24a4}), (\ref{eq:ibva24a5}), we get 
(\ref{eq:inva24a2}). The proof of our proposition is completed. 
\end{proof}
\subsection{The limit of certain orbital integrals}%
\label{subsec:aadif}
We use the notation of Subsection \ref{subsec:jgnore}. As we saw in 
Subsection \ref{subsec:cenreg}, 
$\Tr^{\left[\gamma'\right]}\left[\mu\left(\sqrt{C^{\mathfrak 
g,X}+A}\right)\right]$ is a smooth function of $\gamma'\in 
H^{\mathrm{reg}}$. Recall that $\overline{\pi}^{\mathfrak h,\mathfrak 
z\left(\gamma\right)}$ is a differential operator on $\mathfrak 
h_{\C}$. To make our formulas clearer, we will denote its action in 
the variables $b$ or $h$ as $\overline{\pi}^{\mathfrak h,\mathfrak 
z\left(\gamma\right)}_{b}$ or $\overline{\pi}^{\mathfrak h,\mathfrak 
z\left(\gamma\right)}_{h}$. 
\begin{proposition}\label{prop:tcohash}
	The following identity holds on $H^{\mathrm{reg}}$:
	\begin{multline}\label{eq:inva25a8}
		\overline{\pi}^{\mathfrak h,\mathfrak 
		z\left(\gamma\right)}\left[
D_{H}\left(\gamma'\right)
\Tr^{\left[\gamma'\right]}\left[L\mu\left(\sqrt{C^{\mathfrak g,X}+A}\right) \right]\right]\\
=\left(-1\right)^{\left\vert  R_{+}\left(\gamma\right)\right\vert}\overline{\pi}^{\mathfrak h,\mathfrak 
z\left(\gamma\right)}_{h}\left(\phi_{\mathrm{HC}} L\right)\mu\left(\sqrt{\phi_{\mathrm{HC}} C^{\mathfrak g}+A}\right) \\
\left[D_{H}\left(\gamma'\right)\mathcal{J}_{\gamma'}\left(h_{\mathfrak k}\right)
\Tr^{E}\left[\rho^{E}\left(k^{-1}e^{b_{\mathfrak k}-h_{\mathfrak k}} 
\right) \right]\delta_{a'}\right]\left(0\right)\\
\\
=\left(-2\pi\right)^{\left\vert  
R_{+}\left(\gamma\right)\right\vert}\int_{i \mathfrak  h_{\mathfrak 
k}}^{}L^{\mathfrak z\left(\gamma\right)}
\mu\left(\sqrt{\left(C^{\mathfrak g}\right)^{\mathfrak 
z\left(\gamma\right)}+A}\right)\left(-a- b_{\mathfrak 
p}-h_{\mathfrak k}\right)\\
\pi^{\mathfrak h, \mathfrak  z\left(\gamma\right)}
\left(b_{\mathfrak p}+h_{\mathfrak 
k}\right)D_{H}\left(\gamma'\right)\mathcal{J}_{\gamma'}\left(h_{\mathfrak k}\right)
\Tr^{E}\left[\rho^{E}\left(k^{ -1}e^{b_{\mathfrak k}-h_{\mathfrak k}} \right) \right] dh_{\mathfrak k}.
\end{multline}
\end{proposition}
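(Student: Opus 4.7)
The plan is to prove the two equalities in (\ref{eq:inva25a8}) in turn, combining Theorem \ref{thm:Tgenorb} with the explicit structure of $\mathcal{J}_{\gamma'}$ from Theorem \ref{thm:caregbi} and with Rossmann's formula in the form of Proposition \ref{prop:keha}.

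For the first equality, I would apply Theorem \ref{thm:Tgenorb} to the regular element $\gamma'\in H^{\mathrm{reg}}$, unfold the convolution evaluation as in (\ref{eq:fina-2a}), and use $a' = a+b_{\mathfrak p}$ together with $k'^{-1}e^{-h_{\mathfrak k}} = k^{-1}e^{b_{\mathfrak k}-h_{\mathfrak k}}$ (since $k$ commutes with $e^{b_{\mathfrak k}}$ in the abelian group $H$) to get
\begin{equation*}
D_H(\gamma')\Tr^{[\gamma']}\bigl[L\mu(\sqrt{C^{\mathfrak g,X}+A})\bigr] = \int_{i\mathfrak h_{\mathfrak k}} K(-a-b_{\mathfrak p}-h_{\mathfrak k})\,H(b,h_{\mathfrak k})\,dh_{\mathfrak k},
\end{equation*}
where $K = (\phi_{\mathrm{HC}} L)\mu(\sqrt{\phi_{\mathrm{HC}} C^{\mathfrak g}+A})$ is the smooth convolution kernel on $\mathfrak h_i$ and $H(b,h_{\mathfrak k}) = D_H(\gamma')\mathcal{J}_{\gamma'}(h_{\mathfrak k})\Tr^E\bigl[\rho^E(k^{-1}e^{b_{\mathfrak k}-h_{\mathfrak k}})\bigr]$. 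The crucial structural fact, read off from the explicit formula of Theorem \ref{thm:caregbi} together with $\langle\alpha,b_{\mathfrak k}\rangle = 0$ for $\alpha\in R^{\mathrm{re}}$ and the local constancy of $\epsilon_D(\gamma')$ on $H^{\mathrm{reg}}$, is that on each connected component of $\{b\in\mathfrak h : \gamma e^b\in H^{\mathrm{reg}}\}$, the function $H(b,h_{\mathfrak k})$ is independent of $b_{\mathfrak p}$ and depends on $b_{\mathfrak k}$ only through the combination $b_{\mathfrak k}-h_{\mathfrak k}$. Consequently $\partial_{b_{\mathfrak p}}H = 0$ and $\partial_{b_{\mathfrak k},v}H = -\partial_{h_{\mathfrak k},v}H$ for $v\in\mathfrak h_{\mathfrak k,\C}$. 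Applying $\overline{\pi}^{\mathfrak h,\mathfrak z(\gamma)}_b$ and integrating by parts in $h_{\mathfrak k}$ (justified by the Gaussian decay (\ref{eq:ober2}) of $K$ and the bound (\ref{eq:capr14}) on $H$), each of the $|R_+(\gamma)|$ directional derivatives $\partial_{H_\alpha,b}$ transfers to $-\partial_{H_\alpha,h} K$ evaluated at $-a-b_{\mathfrak p}-h_{\mathfrak k}$, yielding the overall prefactor $(-1)^{|R_+(\gamma)|}$.

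For the second equality, I would invoke Proposition \ref{prop:keha} (Rossmann's formula) with $A$ replaced by $B^{*}(\rho^{\mathfrak g},\rho^{\mathfrak g})+A$. Using Propositions \ref{prop:cas} and \ref{prop:czg} to identify $\phi_{\mathrm{HC}} C^{\mathfrak g} = -\Delta^{\mathfrak h}+B^{*}(\rho^{\mathfrak g},\rho^{\mathfrak g})$ and $(C^{\mathfrak g})^{\mathfrak z(\gamma)} = -\Delta^{\mathfrak z(\gamma)}+B^{*}(\rho^{\mathfrak g},\rho^{\mathfrak g})$, this rewrites as
\begin{equation*}
\overline{\pi}^{\mathfrak h,\mathfrak z(\gamma)}_h K(h) = \pi^{\mathfrak h,\mathfrak z(\gamma)}(-2\pi h)\,L^{\mathfrak z(\gamma)}\mu\bigl(\sqrt{(C^{\mathfrak g})^{\mathfrak z(\gamma)}+A}\bigr)(h).
\end{equation*}
Evaluating at $h = -a-b_{\mathfrak p}-h_{\mathfrak k}$, and using that $a$ lies in the center of $\mathfrak z(\gamma)$ so that $\langle\alpha,a\rangle = 0$ for $\alpha\in R(\gamma)$, homogeneity of $\pi^{\mathfrak h,\mathfrak z(\gamma)}$ gives $\pi^{\mathfrak h,\mathfrak z(\gamma)}(2\pi(a+b_{\mathfrak p}+h_{\mathfrak k})) = (2\pi)^{|R_+(\gamma)|}\pi^{\mathfrak h,\mathfrak z(\gamma)}(b_{\mathfrak p}+h_{\mathfrak k})$. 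Combined with the $(-1)^{|R_+(\gamma)|}$ inherited from the first step, this produces the prefactor $(-2\pi)^{|R_+(\gamma)|}$, as required.

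The main obstacle will be the careful bookkeeping of signs and the integration-by-parts argument in the first step: one must correctly identify the $(b_{\mathfrak k}-h_{\mathfrak k})$-dependence of $H$ on each chamber of $\{b : \gamma e^b\in H^{\mathrm{reg}}\}$, handle the mixed nature of directional derivatives (with $H_\alpha$ naturally living in $\mathfrak h_{\mathfrak p}\oplus i\mathfrak h_{\mathfrak k}$) when transferred between $b$-space and the kernel's argument, and verify absence of boundary terms at infinity along $i\mathfrak h_{\mathfrak k}$ using (\ref{eq:ober2}) and (\ref{eq:capr14}).
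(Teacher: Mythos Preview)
Your proposal is correct and follows essentially the same approach as the paper's proof: apply Theorem \ref{thm:Tgenorb} to $\gamma'\in H^{\mathrm{reg}}$, use the structural fact from Theorem \ref{thm:caregbi} that $D_H(\gamma')\mathcal{J}_{\gamma'}(h_{\mathfrak k})\Tr^E[\rho^E(k'^{-1}e^{-h_{\mathfrak k}})]$ is locally constant in $b_{\mathfrak p}$ and depends on $b_{\mathfrak k}$ only through $b_{\mathfrak k}-h_{\mathfrak k}$ to transfer $\overline{\pi}^{\mathfrak h,\mathfrak z(\gamma)}_b$ onto the kernel with sign $(-1)^{|R_+(\gamma)|}$, and then invoke Proposition \ref{prop:keha} together with (\ref{eq:auj1a2}) for the second equality. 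The paper is terser, simply pointing back to the mechanism already explained in the proof of Theorem \ref{thm:Tgenorb}; one minor remark is that for the integration-by-parts justification at fixed $\gamma$-regular $b$ you should cite the basic estimate (\ref{eq:comm-1}) on $\mathcal{J}_{\gamma'}$ rather than (\ref{eq:capr14}), since the latter already contains the polynomial factor $\pi^{\mathfrak h,\mathfrak z(\gamma)}$.
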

\begin{proof}
	As we saw in Theorem \ref{thm:caregbi}, the function 
$\left(\gamma',h_{\mathfrak k}\right)\in H^{\mathrm{reg}}\times i\mathfrak 
h_{\mathfrak k}\to\mathcal{J}_{\gamma'}\left(h_{\mathfrak 
k}\right)\in\C$ is smooth. If $\gamma'\in H^{\mathrm{reg}}$, then 
$\mathfrak z\left(\gamma'\right)=\mathfrak h$, so that by equation 
(\ref{eq:diff2}) in Theorem 
\ref{thm:Tgenorb}, we get
\begin{multline}\label{eq:inva24a6}
\overline{\pi}^{\mathfrak h,\mathfrak z\left(\gamma\right)}\left[
D_{H}\left(\gamma'\right)\Tr^{\left[\gamma'\right]}\left[L\mu\left(\sqrt{C^{\mathfrak g,X}+A}\right)\right]\right]\\
=
\left(\phi_{\mathrm{HC}} L\right)\mu\left(\sqrt{\phi_{\mathrm{HC}} C^{\mathfrak g}+A}\right)
\\
\overline{\pi}^{\mathfrak h,\mathfrak 
z\left(\gamma\right)}_{b}\left[D_{H}\left(\gamma'\right)\mathcal{J}_{\gamma'}\left(h_{\mathfrak 
k}\right)
\Tr^{E}\left[\rho^{E}\left(k^{-1}e^{b_{\mathfrak k}-h_{\mathfrak k}} 
\right) \right]\delta_{a'}\right]\left(0\right).
\end{multline}
In the right hand-side of (\ref{eq:inva24a6}), the differential 
operator 
$\overline{\pi}^{\mathfrak h, \mathfrak z\left(\gamma\right)}_{b}$ acts 
on the distribution on the right in the variables $b=\left(b_{\mathfrak 
p}, b_{\mathfrak k}\right)\in \mathfrak h$, and 
not on the variable $h_{\mathfrak k}$. The situation is actually 
strictly similar to what we already met in the proofs of  Proposition 
\ref{prop:pdifre} and Theorem 
\ref{thm:Tgenorb}. We obtain this way the first identity in 
(\ref{eq:inva25a8}). 

By Proposition \ref{prop:keha},  using the conventions in  
(\ref{eq:tann1}), we get
\begin{multline}\label{eq:capr24a8b0}
\left(-1\right)^{\left\vert  R_{+}\left(\gamma\right)\right\vert}\overline{\pi}^{\mathfrak h,\mathfrak 
z\left(\gamma\right)}\left(\phi_{\mathrm{HC}} L\right)\mu\left(\sqrt{\phi_{\mathrm{HC}} C^{\mathfrak g}+A}\right) \\
\left[D_{H}\left(\gamma'\right)\mathcal{J}_{\gamma'}\left(h_{\mathfrak k}\right)
\Tr^{E}\left[\rho^{E}\left(k^{-1}e^{b_{\mathfrak k}-h_{\mathfrak k}} 
\right) \right]\delta_{a'}\right]\left(0\right)\\
=\left(-2\pi\right)^{\left\vert  
R_{+}\left(\gamma\right)\right\vert}\int_{i \mathfrak  h_{\mathfrak 
k}}^{} L^{\mathfrak z\left(\gamma\right)}
\mu\left(\sqrt{\left( C^{\mathfrak g}\right)^{\mathfrak 
z\left(\gamma\right)}+A}\right)\left(-a- b_{\mathfrak 
p}-h_{\mathfrak k}\right)\\
\pi^{\mathfrak h, \mathfrak  z\left(\gamma\right)}
\left(a+b_{\mathfrak p}+h_{\mathfrak 
k}\right)D_{H}\left(\gamma'\right)\mathcal{J}_{\gamma'}\left(h_{\mathfrak k}\right)
\Tr^{E}\left[\rho^{E}\left(k^{ -1}e^{b_{\mathfrak k}-h_{\mathfrak k}} \right) \right] dh_{\mathfrak k}.
\end{multline}
By (\ref{eq:auj1a2}),  we get
\begin{equation}\label{eq:24a6b2}
\pi^{\mathfrak h,\mathfrak z\left(\gamma\right)}\left(a+b_{\mathfrak p}+h_{\mathfrak k}\right)=\pi^{\mathfrak h, \mathfrak 
z\left(\gamma\right)}\left(b_{\mathfrak p}+h_{\mathfrak k}\right).
\end{equation}
   By the first identity in (\ref{eq:inva25a8}) and  by 
   (\ref{eq:capr24a8b0}), (\ref{eq:24a6b2}), we get the second 
   identity in (\ref{eq:inva25a8}). The proof of our proposition is completed. 
\end{proof}

Let 
\index{Tg@$T\left(\gamma\right)$}%
$T\left(\gamma\right)$ be a maximal torus in 
$K^{0}\left(\gamma\right)$,  let 
\index{tg@$\mathfrak t\left(\gamma\right)$}%
$\mathfrak t\left(\gamma\right) 
\subset \mathfrak k\left(\gamma\right)$ be the corresponding Lie 
algebra, 
and let 
\index{Wg@$W\left(\mathfrak t\left(\gamma\right):\mathfrak 
k\left(\gamma\right)\right)$}%
$W\left(\mathfrak t\left(\gamma\right):\mathfrak 
k\left(\gamma\right)\right)$\footnote{We use this 
notation instead of $W\left(\mathfrak t\left(\gamma\right)_{\C}:\mathfrak k\left(\gamma\right)_{\C}\right)$ 
because this Weyl group is real.} be the corresponding Weyl group.
\begin{theorem}\label{thm:limhar}
	If $\mathfrak h$ is not the fundamental Cartan subalgebra of 
	$\mathfrak z\left(\gamma\right)$, as $b\in \mathfrak h$ 
	$\gamma$-regular tends to $0$, then
	\begin{equation}\label{eq:gogo1}
\overline{\pi}^{\mathfrak h,\mathfrak z\left(\gamma\right)}
\Biggl[D_{H}\left(\gamma'\right)\\
\Tr^{\left[\gamma'\right]}\left[L\mu\left(\sqrt{C^{\mathfrak 
g,X}+A}\right)\right]\Biggr]\to 0.
\end{equation}

	If $\mathfrak h$ is the fundamental Cartan subalgebra of 
	$\mathfrak z\left(\gamma\right)$, as $b\in \mathfrak h$  
	$\gamma$-regular tends to $0$, then
	\begin{multline}\label{eq:gong1}
\overline{\pi}^{\mathfrak h,\mathfrak z\left(\gamma\right)}
\Biggl[D_{H}\left(\gamma'\right)
\Tr^{\left[\gamma'\right]}\left[L\mu\left(\sqrt{C^{\mathfrak g,X}+A}\right)\right]\Biggr]\\
\to  \left(-1\right)^{\frac{1}{2}\left(\dim \mathfrak 
p\left(\gamma\right)-\dim \mathfrak h_{\mathfrak p}\right)}\frac{\left\vert  
W\left(\mathfrak t\left(\gamma\right):\mathfrak k\left(\gamma\right)\right)\right\vert}{\Vol\left(K^{0}\left(\gamma\right)/T\left(\gamma\right)\right)}\prod_{\alpha\in 
 R_{\mathfrak 
+}^{\mathrm{im}}\left(k\right)}^{}\xi_{\alpha}^{1/2}\left(k^{-1}\right)\\
D_{H}\left(\gamma\right) 
\left(2\pi\right)^{\left\vert  R_{+}\left(\gamma\right)\right\vert}
\int_{i \mathfrak 
k\left(\gamma\right)}^{}L^{\mathfrak z\left(\gamma\right)}\mu\left(\sqrt{\left( C^{\mathfrak 
g}\right)^{\mathfrak 
z\left(\gamma\right)}+A}\right)\left(-a-\Yok\right)\\
\mathcal{J}_{\gamma}\left(\Yok\right)
\Tr^{E}\left[\rho^{E}\left(k^{-1}e^{-\Yok} \right)\right]d\Yok.
\end{multline}
\end{theorem}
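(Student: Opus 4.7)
The plan is to start from the integral representation of Proposition \ref{prop:tcohash}, pass to the limit $b \to 0$ under the integral sign by dominated convergence, and then, in the fundamental case, convert the resulting integral over $i\mathfrak h_{\mathfrak k}$ into one over $i\mathfrak k\left(\gamma\right)$ using Weyl integration. Specifically, the second identity in (\ref{eq:inva25a8}) expresses the quantity
$$\overline{\pi}^{\mathfrak h,\mathfrak z\left(\gamma\right)}\Bigl[D_{H}\left(\gamma'\right)\Tr^{\left[\gamma'\right]}\bigl[L\mu(\sqrt{C^{\mathfrak g,X}+A})\bigr]\Bigr]$$
as $\left(-2\pi\right)^{\left\vert R_{+}\left(\gamma\right)\right\vert}$ times an integral over $h_{\mathfrak k}\in i\mathfrak h_{\mathfrak k}$ of the kernel $L^{\mathfrak z\left(\gamma\right)}\mu(\sqrt{(C^{\mathfrak g})^{\mathfrak z\left(\gamma\right)}+A})\left(-a-b_{\mathfrak p}-h_{\mathfrak k}\right)$ weighted by $\pi^{\mathfrak h,\mathfrak z\left(\gamma\right)}\left(b_{\mathfrak p}+h_{\mathfrak k}\right)\,D_{H}\left(\gamma'\right)\mathcal{J}_{\gamma'}\left(h_{\mathfrak k}\right)\,\Tr^{E}[\rho^{E}(k^{-1}e^{b_{\mathfrak k}-h_{\mathfrak k}})]$, so that all the delicate behavior is concentrated in the weight, whose limit is supplied by Theorem \ref{thm:conv}.

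Dominated convergence is justified by three bounds. The Gaussian estimate (\ref{eq:ober2}) applied to the Lie algebra $\mathfrak z\left(\gamma\right)$ gives $e^{-c\left\vert h_{\mathfrak k}\right\vert^{2}}$ decay of the kernel (shifts by $a+b_{\mathfrak p}$ staying uniform for small $b$); Theorem \ref{thm:conv} yields exactly the uniform exponential bound (\ref{eq:capr14}) on $\pi^{\mathfrak h,\mathfrak z\left(\gamma\right)}\left(b_{\mathfrak p}+h_{\mathfrak k}\right)D_{H}\left(\gamma'\right)\mathcal{J}_{\gamma'}\left(h_{\mathfrak k}\right)$; and the character $\Tr^{E}[\rho^{E}(k^{-1}e^{b_{\mathfrak k}-h_{\mathfrak k}})]$ grows at most exponentially in $\left\vert h_{\mathfrak k}\right\vert$, so the Gaussian dominates. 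For the pointwise limits, (\ref{eq:gena5}) of Theorem \ref{thm:conv} gives vanishing in the non-fundamental case, which proves (\ref{eq:gogo1}) immediately. In the fundamental case, (\ref{eq:gena19}), valid on the full-measure set of $\gamma\,\mathrm{im}$-regular $h_{\mathfrak k}$, shows that the weight converges to the product of $D_{H}\left(\gamma\right)\mathcal{J}_{\gamma}\left(h_{\mathfrak k}\right)$, the Jacobian factor $[\pi^{\mathfrak h_{\mathfrak k},\mathfrak k\left(\gamma\right)}\left(h_{\mathfrak k}\right)]^{2}$, the sign $\left(-1\right)^{\frac{1}{2}\left\vert R_{+}^{\mathrm{c}}\left(\gamma\right)\right\vert+\left\vert R_{\mathfrak k,+}^{\mathrm{im}}\left(k\right)\right\vert}$, and the prefactor $\prod_{\alpha\in R_{+}^{\mathrm{im}}\left(k\right)}\xi_{\alpha}^{1/2}(k^{-1})$.

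The final step is Weyl integration on $\mathfrak k\left(\gamma\right)$ with maximal torus $T\left(\gamma\right)$, which makes sense since $\mathfrak t\left(\gamma\right)=\mathfrak h_{\mathfrak k}$ in the fundamental case. One verifies that the integrand is $\Ad\left(K^{0}\left(\gamma\right)\right)$-invariant in $h_{\mathfrak k}$: the convolution kernel of the invariant operator $L^{\mathfrak z\left(\gamma\right)}\mu(\sqrt{(C^{\mathfrak g})^{\mathfrak z\left(\gamma\right)}+A})$ is invariant, the shift by the central element $a$ preserves invariance, $\mathcal{J}_{\gamma}$ is invariant by construction, and each $k_{0}\in K^{0}\left(\gamma\right)$ commutes with $k$ because $K^{0}\left(\gamma\right)\subset Z\left(\gamma\right)\subset Z\left(k\right)$ by (\ref{eq:inva2a7x1}). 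Weyl integration then replaces $[\pi^{\mathfrak h_{\mathfrak k},\mathfrak k\left(\gamma\right)}\left(h_{\mathfrak k}\right)]^{2}\,dh_{\mathfrak k}$ by $\frac{\left\vert W\left(\mathfrak t\left(\gamma\right):\mathfrak k\left(\gamma\right)\right)\right\vert}{\Vol\left(K^{0}\left(\gamma\right)/T\left(\gamma\right)\right)}\,d\Yok$, delivering the integral over $i\mathfrak k\left(\gamma\right)$ appearing in (\ref{eq:gong1}). Matching the overall sign with $\left(-1\right)^{\frac{1}{2}\left(\dim\mathfrak p\left(\gamma\right)-\dim\mathfrak h_{\mathfrak p}\right)}$ reduces to a short parity computation: $R^{\mathrm{re}}\left(\gamma\right)=\emptyset$ by Proposition \ref{prop:Preal}, so $\left\vert R_{+}\left(\gamma\right)\right\vert=\left\vert R_{+}^{\mathrm{im}}\left(k\right)\right\vert+\left\vert R_{+}^{\mathrm{c}}\left(\gamma\right)\right\vert$; $\left\vert R_{+}^{\mathrm{c}}\left(\gamma\right)\right\vert$ is even; and (\ref{eq:grup1}) applied to $\mathfrak z\left(\gamma\right)$ (with $\mathfrak r=0$) yields $\frac{1}{2}\left(\dim\mathfrak p\left(\gamma\right)-\dim\mathfrak h_{\mathfrak p}\right)=\left\vert R_{\mathfrak p,+}^{\mathrm{im}}\left(k\right)\right\vert+\frac{1}{2}\left\vert R_{+}^{\mathrm{c}}\left(\gamma\right)\right\vert$, which combined with the signs $\left(-1\right)^{\left\vert R_{+}\left(\gamma\right)\right\vert}$ from $\left(-2\pi\right)^{\left\vert R_{+}\left(\gamma\right)\right\vert}\to\left(2\pi\right)^{\left\vert R_{+}\left(\gamma\right)\right\vert}$ and the sign from (\ref{eq:gena19}) gives exactly the required total. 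The main obstacle is not any single estimate but the recognition — already packaged in Theorem \ref{thm:conv} — that the residual density $[\pi^{\mathfrak h_{\mathfrak k},\mathfrak k\left(\gamma\right)}]^{2}$ in (\ref{eq:gena19}) is precisely the Jacobian demanded by Weyl integration on $\mathfrak k\left(\gamma\right)$, which is what makes the conversion to an $i\mathfrak k\left(\gamma\right)$ integral click into place.
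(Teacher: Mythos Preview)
Your proof is correct and follows essentially the same approach as the paper: start from the integral representation of Proposition \ref{prop:tcohash}, apply dominated convergence using the Gaussian kernel bound (\ref{eq:ober2}) together with the uniform estimate (\ref{eq:capr14}) and the pointwise limits (\ref{eq:gena5}), (\ref{eq:gena19}) from Theorem \ref{thm:conv}, and then in the fundamental case invoke Weyl integration on $i\mathfrak k(\gamma)$ to pass from $i\mathfrak h_{\mathfrak k}$ to $i\mathfrak k(\gamma)$. Your parity computation for the sign is a mild repackaging of the paper's argument (which goes via the identities (\ref{eq:cong3az1})--(\ref{eq:grup4})), and your reference for $K^{0}(\gamma)\subset Z(k)$ would more directly be (\ref{eq:inva18a}) rather than (\ref{eq:inva2a7x1}), but the content is the same.
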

\begin{proof}
	First, we consider the case where $\mathfrak h$ 
is not the fundamental Cartan subalgebra of $\mathfrak 
z\left(\gamma\right)$. Using equations (\ref{eq:ober2}), 
	(\ref{eq:capr14}),     
(\ref{eq:gena5}),  
(\ref{eq:inva25a8}), and dominated convergence, 
we get (\ref{eq:gogo1}).

	Assume now that $\mathfrak h$ is the fundamental Cartan 
	subalgebra of $\mathfrak z\left(\gamma\right)$. Using equations  
	(\ref{eq:ober2}), 
	(\ref{eq:capr14}),   (\ref{eq:gena19}),  (\ref{eq:inva25a8}),  since the 
	convergence in (\ref{eq:gena19}) takes place except on a Lebesgue 
	negligible set of $i \mathfrak h_{\mathfrak k}$, we can use 
	dominated convergence, so that as 
	$b\in \mathfrak h$ $\gamma$-regular tends to $0$, 
	\begin{multline}\label{eq:gong2}
\overline{\pi}^{\mathfrak h,\mathfrak z\left(\gamma\right)}\left[D_{H}\left(\gamma'\right)
\Tr^{\left[\gamma'\right]}\left[L\mu\left(\sqrt{C^{\mathfrak g,X}+A}\right)\right]\right]\\
\to \left(-1\right)^{\left\vert  R_{+}\left(\gamma\right)\right\vert+\frac{1}{2}\left\vert  
R^{\mathrm{c}}_{+}\left(\gamma\right)\right\vert+\left\vert  R^{\mathrm{im}}_{\mathfrak 
k,+}\left(k\right)\right\vert}\prod_{\alpha\in 
R^{\mathrm{im}}_{+}\left(k\right)}^{}\xi_{\alpha}^{1/2}\left(k^{-1}\right)
D_{H}\left(\gamma\right) \\
\left(2\pi\right)^{\left\vert  
R_{+}\left(\gamma\right)\right\vert}\int_{i \mathfrak h_{\mathfrak k}}^{}L^{\mathfrak z\left(\gamma\right)}  \mu
\left(\sqrt{\left( C^{\mathfrak g} \right) ^{\mathfrak 
z\left(\gamma\right)}+A}\right)\left(-a-h_{\mathfrak k}\right)\\
\mathcal{J}_{\gamma}\left(h_{\mathfrak k}\right)
\Tr^{E}\left[\rho^{E}\left(k^{-1}e^{-h_{\mathfrak k}}\right) \right]\left[\pi^{\mathfrak 
h_{\mathfrak k}, \mathfrak k\left(\gamma\right)}\left(h_{\mathfrak 
k}\right)\right]^{2}
dh_{\mathfrak k}.
\end{multline}

The  operators $L^{\mathfrak 
z\left(\gamma\right)}$ and $\left( C^{\mathfrak 
g}\right)^{\mathfrak z\left(\gamma\right)}$ on $\mathfrak z\left(\gamma\right)$ are both 
$K\left(\gamma\right)$-invariant, and so the smooth function
$$ L^{\mathfrak z\left(\gamma\right)}  \mu
\left(\sqrt{\left( C^{\mathfrak g} \right) ^{\mathfrak 
z\left(\gamma\right)}+A}\right)\left(-a-\Yok\right)$$
on $i \mathfrak k\left(\gamma\right)$ is also $K\left(\gamma\right)$-invariant. This is also the case for the function
$$\mathcal{J}_{\gamma}\left(\Yok\right)\Tr^{E}\left[\rho^{E}\left(k^{-1}e^{-\Yok}\right)\right].$$
Since $\mathfrak h$ is the fundamental Cartan subalgebra of 
$\mathfrak z\left(\gamma\right)$, $\mathfrak h_{\mathfrak k}$ is a 
Cartan subalgebra of $\mathfrak k\left(\gamma\right)$. By Weyl's 
integration formula,  and taking into account the fact that on 
$ i \mathfrak h_{\mathfrak k}$, $\left[\pi^{\mathfrak h_{\mathfrak 
k}, \mathfrak 
k\left(\gamma\right)}\left(h_{\mathfrak k}\right)\right]^{2}$ is 
nonnegative, we obtain
\begin{multline}\label{eq:gong3}
	\int_{i \mathfrak 
k\left(\gamma\right)}^{} L^{\mathfrak z\left(\gamma\right)}\mu\left(\sqrt{\left( C^{\mathfrak 
g}\right)^{\mathfrak z\left(\gamma\right)}+A}\right)\left(-a-\Yok\right)\\
\mathcal{J}_{\gamma}\left(\Yok\right)
\Tr^{E}\left[\rho^{E}\left(k^{-1}e^{-\Yok} \right)\right]d\Yok\\
=\frac{\Vol\left(K^{0}\left(\gamma\right)/T\left(\gamma\right)\right)}{\left\vert  W\left(\mathfrak t\left(\gamma\right):\mathfrak k\left(\gamma\right)\right)\right\vert}
\int_{i \mathfrak 
h_{\mathfrak k}}^{} L^{\mathfrak z\left(\gamma\right)}
\mu\left(\sqrt{\left( C^{\mathfrak g}\right)^{\mathfrak 
z\left(\gamma\right)}+A}\right)\left(-a-h_{\mathfrak k}\right)\\
\mathcal{J}_{\gamma}\left(h_{\mathfrak k}\right)
\Tr^{E}\left[\rho^{E}\left(k^{-1}e^{-h_{\mathfrak k}}\right)\right]
\left[\pi^{\mathfrak h_{\mathfrak k},\mathfrak 
k\left(\gamma\right)}\left(h_{\mathfrak k}\right)\right]^{2}dh_{\mathfrak k}.
\end{multline}

Using in particular Proposition \ref{prop:Ps} applied to $\mathfrak z\left(\gamma\right)$, we have the identities,
\begin{align}\label{eq:cong3az1}
&\left\vert  R_{+}\left(\gamma\right)\right\vert=\dim \mathfrak 
c_{+}\left(\gamma\right)+\frac{1}{2}\dim \mathfrak i\left(k\right), 
\nonumber \\
&\left\vert  R_{\mathfrak 
k,+}^{\mathrm{im}}\left(k\right)\right\vert=\frac{1}{2}\dim \mathfrak i_{\mathfrak 
k}\left(k\right),\\
&\left\vert  R^{\mathrm{c}}_{+}\left(\gamma\right)\right\vert=
\dim \mathfrak c_{+}\left(\gamma\right). \nonumber 
\end{align}
By (\ref{eq:cong3az1}),  we deduce that
	\begin{multline}\label{eq:grup2a1}
\left\vert  R_{+}\left(\gamma\right)\right\vert+\left\vert  
R_{\mathfrak k,+}^{\mathrm{im}}\left(k\right)\right\vert+\frac{1}{2}\left\vert  
R_{+}^{\mathrm{c}}\left(\gamma\right)\right\vert \\
=
\dim \mathfrak c_{+}\left(\gamma\right)+\frac{1}{2}\dim \mathfrak 
i_{\mathfrak k}\left( k\right)+\frac{1}{2}\dim \mathfrak i\left(k\right)+\frac{1}{2}\dim \mathfrak c_{+}\left(\gamma\right).
\end{multline}
By Proposition \ref{prop:Ps}, $\mathfrak c_{+}\left(\gamma\right)$ 
has even dimension. This is also the case for $\mathfrak i_{\mathfrak 
k}\left(k\right)$. By (\ref{eq:grup2a1}), we get the equality 
$\mathrm{mod}\, 2$, 
\begin{equation}\label{eq:grup3}
\left\vert  R_{+}\left(\gamma\right)\right\vert+\left\vert  
R_{\mathfrak k,+}^{\mathrm{im}}\left(k\right)\right\vert+\frac{1}{2}\left\vert  
R_{+}^{\mathrm{c}}\left(\gamma\right)\right\vert=\frac{1}{2}\dim 
\mathfrak i_{\mathfrak p}\left(k\right)+\frac{1}{4}\dim \mathfrak 
c\left(\gamma\right).
\end{equation}
By equation (\ref{eq:grup1}) in Proposition \ref{prop:peven} 
	applied to $\mathfrak z\left(\gamma\right)$, since $\mathfrak h$ is 
	fundamental in $\mathfrak z\left(\gamma\right)$, we get
	\begin{equation}\label{eq:grup3ax1}
\dim \mathfrak p\left(\gamma\right)-\dim \mathfrak h_{\mathfrak 
p}=\dim \mathfrak i_{\mathfrak p}\left(k\right)+\frac{1}{2}\dim 
\mathfrak c\left(\gamma\right).
\end{equation}
By (\ref{eq:grup3}), (\ref{eq:grup3ax1}), we get
	\begin{equation}\label{eq:grup4}
\left(-1\right)^{\left\vert  R_{+}\left(\gamma\right)\right\vert+\left\vert  
R_{\mathfrak k,+}^{\mathrm{im}}\left(k\right)\right\vert+\frac{1}{2}\left\vert  
R_{+}^{\mathrm{c}}\left(\gamma\right)\right\vert}=\left(-1\right)^{\frac{1}{2}\left(\dim \mathfrak 
p\left(\gamma\right)-\dim \mathfrak h_{\mathfrak p}\right)}.
\end{equation}
When $\gamma=1$, the above identity had been established by 
\cite[Lemma 18]{Harish64c}. 

By (\ref{eq:gong2}), (\ref{eq:gong3}), and (\ref{eq:grup4}),  we get 
(\ref{eq:gong1}). The proof of our theorem is completed. 
\end{proof}
\section{The final formula}%
\label{sec:senUg}
In this section, we establish our final formula in the case of a 
non necessarily regular semisimple element $\gamma\in G$. Our formula 
extends both  the formula in Theorem \ref{thm:Ttrfin} valid for 
$\gamma$ semisimple 
and $L=1$, and the formula in Theorem \ref{thm:Tgenorb} valid for $\gamma$ regular. 
To establish our main result, we combine a fundamental result of 
Harish-Chandra with the results we obtained in Section 
\ref{sec:rojg}. Also, along the lines of \cite[Chapter 6]{Bismut08b}, we give a wave kernel formulation of our main 
result. 

This section is organized as follows. In Subsection 
\ref{subsec:geca}, we establish our main result. 

In Subsection \ref{subsec:micro}, as in \cite{Bismut08b}, we reformulate our main  result in terms 
of wave kernels. 

Finally, in Subsection \ref{subsec:natu}, we verify our main formula 
is compatible to natural operations on orbital integrals.
\subsection{The general case}%
\label{subsec:geca}
Let $L\in Z\left(\mathfrak g\right)$. Here, we take  $\gamma\in G$ semisimple as in 
(\ref{eq:dis4a}).
We will extend Theorem \ref{thm:Tgenorb} to non regular
$\gamma$. 
\begin{theorem}\label{thm:Tgenorbis}
The following identity holds:
	\begin{multline}\label{eq:diff2bis}
\Tr^{\left[\gamma\right]}\left[L\mu\left(\sqrt{C^{\mathfrak g,X}+A}\right)\right]\\
=
L^{\mathfrak z\left(\gamma\right)}
\mu\left(\sqrt{\left( C^{\mathfrak g}\right)^{\mathfrak 
z\left(\gamma\right)}+A}\right)\left[
\mathcal{J}_{\gamma}\left(\Yok\right)\Tr^{E}\left[\rho^{E}\left(k^{-1}e^{-\Yok}\right)\right]\delta_{a}\right]\left(0\right).
\end{multline}
\end{theorem}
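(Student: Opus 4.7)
The strategy is to deduce the formula from the regular case (Theorem~\ref{thm:Tgenorb}) by a limiting argument, using Harish-Chandra's classical limit formula to identify the left-hand side and Theorem~\ref{thm:limhar} to identify the right-hand side. Concretely, I choose a $\theta$-stable fundamental Cartan subalgebra $\mathfrak h \subset \mathfrak z\left(\gamma\right)$, which by Proposition~\ref{prop:psesicen} is also a Cartan subalgebra of $\mathfrak g$, and let $H \subset G$ be the corresponding Cartan subgroup. For $b \in \mathfrak h$ $\gamma$-regular with $\left\vert b\right\vert$ small enough, by Proposition~\ref{prop:greg} the element $\gamma' = \gamma e^{b}$ lies in $H^{\mathrm{reg}}$, so Theorem~\ref{thm:Tgenorb} applies to $\gamma'$.

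On the geometric side, Theorem~\ref{thm:limhar} already evaluates the limit
\begin{equation*}
\lim_{b \to 0,\,\gamma\text{-reg}} \overline{\pi}^{\mathfrak h,\mathfrak z\left(\gamma\right)}\Bigl[D_{H}\left(\gamma'\right)\Tr^{\left[\gamma'\right]}\left[L\mu\left(\sqrt{C^{\mathfrak g,X}+A}\right)\right]\Bigr]
\end{equation*}
as an explicit geometric constant $\kappa\left(\gamma\right)$ (involving $D_H\left(\gamma\right)$, $\left\vert W\left(\mathfrak t\left(\gamma\right):\mathfrak k\left(\gamma\right)\right)\right\vert$, $\mathrm{Vol}\left(K^{0}\left(\gamma\right)/T\left(\gamma\right)\right)$, a product of $\xi_{\alpha}^{1/2}\left(k^{-1}\right)$, a sign, and a power of $2\pi$) times
\begin{equation*}
L^{\mathfrak z\left(\gamma\right)}\mu\left(\sqrt{\left(C^{\mathfrak g}\right)^{\mathfrak z\left(\gamma\right)}+A}\right)\Bigl[\mathcal{J}_{\gamma}\left(\Yok\right)\Tr^{E}\left[\rho^{E}\left(k^{-1}e^{-\Yok}\right)\right]\delta_{a}\Bigr]\left(0\right),
\end{equation*}
using Weyl's integration formula on $\mathfrak k\left(\gamma\right)$ to rewrite the integral over $i\mathfrak h_{\mathfrak k}$.

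On the orbital integral side, I invoke Harish-Chandra's limit formula: for a $G$-invariant smooth function $F$ on $G^{\mathrm{reg}}$ of suitable regularity (in our setting, $F\left(\gamma'\right) = \Tr^{\left[\gamma'\right]}\left[Q\right]$ for $Q \in \mathcal{Q}$, which is smooth by Proposition~\ref{prop:pdifre}), the expression $\overline{\pi}^{\mathfrak h,\mathfrak z\left(\gamma\right)}\left[D_{H}\left(\gamma'\right)F\left(\gamma'\right)\right]$ converges as $\gamma' \to \gamma$ along $\gamma$-regular directions in $H$, and the limit equals a universal constant $c\left(\gamma\right)$ times $\Tr^{\left[\gamma\right]}\left[Q\right]$ (when $\mathfrak h$ is fundamental in $\mathfrak z\left(\gamma\right)$). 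Applied with $Q = L\mu\left(\sqrt{C^{\mathfrak g,X}+A}\right) \in \mathcal{Q}$ (cf.\ Proposition~\ref{prop:kerce}), this together with Theorem~\ref{thm:limhar} yields
\begin{equation*}
c\left(\gamma\right) \Tr^{\left[\gamma\right]}\left[L\mu\left(\sqrt{C^{\mathfrak g,X}+A}\right)\right]
= \kappa\left(\gamma\right)\, L^{\mathfrak z\left(\gamma\right)}\mu\left(\sqrt{\left(C^{\mathfrak g}\right)^{\mathfrak z\left(\gamma\right)}+A}\right)\left[\mathcal{J}_{\gamma}\left(\Yok\right)\Tr^{E}\left[\rho^{E}\left(k^{-1}e^{-\Yok}\right)\right]\delta_{a}\right]\left(0\right).
\end{equation*}

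To identify the ratio $\kappa\left(\gamma\right)/c\left(\gamma\right)$, I specialize to $L = 1$: by Theorem~\ref{thm:Ttrfin} the identity \eqref{eq:diff2bis} already holds for arbitrary semisimple $\gamma$ when $L=1$. Choosing $\mu$ and $A$ so that the $L=1$ right-hand side is nonzero (which is possible since $c\left(\gamma\right)$ and $\kappa\left(\gamma\right)$ are independent of $\mu$, $A$, $L$, $E$), the $L=1$ comparison forces $\kappa\left(\gamma\right) = c\left(\gamma\right)$. The general-$L$ identity \eqref{eq:diff2bis} follows. The main obstacle will be verifying the uniform estimates needed to justify dominated convergence in applying Harish-Chandra's limit formula to our Gaussian-decay kernels rather than compactly supported test functions; the Gaussian bounds \eqref{eq:siga2}, \eqref{eq:ober2} combined with the estimates on $r\left(f\right)$ from \eqref{eq:fina5} and the uniform displacement estimate \eqref{eq:sira2} established in the proof of Proposition~\ref{prop:pdifre} provide the necessary control, and the pointwise limit at regular elements of $\mathfrak h$ will give full Lebesgue measure in $i\mathfrak h_{\mathfrak k}$ by Theorem~\ref{thm:conv}.
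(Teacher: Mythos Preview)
Your proposal is correct and follows essentially the same route as the paper: invoke Harish-Chandra's limit formula (cited there as \cite[Lemma 28]{Harish66}, \cite[Part II, Section 12.5, Theorem 13]{Vara77}) to get a universal constant $c_\gamma$ on the orbital-integral side, use Theorem~\ref{thm:limhar} to compute the same limit geometrically, and then identify the constants by specializing to $L=1$ via Theorem~\ref{thm:Ttrfin}. The paper makes the nonvanishing step concrete by taking $E$ trivial and $\mu(x)=e^{-x^2}$, so that positivity of the scalar heat kernel forces the orbital integral to be nonzero; your last paragraph about dominated convergence is unnecessary here, since those estimates are already absorbed into the proof of Theorem~\ref{thm:limhar} and Harish-Chandra's result is invoked as a black box.
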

\begin{proof}
		By a  result of Harish-Chandra \cite[Lemma 28]{Harish66}, 
		\cite[Part II, Section 12.5, Theorem 13]{Vara77},  we know that if 
		$\mathfrak h $ is the fundamental Cartan subalgebra in 
		$\mathfrak z\left(\gamma\right)$,  there is a 
	universal constant $c_{\gamma}$ depending only on $\gamma$ 
	such that with the notation in 
	Theorem \ref{thm:limhar}, as $b\in \mathfrak h$ $\gamma$-regular 
	tends to $0$, 	
	\begin{multline}\label{eq:gong7}
\overline{\pi}^{\mathfrak h,\mathfrak z\left(\gamma\right)}
\left[D_{H}\left(\gamma'\right)\Tr^{\left[\gamma'\right]}\left[L\mu\left(\sqrt{C^{\mathfrak g,X}+A}\right)\right]\right]\\
\to 
c_{\gamma}\Tr^{\left[\gamma\right]}\left[L
\mu\left(\sqrt{C^{\mathfrak g,X}+A}\right)\right].
\end{multline}
In Theorem \ref{thm:limhar}, we gave another proof of 
the existence of the limit in (\ref{eq:gong7}). Also by the 
fundamental result of \cite{Bismut08b} stated as Theorem 
\ref{thm:Ttrfin}, when $L=1$, the integral in the right-hand side of 
(\ref{eq:gong1}) coincides with the orbital integral 
$\Tr^{\left[\gamma\right]}\left[\mu\left(\sqrt{C^{\mathfrak 
g,X}+A}\right)\right]$. 
To identify the constant $c_{\gamma}$, we only need to prove that one 
of these last orbital integrals does not vanish. It is enough to 
take $E$ to be the trivial representation, and 
$\mu\left(x\right)=\exp\left(-x^{2}\right)$.  Since the scalar heat 
kernel on $X$ is positive, the corresponding orbital integrals do not 
vanish. So we find that
\begin{multline}\label{eq:gong8}
c_{\gamma}=\left(-1\right)^{\frac{1}{2}\left(\dim \mathfrak 
p\left(\gamma\right)-\dim \mathfrak h_{\mathfrak 
p}\right)}\frac{\left\vert  W\left(\mathfrak t\left(\gamma\right):\mathfrak k\left(\gamma\right)\right)\right\vert}{\Vol\left(K^{0}\left(\gamma\right)/T\left(\gamma\right)\right)}\\
\left(2\pi\right)^{\left\vert  R_{+}\left(\gamma\right)\right\vert}\prod_{\alpha\in R_{\mathfrak 
+}^{\mathrm{im}}\left(k\right)}^{}\xi_{\alpha}^{1/2}\left(k^{-1}\right)
D_{H}\left(\gamma\right).
\end{multline}
When $\gamma=1$, this computation has already been done by 
Harish-Chandra in  \cite[Section 37, 
Theorem 1]{Harish75}.  \footnote{More precisely, if  $G=KAN$ is the 
Iwasawa decomposition,  when $\gamma=1$, the constant obtained in 
[HC75] is $2^{\frac{\dim N}{2}}c_\gamma$. In [HC75, Section 7], 
Harish-Chandra uses another normalization for the Haar measure on $G$, 
which is adapted to the Iwasawa decomposition. By [HC75, p.202], the 
ratio of these two normalizations is given by  $2^{\frac{\dim 
N}{2}}$, which explains the discrepancy.} For the case of a general $\gamma$, this 
formula can also be derived from \cite[p. 34]{Harish66} and from 
the reference given before.

By combining (\ref{eq:gong1}), (\ref{eq:gong7}), and 
(\ref{eq:gong8}), we get (\ref{eq:diff2bis}). The proof of our 
theorem is completed. 
\end{proof}
\subsection{A microlocal version}%
\label{subsec:micro}
We still take  $\gamma\in G$ semisimple as in 
(\ref{eq:dis4a}).

We will proceed as in \cite[Section 6.3]{Bismut08b}, to which we 
refer for more details. In the sequel we identify $TX$ and $T^{*}X$ 
by the metric. 

Let
$\Tr^{\left[\gamma\right]}\left[L\cos\left(s\sqrt{C^{\mathfrak g,X}+A}\right)\right]$ be the even 
distribution on $\R$ such that 
for any  $\mu\in \mathcal{S}^{\mathrm{even}}\left(\R\right)$ with
$\widehat{\mu}$ having compact support,
\begin{equation}
    \Tr^{\left[\gamma\right]}\left[L\mu\left(\sqrt{C^{\mathfrak g,X}+A}\right)\right]=
    \int_{\R}^{}\widehat{\mu}\left(s\right)
    \Tr^{\left[\gamma\right]}\left[L\cos\left(2\pi s\sqrt{C^{\mathfrak g,X}+A}\right)\right]ds.
    \label{eq:trib4}
\end{equation}

The
 operator $L\cos\left(s\sqrt{C^{\mathfrak g,X}+A}\right)$ defines a 
distribution on $\R\times X\times X$. By finite propagation speed  \cite[section 
7.8]{ChazarainPiriou}, \cite[section 4.4]{Taylor81}, its 
support is included in $\left(s,x,x'\right),\left\vert  
s\right\vert\ge d\left(x,x'\right)$.  Let $\mathcal{X}$ be the total 
space of $TX$. Let   $s\in\R\to\varphi_{s}$ be the geodesic 
flow on 
$\mathcal{X}$.  Let 
$\tau$ be the variable dual to $s$. By
\cite[Theorem 23.1.4 and remark]{Hormander85a}, the
wave front set 
$\mathrm{WF}\left(L\cos\left(s\sqrt{C^{\mathfrak g,X}+A}\right)\right)$ of the distribution 
$L\cos\left(s\sqrt{C^{\mathfrak g,X}+A}\right)$ is the conic set in 
$\R^{2}\times T^{*}X\times T^{*}X$ generated by 
$\left(x',-Y'\right)=\varphi_{\pm s}\left(x,Y\right),\left\vert  
Y\right\vert=1,\tau=\pm 1$. Conic  means that the dilations by 
$\lambda>0$ are applied simultaneously to the variables $Y,Y',\tau$.

As explained in \cite[Section 3.4]{Bismut08b}, in the geodesic coordinate system centered at $x_{0}=p1$,   $\mathfrak 
p^{\perp}\left(\gamma\right)$ can be identified 
with a smooth 
submanifold 
$P^{\perp}\left(\gamma\right) $ of $X$. Let
\index{NPgX@$N_{P^{\perp}\left(\gamma\right)/X}$}%
$N_{P^{\perp}\left(\gamma\right)/X}$ be the orthogonal bundle to 
$TP^{\perp}\left(\gamma\right)$ in $TX$.

Set 
\index{DgX@$\Delta^{\gamma}_{X}$}%
\begin{equation}
    \Delta^{\gamma}_{X}=\left\{\left(x,\gamma x\right),x\in P^{\perp}\left(\gamma\right)\right\}.
    \label{eq:flimsdorf1}
\end{equation}
Then $\Delta^{\gamma}_{X}$ is a smooth submanifold of $X\times X$.
  The
conormal bundle to   $\R\times \Delta^{\gamma}_{X} \subset \R\times 
X\times X$ is  the set 
$\left(\left(s,\tau\right),\left(x,Y\right),\left(x',Y'\right)\right)
\in \R^{2}\times \mathcal{X}\times \mathcal{X}$ such  
that $\tau=0,x\in P^{\perp}\left(\gamma\right),x'=\gamma x, 
\gamma^{*}Y'+Y\in N_{P^{\perp}\left(\gamma\right)/X}$. 

By \cite[Theorem 8.2.10]{Hormander83a},    
$$L\cos\left(s\sqrt{C^{\mathfrak g,X}+A}\right)\Delta^{\gamma}_{X}$$
is 
a well-defined  distribution on $\R\times X\times X$, and its wave 
front set is the formal sum of the wave front sets  of the two above 
distributions. 
In particular
the pushforward of the distribution
$\Tr^{F}\left[\gamma L\cos\left( s\sqrt{C^{\mathfrak g,X}+A}\right)\right]$ by the projection $\R\times 
X\times X\to \R$ is well-defined. It will be denoted 
\begin{equation}
    \int_{\Delta^{\gamma}_{X}}\Tr^{F}\left[\gamma L\cos\left( s\sqrt{C^{\mathfrak g,X}+A}\right)\right].
    \label{eq:trib5}
\end{equation}
This is an even distribution on $\R$.

Tautologically, we have the identity of even distributions on $\R$, 
\begin{equation}
    \Tr^{\left[\gamma\right]}\left[L\cos\left(s\sqrt{C^{\mathfrak g,X}+A}\right)\right]=
    \int_{\Delta^{\gamma}_{X}}\Tr^{F}\left[\gamma L\cos \left( 
  s\sqrt{C^{\mathfrak g,X}+A}\right)\right].
    \label{eq:trib6}
\end{equation}

We have the result stablished in \cite[Proposition 6.3.1]{Bismut08b}.
\begin{proposition}\label{Ptribax}
The singular support of 
$  \Tr^{\left[\gamma\right]}\left[L\cos\left( s\sqrt{C^{\mathfrak g,X}+A}\right)\right]$
is included in  $s=\pm\left\vert  a\right\vert$, and the ordinary support is 
included in 
$$\left\{s\in\R,\left\vert  s\right\vert\ge\left\vert  
a\right\vert\right\}.$$ 
If $a=0$, if $\mathfrak 
p\left(\gamma\right)=0$,  the singular support of 
$$  
\Tr^{\left[\gamma\right]}\left[L\cos\left( s\sqrt{C^{\mathfrak 
g,X}+A}\right)\right]$$
is empty.
\end{proposition}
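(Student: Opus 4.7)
The plan is to combine two ingredients: the wave-kernel incarnation of Theorem~\ref{thm:Tgenorbis} with standard wave-front set calculus for the Klein--Gordon propagator on a Euclidean space. By (\ref{eq:trib4}), Proposition~\ref{Ptribax} concerns the even distribution $\Tr^{[\gamma]}[L\cos(s\sqrt{C^{\mathfrak g,X}+A})]$ in $s\in\R$. Letting $\widehat{\mu}$ range over bump functions in the identity of Theorem~\ref{thm:Tgenorbis} yields the distributional identity
$$\Tr^{[\gamma]}[L\cos(s\sqrt{C^{\mathfrak g,X}+A})] = L^{\mathfrak z(\gamma)}\cos\bigl(s\sqrt{-\Delta^{\mathfrak z(\gamma)}+B^{*}(\rho^{\mathfrak g},\rho^{\mathfrak g})+A}\bigr)\bigl[\Psi\,\delta_{a}\bigr](0),$$
where $\Psi(\Yok)=\mathcal J_{\gamma}(\Yok)\Tr^{E}[\rho^{E}(k^{-1}e^{-\Yok})]$ is a smooth function on $i\mathfrak k(\gamma)$ of at most exponential growth by (\ref{eq:comm-1}), and by Proposition~\ref{prop:czg}, $(C^{\mathfrak g})^{\mathfrak z(\gamma)}=-\Delta^{\mathfrak z(\gamma)}+B^{*}(\rho^{\mathfrak g},\rho^{\mathfrak g})$. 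This reduces matters to the Euclidean Klein--Gordon propagator on $\mathfrak z_{i}(\gamma)=\mathfrak p(\gamma)\oplus i\mathfrak k(\gamma)$ applied to a distribution concentrated on the affine flat $\{a\}+i\mathfrak k(\gamma)$.

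For the ordinary support I argue directly. Since $L\in Z(\mathfrak g)$ acts as a left-invariant, hence local, differential operator, the Schwartz kernel of $L\cos(s\sqrt{C^{\mathfrak g,X}+A})$ inherits finite propagation: it is supported in $\{d(x,x')\le|s|\}$. On $\Delta^{\gamma}_{X}$, points take the form $(x,\gamma x)=(e^{f}x_{0},\gamma e^{f}x_{0})$ with $f\in\mathfrak p^{\perp}(\gamma)$, and by (\ref{eq:dis4}) one has $d(x,\gamma x)=d_{\gamma}(e^{f}x_{0})\ge m_{\gamma}=|a|$. Hence for $|s|<|a|$ the kernel vanishes on $\Delta^{\gamma}_{X}$, the integrand in (\ref{eq:trib5}) is zero, and the ordinary support of $\Tr^{[\gamma]}[L\cos(s\sqrt{\cdot})]$ lies in $\{|s|\ge|a|\}$.

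For the singular support I would work on the Euclidean side of the above identity. The Klein--Gordon kernel $K(s,f)=\cos\bigl(s\sqrt{-\Delta^{\mathfrak z(\gamma)}+c}\bigr)(f)$ on $\R\times\mathfrak z_{i}(\gamma)$ has singular support in the light cone $\{s^{2}=|f|^{2}\}$, and its wave-front set is generated by the bicharacteristic flow of $\sqrt{|\xi|^{2}+c}$. The distribution $\Psi\,\delta_{a}$ has wave-front set contained in the conormal to $\{a\}+i\mathfrak k(\gamma)$, namely covectors in the $\mathfrak p(\gamma)^{*}$ direction. Pulling back $K$ to $\{s\}\times(\{a\}+i\mathfrak k(\gamma))$ and then pushing forward to $\R$ in the $s$-variable is controlled by H\"ormander's calculus: after checking transversality (automatic, since the vanishing of $\tau$ in the wave-front would force vanishing of the base covector), a singularity in $s$ at $s_{0}$ forces the existence of $\Yok\in i\mathfrak k(\gamma)$ critical for the phase $\sqrt{|a|^{2}+|\Yok|^{2}}=|s_{0}|$. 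The unique such critical point $\Yok=0$ gives $|s_{0}|=|a|$, and the constant-coefficient operator $L^{\mathfrak z(\gamma)}$ does not enlarge the singular support.

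In the special case $a=0$ and $\mathfrak p(\gamma)=0$, the $\mathfrak p(\gamma)$-factor collapses, $\mathfrak z_{i}(\gamma)=i\mathfrak k(\gamma)$, and $\Psi\,\delta_{a}$ degenerates to the smooth function $\Psi$. Klein--Gordon evolution of smooth Cauchy data is $C^{\infty}$ on $\R\times i\mathfrak k(\gamma)$, which remains smooth after applying $L^{\mathfrak z(\gamma)}$ and restricting to $f=0$; thus the singular support is empty. The principal technical hurdle will be the rigorous pullback/pushforward argument in the presence of the exponentially growing weight $\Psi$: I would smoothly truncate $\Psi$ outside a large ball in $i\mathfrak k(\gamma)$, apply standard wave-front calculus to the compactly supported piece, and control the tail by the finite-propagation bound on $K$, which confines contributions with $|s|$ bounded to regions of $\Yok$ where $|\Yok|^{2}\le s^{2}-|a|^{2}$, keeping the exponential growth of $\Psi$ harmless once tested against a compactly supported function of $s$.
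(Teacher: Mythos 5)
Your argument is correct, but it takes a genuinely different route from the paper for the singular-support part. The paper (following \cite[Proposition 6.3.1]{Bismut08b}) works entirely on the $X$-side: it describes $\mathrm{WF}\left(L\cos(s\sqrt{C^{\mathfrak g,X}+A})\right)$ as the conic set generated by the geodesic flow on $\mathcal X$, notes $\tau\neq 0$ there, forms the product with the conormal distribution to $\R\times\Delta^{\gamma}_{X}$, and pushes forward to $\R$; the intersection forces the geodesic from $x$ to $\gamma x$ with $x\in X(\gamma)$, whence $|s|=|a|$. You instead transfer the problem across the orbital-integral formula of Theorem~\ref{thm:Tgenorbis} to the flat Klein--Gordon propagator on $\mathfrak z_{i}(\gamma)$, restrict to $\{a\}\oplus i\mathfrak k(\gamma)$, and push forward. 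This is essentially re-proving Theorem~\ref{TFourier} en route, and then reading the singular support off the Euclidean side. Both are legitimate; the paper's method is self-contained on $X$ and does not invoke the main geometric formula, while yours outsources the geometry to Theorem~\ref{thm:Tgenorbis} and replaces the curved microlocal analysis with the elementary Euclidean one.

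Your ordinary-support argument coincides with the paper's: $L$ is a local operator, so finite propagation speed is preserved, and $d(x,\gamma x)\ge m_{\gamma}=|a|$ on $\Delta^{\gamma}_{X}$ forces vanishing for $|s|<|a|$. In the Euclidean singular-support argument, the transversality claim is phrased loosely but the underlying fact is right: on the wave-front set of the Klein--Gordon propagator, $\tau^{2}=|\xi|^{2}\neq 0$, so it misses the conormal $\{\tau=0\}$ to $\R\times(\{a\}+i\mathfrak k(\gamma))$, making the restriction well-defined; pushing forward requires the $i\mathfrak k(\gamma)$-component of the covector to vanish, which on the light cone forces $\Yok=0$ and hence $|s|=|a|$. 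Multiplication by the smooth (merely exponentially growing) factor $\mathcal J_{\gamma}\,\Tr^{E}$ and the constant-coefficient $L^{\mathfrak z(\gamma)}$ do not enlarge the wave front set, and your truncation-plus-finite-propagation remark handles the lack of compact support. The case $a=0$, $\mathfrak p(\gamma)=0$ by smooth Cauchy data is also correct. No gaps.
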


We define the even distribution on $\R$,
\begin{equation}
  L^{\mathfrak 
   z\left(\gamma\right)}\cos\left(s\sqrt{\left( C^{\mathfrak 
   g} \right) ^{\mathfrak z\left(\gamma\right)}+A}\right)\left[\mathcal{J}_{\gamma}\left(\Yok\right)\Tr^{E}\left[\rho^{E}\left(k^{-1}\
    e^{-\Yok} \right)\right] \delta_{a}\right] \left(0\right)
    \label{eq:bronx1}
\end{equation}
by the formula
\begin{multline}
    L^{\mathfrak z\left(\gamma\right)}\mu\left(\sqrt{\left( C^{\mathfrak g}\right)^{\mathfrak z\left(\gamma\right)}+A}
       \right)\left[\mathcal{J}_{\gamma}\left(\Yok\right)\Tr^{E}\left[\rho^{E}\left(k^{-1}e^{-\Yok}
   \right)\right]\delta_{a}\right]\left(0\right)\\
   =
   \int_{\R}^{}\widehat{\mu}\left(s\right)
L^{\mathfrak z\left(\gamma\right)}\cos\left(2\pi s\sqrt{\left(  C^{\mathfrak g}\right)^{\mathfrak 
 z\left(\gamma\right)}+A}\right)\\
 \left[\mathcal{J}_{\gamma}\left(\Yok\right)
 \Tr^{E}\left[\rho^{E}\left(k^{-1}
   e^{-\Yok}\right)\right]\delta_{a}\right]\left(0\right).
    \label{eq:brahms1}
\end{multline}

Let $z=\left(y,\Yok\right)$ be the generic element of 
$\mathfrak z_{i}\left(\gamma\right) =\mathfrak p\left(\gamma\right) 
\oplus i\mathfrak  k\left(\gamma\right)$. 
Using   finite propagation speed for the wave equation,  
$$L^{\mathfrak z\left(\gamma\right)}\cos\left( 
s\sqrt{\left( C^{\mathfrak g}\right)^{\mathfrak 
z\left(\gamma\right)}+A}\right)$$
is a distribution on $\R\times 
\mathfrak z_{i}\left(\gamma\right)\times \mathfrak z_{i}\left(\gamma\right)$ 
whose support is  included in $\left(s,z,z'\right),\left\vert  
s\right\vert\ge \left\vert  z'-z\right\vert$. Moreover, by
\cite[Theorem 23.1.4 and remark]{Hormander85a}, its 
wave front set is the conic 
set associated with 
$\left(y',-Y'\right)=\left(y\pm sY,Y\right),\left\vert  
Y\right\vert=1,\tau=\pm 1$. Conic set means again that the dilations 
by $\lambda>0$ are applied to the variables $Y,Y',\tau$.

Set
\index{Hg@$H^{\gamma}$}%
\begin{equation}
    H^{\gamma}=\left\{0\right\}\times \left(a,i\mathfrak 
    k\left(\gamma\right) \right) \subset \mathfrak z_{i}\left(\gamma\right)\times 
    \mathfrak z_{i}\left(\gamma\right).
    \label{eq:senignew0}
\end{equation}
  The wave front set associated with 
$\R\times H^{\gamma} \subset \R\times \mathfrak z_{i}\left(\gamma\right)
\times \mathfrak z_{i}\left(\gamma\right)$ is  such that $Y^{ \prime \mathfrak 
k\left(\gamma\right)}=0, \tau=0$, 
so that  the product 
$$L^{\mathfrak 
z\left(\gamma\right)}\cos\left(s\sqrt{\left( C^{\mathfrak 
g} \right)^{\mathfrak z\left(\gamma\right)}+A}\right)H^{\gamma}$$
is well-defined. 

The function $\mathcal{J}_{\gamma}\left(\Yok\right)\Tr^{E}\left[\rho^{E}\left(k^{-1}
e^{-\Yok}\right)\right]$ can be viewed as a 
smooth function on the second copy of $\mathfrak 
z_{i}\left(\gamma\right)$ in $\mathfrak z_{i}\left(\gamma\right)\times 
\mathfrak z_{i}\left(\gamma\right)$. It lifts to a smooth 
function on $\mathfrak z_{i}\left(\gamma\right)\times \mathfrak 
z_{i}\left(\gamma\right)$. 

Therefore, 
\begin{equation}
L^{\mathfrak z\left(\gamma\right)}\cos\left(s\sqrt{\left(C^{\mathfrak g}\right)^{\mathfrak 
   z\left(\gamma\right)}+A}\right)H^{\gamma}\mathcal{J}_{\gamma}\left(\Yok\right)\Tr^{E}\left[\rho^{E}\left(k^{-1}
   e^{-\Yok}\right)\right]
    \label{eq:tromb1}
\end{equation}
is a well-defined distribution on $\R\times \mathfrak 
z_{i}\left(\gamma\right)\times \mathfrak z_{i}\left(\gamma\right)$. The pushforward 
of this distribution by the projection $\R\times \mathfrak z_{i}\left(\gamma\right)
\times \mathfrak z_{i}\left(\gamma\right)\to \R$ is denoted
\begin{equation}
    \int_{H^{\gamma}}^{}L^{\mathfrak 
	z\left(\gamma\right)}\cos\left(s\sqrt{\left( C^{\mathfrak 
	g}\right)^{\mathfrak 
	z\left(\gamma\right)}+A}\right)\mathcal{J}_{\gamma}\left(\Yok\right)\Tr^{E}\left[\rho^{E}\left(k^{-1}e^{-\Yok} \right) \right].
    \label{eq:trib7}
\end{equation}
This is an even distribution  supported in $\left\vert  
s\right\vert\ge  \left\vert  a\right\vert$, with singular support  
included in   $s=\pm  
\left\vert  a\right\vert$. Note that if $a=0$ and if $\mathfrak 
p\left(\gamma\right)=0$, the singular support of this distribution is 
empty.
\begin{theorem}\label{TFourier}
We have the identity  of even distributions on $\R$ supported on 
$\left\vert  s\right\vert\ge \left\vert  a\right\vert$ with singular 
support included in $\pm \left\vert  a\right\vert$,
\begin{multline}
    \int_{\Delta^{\gamma}_{X}}\Tr^{F}\left[\gamma L\cos\left(s\sqrt{C^{\mathfrak g,X}+A}\right)\right]
    \\
    =
   \int_{H^{\gamma}}^{}L^{\mathfrak 
   z\left(\gamma\right)}\cos\left(s\sqrt{\left( C^{\mathfrak 
   g}\right)^{\mathfrak 
   z\left(\gamma\right)}+A}\right)\mathcal{J}_{\gamma}\left(\Yok\right)\Tr^{E}\left[\rho^{E}\left(k^{-1}
  e^{-\Yok}\right)\right].
    \label{eq:trib8}
\end{multline}
\end{theorem}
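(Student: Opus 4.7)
The strategy is to deduce Theorem \ref{TFourier} from the smooth-kernel identity of Theorem \ref{thm:Tgenorbis} by testing both sides against a suitable dense family of even test functions on $\R$, exploiting the wavefront-set analysis already performed before the statement.

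First, I would verify that both sides of (\ref{eq:trib8}) are well-defined even distributions on $\R$ with the claimed support and singular-support properties. For the left-hand side, this is Proposition \ref{Ptribax} combined with the wavefront-set computation: $L\cos\left(s\sqrt{C^{\mathfrak g,X}+A}\right)$ is a distribution on $\R\times X\times X$ whose wavefront set meets the conormal of $\R\times\Delta^\gamma_X$ transversally (only at $\tau=0$ is there an issue), so pushforward to $\R$ is licit. For the right-hand side, the analogous argument: the wavefront set of $L^{\mathfrak z\left(\gamma\right)}\cos\left(s\sqrt{\left(C^{\mathfrak g}\right)^{\mathfrak z\left(\gamma\right)}+A}\right)$ on $\R\times \mathfrak z_i(\gamma)\times\mathfrak z_i(\gamma)$ is transverse to the conormal of $\R\times H^\gamma$ because $H^\gamma$ is transverse to the $\mathfrak k(\gamma)$-directions that carry the propagation, so the product with the distribution $\delta_a$ (encoded via the Dirac mass on the $\mathfrak p(\gamma)$-factor of $H^\gamma$) is defined, and multiplying by the smooth function $\mathcal{J}_\gamma\left(Y_0^{\mathfrak k}\right)\Tr^E[\rho^E(k^{-1}e^{-Y_0^{\mathfrak k}})]$ preserves well-definedness. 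Finally, pushforward along $\R\times\mathfrak z_i(\gamma)\times\mathfrak z_i(\gamma)\to\R$ is legitimate by the same finite-propagation-speed argument.

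Next, I would argue that a tempered even distribution $T$ on $\R$ is completely determined by the numbers $\int_\R \widehat\mu(s)\,T(ds)$ as $\mu$ ranges over $\mathcal S^{\mathrm{even}}(\R)$ with $\widehat\mu$ having compact support, since such $\widehat\mu$ are dense in $C^{\infty,\mathrm{even}}_c(\R)$ and the distributions in question are continuous on compactly supported test functions. So it suffices to check the identity after pairing both sides with $\widehat\mu$ for arbitrary $\mu\in\mathcal S^{\mathrm{even}}(\R)$ with $\widehat\mu$ compactly supported. Any such $\mu$ trivially satisfies the Gaussian decay condition (\ref{eq:fourier2}), so Theorem \ref{thm:Tgenorbis} is applicable.

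Then I would compute the two pairings. By (\ref{eq:trib4}) and (\ref{eq:trib6}), pairing $\widehat\mu$ with the left-hand side of (\ref{eq:trib8}) yields exactly $\Tr^{[\gamma]}\bigl[L\mu\bigl(\sqrt{C^{\mathfrak g,X}+A}\bigr)\bigr]$. By the very definition (\ref{eq:brahms1}), pairing $\widehat\mu$ with the right-hand side of (\ref{eq:trib8}) produces
\begin{equation*}
L^{\mathfrak z(\gamma)}\mu\left(\sqrt{\left(C^{\mathfrak g}\right)^{\mathfrak z(\gamma)}+A}\right)\bigl[\mathcal J_\gamma\left(Y_0^{\mathfrak k}\right)\Tr^E[\rho^E(k^{-1}e^{-Y_0^{\mathfrak k}})]\delta_a\bigr](0).
\end{equation*}
These two quantities coincide by Theorem \ref{thm:Tgenorbis}, which gives the identity (\ref{eq:trib8}). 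The support and singular-support refinements stated in Theorem \ref{TFourier} then come from the corresponding statements for each side established in the first step.

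The main obstacle is the bookkeeping of the wavefront-set conditions ensuring that the pullback to $\R\times H^\gamma$ and the subsequent pushforward on the right-hand side are legitimate operations in the sense of \cite[Theorem 8.2.10]{Hormander83a}; once this is in place, the proof is purely a density/testing argument against Theorem \ref{thm:Tgenorbis}.
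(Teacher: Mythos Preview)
Your proposal is correct and follows essentially the same approach as the paper's own proof, which simply invokes Theorem \ref{thm:Tgenorbis} and the argument of \cite[Theorem 6.3.2]{Bismut08b}. You have made explicit the density/testing step and the wavefront bookkeeping that the paper leaves implicit in that reference.
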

\begin{proof}
	We use Theorem \ref{thm:Tgenorbis}, and we proceed as in the 
	proof of \cite[Theorem 6.3.2]{Bismut08b}.
\end{proof}

\subsection{Compatibility properties of the formula}%
\label{subsec:natu}
Let us give a direct proof that the right-hand side of 
(\ref{eq:diff2bis}) is invariant by conjugation 
of $\gamma$ in $G$. 
Indeed let $\gamma,\gamma'$ be two conjugate elements  in $G$ as in 
Theorem \ref{thm:Tincl}. By this theorem,
 they are also 
conjugate by an element $k''$ of $K$, and equation 
(\ref{eq:dis4b}) holds. Since the character 
of the representation $\rho^{E}$ is invariant by conjugation by 
elements of $K$, the right hand-sides of (\ref{eq:diff2bis}) associated with 
$\gamma,\gamma'$ coincide.

We will denote  the dependence of our orbital integrals on 
$E$
with an extra superscript $E$. If $L\in Z\left(\mathfrak g\right)$, 
by Theorem \ref{thm:pinv}, the $L_{2}$ transpose of $L$ is 
just $\sigma\left(L\right)$. Observe that $C^{\mathfrak g,X}$ is 
symmetric, i.e., it is equal to its transpose. Then one has the easy 
formula
\begin{align}\label{eq:gong9}
&\Tr^{\left[\gamma^{-1}\right],E^{*}}\left[\sigma\left(L\right)\mu
\left(\sqrt{C^{\mathfrak g,X}+A}\right)\right]
=\Tr^{\left[\gamma\right],E}\left[L\mu\left(\sqrt{C^{\mathfrak 
g,X}+A}\right)\right],\\
&\overline{\Tr^{\left[\gamma\right],E}\left[L\mu\left(\sqrt{C^{\mathfrak g,X}+A}\right)\right]}=\Tr^{\left[\gamma\right],E^{*}}\left[L
\mu\left(\sqrt{C^{\mathfrak g,X}+A}\right)\right]. \nonumber 
\end{align}
Using the  identities in  (\ref{eq:ida1}), we can recover 
(\ref{eq:gong9}) from (\ref{eq:diff2bis}). 

Finally, it is easy to verify that, as it should be,  our formula is unchanged when 
replacing $\gamma,L$ by $\theta\gamma,\theta L$. 
\section{Orbital integrals and the index theorem}%
\label{sec:index}
The purpose of this section is to verify the compatibility of our 
formula for orbital integrals with the index theorem of Atiyah-Singer 
\cite{AtiyahSinger68,AtiyahSinger68b},  to the Lefschetz formulas 
of \cite{AtiyahBott67,AtiyahBott68} for Dirac operators, to the index 
formula of Kawasaki \cite{Kawasaki79}. More precisely we extend to 
the case of an arbitrary $L$ what was done in \cite[Chapter 
7]{Bismut08b} in the case $L=1$. Also we verify the compatibility of 
our results with results of Huang-Pand\v{z}i\'{c} \cite{HuangPan02}  
who established the 
Vogan conjecture on Dirac cohomology.

This section is organized as follows. In Subsection 
\ref{subsec:dirX}, we construct the Dirac operator $D^{X}$ on the 
symmetric space $X$. 

In Subsection \ref{subsec:dimb}, we introduce the relevant notation 
when  $G$ and 
$K$ have the same complex rank.

In Subsection \ref{subsec:orbi}, we evaluate the orbital integrals 
associated with the index theorem for Dirac operators when $\gamma$ 
semisimple is nonelliptic, and also when $\gamma=1$. 

In Subsection \ref{subsec:dimbb}, when $\gamma$ is elliptic, we 
consider again the case  where the difference of complex ranks is 
still equal to $0$. 

In Subsection \ref{subsec:ellel}, we evaluate the orbital integrals  
associated with the index theorem for the Dirac operator. 

Finally, in Subsection \ref{subsec:resHP}, we verify the 
compatibility of our results with the results of Huang-Pand\v{z}i\'{c}  \cite{HuangPan02}.
\subsection{The Dirac operator on $X$}%
\label{subsec:dirX}
Here, we use the notation of Section \ref{sec:geofo}. We assume that 
$K$ is  simply connected, and also that $\mathfrak p$ is even-dimensional and oriented. Let $c\left(\mathfrak 
p\right)$ be the Clifford algebra associated with 
$\left(\mathfrak p,B\vert_{\mathfrak p}\right)$. 

As explained in \cite[Section 
7.2]{Bismut08b}, the representation $\rho^{\mathfrak p}: K\to 
\mathrm{SO}\left(\mathfrak p\right)$ lifts to a representation $K\to 
\Aut^{\mathrm{even}}\left(S^{\mathfrak p}\right)$, where 
$S^{\mathfrak p}=S^{\mathfrak p}_{+} \oplus S^{\mathfrak p}_{-}$ is 
the $\Z_{2}$-graded Hermitian vector space of $\mathfrak p$-spinors. 
We have the identification of $\Z_{2}$-graded algebras,
\begin{equation}\label{eq:clif1}
c\left(\mathfrak p\right) \otimes _{\R}\C=\End\left(S^{\mathfrak 
p}\right).
\end{equation}

Set
 \begin{equation}\label{eq:clif3}
	 S^{TX}=G\times_{K} S^{\mathfrak p}.
\end{equation}
The $\Z_{2}$-graded vector bundle   $S^{TX}$ inherits a 
unitary connection $\n^{S^{TX}}$. 

Let $\n^{S^{TX}\otimes F}$ be 
the  connection on $S^{TX} \otimes F$ associated with $\n^{S^{TX}}, 
\n^{F}$.  

Recall that $C^{\mathfrak k,E}$ descends to a parallel section 
$C^{\mathfrak k,F }$ of $\End\left(F\right)$. Here, $C^{\mathfrak 
g,X}$ denotes the action of $C^{\mathfrak g}$ on $C^{\infty 
}\left(X,S^{TX} \otimes F\right)$.
 
Here, $D^{X}$ denotes 
the Dirac operator acting on $C^{\infty}(X,S^{TX}\otimes F)$. If 
$e_{1},\ldots,e_{m}$ is an orthonormal basis of $TX$ then
\begin{equation}\label{eq:clif2a1}
D^{X}=\sum_{1}^{n}c\left(e_{i}\right)\n^{S^{TX} \otimes F}_{e_{i}}.
\end{equation}

Let 
$\mathfrak h  \subset \mathfrak g$ be a 
$\theta$-stable fundamental Cartan subalgebra of $\mathfrak g$. We will use the notation
\index{b@$\mathfrak b$}%
\begin{align}\label{eq:clif2}
&\mathfrak b=\mathfrak h_{\mathfrak p}, &\mathfrak t=\mathfrak 
h_{\mathfrak k}.
\end{align}
Then $\mathfrak t \subset \mathfrak k$ is the Lie algebra of a maximal torus $T \subset 
K$. Also $\dim \mathfrak t,\dim \mathfrak h$ are the complex ranks of 
$K,G$, and $\dim \mathfrak b$ is the difference of these complex 
ranks. Since $m$ is even, $\dim \mathfrak b$ is also even.  Let 
$\phi_{\mathrm{HC}}: Z\left(\mathfrak g\right) \simeq  I\ac\left(\mathfrak 
h, \mathfrak g\right)$ be the corresponding isomorphism of 
Harish-Chandra.

We fix a system of positive roots in $i \mathfrak  t^{*}$ associated 
with the pair $\left(\mathfrak t,\mathfrak k\right)$. In particular
\index{rk@$\rho^{\mathfrak k}$}%
$\rho^{\mathfrak k}\in i \mathfrak t^{*}$ is calculated with respect 
to this system.

By \cite[eqs. (7.2.8) and (7.2.9)]{Bismut08b} and by 
(\ref{eq:fina-1}), we get
\begin{equation}\label{eq:clif4}
D^{X,2}=C^{\mathfrak g,X}-B^{*}\left(\rho^{\mathfrak 
g},\rho^{\mathfrak g}\right)+B^{*}\left(\rho^{\mathfrak 
k},\rho^{\mathfrak k}\right)-C^{\mathfrak k,F}.
\end{equation}

We may and we will assume that $\rho^{E}$ is an  irreducible representation of $K$ 
with  dominant weight $\lambda\in i \mathfrak t^{*}$. Then
\begin{equation}\label{eq:clif6}
C^{\mathfrak k,E}=-B^{*}\left(\rho^{\mathfrak 
k}+\lambda,\rho^{\mathfrak 
k}+\lambda\right)+B^{*}\left(\rho^{\mathfrak k},\rho^{\mathfrak 
k}\right).
\end{equation}

By (\ref{eq:clif4}), (\ref{eq:clif6}), we get
\begin{equation}\label{eq:clif7}
D^{X,2}=C^{\mathfrak g,X}-B^{*}\left(\rho^{\mathfrak 
g},\rho^{\mathfrak g}\right)+B^{*}\left(\rho^{\mathfrak 
k}+\lambda,\rho^{\mathfrak k}+\lambda\right).
\end{equation}
By (\ref{eq:inva9}), we can rewrite (\ref{eq:clif7}) in the form
\begin{equation}\label{eq:clif7a1}
D^{X,2}=C^{\mathfrak g,X}-\phi_{\mathrm{HC}}C^{\mathfrak 
g}\left(  \rho^{\mathfrak k}+\lambda \right).
\end{equation}
\subsection{The case where $\dim \mathfrak b=0$}%
\label{subsec:dimb}
In this Subsection, we assume that $\dim \mathfrak b=0$. Then $\mathfrak h= \mathfrak t$
is a fundamental Cartan subalgebra of $\mathfrak g$, and 
\begin{equation}\label{eq:clif13a1}
R=R^{\mathrm{im}}.
\end{equation}
Also $R^{\mathrm{im}}_{\mathfrak k}$ is just the root system 
associated with the pair $\left(\mathfrak t, \mathfrak k\right)$. We 
fix a positive root system 
\index{Rimkp@$R^{\mathrm{im}}_{\mathfrak k,+}$}%
$R^{\mathrm{im}}_{\mathfrak k,+} \subset 
R^{\mathrm{im}}_{\mathfrak k}$, and 
a positive weight system 
\index{Rpp@$R^{\mathrm{im}}_{\mathfrak p,+}$}%
$R^{\mathrm{im}}_{\mathfrak p,+} \subset R_{\mathfrak 
p}^{\mathrm{im}}$ which is compatible with the orientation of 
$\mathfrak p$, so that
\index{Rim@$R_{+}^{\mathrm{im}}$}%
$R_{+}^{\mathrm{im}}=R^{\mathrm{im}}_{\mathfrak p,+}\cup 
R^{\mathrm{im}}_{\mathfrak k,+}$ is a positive root system for the 
pair $\left(\mathfrak t,\mathfrak g\right)$. 

The functions 
\index{ptg@$\pi^{\mathfrak t, \mathfrak g}$}%
\index{ptk@$\pi^{\mathfrak t, \mathfrak k}$}%
$\pi^{\mathfrak t, \mathfrak g},\pi^{\mathfrak t, 
\mathfrak k}$ on $\mathfrak t$ are given by
\begin{align}\label{eq:clif13a3}
&\pi^{\mathfrak t, \mathfrak g}\left(h\right)=\prod_{\alpha\in 
R^{\mathrm{im}}_{+}}^{}\left\langle  \alpha,h\right\rangle,
&\pi^{\mathfrak t, \mathfrak k}\left(h\right)=\prod_{\alpha\in 
R^{\mathrm{im}}_{\mathfrak k,+}}^{}\left\langle  \alpha,h\right\rangle.
\end{align}

Here,  $\rho^{\mathfrak k}, \lambda\in i \mathfrak  t^{*}$ are calculated with this
choice of $R^{\mathrm{im}}_{\mathfrak k,+}$. We identify $\mathfrak t$ and $\mathfrak t^{*}$ by 
the quadratic form $B\vert_{\mathfrak t}$.  In particular, 
$\pi^{\mathfrak t, \mathfrak k}\left(\frac{\rho^{\mathfrak 
k}}{2\pi}\right)$ and $\pi^{\mathfrak t, \mathfrak 
k}\left(\frac{\rho^{\mathfrak k}+\lambda}{2\pi}\right)$ are 
well-defined, and $\frac{\pi^{\mathfrak t,\mathfrak 
g}\left(\frac{\rho^{\mathfrak 
k}+\lambda}{2\pi}\right)}{\pi^{\mathfrak t, \mathfrak 
k}\left(\frac{\rho^{\mathfrak k}}{2\pi}\right)}$ only depends on the 
orientation of $\mathfrak p$.   Also
$\phi_{\mathrm{HC}}L$ is a polynomial on $\mathfrak t^{*}$, and so $\phi_{\mathrm{HC}}L\left(-\rho^{\mathfrak 
k}-\lambda\right)$ is well-defined.  
\subsection{Orbital integrals and the index theorem: the case of the 
 identity}%
\label{subsec:orbi}
 Take $L\in Z(\mathfrak g)$. For $t>0$, $L\exp\left(-tD^{X,2}\right)$ 
acts on $C^{\infty }\left(X,S^{TX} \otimes F\right)$. 

In the sequel, 
\index{Trs@$\Trs$}%
$\Trs$ is our notation for the supertrace.\footnote{If $V=V_{+} 
\oplus V_{-}$ is a $\Z_{2}$-graded vector space, if $\tau=\pm 1$ is 
the involution defining the grading,  if $A\in \End\left(V\right)$, 
the supertrace of $A$ is defined to be 
$\Trs\left[A\right]=\Tr\left[\tau A\right]$.}

We will extend \cite[Theorem 7.4.1]{Bismut08b}. 
As in \cite[Section 7.1]{Bismut08b}, 
$\widehat{A}\left(TX,\n^{TX}\right),\ch\left(F,\n^{F}\right)$ denote 
the obvious
 characteristic forms on $X$. Let $\eta\in 
\Lambda^{m}\left(T^{*}X\right)$ be the canonical volume form on $X$ 
that defines its orientation.  If $\alpha\in 
\Lambda\ac\left(T^{*}X\right)$, let $\alpha^{(p)}$ denote its 
component in $\Lambda^{p}\left(T^{*}X\right)$. Let $\alpha^{\max}\in 
\R$ be such that
\begin{equation}\label{eq:clif8}
a^{(m)}=\alpha^{\max}\eta.
\end{equation}

Let $\gamma\in G$ be 
semisimple.
\begin{theorem}\label{thm:indt}
	If $\gamma$ is nonelliptic, for any $t>0$, 
	\begin{equation}\label{eq:D0}
\Trs^{\left[\gamma\right]}\left[L\exp\left(-tD^{X,2}\right)\right]=0.
\end{equation}

	If $\dim \mathfrak b>0$, for any $t>0$, 
	\begin{align}\label{eq:clif9}
&\Trs^{\left[1\right]}\left[L\exp(-tD^{X,2})\right]=0,\\
&\left[\widehat{A}\left(TX,\n^{TX}\right)\ch\left(F,\n^{F}\right)\right]^{\max}=0. \nonumber 
\end{align}

If $\dim \mathfrak b=0$, then
\begin{align}\label{eq:D1a1}
&\Trs^{\left[1\right]}\left[L\exp\left(-tD^{X,2}\right)\right]=\phi_{\mathrm{HC}}L\left(-\rho^{\mathfrak k}-\lambda \right) \left(-1\right)^{m/2}\frac{\pi^{\mathfrak t, \mathfrak g}\left(\frac{\rho^{\mathfrak 
k}+\lambda}{2\pi}\right)}{\pi^{\mathfrak t,\mathfrak 
k}\left(\frac{\rho^{\mathfrak k}}{2\pi}\right)},\\
&\left[\widehat{A}\left(TX,\n^{TX}\right)\ch\left(F,\n^{F}\right)\right]^{\max}=\left(-1\right)^{m/2}
\frac{\pi^{\mathfrak t, \mathfrak g}\left(\frac{\rho^{\mathfrak 
k}+\lambda  }{2\pi} \right)} {\pi^{\mathfrak t, \mathfrak 
k}\left(\frac{\rho^{\mathfrak k}}{2\pi}\right)}. \nonumber 
\end{align}
\end{theorem}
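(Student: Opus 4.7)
The plan is to reduce everything to the master identity in Theorem \ref{thm:Tgenorbis}. I would apply that theorem with $E$ replaced by the $\Z_2$-graded $K$-representation $S^{\mathfrak p}\otimes E$, $\Tr$ replaced by the supertrace $\Trs$, $\mu(x)=e^{-tx^2}$, and $A=-\phi_{\mathrm{HC}}C^{\mathfrak g}(\rho^{\mathfrak k}+\lambda)\in\R$. By (\ref{eq:clif7a1}), the operator $\mu(\sqrt{C^{\mathfrak g,X}+A})$ then coincides with $e^{-tD^{X,2}}$, and Proposition \ref{prop:czg} identifies $(C^{\mathfrak g})^{\mathfrak z(\gamma)}$ with $-\Delta^{\mathfrak z(\gamma)}+B^{*}(\rho^{\mathfrak g},\rho^{\mathfrak g})$. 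For any semisimple $\gamma$ this yields
\begin{multline*}
\Trs^{\left[\gamma\right]}\!\left[L e^{-tD^{X,2}}\right]\\
= L^{\mathfrak z(\gamma)}\mu_{t}\!\left(\sqrt{-\Delta^{\mathfrak z(\gamma)}+B^{*}(\rho^{\mathfrak g},\rho^{\mathfrak g})+A}\right)\!\left[\mathcal J_{\gamma}(\Yok)\,\Trs^{S^{\mathfrak p}\otimes E}\!\left[\rho(k^{-1}e^{-\Yok})\right]\delta_{a}\right]\!(0),
\end{multline*}
with $\mu_{t}(x)=e^{-tx^{2}}$.

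When $\gamma$ is nonelliptic, $a\neq 0$. By (\ref{eq:inva18a}) the vector $a$ is central in $\mathfrak z(\gamma)$ and fixed by $\Ad(k)$, so $k^{-1}e^{-\Yok}$ acts as the identity on $\R a\subset\mathfrak p(\gamma)$. The multiplicativity of the spin supertrace along the $K(\gamma)$-stable splitting $\mathfrak p=\mathfrak p(\gamma)\oplus\mathfrak p^{\perp}(\gamma)$, together with the Pfaffian-type identity $\Trs^{S^{V}}[\rho^{S^{V}}(g)]^{2}=\pm\det(1-\rho^{V}(g))$ for $g\in\mathrm{Spin}(V)$, forces $\Trs^{S^{\mathfrak p(\gamma)}}[\rho(k^{-1}e^{-\Yok})]$ to vanish identically. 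Hence the whole distribution in the right-hand side is zero and (\ref{eq:D0}) follows.

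For $\gamma=1$ one has $a=0$, $k=1$, $\mathfrak z(\gamma)=\mathfrak g$, $\mathfrak k(\gamma)=\mathfrak k$; by (\ref{eq:norm0}), $\mathcal J_{1}(\Yok)=\widehat A(\ad(\Yok)|_{\mathfrak p})/\widehat A(\ad(\Yok)|_{\mathfrak k})$. Using the classical spin supertrace identity
\[
\Trs^{S^{\mathfrak p}}\!\left[\rho^{S^{\mathfrak p}}(e^{-\Yok})\right]=(-1)^{m/2}\det\!\left(2\sinh\tfrac{\ad(\Yok)}{2}\Big|_{\mathfrak p}\right)^{1/2},
\]
the product $\mathcal J_{1}(\Yok)\Trs^{S^{\mathfrak p}}[\rho(e^{-\Yok})]$ simplifies to $(-1)^{m/2}\det(\ad(\Yok)|_{\mathfrak p})^{1/2}/\widehat A(\ad(\Yok)|_{\mathfrak k})$. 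When $\dim\mathfrak b>0$, $\Ad$-invariance of the whole integrand allows a Weyl-integration reduction to $i\mathfrak t$; on $i\mathfrak t$ the kernel of $\ad(\cdot)|_{\mathfrak p}$ contains $\mathfrak b$, so $\det(\ad(\cdot)|_{\mathfrak p})\equiv 0$ and the reduced integral vanishes, giving the first identity in (\ref{eq:clif9}). The second identity comes from the local index theorem of Atiyah-Bott-Patodi: on a symmetric space with parallel curvature, $\Trs[e^{-tD^{X,2}}(x,x)]=[\widehat A(TX,\n^{TX})\ch(F,\n^{F})]^{\max}(x)$ is $t$-independent and $G$-invariant, and equating it with the just-proven vanishing of $\Trs^{[1]}[e^{-tD^{X,2}}]$ forces the characteristic form to vanish as well.

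When $\dim\mathfrak b=0$, $\mathfrak h=\mathfrak t$ and the reduced integrand on $i\mathfrak t$ is no longer forced to be zero. I would now invoke Rossmann's formula (as in Proposition \ref{prop:keha} applied with $\mathfrak z(\gamma)=\mathfrak g$) to convert the convolution of $L^{\mathfrak g}\mu_{t}(\sqrt{-\Delta^{\mathfrak g}+B^{*}(\rho^{\mathfrak g},\rho^{\mathfrak g})+A})$ against the $\Ad$-invariant distribution on $\mathfrak g_{i}$ into Fourier analysis on $i\mathfrak t$. After matching the spin supertrace and the $\mathcal J_{1}$-factor with the Weyl-denominator $[\pi^{\mathfrak t,\mathfrak k}]^{2}$ produced by the reduction, the distribution becomes, up to the prefactor $1/\pi^{\mathfrak t,\mathfrak k}(\rho^{\mathfrak k}/2\pi)$, the Dirac mass at the point $-(\rho^{\mathfrak k}+\lambda)\in i\mathfrak t^{*}$; this is the Kirillov-type identity underlying Weyl's character formula for $K$, with the classical spinor shift making the highest weight of $S^{\mathfrak p}_{+}$ equal to $\rho^{\mathfrak g}-\rho^{\mathfrak k}$. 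Because $A=-\phi_{\mathrm{HC}}C^{\mathfrak g}(\rho^{\mathfrak k}+\lambda)$, the Gaussian exponent $B^{*}(\rho^{\mathfrak k}+\lambda,\rho^{\mathfrak k}+\lambda)-B^{*}(\rho^{\mathfrak g},\rho^{\mathfrak g})+A$ vanishes identically, so all $t$-dependence disappears and only the evaluation $\phi_{\mathrm{HC}}L(-\rho^{\mathfrak k}-\lambda)$ together with the ratio $\pi^{\mathfrak t,\mathfrak g}(\cdot)/\pi^{\mathfrak t,\mathfrak k}(\cdot)$ survives, yielding the first identity of (\ref{eq:D1a1}); the second then follows from the local index theorem combined with the just-established $t$-independence. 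The main obstacle is precisely this last bookkeeping of signs and Weyl denominators; for $L=1$ it is the computation already carried out in \cite[Chapter 7]{Bismut08b}, and the insertion of $L$ reduces, via (\ref{eq:com5}) and Rossmann's formula, to evaluating the polynomial $\phi_{\mathrm{HC}}L$ at the same point $-(\rho^{\mathfrak k}+\lambda)$.
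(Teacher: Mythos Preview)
Your approach is essentially the paper's: apply Theorem \ref{thm:Tgenorbis} with $S^{\mathfrak p}\otimes E$ and $\mu(x)=e^{-tx^{2}}$, use the spin supertrace identity to get the vanishing statements, and combine Weyl integration, the Weyl character formula, and Rossmann's formula for the explicit evaluation when $\dim\mathfrak b=0$. Two small points. First, in the nonelliptic case your detour through the splitting $\mathfrak p=\mathfrak p(\gamma)\oplus\mathfrak p^{\perp}(\gamma)$ is unnecessary and not quite safe: multiplicativity of the spin supertrace needs both summands even-dimensional, which is not guaranteed. The paper applies your Pfaffian identity directly to $V=\mathfrak p$ (even by hypothesis): $a\neq 0$ lies in $\ker\bigl(1-\Ad(k^{-1}e^{-\Yok})\bigr)\big|_{\mathfrak p}$, so the determinant vanishes and $\Trs^{S^{\mathfrak p}}[\rho^{S^{\mathfrak p}}(k^{-1}e^{-\Yok})]=0$. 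Second, for the characteristic-form identities the paper simply cites \cite[Theorem 7.4.1]{Bismut08b}, i.e.\ equation \eqref{eq:grsen1}; your ``local index theorem of Atiyah--Bott--Patodi'' only gives the $t\to 0$ limit, not $t$-independence, but that limit already suffices once you have the first lines of \eqref{eq:clif9} and \eqref{eq:D1a1} for all $t>0$.
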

\begin{proof}
	First we prove (\ref{eq:D0}). We proceed as in \cite{Bismut08b}. By Proposition \ref{prop:czg}, by Theorem \ref{thm:Tgenorbis} and by 
(\ref{eq:clif7}), we get
\begin{multline}\label{eq:clif13}
	\Trs^{\left[\gamma\right]}\left[L\exp\left(-tD^{X,2}\right)\right]=\exp\left(-tB^{*}\left(\rho^{\mathfrak k}+\lambda,\rho^{\mathfrak k}+\lambda\right)\right)\\
	L^{ \mathfrak 
	z\left(\gamma\right)}\exp\left(t\Delta^{\mathfrak z\left(\gamma\right)}\right)\left[ \mathcal{J}_{\gamma}\left(Y_{0}^{ \mathfrak k}\right)
 \Trs^{S^{ \mathfrak p}\otimes 
 E}\left[\rho^{S^{\mathfrak p} \otimes 
 E}\left(k^{-1}e^{-\Yok}\right)\right]\delta_{a}\right]\left(0\right).
\end{multline}
Also
\begin{multline}\label{eq:clif13x1}
\Trs^{S^{ \mathfrak p}\otimes 
 E}\left[\rho^{S^{\mathfrak p} \otimes 
 E}\left(k^{-1}e^{-\Yok}\right)\right]=
 \Trs^{S^{ \mathfrak p}}\left[\rho^{S^{\mathfrak 
 p}}\left(k^{-1}e^{-\Yok}\right)\right]\\
 \Tr^{ E}\left[\rho^{ E}\left(k^{-1}e^{-\Yok}\right)\right].
\end{multline}
It is well-known that 
$$\Trs^{S^{ \mathfrak p}}\left[\rho^{S^{\mathfrak p}}\left(k^{-1}e^{-\Yok}\right)\right]$$
is a square root of 
$\det\left(1-\Ad\left(k^{-1}e^{-\Yok}\right)\vert_{\mathfrak p}\right)$.

If $\gamma$ is nonelliptic, $a\neq 0$ lies in the kernel of 
$1-\Ad\left(k^{-1}e^{-\Yok}\right)\vert_{\mathfrak p}$, and so 
(\ref{eq:clif13x1}) vanishes. By (\ref{eq:clif13}), we get 
(\ref{eq:D0}).

By \cite[Theorem 7.4.1]{Bismut08b}, we get
\begin{equation}\label{eq:grsen1}
\Trs^{\left[1\right]}\left[\exp\left(-tD^{X,2}\right)\right]=\left[\widehat{A}\left(TX,\n^{TX}\right)\ch\left(F,\n^{F}\right)\right]^{\max}.
\end{equation}

By (\ref{eq:clif13}),     we get
	 \begin{multline}\label{EQ:TURLU5GEN}
    \Trs^{\left[1\right]}\left[L\exp\left(-tD^{X,2}\right)\right]=\exp\left(-tB^{*}\left(\rho^{\mathfrak k}+\lambda,\rho^{\mathfrak k}+\lambda\right)\right)\\
L^{\mathfrak g}\exp\left(t\Delta^{\mathfrak g}\right)\left[	
 \mathcal{J}_{1}\left(Y_{0}^{ \mathfrak k}\right)
 \Trs^{S^{ \mathfrak p}\otimes 
 E}\left[\rho^{S^{\mathfrak p} \otimes 
 E}\left(e^{-\Yok}\right)\right]\delta_{0}\right]\left(0\right).
 \end{multline}
 We will use (\ref{eq:clif13x1}) with $k=1$. 
We have the well-known identity in \cite[eq. (7.5.11)]{Bismut08b}, 
\begin{equation}\label{eq:clif11}
\Trs^{S^{ \mathfrak p}}\left[\rho^{S^{\mathfrak 
p}}\left(e^{-\Yok}\right)\right]=\left(-i\right)^{m/2}\Pf\left[\ad\left(\Yok\right)\vert_{\mathfrak p}\right]
\widehat{A}^{-1}\left(\ad\left(\Yok\vert_{\mathfrak p}\right)\right).
\end{equation}

In (\ref{eq:clif11}), we may and we will assume that $\Yok\in i 
\mathfrak t$. Since $\mathfrak b \subset \ker 
\ad\left(\Yok\right)\vert_{\mathfrak p}$, if $\dim \mathfrak b>0$, 
then
\begin{equation}\label{eq:clif11a1}
\Pf\left[\ad\left(\Yok\right)\vert_{\mathfrak p}\right]=0, 
\end{equation}
and so (\ref{eq:clif11}) vanishes, which implies the vanishing of 
(\ref{EQ:TURLU5GEN}), 
i.e.,  we have established the first identity in (\ref{eq:clif9}).
Combining this equation for $L=1$ and (\ref{eq:grsen1}), we obtain 
the second equation in (\ref{eq:clif9}).

In the sequel, we assume that $\dim \mathfrak b=0$.
We will use the notation and results of Subsection \ref{subsec:dimb}.

For $\gamma=1$, we use the notation  
\index{gi@$\mathfrak g_{i}$}%
$\mathfrak g_{i}= \mathfrak z_{i}\left(\gamma\right)$, so that
$\mathfrak g_{i}= \mathfrak p \oplus i 
\mathfrak k$.
Let $L^{\mathfrak 
g}\exp\left(t\Delta^{\mathfrak g}\right)\left(f\right), f\in 
\mathfrak g_{i}$ be the 
convolution kernel for $L^{\mathfrak g}\exp\left(t\Delta^{\mathfrak 
g}\right)$. Then
\begin{multline}\label{eq:clif14}
L^{\mathfrak g}\exp\left(t\Delta^{\mathfrak g}\right)\left[ \mathcal{J}_{1}\left(Y_{0}^{ \mathfrak k}\right)
 \Trs^{S^{ \mathfrak p}\otimes 
 E}\left[\rho^{S^{\mathfrak p} \otimes 
 E}\left(e^{-\Yok}\right)\right]\delta_{0}\right]\left(0\right)\\
 =\int_{i \mathfrak k}^{}L^{\mathfrak g}\exp\left(t\Delta^{\mathfrak 
 g}\right)\left(-\Yok\right)\mathcal{J}_{1}\left(Y_{0}^{ \mathfrak k}\right)
 \Trs^{S^{ \mathfrak p}\otimes 
 E}\left[\rho^{S^{\mathfrak p} \otimes 
 E}\left(e^{-\Yok}\right)\right]d\Yok.
\end{multline}

Let 
\index{Wtk@$W\left(\mathfrak t:\mathfrak k\right)$}%
$W\left(\mathfrak t:\mathfrak k\right)$ \footnote{As before, we use this 
notation instead of $W\left(\mathfrak t_{\C}:\mathfrak k_{\C}\right)$ 
because this Weyl group is real.} denote the Weyl group associated 
with the pair $\left(\mathfrak t, \mathfrak k\right)$. Since the integrated function on $i \mathfrak k$ is $K$-invariant, by 
Weyl's integration formula, as in (\ref{eq:gong3}), from 
(\ref{eq:clif14}), we get
\begin{multline}\label{eq:clif15}
L^{\mathfrak g}\exp\left(t\Delta^{\mathfrak g}/2\right)\left[ \mathcal{J}_{1}\left(Y_{0}^{ \mathfrak k}\right)
 \Trs^{S^{ \mathfrak p}\otimes 
 E}\left[\rho^{S^{\mathfrak p} \otimes 
 E}\left(e^{-\Yok}\right)\right]\delta_{0}\right]\left(0\right)\\
 =\frac{\Vol\left(K/T\right)}{\left\vert  W\left(\mathfrak t:\mathfrak k\right)\right\vert}\int_{i 
 \mathfrak t}^{}L^{\mathfrak g}\exp\left(t\Delta^{\mathfrak 
 g}\right)\left(-h_{\mathfrak 
 k}\right)\\
 \mathcal{J}_{1}\left(h_{\mathfrak k}\right)
 \Trs^{S^{ \mathfrak p}\otimes 
 E}\left[\rho^{S^{\mathfrak p} \otimes 
 E}\left(e^{-h_{\mathfrak k}}\right)\right]
 \left[\pi^{\mathfrak t, 
 \mathfrak k}\left(h_{\mathfrak k}\right)\right]^{2}dh_{\mathfrak k}.
\end{multline}

By (\ref{eq:norm0}), (\ref{eq:clif11}), we get
\begin{equation}\label{eq:clif16}
\mathcal{J}_{1}\left(h_{\mathfrak 
k}\right)\Trs^{S^{\mathfrak p}}\left[e^{-h_{\mathfrak k}}\right]=\left(-i\right)^{m/2}\Pf\left[\ad\left(h_{\mathfrak k}\right)\vert_{\mathfrak p}\right]\widehat{A}^{-1}\left(\ad\left(h_{\mathfrak k}\right)\vert_{\mathfrak k}\right).
\end{equation}
Moreover, given our choice of $R^{\mathrm{im}}_{\mathfrak p,+}$, we have
\begin{equation}\label{eq:clif17}
\Pf\left[\ad\left(h_{\mathfrak k}\right)\vert_{\mathfrak 
p}\right]=i^{m/2}\prod_{\alpha\in R^{\mathrm{im}}_{\mathfrak 
p,+}}^{}\left\langle  \alpha,h_{\mathfrak k}\right\rangle.
\end{equation}

If $w\in W\left(\mathfrak t:\mathfrak k\right)$, let $\epsilon_{w}=\pm 1$ be the 
determinant of $w$ on $\mathfrak t$.
Using the Weyl character formula, we have the identity
\begin{multline}\label{eq:clif18}
\left[\pi^{\mathfrak t,\mathfrak k}\left(h_{\mathfrak k} \right) \right]^{2}\widehat{A}^{-1}\left(\ad\left(h_{\mathfrak k}\right)\vert_{\mathfrak 
k}\right)\Tr^{E}\left[\rho^{E}\left(e^{-h_{\mathfrak 
k}}\right)\right]\\
=\left(-1\right)^{\left\vert  R^{\mathrm{im}}_{\mathfrak 
k,+}\right\vert}\pi^{\mathfrak t,\mathfrak k}\left(h_{\mathfrak k}\right)
\sum_{w\in W\left(\mathfrak t:\mathfrak k\right)}^{}\epsilon_{w}e^{-\left\langle  
w\left(\rho^{\mathfrak k}+\lambda\right),h_{\mathfrak k}\right\rangle}.
\end{multline}

By (\ref{eq:clif13a3}), and (\ref{eq:clif16})--(\ref{eq:clif18}), we conclude that
\begin{multline}\label{eq:clif19}
\mathcal{J}_{1}\left(h_{\mathfrak k}\right)\Trs^{S^{\mathfrak p} 
\otimes E}\left[\rho^{S^{\mathfrak p} \otimes 
E}\left(e^{-h_{\mathfrak k}}\right)\right]\left[\pi^{\mathfrak 
t,\mathfrak k}\left(h_{\mathfrak k} \right) \right]^{2}\\
=\left(-1\right)^{\left\vert  R^{\mathrm{im}}_{\mathfrak k,+}\right\vert}\pi^{\mathfrak t,\mathfrak g}\left(h_{\mathfrak k}\right)
\sum_{w\in W\left(\mathfrak t:\mathfrak k\right)}^{}\epsilon_{w}e^{-\left\langle  
w\left(\rho^{\mathfrak k}+\lambda\right),h_{\mathfrak k}\right\rangle}.
\end{multline}

By  (\ref{eq:clif19}), we obtain
\begin{multline}\label{eq:clif20}
\int_{i 
 \mathfrak t}^{}L^{\mathfrak g}\exp\left(t\Delta^{\mathfrak 
 g}\right)\left(-h_{\mathfrak 
 k}\right)\mathcal{J}_{1}\left(h_{\mathfrak k}\right)
 \Trs^{S^{ \mathfrak p}\otimes 
 E}\left[\rho^{S^{\mathfrak p} \otimes 
 E}\left(e^{-h_{\mathfrak k}}\right)\right] \left[\pi^{\mathfrak t, 
 \mathfrak k}\left(h_{\mathfrak k}\right)\right]^{2}dh_{\mathfrak k}\\
 =\left(-1\right)^{\left\vert  R^{\mathrm{im}}_{\mathfrak k,+}\right\vert}\int_{i \mathfrak t}^{}
 L^{\mathfrak g}\exp\left(t\Delta^{\mathfrak 
 g}\right)\left(-h_{\mathfrak 
 k}\right) \\
 \left(\pi^{\mathfrak t,\mathfrak g}\left(h_{\mathfrak k}\right)
\sum_{w\in W\left(\mathfrak t:\mathfrak k\right)}^{}\epsilon_{w}e^{-\left\langle  
w \left( \rho^{\mathfrak 
k} +\lambda\right) ,h_{\mathfrak k}\right\rangle}\right)dh_{\mathfrak k}.
\end{multline}

 As in (\ref{eq:ibva24a5}), we will use  Rossmann formula in (\ref{eq:clif20}) with respect 
to the par $\left(\mathfrak t, \mathfrak u\right)$.  If $e\in \mathfrak t^{*}_{\C}$, we get
\begin{multline}\label{eq:clif21}
\int_{i\mathfrak t}^{}L^{\mathfrak g}\exp\left(t\Delta^{\mathfrak 
g}\right)\left(-h_{\mathfrak k}\right)\pi^{\mathfrak t, \mathfrak 
g}\left(h_{\mathfrak k}\right)e^{-\left\langle e,h_{\mathfrak 
k}\right\rangle}dh_{\mathfrak k}\\
=\left(-1\right)^{\left\vert  
R^{\mathrm{im}}_{+}\right\vert}
\pi^{\mathfrak t, \mathfrak 
g}\left(\frac{e}{2\pi}\right)L^{\mathfrak 
g}\left(-e\right)\exp\left(tB^{*}\left(e,e\right)\right).
\end{multline}
Also $W\left(\mathfrak t:\mathfrak k\right) \subset W\left(\mathfrak 
t_{\C}:\mathfrak g_{\C}\right)$.  
Moreover, if $w\in W\left(\mathfrak t_{\C}:\mathfrak g_{\C}\right)$, if $\epsilon_{w}$ 
still denotes the determinant of $w$ on $\mathfrak t$, by 
\cite[Corollary V.4.6 and Lemma V.4.10]{BrockerDieck95}, 
\begin{equation}\label{eq:clif22}
\pi^{\mathfrak t, \mathfrak 
g}\left(we\right)=\epsilon_{w}\pi^{\mathfrak t, \mathfrak 
g}\left(e\right).
\end{equation}
Finally,  $L^{\mathfrak g}\vert_{\mathfrak t}$ is $W\left(\mathfrak 
t_{\C}:\mathfrak 
g_{\C}\right)$-invariant. 

By (\ref{eq:clif13}), (\ref{eq:clif15}), (\ref{eq:clif20}), and 
(\ref{eq:clif21}), we obtain
\begin{multline}\label{eq:clif22a1}
\Tr^{\left[1\right]}\left[L\exp\left(-tD^{X,2}\right)\right] \\
=\Vol\left(K/T\right)\left(-1\right)^{\left\vert  R^{\mathrm{im}}_{\mathfrak p,+}\right\vert}
\pi^{\mathfrak t, \mathfrak g}\left(\frac{\rho^{\mathfrak 
k}+\lambda}{2\pi}\right)
L^{\mathfrak g}\left(-\rho^{\mathfrak k}-\lambda\right).
\end{multline}
By construction
\begin{equation}\label{eq:clif23}
L^{\mathfrak g}\left(-\rho^{\mathfrak 
k}-\lambda\right)=\phi_{\mathrm{HC}}L\left(- \rho^{\mathfrak 
k}-\lambda\right).
\end{equation}
By \cite[Corollary 7.27]{BerlineGetzlerVergne04}, we get
\begin{equation}\label{eq:clif23a1}
\Vol\left(K/T\right)=\frac{1}{\pi^{\mathfrak t, \mathfrak 
k}\left(\frac{\rho^{\mathfrak k}}{2\pi}\right)}.
\end{equation}
Also
\begin{equation}\label{eq:clif24}
\left\vert  R^{\mathrm{im}}_{\mathfrak p,+}\right\vert=m/2.
\end{equation}
By (\ref{eq:clif22a1})--(\ref{eq:clif24}), we get the first equation 
in (\ref{eq:D1a1}). 
 When $L=1$, we can compare (\ref{eq:grsen1}) and 
this first equation, and we obtain the second equation in 
(\ref{eq:D1a1}). 
The proof of our theorem is completed. 
\end{proof}
\begin{remark}\label{rem:atsc}
	Equation (\ref{eq:D1a1}) was  obtained by 
Atiyah-Schmid \cite[eq. 
	(3.10)]{AtiyahSchmid77},
using Hirzebruch proportionality 
principle \cite{Hirzebruch58}, together with formulas like 
(\ref{eq:clif23a1}).
\end{remark}
\subsection{The case where $\gamma=k^{-1},\dim \mathfrak b=0$}%
\label{subsec:dimbb}
In this Subsection, we assume that $\gamma$ is elliptic, i.e.,  
$\gamma=k^{-1}, k\in K$. Recall that the orientation of $\mathfrak p$ 
is fixed. 

Let $T \subset K$ be a maximal torus, and 
let $\mathfrak t \subset \mathfrak k$ be the corresponding Lie 
algebra. We may and we will assume that $k\in T$, so that $\mathfrak 
t \subset \mathfrak k\left(k\right)$.  Then $T$ is a maximal torus in 
$K^{0}\left(k\right)$, and $\mathfrak t \subset \mathfrak 
k\left(k\right)$. 
Let $\kappa\in 
\mathfrak t$ be such that
\begin{equation}\label{eq:gu1}
k=e^{\kappa}. 
\end{equation}
Then $\kappa$ is well-defined up to the lattice of integral elements 
in $\mathfrak t$ associated with $K$. Since $k$ is in the center of
$K^{0}\left(k\right)$ , if $w\in W\left(\mathfrak t:\mathfrak 
k\left(k \right)\right)$, $w\kappa-\kappa$ is integral in 
$\mathfrak t$. 

Let $\mathfrak h= \mathfrak b \oplus \mathfrak t$ be the 
associated fundamental $\theta$-stable Cartan subalgebra of 
$\mathfrak g$. Then $\mathfrak h$ is a $\theta$-stable fundamental 
Cartan subalgebra of $\mathfrak z\left(k\right)$. 

In this Subsection,  we assume that $\dim \mathfrak b=0$. Then $\mathfrak t$ is a 
$\theta$-stable fundamental Cartan subalgebra of $\mathfrak g$ and of 
$\mathfrak z\left(k\right)$. As in (\ref{eq:clif13a1}), we have
\begin{align}\label{eq:gusen3}
&R=R^{\mathrm{im}}, &R\left(k\right)=R^{\mathrm{im}}\left(k\right).
\end{align}

We make the same choice of $R^{\mathrm{im}}_{\mathfrak 
k,+},R^{\mathrm{im}}_{\mathfrak p,+}$ as in Subsection 
\ref{subsec:dimb}. Set
\begin{equation}\label{eq:gusen4}
R^{\mathrm{im}}_{+}\left(k\right)=R^{\mathrm{im}}_{+}\cap 
R^{\mathrm{im}}\left(k\right).
\end{equation}
Then $R_{+}^{\mathrm{im}}\left(k\right)$ is a positive root system associated with 
the pair $\left(\mathfrak t,\mathfrak z\left(k\right)\right)$, and we 
still have
\begin{equation}\label{eq:gusen5}
R_{+}^{\mathrm{im}}\left(k\right)=R^{\mathrm{im}}_{\mathfrak p,+}\left(k\right)\cup 
R^{\mathrm{im}}_{\mathfrak k,+}\left(k\right).
\end{equation}
The choice of $R_{\mathfrak p,+}^{\mathrm{im}}\left(k\right)$ defines 
an orientation on $\mathfrak p\left(k\right)$.

The functions $\pi^{\mathfrak t, \mathfrak z\left(k\right)}, 
\pi^{\mathfrak t, \mathfrak k\left(k\right)}$ on $\mathfrak t$ are given by
\begin{align}\label{eq:gusen6}
&\pi^{\mathfrak t, \mathfrak z\left(k\right)}=\prod_{\alpha\in 
R^{\mathrm{im}}_{+}\left(k\right)}^{}\left\langle  
\alpha,h\right\rangle,
&\pi^{\mathfrak t, \mathfrak k\left(k\right)}=\prod_{\alpha\in 
R^{\mathrm{im}}_{\mathfrak k,+}\left(k\right)}^{}\left\langle  
\alpha,h\right\rangle.
\end{align}

Again $\rho^{\mathfrak k},\lambda\in i \mathfrak t^{*}$ are calculated with respect to 
$R^{\mathrm{im}}_{\mathfrak k,+}$, and  
\index{rkk@$\rho^{\mathfrak k\left(k\right)}$}%
$\rho^{\mathfrak 
k\left(k\right)}\in i \mathfrak t^{*}$ is obtained via 
$R^{\mathrm{im}}_{\mathfrak k,+}\left(k\right)$. We identify $\mathfrak t$ and 
$\mathfrak t^{*}$ by the quadratic form $B\vert _{\mathfrak t}$. In 
particular $\pi^{\mathfrak t, \mathfrak 
k\left(k\right)}\left(\frac{\rho^{\mathfrak 
k\left(k\right)}}{2\pi}\right),\pi^{\mathfrak t,\mathfrak 
z\left(k\right)}\left(\frac{\rho^{\mathfrak k}+\lambda}{2\pi}\right)$ 
are well-defined as well as 
$\phi_{\mathrm{HC}}L\left(-\rho^{\mathfrak k}-\lambda\right)$. 

Note that $W\left(\mathfrak t: 
\mathfrak k\left(k\right)\right) \subset W\left(\mathfrak t:\mathfrak 
k\right)$. 
If $w\in W\left(\mathfrak t:\mathfrak k\right)$, 
$e^{-\left\langle  w\left(\rho^{\mathfrak 
k}+\lambda\right),\kappa\right\rangle}$ depends only on the image of 
$w$ in $W\left(\mathfrak t:\mathfrak 
k\left(k\right)\right)\setminus W\left(\mathfrak t:\mathfrak 
k\right)$.
The same is true for  
$\epsilon_{w}\pi^{\mathfrak t,\mathfrak 
z\left(k\right)}\left(\frac{w\left(\rho^{\mathfrak 
k}+\lambda\right)}{2\pi}\right)$.
\subsection{Orbital integrals and index theory: the case of elliptic elements}%
\label{subsec:ellel}
We use the same notation as in Subsection \ref{subsec:dimbb}. In 
particular, $\gamma=k^{-1}, k\in K$.

Let $X\left(\gamma\right)$ be the fixed point set of $\gamma$ in $X$. 
Let 
$$\widehat{A}^{\gamma}\left(TX\vert_{X\left(\gamma\right)},\n^{TX\vert_{X\left(\gamma\right)}}\right),
\ch^{\gamma}\left(F,\n^{F}\right)$$
denote the corresponding 
Atiyah-Bott characteristic forms on $X\left(\gamma\right)$, that are 
defined as in \cite[eqs. (7.7.2) and (7.7.4)]{Bismut08b}.
\begin{theorem}\label{thm:Tfino}
	If $\dim \mathfrak b>0$, then
\begin{align}\label{eq:clif27}
&\Trs^{\left[\gamma\right]}\left[L\exp\left(-tD^{X,2}\right)\right]=0,\\ 
&\left[\widehat{A}^{\gamma}\left(TX\vert_{X\left(\gamma\right)},\n^{TX\vert_{X\left(\gamma\right)}}\right)
\ch^{\gamma}\left(F,\n^{F}\right)\right]^{\max}=0. \nonumber 
\end{align}

If $\dim \mathfrak b=0$, then
\begin{multline}\label{eq:clif28}
	\Trs^{\left[\gamma\right]}\left[L\exp\left(-tD^{X,2}\right)\right]=\phi_{\mathrm{HC}}L\left(-\rho^{\mathfrak k}-\lambda \right)\\ 
\left(-1\right)^{\dim \mathfrak p\left(k\right)/2}
\frac{1}{\pi^{\mathfrak t, \mathfrak k\left(k\right)}\left(\frac{\rho^{\mathfrak k\left(k\right)}}{2\pi}\right)}\frac{1}{\prod_{\alpha\in R^{\mathrm{im}}_{+}\setminus 
R^{\mathrm{im}}_{+}\left(k\right)}^{}2\sinh\left(-\left\langle  
\alpha,\kappa\right\rangle/2\right)}\\
\sum_{w\in W\left(\mathfrak t:\mathfrak 
k\left(k\right)\right)\setminus W\left(\mathfrak t:\mathfrak k\right)}^{}
\epsilon_{w}\pi^{\mathfrak t, 
\mathfrak z\left(k\right)}\left(\frac{w\left(\rho^{\mathfrak 
k}+\lambda\right)}{2\pi}\right)e^{-\left\langle  w\left(\rho^{\mathfrak 
k}+\lambda\right),\kappa\right\rangle},
\end{multline}
and
\begin{multline}\label{eq:clif28a1}
\left[\widehat{A}^{\gamma}\left(TX\vert_{X\left(\gamma\right)},\n^{TX\vert_{X\left(\gamma\right)}}\right)
\ch^{\gamma}\left(F,\n^{F}\right)\right]^{\max}\\=\left(-1\right)^{\dim \mathfrak p\left(k\right)/2}
\frac{1}{\pi^{\mathfrak t, \mathfrak k\left(k\right)}\left(\frac{\rho^{\mathfrak k\left(k\right)}}{2\pi}\right)}\frac{1}{\prod_{\alpha\in R^{\mathrm{im}}_{+}\setminus 
R^{\mathrm{im}}_{+}\left(k\right)}^{}2\sinh\left(-\left\langle  
\alpha,\kappa\right\rangle/2\right)}\\
\sum_{w\in W\left(\mathfrak t:\mathfrak 
k\left(k\right)\right)\setminus W\left(\mathfrak t:\mathfrak k\right)}^{}
\epsilon_{w}\pi^{\mathfrak t, 
\mathfrak z\left(k\right)}\left(\frac{w\left(\rho^{\mathfrak 
k}+\lambda\right)}{2\pi}\right)e^{-\left\langle  w\left(\rho^{\mathfrak 
k}+\lambda\right),\kappa\right\rangle}.
\end{multline}
\end{theorem}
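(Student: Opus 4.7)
The plan is to parallel the strategy of Theorem \ref{thm:indt}: combine the heat kernel identity (\ref{eq:clif7a1}) with the main formula (\ref{eq:diff2bis}) to rewrite the orbital integral as a distribution computation on $\mathfrak{z}_i(\gamma)$, then reduce to an integral over the fundamental Cartan $\mathfrak{t}$ via Weyl's integration formula on $K^0(k)$. Concretely, taking $\mu(x)=e^{-tx^2}$ and $A=-B^*(\rho^{\mathfrak{g}},\rho^{\mathfrak{g}})+B^*(\rho^{\mathfrak{k}}+\lambda,\rho^{\mathfrak{k}}+\lambda)$, and using $a=0$ so that $\delta_a=\delta_0$, Theorem \ref{thm:Tgenorbis} and Proposition \ref{prop:czg} give
\begin{multline*}
\Trs^{[\gamma]}[L\exp(-tD^{X,2})]
=\exp(-tB^*(\rho^{\mathfrak k}+\lambda,\rho^{\mathfrak k}+\lambda))\\
\times L^{\mathfrak z(\gamma)}\exp(t\Delta^{\mathfrak z(\gamma)})\bigl[\mathcal{J}_\gamma(\Yok)\Trs^{S^{\mathfrak p}\otimes E}[\rho^{S^{\mathfrak p}\otimes E}(k^{-1}e^{-\Yok})]\delta_0\bigr](0),
\end{multline*}
where the integrand factors as $\Trs^{S^{\mathfrak p}}\cdot\Tr^E$ as in (\ref{eq:clif13x1}). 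Since the full integrand in $\Yok\in i\mathfrak k(\gamma)$ is $\Ad(K^0(\gamma))$-invariant, Weyl's integration formula (as in (\ref{eq:gong3})) reduces this to an integral over $i\mathfrak t$, weighted by $[\pi^{\mathfrak t,\mathfrak k(\gamma)}(h_{\mathfrak k})]^2$ and the universal factor $\Vol(K^0(\gamma)/T)/|W(\mathfrak t:\mathfrak k(\gamma))|$.

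For the vanishing case $\dim\mathfrak b>0$, I would argue that the spinor supertrace already vanishes pointwise on $i\mathfrak t$. Indeed, $\Trs^{S^{\mathfrak p}}[\rho^{S^{\mathfrak p}}(k^{-1}e^{-h_{\mathfrak k}})]^2=\det(1-\Ad(k^{-1}e^{-h_{\mathfrak k}}))|_{\mathfrak p}$; since $h_{\mathfrak k}\in\mathfrak t\subset\mathfrak k(\gamma)$ commutes with $k$, the endomorphism preserves the splitting $\mathfrak p=\mathfrak p(k)\oplus\mathfrak p^{\perp}(k)$, and on $\mathfrak p(k)=\mathfrak p(\gamma)$ it reduces to $1-e^{-\ad(h_{\mathfrak k})}$. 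Because $\mathfrak b=\mathfrak h_{\mathfrak p}\subset\mathfrak p(\gamma)$ lies in $\ker\ad(h_{\mathfrak k})|_{\mathfrak p(\gamma)}$ for every $h_{\mathfrak k}\in\mathfrak t$, this determinant vanishes, hence so does the supertrace and the integrand. This gives the first identity in (\ref{eq:clif27}).

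For the main case $\dim\mathfrak b=0$, $\mathfrak h=\mathfrak t$ is simultaneously the fundamental Cartan of $\mathfrak g$ and of $\mathfrak z(k)$, and $R=R^{\mathrm{im}}$, $R(\gamma)=R^{\mathrm{im}}(k)$. The explicit formula (\ref{eq:idfan2}) of Theorem \ref{thm:TJy} then reduces to a ratio of products of $\widehat{A}$-terms and $\sinh$-terms, in which the denominator $\prod_{\alpha\in R_+\setminus R_+(\gamma)}(\xi_\alpha^{1/2}(\gamma)-\xi_\alpha^{-1/2}(\gamma))$ becomes $\prod_{\alpha\in R_+^{\mathrm{im}}\setminus R_+^{\mathrm{im}}(k)}2\sinh(-\langle\alpha,\kappa\rangle/2)$ after substituting $\gamma=e^{-\kappa}$. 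Multiplying by the analogue of the Pfaffian identity (\ref{eq:clif11}) for $\Trs^{S^{\mathfrak p}}[\rho^{S^{\mathfrak p}}(k^{-1}e^{-h_{\mathfrak k}})]$ (applied separately on $\mathfrak p(k)$ and on $\mathfrak p^{\perp}(k)$), the $\widehat{A}$-factors on $\mathfrak p(k)$ and $\mathfrak k(k)$ cancel against those in $\mathcal{J}_\gamma$, the $\sinh$-factors on $\mathfrak p^{\perp}(k)$ combine into the Pfaffian on $\mathfrak p(\gamma)$ required to recover the characteristic class structure, and one is left with a polynomial $\pi^{\mathfrak t,\mathfrak z(k)}(h_{\mathfrak k})$ times the Weyl character of $E$ at $\kappa+h_{\mathfrak k}$. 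The Weyl character formula
\[
\Tr^E[\rho^E(k^{-1}e^{-h_{\mathfrak k}})]
=\frac{\sum_{w\in W(\mathfrak t:\mathfrak k)}\epsilon_w e^{-\langle w(\rho^{\mathfrak k}+\lambda),\kappa+h_{\mathfrak k}\rangle}}
{\prod_{\alpha\in R^{\mathrm{im}}_{\mathfrak k,+}}(\xi_\alpha^{1/2}(k^{-1}e^{-h_{\mathfrak k}})-\xi_\alpha^{-1/2}(k^{-1}e^{-h_{\mathfrak k}}))}
\]
matches the remaining $\mathfrak k(k)$-denominator once one notes that the summation factors through cosets $W(\mathfrak t:\mathfrak k(k))\backslash W(\mathfrak t:\mathfrak k)$ (using $W(\mathfrak t:\mathfrak k(k))$-invariance of $\kappa$ mod integrality and (\ref{eq:clif22})). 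Applying Rossmann's formula as in (\ref{eq:clif21}) to each summand evaluates the resulting Gaussian-weighted integral to $\pi^{\mathfrak t,\mathfrak z(k)}(\frac{w(\rho^{\mathfrak k}+\lambda)}{2\pi})e^{t B^*(w(\rho^{\mathfrak k}+\lambda),w(\rho^{\mathfrak k}+\lambda))}L^{\mathfrak g}(-w(\rho^{\mathfrak k}+\lambda))$; invariance of $\phi_{\mathrm{HC}}L=L^{\mathfrak g}|_{\mathfrak t}$ under $W(\mathfrak t_{\mathbb C}:\mathfrak g_{\mathbb C})$ replaces every $L^{\mathfrak g}(-w(\rho^{\mathfrak k}+\lambda))$ by $\phi_{\mathrm{HC}}L(-\rho^{\mathfrak k}-\lambda)$, and the exponential cancels the prefactor $e^{-tB^*(\rho^{\mathfrak k}+\lambda,\rho^{\mathfrak k}+\lambda)}$. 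Collecting all prefactors—$\Vol(K^0(\gamma)/T)=1/\pi^{\mathfrak t,\mathfrak k(\gamma)}(\rho^{\mathfrak k(\gamma)}/2\pi)$ by (\ref{eq:clif23a1}), the normalization constant $(2\pi)^{|R_+(\gamma)|}$, and the overall sign $(-1)^{\dim\mathfrak p(\gamma)/2}$—yields (\ref{eq:clif28}). The sign bookkeeping will be the principal obstacle: one must match $\epsilon_D(\gamma)$, the $\left(-1\right)^{|R_{\mathfrak p,+}^{\mathrm{im}}\setminus R_{\mathfrak p,+}^{\mathrm{im}}(k)|}$ of (\ref{eq:idfan2}), and the imaginary‐root signs from the Pfaffian on $\mathfrak p^{\perp}(k)$, in a way consistent with the orientation conventions of $\mathfrak p(\gamma)$ fixed by $R_{\mathfrak p,+}^{\mathrm{im}}(k)$.

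Finally, the characteristic-form identities (\ref{eq:clif28a1}) and the second half of (\ref{eq:clif27}) follow by specializing $L=1$ and invoking the Atiyah–Bott–Lefschetz fixed point formula in its heat kernel form established in \cite[Chapter 7]{Bismut08b}, which identifies $\Trs^{[\gamma]}[\exp(-tD^{X,2})]$ with $[\widehat{A}^\gamma(TX|_{X(\gamma)},\n^{TX|_{X(\gamma)}})\ch^\gamma(F,\n^F)]^{\max}$; comparing with (\ref{eq:clif28}) for $L=1$ then yields (\ref{eq:clif28a1}), and comparing with the vanishing of the orbital integral yields the vanishing of the characteristic form when $\dim\mathfrak b>0$.
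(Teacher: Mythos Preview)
Your proposal is correct and follows essentially the same route as the paper: apply Theorem \ref{thm:Tgenorbis} to obtain the analogue of (\ref{eq:clif13}), argue vanishing for $\dim\mathfrak b>0$ via the kernel of $1-\Ad(k^{-1}e^{-h_{\mathfrak k}})$ on $\mathfrak b$, and for $\dim\mathfrak b=0$ reduce to $i\mathfrak t$ by Weyl integration on $K^0(k)$, combine the spinor supertrace with $\mathcal{J}_{k^{-1}}$ (the paper packages this via (\ref{eq:ret33}) together with \cite[eq.~(7.7.7)]{Bismut08b} to reach (\ref{eq:clif33}), rather than unpacking (\ref{eq:idfan2}) directly), then apply the Weyl character formula, Rossmann's formula, the coset reduction for the $W(\mathfrak t:\mathfrak k)$-sum, and the volume identity (\ref{eq:clif23a1}) exactly as you outline. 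The characteristic-form statements come from specializing $L=1$ and comparing with \cite[Theorem~7.7.1]{Bismut08b}, as you say.
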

\begin{proof}
	By
\cite[Theorem 7.7.1]{Bismut08b}, for $t>0$, we get
\begin{multline}\label{eq:crcr1}
\Trs^{\left[\gamma\right]}\left[\exp\left(-tD^{X,2}\right)\right]\\
=\left[\widehat{A}^{\gamma}\left(TX\vert_{X\left(\gamma\right)},\n^{TX\vert_{X\left(\gamma\right)}}\right),
\ch^{\gamma}\left(F,\n^{F}\right)\right]^{\max}.
\end{multline}

Equation (\ref{eq:clif13}) still 
holds. We claim that if $\dim \mathfrak b>0$, 
\begin{equation}\label{eq:clif29}
\Trs^{S^{\mathfrak p}}\left[\rho^{S^{\mathfrak 
p}}\left(k^{-1}e^{-\Yok}\right)\right]=0.
\end{equation}
 If $\Yok\in i\mathfrak k\left(k\right)$, after conjugation by an element of
$K^{0}\left(k\right)$, we may assume that $\Yok\in i \mathfrak 
t$. If $\dim \mathfrak b>0$, then $1-\Ad\left(k^{-1}e^{-\Yok}\right)$ vanishes 
on $\mathfrak b$. The argument we gave after (\ref{eq:clif13x1}) shows 
that (\ref{eq:clif29}) vanishes. This proves the first equation  in 
(\ref{eq:clif27}). Combining this  equation 
for $L=1$ and (\ref{eq:crcr1}), we get the second equation in (\ref{eq:clif27}).

In the sequel, we assume that $\dim \mathfrak b=0$. We will use the notation and results of 
Subsection \ref{subsec:dimbb}, and also equation (\ref{eq:clif13}).   As in (\ref{eq:clif14}), and with a similar 
notation, we get
\begin{multline}\label{eq:clif31}
L^{\mathfrak z\left(k\right)}\exp\left(t\Delta^{\mathfrak 
z\left(k\right)}\right)\left[ \mathcal{J}_{k^{-1}}\left(Y_{0}^{ \mathfrak k}\right)
 \Trs^{S^{ \mathfrak p}\otimes 
 E}\left[\rho^{S^{\mathfrak p} \otimes 
 E}\left(k^{-1}e^{-\Yok}\right)\right]\delta_{0}\right]\left(0\right)\\
 =\int_{i \mathfrak k\left(k\right)}^{}L^{\mathfrak z\left(k\right)}
 \exp\left(t\Delta^{\mathfrak 
 z\left(k\right)}\right)\left(-\Yok\right)\\
 \mathcal{J}_{k^{-1}}\left(\Yok\right)
 \Trs^{S^{\mathfrak p} \otimes E}\left[\rho^{S^{\mathfrak 
p} \otimes E}\left(k^{-1}e^{-\Yok}\right)\right]d\Yok.
\end{multline}
Using Weyl integration as in (\ref{eq:clif15}), we deduce from 
(\ref{eq:clif31}) that
\begin{multline}\label{eq:clif32}
L^{\mathfrak z\left(k\right)}\exp\left(t\Delta^{\mathfrak 
z\left(k\right)}\right)\left[ \mathcal{J}_{k^{-1}}\left(Y_{0}^{ \mathfrak k}\right)
 \Trs^{S^{ \mathfrak p}\otimes 
 E}\left[\rho^{S^{\mathfrak p} \otimes 
 E}\left(k^{-1}e^{-\Yok}\right)\right]\delta_{0}\right]\left(0\right)\\
 =\frac{\Vol\left(K^{0}\left(k\right)/T\right)}{\left\vert  
 W\left(\mathfrak t:\mathfrak k\left(k\right) \right)\right\vert}
 \int_{i \mathfrak t}^{}L^{\mathfrak z\left(k\right)}\exp\left(t 
 \Delta^{\mathfrak z\left(k\right)}\right)\left(-h_{\mathfrak k}\right)
 \\
\mathcal{J}_{k^{-1}}\left(h_{\mathfrak k}\right) \Trs^{S^{\mathfrak p} \otimes E}\left[\rho^{S^{\mathfrak 
p} \otimes E}\left(k^{-1}e^{-h_{\mathfrak k}}\right)\right]
\left[\pi^{\mathfrak t, \mathfrak 
k\left(k\right)}\left(h_{\mathfrak 
k}\right)\right]^{2}dh_{\mathfrak k}.
\end{multline}

By \cite[eq. (7.7.7)]{Bismut08b} and using the corresponding notation, if $h_{\mathfrak k}\in i\mathfrak  
t$, we have the 
identity
\begin{multline}\label{eq:clif31a}
\Trs^{S^{\mathfrak p}}\left[\rho^{S^{\mathfrak 
p}}\left(k^{-1}e^{-h_{\mathfrak k}}\right)\right]
=\left(-i\right)^{\dim \mathfrak p\left(k\right)/2}\Pf\left[\ad\left(h_{\mathfrak 
k}\right)\vert_{\mathfrak 
p\left(k\right)}\right] \\
\widehat{A}^{-1}\left(\ad\left(h_{\mathfrak k}\right)\vert_{\mathfrak p\left(k\right)}\right) \left(\widehat{A}^{ke^{h_{\mathfrak k}}\vert_{\mathfrak p^{\perp}\left(k\right)}}\left(0\right) \right)^{-1}.
\end{multline}
By (\ref{eq:ret33}), (\ref{eq:clif31a}), and by proceeding as in 
\cite[eq. (7.7.8)]{Bismut08b}, we get
\begin{multline}\label{eq:clif32a}
\mathcal{J}_{k^{-1}}\left(h_{\mathfrak k}\right)\Trs^{S^{\mathfrak p}}\left[\rho^{S^{\mathfrak 
p}}\left(k^{-1}e^{-h_{\mathfrak k}}\right)\right]
=\left(-i\right)^{\dim \mathfrak 
p\left(k\right)/2}\\
\Pf\left[\ad\left(h_{\mathfrak 
k}\right)\vert_{\mathfrak 
p\left(k\right)}\right] 
\widehat{A}^{-1}\left(\ad\left(h_{\mathfrak k}\right)
\vert_{\mathfrak 
k\left(k\right)}\right)
\widehat{A}^{k^{-1}\vert_{\mathfrak 
p^{\perp}\left(k\right)}}\left(0\right)\\
 \left[\frac{\det\left(1-\Ad\left(k^{-1}e^{-h_{\mathfrak k}}\right)\right)\vert_{ \mathfrak k^{\perp}
   \left(k\right)}}{\det\left(1-\Ad\left(k^{-1}\right)\right)\vert _{ \mathfrak 
   k^{\perp}\left(k\right)}}\right]^{1/2}.
\end{multline}
Also we have the identity,
\begin{equation}\label{eq:clif32aa}
 \left[\frac{\det\left(1-\Ad\left(k^{-1}e^{-h_{\mathfrak k}}\right)\right)\vert_{ \mathfrak k^{\perp}
   \left(k\right)}}{\det\left(1-\Ad\left(k^{-1}\right)\right)\vert _{ \mathfrak 
   k^{\perp}\left(k\right)}}\right]^{1/2}=\frac{\widehat{A}^{k^{-1}\vert_{\mathfrak k^{\perp}\left(k\right)}}\left(0\right)}
   {\widehat{A}^{k^{-1}e^{-h_{\mathfrak k}}\vert_{\mathfrak 
   k^{\perp}\left(k\right)}}\left(0\right)}.
\end{equation}
Using (\ref{eq:clif32aa}), we can rewrite (\ref{eq:clif32a}) in the form,
\begin{multline}\label{eq:clif33}
\mathcal{J}_{k^{-1}}\left(h_{\mathfrak k}\right)\Trs^{S^{\mathfrak p}}\left[\rho^{S^{\mathfrak 
p}}\left(k^{-1}e^{-h_{\mathfrak k}}\right)\right]\\
=\left(-i\right)^{\dim \mathfrak 
p\left(k\right)/2}
\Pf\left[\ad\left(h_{\mathfrak 
k}\right)\vert_{\mathfrak 
p\left(k\right)}\right] 
\widehat{A}^{k^{-1}}\left(0\right)\left( 
\widehat{A}^{k^{-1}} \right) ^{-1}\left(-\ad\left(h_{\mathfrak k}\right)\vert_{\mathfrak k}\right) .
\end{multline}
As in (\ref{eq:clif17}), we get
\begin{equation}\label{eq:clif34}
\Pf\left[\ad\left(h_{\mathfrak k} \right) \vert_{\mathfrak p\left(k\right)}\right]
=i^{\dim \mathfrak 
p\left(k\right)/2}\prod_{\alpha\in R_{\mathfrak 
p,+}^{\mathrm{im}}\left(k\right)}^{}\left\langle  
\alpha,h_{\mathfrak k}\right\rangle.
\end{equation}

By Weyl's character formula, we obtain
\begin{multline}\label{eq:clif36}
\left[\pi^{\mathfrak t,\mathfrak 
k\left(k\right)}\left(h_{\mathfrak 
k}\right)\right]^{2}\widehat{A}^{k^{-1}}\left(-\ad\left(h_{\mathfrak 
k}\right)\vert_{\mathfrak 
k}\right)\Tr^{E}\left[\rho^{E}\left(k^{-1}e^{-h_{\mathfrak 
k}}\right)\right]\\
=	\left(-1\right)^{\left\vert  R^{\mathrm{im}}_{\mathfrak 
k,+}\left(k\right)\right\vert}\pi^{\mathfrak 
t,\mathfrak k\left(k\right)}\left(h_{\mathfrak 
k}\right)\sum_{w\in W\left(\mathfrak t:\mathfrak 
k\right)}^{}\epsilon_{w}e^{-\left\langle w \left( \rho^{\mathfrak 
k}+\lambda\right), \kappa+h_{\mathfrak k}  \right\rangle}.
\end{multline}

By (\ref{eq:clif33})--(\ref{eq:clif36}), we find that
\begin{multline}\label{eq:clif37}
\mathcal{J}_{k^{-1}}\left(h_{\mathfrak k}\right)\Trs^{S^{\mathfrak p} 
\otimes E}\left[\rho^{S^{\mathfrak 
p} \otimes E}\left(k^{-1}e^{-h_{\mathfrak k}}\right)\right]\left[\pi^{\mathfrak t, \mathfrak 
k\left(k\right)}\left(h_{\mathfrak k}\right)\right]^{2}\\
=\left(-1\right)^{\left\vert  R^{\mathrm{im}}_{\mathfrak 
k,+}\left(k\right)\right\vert}\widehat{A}^{k^{-1}}\left(0\right)\pi^{\mathfrak t, \mathfrak 
z\left(k\right)}\left(h_{\mathfrak k}\right)\sum_{w\in 
W\left(\mathfrak t:\mathfrak k\right)}^{}
\epsilon_{w}e^{-\left\langle w \left(   \rho^{\mathfrak 
k}+\lambda \right)   , \kappa+h_{\mathfrak k}    \right\rangle}.
\end{multline}

By (\ref{eq:clif37}), we obtain
\begin{multline}\label{eq:clif38}
\int_{i \mathfrak t}^{}L^{\mathfrak z\left(k\right)}\exp\left(t 
 \Delta^{\mathfrak z\left(k\right)}\right)\left(-h_{\mathfrak k}\right)
\mathcal{J}_{k^{-1}}\left(h_{\mathfrak k}\right) \Trs^{S^{\mathfrak p} \otimes E}\left[\rho^{S^{\mathfrak 
p} \otimes E}\left(k^{-1}e^{-h_{\mathfrak k}}\right)\right]\\
\left[\pi^{\mathfrak t, \mathfrak 
k\left(k\right)}\left(h_{\mathfrak 
k}\right)\right]^{2}dh_{\mathfrak k}
=\left(-1\right)^{\left\vert  R^{\mathrm{im}}_{\mathfrak 
k,+}\left(k\right)\right\vert}\widehat{A}^{k^{-1}}\left(0\right) \\
\int_{i \mathfrak  t}^{}L^{\mathfrak z\left(k\right)}\exp\left(t 
 \Delta^{\mathfrak z\left(k\right)}\right)\left(-h_{\mathfrak k}\right)
 \pi^{\mathfrak t, \mathfrak 
z\left(k\right)}\left(h_{\mathfrak k}\right)\sum_{w\in 
W\left(\mathfrak t:\mathfrak k\right)}^{}
\epsilon_{w}e^{-\left\langle w \left(  \rho^{\mathfrak 
k}+\lambda \right) , \kappa+h_{\mathfrak k} \right\rangle}dh_{\mathfrak k}.
 \end{multline}
 Using Rossmann's formula as in (\ref{eq:clif21}), if $e\in \mathfrak 
 t^{*}_{\C}$, we find that
\begin{multline}\label{eq:clif39}
\int_{i \mathfrak  t}^{}L^{\mathfrak z\left(k\right)}\exp\left(t 
 \Delta^{\mathfrak z\left(k\right)}\right)\left(-h_{\mathfrak k}\right)
 \pi^{\mathfrak t, \mathfrak 
z\left(k\right)}\left(h_{\mathfrak k}\right)e^{-\left\langle  
e,h_{\mathfrak k}\right\rangle}dh_{\mathfrak k}\\
=\left(-1\right)^{\left\vert R^{\mathrm{im}}_{+}\left( k \right) 
\right\vert}\pi^{\mathfrak t,\mathfrak 
z\left(k\right)}\left(\frac{e}{2\pi}\right)L^{\mathfrak 
g}\left(-e\right)\exp\left(tB^{*}\left(e,e\right)\right).
\end{multline}
By (\ref{eq:clif39}), we get
\begin{multline}\label{eq:clif40}
\int_{i \mathfrak  t}^{}L^{\mathfrak z\left(k\right)}\exp\left(t 
 \Delta^{\mathfrak z\left(k\right)}\right)\left(-h_{\mathfrak k}\right)
 \pi^{\mathfrak t, \mathfrak 
z\left(k\right)}\left(h_{\mathfrak k}\right)\sum_{w\in 
W\left(\mathfrak t:\mathfrak k\right)}^{}
\epsilon_{w}e^{-\left\langle  w\left(\rho^{\mathfrak 
k}+\lambda\right), \kappa+h_{\mathfrak 
k} \right\rangle}dh_{\mathfrak k}\\
=\left(-1\right)^{\left\vert  R^{\mathrm{im}}_{+}\left(k\right)\right\vert}
L^{\mathfrak 
g}\left(-\rho^{\mathfrak 
k}-\lambda\right)\exp\left(tB^{*}\left(\rho^{\mathfrak 
k}+\lambda,\rho^{\mathfrak k}+\lambda\right)\right)\\
\sum_{w\in W\left(\mathfrak t:\mathfrak k\right)}^{}\epsilon_{w}\pi^{\mathfrak t, 
\mathfrak z\left(k\right)}\left(\frac{w\left(\rho^{\mathfrak 
k}+\lambda\right)}{2\pi}\right)e^{-\left\langle  w\left(\rho^{\mathfrak 
k}+\lambda\right),\kappa\right\rangle}.
\end{multline}
By the considerations that follow (\ref{eq:gu1}) and by (\ref{eq:clif40}), we obtain
\begin{multline}\label{eq:clif41}
\int_{i \mathfrak  t}^{}L^{\mathfrak z\left(k\right)}\exp\left(t 
 \Delta^{\mathfrak z\left(k\right)}\right)\left(-h_{\mathfrak k}\right)
 \pi^{\mathfrak t, \mathfrak 
z\left(k\right)}\left(h_{\mathfrak k}\right)\sum_{w\in 
W\left(\mathfrak t:\mathfrak k\right)}^{}
\epsilon_{w}e^{-\left\langle  w\left(\rho^{\mathfrak 
k}+\lambda\right), \kappa+h_{\mathfrak 
k} \right\rangle}dh_{\mathfrak k}\\
=\left(-1\right)^{\left\vert  R^{\mathrm{im}}_{+}\left(k\right)\right\vert}
\left\vert  W\left(\mathfrak t:\mathfrak k\left(k\right)\right)\right\vert L^{\mathfrak 
g}\left(-\rho^{\mathfrak 
k}-\lambda\right)\exp\left(tB^{*}\left(\rho^{\mathfrak 
k}+\lambda,\rho^{\mathfrak k}+\lambda\right)\right)\\
\sum_{w\in W\left(\mathfrak t:\mathfrak 
k\left(k\right)\right)\setminus W\left(\mathfrak t:\mathfrak k\right)}^{}\epsilon_{w}\pi^{\mathfrak t, 
\mathfrak z\left(k\right)}\left(\frac{w\left(\rho^{\mathfrak 
k}+\lambda\right)}{2\pi}\right)e^{-\left\langle  w\left(\rho^{\mathfrak 
k}+\lambda\right),\kappa\right\rangle}.
\end{multline}

By (\ref{eq:clif13}), (\ref{eq:clif32}), (\ref{eq:clif38}), and 
(\ref{eq:clif41}), we get
\begin{multline}\label{eq:clif42}
\Trs^{\left[\gamma\right]}\left[L\exp\left(-tD^{X,2}\right)\right]=\Vol\left(K^{0}\left(k\right)/T\right)\left(-1\right)^{\left\vert  R^{\mathrm{im}}_{\mathfrak p,+}\left(k\right)\right\vert}\widehat{A}^{k^{-1}}\left(0\right)\\
\phi_{\mathrm{HC}}L\left(-\rho^{\mathfrak k}-\lambda \right) 
\sum_{w\in W\left(\mathfrak t:\mathfrak 
k\left(k\right)\right)\setminus W\left(\mathfrak t:\mathfrak k\right)}^{}
\epsilon_{w}\pi^{\mathfrak t, 
\mathfrak z\left(k\right)}\left(\frac{w\left(\rho^{\mathfrak 
k}+\lambda\right)}{2\pi}\right)e^{-\left\langle  w\left(\rho^{\mathfrak 
k}+\lambda\right),\kappa\right\rangle}.
\end{multline}
As in (\ref{eq:clif23a1}), we obtain
\begin{equation}\label{eq:clif45}
\Vol\left(K^{0}\left(k\right)/T\right)=\frac{1}{\pi^{\mathfrak 
t, \mathfrak k\left(k\right)}\left(\frac{\rho^{\mathfrak 
k\left(k\right)}}{2\pi}\right)}.
\end{equation}
As in (\ref{eq:clif24}), we get
\begin{equation}\label{eq:clif46}
\left\vert  R^{\mathrm{im}}_{\mathfrak 
p,+}\left(k\right)\right\vert=\dim \mathfrak 
p\left(k\right)/2.
\end{equation}
Moreover, 
\begin{equation}\label{eq:clif47}
\widehat{A}^{k^{-1}}\left(0\right)=\frac{1}{\prod_{\alpha\in 
R^{\mathrm{im}}_{+}\setminus 
R^{\mathrm{im}}_{+}\left(k\right)}^{}2\sinh\left(-\left\langle  \alpha,\kappa\right\rangle/2\right)}.
\end{equation}

By (\ref{eq:clif42})--(\ref{eq:clif47}), we 
get (\ref{eq:clif28}), which combined with (\ref{eq:crcr1}) gives (\ref{eq:clif28a1}). The proof of our theorem is completed. 
\end{proof}
\begin{remark}\label{rem:asc}
	Equation (\ref{eq:clif28a1}) can also be obtained by a suitable 
	application of Hirzebruch proportionality principle 
	\cite{Hirzebruch58} similar to what was done by Atiyah-Schmid 
	\cite{AtiyahSchmid77}.
\end{remark}
\subsection{Orbital integrals and a conjecture by Vogan}%
\label{subsec:resHP}
Let $\pi$ be an irreducible unitary representation of $G$ acting on a 
Hilbert space $V_{\pi}$. By \cite[p. 205]{Knapp86},   a vector 
$v\in V_{\pi}$ is called $K$-finite if the vector subspace 
generated by   the vectors $k v\vert_{k\in K}$ has  finite 
dimension.   Let $V_{\pi,K}\subset V_{\pi}$ be the vector subspace  
of the $K$-finite 
vectors in $V_{\pi}$. By   \cite[Proposition 8.5]{Knapp86}, $U(\mathfrak g_{\C})$ acts on 
$V_{\pi,K}$, so that $V_{\pi,K}$ is a $(\mathfrak g_{\C},K)$-module. 

 Let $e_{1},\ldots, e_{m}$ be 
an orthonormal basis of $\mathfrak p$. Let $D\in U(\mathfrak g)\otimes 
c(\mathfrak p)$ be the Dirac operator,
\begin{align}
	D=\sum_{i=1}^{m}c(e_{i})e_{i}.\end{align}
We denote by $D\vert_{V_{\pi,K}\otimes S^{\mathfrak p}}$  the restriction 
of $D$ to 
$V_{\pi,K}\otimes S^{\mathfrak p}$. By \cite[p. 189]{HuangPan02}, the Dirac 
cohomology  of $V_{\pi,K}$ is the $K$-module defined by 
\begin{align}
	H_{D}(V_{\pi,K})=\ker D|_{V_{\pi,K}\otimes S^{\mathfrak p}}. 
\end{align}
By \cite[Theorem 8.1]{Knapp86}, each $K$-type in $H_{D}(V_{\pi,K})$  has finite multiplicity.  

The Vogan conjecture, solved by Huang-Pand\v{z}i\'{c} 
\cite[Corollary 2.4]{HuangPan02} states the following.
\begin{theorem}\label{thm:voganconj}
If the Dirac cohomology $H_{D}(V_{\pi,K})$ contains a $K$-type of  
highest weight $\lambda\in i\mathfrak t^{*}$,  the infinitesimal 
character of $V_{\pi,K}$ is $\rho^{\mathfrak k}+\lambda$. 
\end{theorem}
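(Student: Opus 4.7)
The plan is to leverage the algebraic structure of the Dirac operator $D = \sum_{i} c(e_i) e_i \in U(\mathfrak{g}) \otimes c(\mathfrak{p})$ and the commutation $[Z(\mathfrak{g}), D] = 0$, in order to transfer the action of $L \in Z(\mathfrak{g})$ on the Dirac cohomology to the action of a corresponding central element for the diagonal $\mathfrak{k}$-action, and then to identify the resulting scalar via the Harish-Chandra isomorphism. The key algebraic input, which is the technical heart of \cite{HuangPan02}, is the following. Letting $\mathfrak{k}_{\Delta}$ denote the diagonal copy of $\mathfrak{k}$ inside $U(\mathfrak{g}) \otimes c(\mathfrak{p})$, acting simultaneously on $V_{\pi,K}$ and on $S^{\mathfrak{p}}$, and $Z(\mathfrak{k}_{\Delta})$ its commutant, there is a homomorphism $\zeta : Z(\mathfrak{g}) \to Z(\mathfrak{k}_{\Delta})$ such that for each $L \in Z(\mathfrak{g})$ one can find $A_L \in U(\mathfrak{g}) \otimes c(\mathfrak{p})$ with
\begin{equation*}
L \otimes 1 = \zeta(L) + A_L D + D A_L,
\end{equation*}
and such that under the two Harish-Chandra isomorphisms, $\phi_{\mathrm{HC}}^{\mathfrak{k}} \zeta(L)$ agrees with $\phi_{\mathrm{HC}}^{\mathfrak{g}} L$ after the restriction of polynomials from $\mathfrak{h}^*$ to $\mathfrak{t}^* \subset \mathfrak{h}^*$.

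Granted this input, I would conclude as follows. Let $V_{\tau} \subset H_D(V_{\pi,K})$ be a $K$-subrepresentation of highest weight $\lambda$, and pick a representative $v \in \ker D \subset V_{\pi,K} \otimes S^{\mathfrak{p}}$ of a highest-weight vector of $V_{\tau}$. Since $L$ acts on $V_{\pi,K}$ by the scalar $\chi_{\pi}(L)$, we have $Lv = \chi_{\pi}(L)\,v$. On the other hand, the algebraic identity gives
\begin{equation*}
Lv = \zeta(L)\,v + A_L D v + D A_L v \equiv \zeta(L)\,v \pmod{\Im D},
\end{equation*}
since $Dv = 0$. In $H_D(V_{\pi,K})$ the class $[v]$ is nonzero, and $\zeta(L) \in Z(\mathfrak{k}_{\Delta})$ acts on the $K$-type $V_{\tau}$ by the scalar $\phi_{\mathrm{HC}}^{\mathfrak{k}} \zeta(L)(\lambda + \rho^{\mathfrak{k}})$. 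By the Harish-Chandra compatibility of $\zeta$, this equals $\phi_{\mathrm{HC}}^{\mathfrak{g}} L(\lambda + \rho^{\mathfrak{k}})$. Hence $\chi_{\pi}(L) = \phi_{\mathrm{HC}}^{\mathfrak{g}} L(\lambda + \rho^{\mathfrak{k}})$ for every $L \in Z(\mathfrak{g})$, which is precisely the statement that the infinitesimal character of $V_{\pi,K}$ is $\rho^{\mathfrak{k}} + \lambda$.

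The main obstacle is the construction of $\zeta$ together with the verification of both the decomposition $L \otimes 1 = \zeta(L) + A_L D + D A_L$ and the Harish-Chandra compatibility; this requires a delicate analysis of the natural filtration on $U(\mathfrak{g}) \otimes c(\mathfrak{p})$ and of the cohomology of a Koszul-type complex built from $D$. Since this input can be cited from \cite{HuangPan02}, the work in this subsection reduces essentially to matching sign and shift conventions. As an internal consistency check with the orbital integral theory developed earlier, the explicit heat trace formula \eqref{eq:D1a1} of Theorem \ref{thm:indt}, together with its elliptic extension in Theorem \ref{thm:Tfino}, involves the polynomial $\phi_{\mathrm{HC}} L(-\rho^{\mathfrak{k}} - \lambda)$, which is precisely what one predicts from the Vogan conjecture combined with a Plancherel-type decomposition, once one accounts for the Weyl-group symmetry of $\phi_{\mathrm{HC}} L$.
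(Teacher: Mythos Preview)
Your sketch is essentially the Huang--Pand\v{z}i\'{c} argument and is correct as far as it goes: the homomorphism $\zeta:Z(\mathfrak g)\to Z(\mathfrak k_{\Delta})$, the identity $L\otimes 1=\zeta(L)+A_LD+DA_L$, and the compatibility with the Harish-Chandra maps are exactly the ingredients of \cite[Theorem 2.3 and Corollary 2.4]{HuangPan02}, and your passage to the scalar on a $K$-type in $H_D(V_{\pi,K})$ is the standard deduction.

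However, there is nothing to compare against: in this paper Theorem \ref{thm:voganconj} is not proved but merely \emph{quoted} from \cite{HuangPan02}. The role of Subsection \ref{subsec:resHP} is the reverse of what you suggest in your last paragraph: the authors take the Vogan conjecture as an external input and use it, via \eqref{eq:pan1} and the McKean--Singer identity \eqref{eq:pan2}, to check that their orbital-integral formulas \eqref{eq:D1a1} and \eqref{eq:clif28} are consistent with it. So your write-up should simply cite \cite{HuangPan02} for the proof and, if you wish, keep your sketch as an informal reminder of why the scalar $\phi_{\mathrm{HC}}L(-\rho^{\mathfrak k}-\lambda)$ appears.
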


An equivalent formulation of Theorem \ref{thm:voganconj} says that  if $E$ is an irreducible $K$ representation of highest 
weight $\lambda\in i\mathfrak t^{*}$,  if $D^{S^{\mathfrak p} \otimes 
E}$ denotes the restriction of $D$ to  $(V_{\pi,K}\otimes 
S^{\mathfrak p}\otimes E)^{K}$,  if $\ker D^{S^{\mathfrak p} \otimes 
E}\neq 0$,  then $L\in Z(\mathfrak g)$ acts on $V_{\pi,K}$ as the 
scalar  $\phi_{\rm HC}L\left(-\rho^{\mathfrak k}-\lambda\right)$.

We will show that  (\ref{eq:D1a1}) and 
(\ref{eq:clif28})  are compatible with  Theorem 
\ref{thm:voganconj}.  Let $\Gamma$ be a discrete cocompact  subgroup 
of $G$. By  \cite[Theorem, p.23]{GelfandGraevPia90},  we have
\begin{align}\label{eq:GMP}
	L^{2}(\Gamma\backslash G)=\bigoplus_{\pi\in 
	\widehat{G}_{u}}^{\rm Hil}n_{\Gamma}(\pi)V_{\pi},
\end{align}
with $n_{\Gamma}(\pi)\in \N$. 

We use the notation of Subsection \ref{subsec:dirX}. In particular, we assume that 
$\rho^{E}$ is an irreducible representation of $K$ with highest weight 
$\lambda\in i\mathfrak t^{*}$. Let $Z$ be the compact orbifold 
$Z=\Gamma\setminus X$. The vector bundle $F$ on $X$ descends to an 
orbifold
vector bundle on $Z$, which we still denote $F$. Also $D^{X}$ 
descends to the orbifold Dirac operator  $D^{Z}$. By \eqref{eq:GMP}, we have 
\begin{equation}\label{eq:pan1}
	\ker D^{Z}=\bigoplus_{\pi\in \widehat{G}_{u}} 
	n_{\Gamma}(\pi)(H_{D}(V_{\pi,K})\otimes E)^{K}. 
\end{equation}
Since $\ker D^{Z}$ is finite-dimensional, the  sum in the right-hand 
side only contains finitely many nonzero terms.  By Theorem \ref{thm:voganconj},  $L\in Z(\mathfrak g)$ acts on $\ker 
D^{Z}$ as $\phi_{\rm HC}L\left(-\rho^{\mathfrak k}-\lambda\right)$.

Using the McKean-Singer formula \cite{McKeanSinger} and 
the above, for $t>0$, we get 
\begin{equation}\label{eq:pan2}
\Trs\left[L\exp\left(-tD^{Z,2}\right)\right]=\phi_{\mathrm{HC}}L\left(-\rho^{\mathfrak k}-\lambda\right)\Trs\left[\exp\left(-tD^{Z,2}\right)\right].
\end{equation}
Also $\Trs\left[L\exp\left(-tD^{Z,2}\right)\right]$ can be evaluated 
in terms of corresponding orbital integrals using Selberg's trace 
formula. 

Assume first that $\Gamma$ is torsion free. 
By   equation (\ref{eq:D0}) in 
Theorem \ref{thm:indt}, only the identity element contributes to the 
above supertrace. Then equation  (\ref{eq:D1a1}) can  be viewed as a 
consequence of (\ref{eq:grsen1}), (\ref{eq:pan2}).

When $\Gamma$ is not torsion free, only the finite number of conjugacy 
classes of elliptic elements in $\Gamma$ contribute to 
(\ref{eq:pan2}). Then equations (\ref{eq:clif28}), (\ref{eq:crcr1}) 
 are compatible to (\ref{eq:pan2}). 

\printindex

\bibliography{Bismut,Others}

\def\cprime{$'$}
\begin{thebibliography}{BeGV04}

\bibitem[ABo67]{AtiyahBott67}
M.~F. Atiyah and R.~Bott.
\newblock A {L}efschetz fixed point formula for elliptic complexes. {I}.
\newblock {\em Ann. of Math. (2)}, 86:374--407, 1967.

\bibitem[ABo68]{AtiyahBott68}
M.~F. Atiyah and R.~Bott.
\newblock A {L}efschetz fixed point formula for elliptic complexes. {I}{I}.
  {A}pplications.
\newblock {\em Ann. of Math. (2)}, 88:451--491, 1968.

\bibitem[ASc77]{AtiyahSchmid77}
M.~F. Atiyah and W.~Schmid.
\newblock A geometric construction of the discrete series for semisimple {L}ie
  groups.
\newblock {\em Invent. Math.}, 42:1--62, 1977.

\bibitem[AS68a]{AtiyahSinger68}
M.~F. Atiyah and I.~M. Singer.
\newblock The index of elliptic operators. {I}.
\newblock {\em Ann. of Math. (2)}, 87:484--530, 1968.

\bibitem[AS68b]{AtiyahSinger68b}
M.~F. Atiyah and I.~M. Singer.
\newblock The index of elliptic operators. {I}{I}{I}.
\newblock {\em Ann. of Math. (2)}, 87:546--604, 1968.

\bibitem[BaGS85]{BalGroSchro}
W.~Ballmann, M.~Gromov, and V.~Schroeder.
\newblock {\em Manifolds of nonpositive curvature}, volume~61 of {\em Progress
  in Mathematics}.
\newblock Birkh\"auser Boston Inc., Boston, MA, 1985.

\bibitem[BeGV04]{BerlineGetzlerVergne04}
N.~Berline, E.~Getzler, and M.~Vergne.
\newblock {\em Heat kernels and {D}irac operators}.
\newblock Grundlehren Text Editions. Springer-Verlag, Berlin, 2004.
\newblock Corrected reprint of the 1992 original.

\bibitem[B11]{Bismut08b}
J.-M. Bismut.
\newblock {\em Hypoelliptic {L}aplacian and orbital integrals}, volume 177 of
  {\em Annals of Mathematics Studies}.
\newblock Princeton University Press, Princeton, NJ, 2011.

\bibitem[BL99]{BismutLabourie99}
J.-M. Bismut and F.~Labourie.
\newblock Symplectic geometry and the {V}erlinde formulas.
\newblock In {\em Surveys in differential geometry: differential geometry
  inspired by string theory}, volume~5 of {\em Surv. Differ. Geom.}, pages
  97--311. Int. Press, Boston, MA, 1999.

\bibitem[BrD95]{BrockerDieck95}
T.~Br{\"o}cker and T.~tom Dieck.
\newblock {\em Representations of compact {L}ie groups}, volume~98 of {\em
  Graduate Texts in Mathematics}.
\newblock Springer-Verlag, New York, 1995.
\newblock Translated from the German manuscript, Corrected reprint of the 1985
  translation.

\bibitem[CP81]{ChazarainPiriou}
J.~Chazarain and A.~Piriou.
\newblock {\em Introduction \`a la th\'eorie des \'equations aux d\'eriv\'ees
  partielles lin\'eaires}.
\newblock Gauthier-Villars, Paris, 1981.

\bibitem[Du70]{Duflo70}
M.~Duflo.
\newblock Caract\`eres des groupes et des alg\`ebres de {L}ie r\'{e}solubles.
\newblock {\em Ann. Sci. \'{E}cole Norm. Sup. (4)}, 3:23--74, 1970.

\bibitem[E96]{Eberlein96}
P.~B. Eberlein.
\newblock {\em Geometry of nonpositively curved manifolds}.
\newblock Chicago Lectures in Mathematics. University of Chicago Press,
  Chicago, IL, 1996.
  
  \bibitem[GGP90]{GelfandGraevPia90}
I.~M. Gelfand, M.~I. Graev, and I.~I. Pyatetskii-Shapiro.
\newblock {\em Representation theory and automorphic functions}, volume~6 of
  {\em Generalized Functions}.
\newblock Academic Press, Inc., Boston, MA, 1990.
\newblock Translated from the Russian by K. A. Hirsch, Reprint of the 1969
  edition.

  \bibitem[HC56]{Harish56}
Harish-Chandra.
\newblock The characters of semisimple {L}ie groups.
\newblock {\em Trans. Amer. Math. Soc.}, 83:98--163, 1956.

\bibitem[HC57a]{Harish57}
Harish-Chandra.
\newblock Differential operators on a semisimple {L}ie algebra.
\newblock {\em Amer. J. Math.}, 79:87--120, 1957.

\bibitem[HC57b]{Harish57a}
Harish-Chandra.
\newblock A formula for semisimple {L}ie groups.
\newblock {\em Amer. J. Math.}, 79:733--760, 1957.

\bibitem[HC64]{Harish64c}
Harish-Chandra.
\newblock Some results on an invariant integral on a semisimple {L}ie algebra.
\newblock {\em Ann. of Math. (2)}, 80:551--593, 1964.

\bibitem[HC65]{Harish65}
Harish-Chandra.
\newblock Invariant eigendistributions on a semisimple {L}ie group.
\newblock {\em Trans. Amer. Math. Soc.}, 119:457--508, 1965.

\bibitem[HC66]{Harish66}
Harish-Chandra.
\newblock Discrete series for semisimple {L}ie groups. {II}. {E}xplicit
  determination of the characters.
\newblock {\em Acta Math.}, 116:1--111, 1966.

\bibitem[HC75]{Harish75}
Harish-Chandra.
\newblock Harmonic analysis on real reductive groups. {I}. {T}he theory of the
  constant term.
\newblock {\em J. Functional Analysis}, 19:104--204, 1975.

\bibitem[Hi58]{Hirzebruch58}
F.~Hirzebruch.
\newblock Automorphe {F}ormen und der {S}atz von {R}iemann-{R}och.
\newblock In {\em Symposium internacional de topolog\'\i a algebraica
  International symposium on algebraic topology}, pages 129--144. Universidad
  Nacional Aut\'onoma de M\'exico and UNESCO, Mexico City, 1958.
 

\bibitem[H{\"o}83]{Hormander83a}
L.~H{\"o}rmander.
\newblock {\em The analysis of linear partial differential operators. {I}},
  volume 256 of {\em Grundlehren der Mathematischen Wissenschaften [Fundamental
  Principles of Mathematical Sciences]}.
\newblock Springer-Verlag, Berlin, 1983.
\newblock Distribution theory and Fourier analysis.

\bibitem[H{\"o}85]{Hormander85a}
L.~H{\"o}rmander.
\newblock {\em The analysis of linear partial differential operators.
  {I}{I}{I}}.
\newblock Grundl. Math. Wiss. Band 274. Springer-Verlag, Berlin, 1985.
\newblock Pseudodifferential operators.

\bibitem[HuP02]{HuangPan02}
J.-S. Huang and P.~Pand\v{z}i\'{c}.
\newblock Dirac cohomology, unitary representations and a proof of a conjecture
  of {V}ogan.
\newblock {\em J. Amer. Math. Soc.}, 15(1):185--202, 2002.

\bibitem[K79]{Kawasaki79}
T.~Kawasaki.
\newblock The {R}iemann-{R}och theorem for complex {$V$}-manifolds.
\newblock {\em Osaka J. Math.}, 16(1):151--159, 1979.

\bibitem[Kn86]{Knapp86}
A.~W. Knapp.
\newblock {\em Representation theory of semisimple groups}, volume~36 of {\em
  Princeton Mathematical Series}.
\newblock Princeton University Press, Princeton, NJ, 1986.
\newblock An overview based on examples.

\bibitem[Kn02]{Knapp02}
A.~W. Knapp.
\newblock {\em Lie groups beyond an introduction}, volume 140 of {\em Progress
  in Mathematics}.
\newblock Birkh\"{a}user Boston, Inc., Boston, MA, second edition, 2002.

\bibitem[Ko73]{Kostant73}
B.~Kostant.
\newblock On convexity, the {W}eyl group and the {I}wasawa decomposition.
\newblock {\em Ann. Sci. \'{E}cole Norm. Sup. (4)}, 6:413--455 (1974), 1973.

\bibitem[Ko76]{Kostant76}
B.~Kostant.
\newblock On {M}acdonald's {$\eta $}-function formula, the {L}aplacian and
  generalized exponents.
\newblock {\em Advances in Math.}, 20(2):179--212, 1976.

\bibitem[MS67]{McKeanSinger}
H.~P. McKean, Jr. and I.~M. Singer.
\newblock Curvature and the eigenvalues of the {L}aplacian.
\newblock {\em J. Differential Geometry}, 1(1):43--69, 1967.

\bibitem[R78]{Rossmann78}
W.~Rossmann.
\newblock Kirillov's character formula for reductive {L}ie groups.
\newblock {\em Invent. Math.}, 48(3):207--220, 1978.

\bibitem[Se56]{Selberg56}
A.~Selberg.
\newblock Harmonic analysis and discontinuous groups in weakly symmetric
  {R}iemannian spaces with applications to {D}irichlet series.
\newblock {\em J. Indian Math. Soc. (N.S.)}, 20:47--87, 1956.

\bibitem[T81]{Taylor81}
M.~E. Taylor.
\newblock {\em Pseudodifferential operators}, volume~34 of {\em Princeton
  Mathematical Series}.
\newblock Princeton University Press, Princeton, N.J., 1981.

\bibitem[Va77]{Vara77}
V.~S. Varadarajan.
\newblock {\em Harmonic analysis on real reductive groups}.
\newblock Lecture Notes in Mathematics, Vol. 576. Springer-Verlag, Berlin-New
  York, 1977.


\bibitem[V79]{Vergne79}
M.~Vergne.
\newblock On {R}ossmann's character formula for discrete series.
\newblock {\em Invent. Math.}, 54(1):11--14, 1979.

\bibitem[W88]{Wallach88}
N.~R. Wallach.
\newblock {\em Real reductive groups. {I}}, volume 132 of {\em Pure and Applied
  Mathematics}.
\newblock Academic Press Inc., Boston, MA, 1988.

\end{thebibliography}
\end{document}